\definecolor{blush}{rgb}{0.87, 0.36, 0.51}
\definecolor{jazzberryjam}{rgb}{0.65, 0.04, 0.37}
\definecolor{tiffanyblue}{rgb}{0.04, 0.73, 0.71}
\definecolor{darkcyan}{rgb}{0.0, 0.55, 0.55}
\newcommand{\bC}{\mathbf{C}}
\newcommand{\bQ}{\mathbf{Q}}
\newcommand{\bZ}{\mathbf{Z}}
\newcommand{\bN}{\mathbf{N}}
\newcommand{\bP}{\mathbf{P}}
\newcommand{\bA}{\mathbf{A}}
\newcommand{\bB}{\mathbf{B}}
\newcommand{\bK}{\mathbf{K}}
\newcommand{\bF}{\mathbf{F}}
\newcommand{\bk}{\mathbf{k}}
\newcommand{\bM}{\mathbf{M}}
\newcommand{\cA}{\mathcal{A}}
\newcommand{\cB}{\mathcal{B}}
\newcommand{\cF}{\mathcal{F}}
\newcommand{\cI}{\mathcal{I}}
\newcommand{\cO}{\mathcal{O}}
\newcommand{\cY}{\mathcal{Y}}
\newcommand{\cX}{\mathcal{X}}
\newcommand{\cC}{\mathcal{C}}
\newcommand{\cL}{\mathcal{L}}
\newcommand{\cG}{\mathcal{G}}
\newcommand{\frp}{\mathfrak{p}}
\newcommand{\frpbar}{\wb{\frp}}
\newcommand{\supp}{\mathrm{Supp}}
\newcommand{\codim}{\textup{codim}}
\newcommand{\Exc}{\textup{Exc}}
\newcommand{\coeff}{\textup{coeff}}
\newcommand{\red}{\textup{red}}
\newcommand{\charac}{\textup{char}}
\newcommand{\Image}{\textup{Image}}
\newcommand{\Hom}{\textup{Hom}}
\newcommand{\cHom}{\mathcal{H}om}
\newcommand{\Tr}{\textup{Tr}}
\newcommand{\sing}{\textup{Sing}}
\newcommand{\Fr}{F}
\newcommand{\id}{\textup{id}}
\newcommand{\vvert}[1]{\lVert #1 \rVert}
\DeclareMathOperator{\Spec}{\textup{Spec}}
\DeclareMathOperator{\Proj}{\textup{Proj}}
\DeclareMathOperator{\relSpec}{\textbf{\textup{Spec}}}
\DeclareMathOperator{\Bs}{Bs}
\DeclareMathOperator{\nklt}{\textup{Nklt}}
\DeclareRobustCommand{\Frac}{\textup{Frac}}
\tikzset{double line with arrow/.style args={#1,#2}{decorate,decoration={markings,%
mark=at position 0 with {\coordinate (ta-base-1) at (0,1pt);
\coordinate (ta-base-2) at (0,-1pt);},
mark=at position 1 with {\draw[#1] (ta-base-1) -- (0,1pt);
\draw[#2] (ta-base-2) -- (0,-1pt);
}}}}
\tikzset{Equal/.style={-,double line with arrow={-,-}}}
\let\save@mathaccent\mathaccent
\newcommand*\if@single[3]{%
  \setbox0\hbox{${\mathaccent"0362{#1}}^H$}%
  \setbox2\hbox{${\mathaccent"0362{\kern0pt#1}}^H$}%
  \ifdim\ht0=\ht2 #3\else #2\fi
  }
\newcommand*\rel@kern[1]{\kern#1\dimexpr\macc@kerna}
\newcommand*\widebar[1]{\@ifnextchar^{{\wide@bar{#1}{0}}}{\wide@bar{#1}{1}}}
\newcommand*\wide@bar[2]{\if@single{#1}{\wide@bar@{#1}{#2}{1}}{\wide@bar@{#1}{#2}{2}}}
\newcommand*\wide@bar@[3]{%
  \begingroup
  \def\mathaccent##1##2{%
    \let\mathaccent\save@mathaccent
    \if#32 \let\macc@nucleus\first@char \fi
    \setbox\z@\hbox{$\macc@style{\macc@nucleus}_{}$}%
    \setbox\tw@\hbox{$\macc@style{\macc@nucleus}{}_{}$}%
    \dimen@\wd\tw@
    \advance\dimen@-\wd\z@
    \divide\dimen@ 3
    \@tempdima\wd\tw@
    \advance\@tempdima-\scriptspace
    \divide\@tempdima 10
    \advance\dimen@-\@tempdima
    \ifdim\dimen@>\z@ \dimen@0pt\fi
    \rel@kern{0.6}\kern-\dimen@
    \if#31
      \overline{\rel@kern{-0.6}\kern\dimen@\macc@nucleus\rel@kern{0.4}\kern\dimen@}%
      \advance\dimen@0.4\dimexpr\macc@kerna
      \let\final@kern#2%
      \ifdim\dimen@<\z@ \let\final@kern1\fi
      \if\final@kern1 \kern-\dimen@\fi
    \else
      \overline{\rel@kern{-0.6}\kern\dimen@#1}%
    \fi
  }%
  \macc@depth\@ne
  \let\math@bgroup\@empty \let\math@egroup\macc@set@skewchar
  \mathsurround\z@ \frozen@everymath{\mathgroup\macc@group\relax}%
  \macc@set@skewchar\relax
  \let\mathaccentV\macc@nested@a
  \if#31
    \macc@nested@a\relax111{#1}%
  \else
    \def\gobble@till@marker##1\endmarker{}%
    \futurelet\first@char\gobble@till@marker#1\endmarker
    \ifcat\noexpand\first@char A\else
      \def\first@char{}%
    \fi
    \macc@nested@a\relax111{\first@char}%
  \fi
  \endgroup
}
\newcommand{\wb}{\widebar}
\numberwithin{equation}{section}
\begin{document}

\title[Superadditivity of anticanonical Iitaka dimension]{On the superadditivity of anticanonical Iitaka dimension}

\author{Marta Benozzo}
\address{Laboratoire de Math\'ematiques d'Orsay, Universit\'e Paris--Saclay, Orsay, 91400, France}
\email{marta.benozzo@universite-paris-saclay.fr}

\author{Iacopo Brivio}
\address{Center of Mathematical Sciences and Applications, Harvard University, Cambridge, MA, 02138}
\email{ibrivio@cmsa.fas.harvard.edu}

\author{Chi-Kang Chang}
\address{National Center for Theoretical Sciences, National Taiwan University, Taipei 106, Taiwan}
\email{changck924@ntu.edu.tw}

\subjclass[2020]{14D06, 14J17, 14G17}
\keywords{Fibrations, superadditivity, anticanonical divisor, positive characteristic.}

\maketitle

\begin{abstract}
Given a fibration $f\colon X\to Y$ with normal general fibre $X_y$, over a field of any characteristic, we establish the Iitaka-type inequality $\kappa(X,-K_X)\leq \kappa(X_y,-K_{X_y})+\kappa(Y,-K_Y)$ whenever the $\bQ$-linear series $|-K_X|_{\bQ}$ has good singularities on $X_y$.
\end{abstract}

\setcounter{tocdepth}{1}
\tableofcontents



\section{Introduction}%

A projective algebraic variety $X$ is classified according to the positivity of its canonical divisor $K_X$. The most basic measure of such positivity is the \textit{Iitaka dimension} $\kappa(X,K_X)$, an invariant which measures the rate of growth of the spaces $H^0(X,mK_X)$ as a function of $m$. For varieties over the complex numbers, Iitaka proposed the following conjecture.

\begin{conjecture}[Iitaka's Conjecture, $C_{n,m}$, \cite{Iitaka}]\label{c-iitaka}
    Let $f\colon X\to Y$ be a fibration of smooth projective complex varieties, of dimensions $n$ and $m$ respectively, and let $y\in Y$ be a general point. Then
    \[
    \kappa(X,K_X)\geq \kappa(X_y,K_{X_y})+\kappa(Y,K_Y).
    \]
\end{conjecture}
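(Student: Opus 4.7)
Since $C_{n,m}$ is a famous open conjecture, my plan is necessarily to sketch the standard strategy (Viehweg's approach via weak positivity of direct images), which has succeeded in several special cases rather than in full generality. As a preliminary reduction, I would replace $f$ by a birational model with connected fibres between smooth projective varieties, and assume both $\kappa(X_y,K_{X_y})\geq 0$ and $\kappa(Y,K_Y)\geq 0$; otherwise the right-hand side is $-\infty$ (under the usual convention) and the inequality is trivial.

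The central input would be Viehweg's theorem that $f_*\omega_{X/Y}^{\otimes m}$ is weakly positive on $Y$ for $m$ sufficiently divisible. The idea is to combine this relative positivity with positivity gained from the base. Using an Iitaka fibration on $Y$, one reduces to the case where $K_Y$ is either trivial or big on a suitable birational model, and using a relative Iitaka fibration for $f$, one further reduces to the case where $K_{X_y}$ is big on the general fibre. When $K_Y$ is big and $K_{X_y}$ is big, one twists appropriate symmetric powers of $f_*\omega_{X/Y}^{\otimes m}$ by $f^*\cO_Y(mK_Y)$, and the weak positivity together with bigness on the base produces enough sections of $mK_X=mK_{X/Y}+f^*mK_Y$ to witness the required Iitaka dimension.

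The main obstacle, and the reason that $C_{n,m}$ is still open, is the pivotal case where $\kappa(Y,K_Y)=\kappa(X_y,K_{X_y})=0$ (for instance, a fibration of Calabi--Yau type over a Calabi--Yau base). Here one must produce a single effective pluricanonical divisor on $X$, and the available positivity of $f_*\omega_{X/Y}^{\otimes m}$ is not strong enough to force such a section: weak positivity may fail to upgrade to pseudo-effectivity of $K_X$ without additional information, such as Kawamata's semipositivity for $Y$ abelian or Viehweg's bigness for $Y$ of general type. Beyond this conceptual bottleneck, the reduction steps themselves rely on running a relative MMP and on variation of Hodge structure, so the approach does not transfer to positive characteristic, which is presumably why the remainder of the paper replaces $K_X$ by $-K_X$ and develops a different circle of techniques.
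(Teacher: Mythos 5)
This statement is Iitaka's Conjecture $C_{n,m}$ itself, which the paper records as a conjecture (citing Iitaka) and does not prove: the text only remarks that it is open in general, lists the classes of fibrations for which it is known, and notes that it would follow from the Minimal Model Program and Abundance. So there is no proof in the paper to compare you with, and your submission, which is explicitly a strategy sketch rather than a proof, is the honest response: the argument cannot be closed because the statement is open. Your sketch of Viehweg's weak-positivity approach, and your identification of the true obstruction --- producing a single effective pluricanonical divisor when the available positivity of $f_*\omega_{X/Y}^{\otimes m}$ is only weak positivity --- is an accurate account of why the known techniques stop short.

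Two refinements to your survey. First, the bottleneck is not exactly the case $\kappa(Y,K_Y)=\kappa(X_y,K_{X_y})=0$: when $Y$ is an abelian variety (so $\kappa(Y,K_Y)=0$) the conjecture is a theorem of Cao and P\u{a}un, and more generally it holds when $Y$ has maximal Albanese dimension (Hacon--Popa--Schnell, Cao); the genuinely open situations are those where the fibre is not known to have a good minimal model and the base has neither enough positivity nor Albanese-type structure, for instance $\kappa(X_y,K_{X_y})=0$ over a simply connected Calabi--Yau base of dimension at least two. Second, the reduction you describe to the case where $K_Y$ and $K_{X_y}$ are big is not actually available in general; the standard reductions go through the base being of general type (Viehweg) or through good minimal models of the fibre (Kawamata), and it is precisely the failure of such reductions that keeps $C_{n,m}$ open. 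Neither point changes the verdict: the statement is an unproven conjecture, in the paper as everywhere else, and your proposal correctly treats it as such.
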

Although still open in general, this conjecture is proven for many important classes of fibrations (\cite{Vie1,Vie2,Vie3,Kaw_KDAFSOC,Kaw_CAV,Kaw_MMKDAFS,Fuj_AFSGFMAD,HPS,Bir_IC6,Cao,CP}). In particular, $C_{n,m}$ follows from the Minimal Model Program and Abundance conjectures.

Recently, the third author has shown that a similar inequality holds for the anticanonical divisors, provided that the stable base locus of the anticanonical divisor does not dominate the base.

\begin{theorem}[{$C_{n,m}^-$, \cite[Theorem 1.1]{Chang}}]\label{t-changintro}
    Let $f\colon X\to Y$ be a fibration of smooth projective complex varieties, of dimensions $n$ and $m$ respectively, let $y\in Y$ be a general point, and assume $\bB(-K_X)$ does not dominate $Y$. Then
    \[
    \kappa(X,-K_X)\leq \kappa(X_y,-K_{X_y})+\kappa(Y,-K_Y).
    \]
\end{theorem}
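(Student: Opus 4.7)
The plan is to induct on $\dim Y$, using the Iitaka fibration of $-K_Y$ on the base to reduce to a base of smaller dimension, with the classical Iitaka ``easy addition'' inequality handling the extremal case. The base case $\kappa(Y,-K_Y) = \dim Y$ (in particular $\dim Y = 0$) is immediate from easy addition
\[
\kappa(X,-K_X) \leq \kappa(X_y,-K_X|_{X_y}) + \dim Y,
\]
combined with adjunction $-K_X|_{X_y} = -K_{X_y}$ on the smooth general fibre; the hypothesis $\bB(-K_X) \not\supseteq Y$ ensures this restriction is non-trivial and that $\kappa(X_y,-K_{X_y}) \geq 0$.

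For the inductive step, assume $0 < \kappa(Y,-K_Y) < \dim Y$, and let $\psi\colon Y' \to Z$ denote the Iitaka fibration of $-K_Y$, realized as a morphism between smooth projective varieties with $Y' \to Y$ birational. Resolve the composite rational map $\psi \circ f\colon X \dashrightarrow Z$ to a morphism $\tau\colon X'' \to Z$ via a birational modification $\pi\colon X'' \to X$. Easy addition for $\tau$ applied to $\pi^*(-K_X)$ gives
\[
\kappa(X,-K_X) = \kappa(X'',\pi^*(-K_X)) \leq \kappa(X_z,-K_{X_z}) + \kappa(Y,-K_Y),
\]
where $X_z$ is a general smooth fibre of $\psi \circ f$ realized in $X$, using birational invariance of Iitaka dimension for the pulled-back divisor and adjunction $-K_X|_{X_z} = -K_{X_z}$ (the normal bundle of $X_z$ in $X$ is trivial on the general fibre). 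The fibre $X_z$ is itself the total space of a fibration $X_z \to F := \psi^{-1}(z)$ whose general fibres are isomorphic to $X_y$ for $y \in F$ general. The crucial input is that $\kappa(F,-K_F) = 0$: by construction of the Iitaka fibration $-K_{Y'}|_F$ has Iitaka dimension zero, and adjunction on general smooth fibres yields $-K_F = -K_{Y'}|_F$. Applying the inductive hypothesis to $X_z \to F$ (which has $\dim F < \dim Y$ since $\kappa(Y,-K_Y) > 0$) bounds $\kappa(X_z,-K_{X_z})$ by $\kappa(X_y,-K_{X_y}) + 0$, closing the induction.

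Three technical obstacles stand out. First, one must verify that the hypothesis on the base locus is preserved in the inductive step, i.e.\ that $\bB(-K_{X_z}) \not\supseteq F$ for general $z$; this follows because restriction of sections gives $\bB(-K_{X_z}) \subseteq \bB(-K_X) \cap X_z$, and $f(\bB(-K_X))$ is a fixed proper subvariety of $Y$ which cannot contain a general Iitaka fibre $F$. Second, the case $\kappa(Y,-K_Y) = 0$ (which forces $Z$ to be a point and hence does not reduce $\dim Y$) is the heart of the theorem and must be treated separately --- one approach is to exploit the $\bQ$-linear equivalence $-K_Y \sim_\bQ D$ with $D$ the unique effective divisor provided by $\kappa(Y,-K_Y) = 0$, and to bound the global sections of the direct image $f_*\cO_X(-mK_X) = \cO_Y(-mK_Y) \otimes f_*\cO_X(-mK_{X/Y})$ via a Viehweg-type weak positivity argument applied to $f_*\cO_X(-mK_{X/Y})$. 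Third, since $\kappa(\cdot,-K_\cdot)$ is not birationally invariant under blow-ups (the exceptional divisor appears with the wrong sign), one must carry the pullback $\pi^*(-K_X)$ rather than $-K_{X''}$ throughout, invoking adjunction only on general smooth fibres where the various anticanonical divisors coincide.
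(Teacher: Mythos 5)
Your outer skeleton (pass to the Iitaka fibration of $-K_Y$, use easy additivity over $Z$, reduce to a base with $\kappa=0$) is the same as Step (4) of the paper's strategy, but the proposal has a genuine gap exactly where the theorem lives: the case $\kappa(Y,-K_Y)=0$, which you defer to ``a Viehweg-type weak positivity argument applied to $f_*\cO_X(-mK_{X/Y})$''. Weak positivity is a theorem about pushforwards of \emph{pluricanonical} sheaves; $f_*\cO_X(-mK_{X/Y})$ is not weakly positive in general, and even granting some positivity it would give a lower bound on positivity, whereas what is needed here is an upper bound: when $\kappa(Y,-K_Y)=0$ one must show that restriction $H^0(X,-nK_X)\to H^0(X_y,-nK_{X_y})$ is injective. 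The paper's mechanism is entirely different: the hypothesis on $\bB(-K_X)$ lets one choose $\Lambda\in|-K_X|_{\bQ}$ making $f\colon(X,\Lambda)\to Y$ lc-trivial, Ambro's canonical bundle formula then descends $\bQ$-effectivity to $-K_Y$ (\autoref{t-chang3.8ED}), a perturbation refines this to $-K_Y-\epsilon E$ (\autoref{p-chang4.2}), and a blow-up of $y$ and $X_y$ converts a hypothetical section of $-nK_X$ vanishing on $X_y$ into a second effective member of a $\bQ$-linear series that $\kappa(Y,-K_Y)=0$ forces to be a singleton (\autoref{t-chang4.3inj}). None of this is replaced by your sketch. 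Relatedly, the case $\kappa(Y,-K_Y)=-\infty$ is absent from your induction, and it already requires the same canonical bundle formula descent ($\bQ$-effectivity of $-K_X$ forces $\bQ$-effectivity of $-K_Y$).

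There is also a sign problem in your inductive step. The fibres $X''_z$, $Y'_z$ over a \emph{general} point of $Z$ typically meet the exceptional locus of $Y'\to Y$ (the blown-up centres lie in $\bB(-K_Y)$, whose preimage can dominate $Z$), so the claim that ``the various anticanonical divisors coincide'' on these fibres is false: one has $\pi^*(-K_X)|_{X''_z}=-K_{X''_z}+(\text{effective exceptional})$ and $u^*(-K_Y)|_{Y'_z}=-K_{Y'_z}+(\text{effective exceptional})$, and both corrections point in the unfavourable direction. Consequently $\kappa(X''_z,\pi^*(-K_X)|_{X''_z})$ is not bounded by $\kappa(X''_z,-K_{X''_z})$, and the quantity that \autoref{t-IF} makes zero is $\kappa(Y'_z,u^*(-K_Y)|_{Y'_z})$, not $\kappa(Y'_z,-K_{Y'_z})$ (which may even be $-\infty$). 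So the plain statement you are inducting on does not apply to $X''_z\to Y'_z$; this is precisely why the paper proves the stronger \autoref{t-main_char0} and \autoref{t-chang4.3inj}, with a sub-boundary $\Delta$ (absorbing the exceptional discrepancies), a twist $D$ on the base, and the auxiliary divisor $P\geq\Delta^-$ satisfying $\kappa(X,f^*(-K_Y-D)+P)=0$ in place of your ``$\kappa(F,-K_F)=0$''. Without strengthening the induction hypothesis in this way (or an equivalent bookkeeping device), the reduction does not close.
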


In this paper we formulate and prove a version of \autoref{t-changintro} for some positive characteristic fibrations.

\begin{theorem}[$C_{n,m}^-$, $\charac$ $p>0$, {see \autoref{t-main}}]\label{t-main_intro}
    Let $f \colon X \to Y$ be a generically smooth fibration of smooth projective varieties, of dimensions $n$ and $m$ respectively, over an algebraically closed field $k$ of characteristic $p>0$, and let $y\in Y$ be a general point. Assume
    \begin{enumerate}
        \item $Q^0_{y}(X,0)=k(y)$\footnote{See \autoref{d-P0}},
        \item there exists $\wb{m}\geq 1$ not divisible by $p$ such that $\textup{Bs}(-\wb{m}K_X)$ does not dominate $Y$.
    \end{enumerate}
    Then
    \[
    \kappa(X,-K_X)\leq \kappa(X_y,-K_{X_y})+\kappa(Y,-K_Y).
    \]
    Furthermore, if $\kappa(Y,-K_Y)=0$, equality holds.
\end{theorem}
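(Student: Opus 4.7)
The plan is to adapt Chang's argument for \autoref{t-changintro} to characteristic $p>0$, systematically replacing ordinary global sections $H^0$ with the Frobenius-stable submodule $Q^0$ at critical junctures. The hypothesis $Q^0_y(X,0)=k(y)$ plays the role of geometric connectedness of the general fibre; in characteristic $p$ such connectedness alone no longer guarantees good restriction behaviour of pluri-anticanonical sections, owing to inseparable phenomena in higher direct images, and the $Q^0$ condition is calibrated exactly to defeat them. The hypothesis $p\nmid \bar{m}$ ensures that the Iitaka fibration induced by $|-\bar{m}K_X|$ is not obscured by Frobenius pullbacks.

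The core estimate proceeds in three steps. First, fix $m=\bar{m}\ell$ with $\ell\geq 1$ coprime to $p$; since $\mathrm{Bs}(-\bar{m}K_X)$ does not meet the general fibre by hypothesis~(2), the restriction $H^0(X,-mK_X)\to H^0(X_y,-mK_{X_y})$ is defined for general $y$, and its image has dimension at most $h^0(X_y,-mK_{X_y})=O(m^{\kappa(X_y,-K_{X_y})})$. Second, using the decomposition $-mK_X=-mK_{X/Y}+f^*(-mK_Y)$ and the projection formula, sections vanishing along $X_y$ are controlled by
\[
h^0(X,-mK_X)\leq h^0\bigl(Y,\,f_*\mathcal{O}_X(-mK_{X/Y})\otimes \mathcal{O}_Y(-mK_Y)\bigr).
\]
Third, a rank estimate for the Frobenius-stable part of $f_*\mathcal{O}_X(-mK_{X/Y})$, which by hypothesis~(1) matches $h^0(X_y,-mK_{X_y})$ asymptotically after Frobenius twist, combined with an easy-addition bound for $-mK_Y$ on $Y$, yields the desired inequality $\kappa(X,-K_X)\leq \kappa(X_y,-K_{X_y})+\kappa(Y,-K_Y)$.

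For the equality assertion when $\kappa(Y,-K_Y)=0$, the upper bound specialises to $\kappa(X,-K_X)\leq \kappa(X_y,-K_{X_y})$. The reverse inequality should follow from the same framework: since $h^0(Y,-mK_Y)$ is uniformly bounded, the twist by $-mK_Y$ does not shrink global sections significantly, and the $Q^0$ hypothesis provides generic surjectivity of the fibre-restriction map after passing to a sufficient Frobenius power, which allows enough sections on $X_y$ to lift to $X$. The principal obstacle throughout is the failure of cohomology and base change in characteristic $p$, together with the possible discrepancy between $f_*\mathcal{O}_X$ and $\mathcal{O}_Y$ in the absence of characteristic-zero tools; the technical heart of the proof will thus be a base-change and compatibility statement for $Q^0$ along the fibration $f$, which is exactly the role played by hypothesis~(1).
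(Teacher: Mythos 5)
There is a genuine gap at the heart of your ``core estimate''. The decomposition $-mK_X=-mK_{X/Y}+f^*(-mK_Y)$, the projection formula, and a rank bound on $f_*\cO_X(-mK_{X/Y})$ can only ever give you $\kappa(X,-K_X)\leq \kappa(X_y,-K_{X_y})+\dim(Y)$, i.e.\ Easy Additivity (\autoref{l-easyadditivity}): the number of global sections of the rank-$O(m^{\kappa(X_y,-K_{X_y})})$ sheaf $f_*\cO_X(-mK_{X/Y})\otimes\cO_Y(-mK_Y)$ is governed by the \emph{positivity} of $f_*\cO_X(-mK_{X/Y})$, not just by its rank and by $h^0(Y,-mK_Y)$, and this positivity can compensate for arbitrary negativity of $-K_Y$. \autoref{e-horizontalB(-K_X)} is precisely the counterexample: there $-K_X$ is big while $\kappa(Y,-K_Y)=-\infty$, so no asymptotic bookkeeping of the kind you propose can succeed without first \emph{descending} anticanonical positivity to $Y$. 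That descent is exactly what hypothesis (1) is for, and it is used in a completely different way than you suggest: $Q^0_y(X,0)=k(y)$ produces $F$-complements (\autoref{c-conditions_F_complements_KGFR}), which feed into the Das--Schwede canonical bundle formula (\autoref{dp-CBF_fibration}) to write $K_X+\Lambda\sim_{\bZ_{(p)}}f^*(K_Y+\Delta^Y)$ with $\Delta^Y\geq 0$; a perturbation of this descent then yields the injectivity theorem (\autoref{t-chang4.3_gen}), proved by blowing up $y$ and $X_y$ and exploiting that the $\bQ$-linear series $|f^*(-K_Y-D)+P|_{\bQ}$ contains a unique element. Your sketch contains none of this, nor the final reduction to the Iitaka fibration of $-K_Y$, which requires flattening, a Frobenius base change on $Z$ with normalised reduction to restore normality of fibres (\autoref{l-singularities of fibres after Frob base change}), and foliation-theoretic control of how $K$ changes under that operation (\autoref{c-comparing conductors}); your appeal to a ``base-change and compatibility statement for $Q^0$'' does not substitute for these steps.

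The ``furthermore'' part is also argued incorrectly. You claim the $Q^0$ hypothesis gives ``generic surjectivity of the fibre-restriction map'' so that sections on $X_y$ lift to $X$; no such surjectivity holds or is needed ($Q^0_y$ concerns images of Frobenius trace maps in $H^0(X_y,\cO_{X_y})$, i.e.\ splitting data, not lifting of pluri-anticanonical sections). The reverse inequality $\kappa(X,-K_X)\geq\kappa(X_y,-K_{X_y})$ follows instead from hypothesis (2) alone via \autoref{p-opposite_inequality}: since $\Bs(-\wb{m}K_X)$ does not dominate $Y$, one compares the Iitaka fibration of $-K_X$ restricted to $X_y$ with the Iitaka fibration of $-K_{X_y}$ through a Stein factorisation, with no lifting of sections involved.
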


We also establish the following more general version of \autoref{t-changintro}.

\begin{theorem}[$C_{n,m}^-$, $\charac$ 0, see \autoref{t-main_char0}]\label{t-chang_mt}
    Let $f\colon X\to Y$ be a fibration of smooth complex projective varieties, of dimensions $n$ and $m$ respectively, and let $y\in Y$ be a general point. Suppose that $\cI(X_y,0;\vvert{-K_X}_y)$ is trivial. Then
    \[
    \kappa(X,-K_X)\leq \kappa(X_y,-K_{X_y})+\kappa(Y,-K_Y).
    \]
    Furthermore, if $\bB(-K_X)$ does not dominate $Y$ and $\kappa(Y,-K_Y)=0$, then equality holds.
\end{theorem}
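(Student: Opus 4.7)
The plan is to adapt the strategy of \cite{Chang}, replacing the stable base locus hypothesis by the asymptotic multiplier ideal condition. Using $K_X = K_{X/Y} + f^*K_Y$ and the projection formula, set $\cG_m := f_*\cO_X(-mK_{X/Y})$, so that
\[
h^0(X,-mK_X) = h^0\bigl(Y,\, \cG_m \otimes \cO_Y(-mK_Y)\bigr).
\]
By flat base change, $\cG_m$ has generic rank $h^0(X_y, -mK_{X_y})$ for general $y \in Y$. Thus the task reduces to controlling the global sections of the twisted pushforward $\cG_m \otimes \cO_Y(-mK_Y)$.

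The heart of the argument is to derive, from the triviality of $\cI(X_y, 0; \vvert{-K_X}_y)$, an asymptotic bound of the form
\[
h^0\bigl(Y,\, \cG_m \otimes \cO_Y(-mK_Y)\bigr) \leq C\cdot m^{\kappa(X_y,-K_{X_y}) + \kappa(Y,-K_Y)}
\]
for $m$ sufficiently divisible. The expected mechanism is that the multiplier ideal hypothesis forces $\cG_m$ to sit generically inside $\cO_Y(A)^{\oplus r_m}$ for some fixed divisor $A$ on $Y$ (independent of $m$) and $r_m = h^0(X_y, -mK_{X_y})$, so that the global sections of the twist are bounded by (generic rank)~$\times$~growth of $h^0(Y,-mK_Y)$ up to lower order corrections from $A$. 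The multiplier ideal hypothesis plays the role Chang's base-locus condition played, but at the asymptotic level: Chang used $\bB(-K_X)\not\supset X_y$ directly to restrict sections to the fibre, whereas here the analogous control must be produced through restriction-type theorems for asymptotic multiplier ideals (in the spirit of Ohsawa--Takegoshi and Takayama) applied to the $\bQ$-linear series $|-K_X|_{\bQ}$.

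For the equality case, assume in addition that $\bB(-K_X)$ does not dominate $Y$ and $\kappa(Y,-K_Y) = 0$. The bound above then specialises to $\kappa(X,-K_X) \leq \kappa(X_y,-K_{X_y})$. For the matching lower bound, the stronger base locus condition ensures that for sufficiently divisible $m$, sections of $-mK_{X_y}$ on the general fibre extend to sections of $-mK_X$ via an Ohsawa--Takegoshi-type extension, producing $\kappa(X,-K_X) \geq \kappa(X_y,-K_{X_y})$ and hence equality.

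The main obstacle will be the uniform bound in the second paragraph: translating the asymptotic multiplier ideal triviality on $X_y$ into a concrete control of the twisted pushforward $\cG_m \otimes \cO_Y(-mK_Y)$. Unlike the canonical case, where Viehweg-style weak positivity of direct images delivers such bounds routinely, the pushforward of anti-canonical sheaves enjoys no analogous positivity, and all the required control must come from the mild singularities of $|-K_X|_{\bQ}$ on the general fibre. A careful combination of restriction theorems for asymptotic multiplier ideals with the structure of the relative anti-canonical linear series should provide the key technical input.
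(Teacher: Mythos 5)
There is a genuine gap, and it sits exactly where you flag it: the ``expected mechanism'' in your second paragraph is not a known statement and is never derived, so the proposal is a plan rather than a proof. Nothing in the hypothesis $\cI(X_y,0;\vvert{-K_X}_y)=\cO_{X_y}$ produces a divisor $A$ on $Y$, independent of $m$, with a generic embedding $\cG_m\hookrightarrow\cO_Y(A)^{\oplus r_m}$; there is no analogue of Viehweg weak positivity (or of an upper-bound counterpart) for $f_*\cO_X(-mK_{X/Y})$, as you yourself note. Worse, even if such an embedding existed, the deduction would not close: it would give $h^0(X,-mK_X)\leq r_m\cdot h^0(Y,A-mK_Y)$, and the growth of $h^0(Y,A-mK_Y)$ is \emph{not} controlled by $\kappa(Y,-K_Y)$ --- twisted linear series $|A+mD|$ can grow like $m^d$ with $d>\kappa(Y,D)$ (e.g.\ $D$ nef with $D^2=0$ and $\kappa(D)=0$ on a ruled surface over an elliptic curve, $A$ ample), so the exponent you would obtain is not $\kappa(X_y,-K_{X_y})+\kappa(Y,-K_Y)$. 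This discrepancy between Iitaka dimension and growth of twisted sections is precisely why a direct pushforward count cannot work and why the actual argument has to be structured differently. Finally, for the equality case, Ohsawa--Takegoshi-type extension is an adjoint-bundle statement and does not apply to $-mK_X$; no extension of fibre sections is available (nor needed): under the assumption that $\bB(-K_X)$ does not dominate $Y$, the inequality $\kappa(X,-K_X)\geq\kappa(X_y,-K_{X_y})$ follows from the elementary comparison of Iitaka fibrations in \autoref{p-opposite_inequality}.

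For comparison, the paper's proof uses the multiplier ideal hypothesis in a completely different way: it picks a general $\Lambda\in|-K_X|_{\bQ}$ so that $f\colon(X,\Lambda)\to Y$ is an lc-trivial fibration, and Ambro's canonical bundle formula gives $K_X+\Lambda\sim_{\bQ}f^*(K_Y+M_Y+B_Y)\sim_{\bQ}0$ with $M_Y+B_Y$ $\bQ$-effective, hence $-K_Y$ is $\bQ$-effective (\autoref{t-chang3.8ED}); a perturbation of this (\autoref{p-chang4.2}) then feeds an injectivity theorem (\autoref{t-chang4.3inj}): when $\kappa(Y,-K_Y)=0$ the restriction $H^0(X,-nK_X)\to H^0(X_y,-nK_{X_y})$ is injective, proved by blowing up along a fibre and deriving a contradiction with the uniqueness of the member of a suitable $\bQ$-linear series on $Y$. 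The general case is then reduced to this one by passing to the Iitaka fibration of $-K_Y$, resolving indeterminacies, and applying Easy Additivity (\autoref{l-easyadditivity}) to the composed fibration, with careful bookkeeping because $\kappa(\cdot,-K)$ is not a birational invariant. If you want to salvage your approach, the place to start is to replace the uniform-$A$ bound by this canonical-bundle-formula descent; the pushforward bookkeeping alone cannot see the difference between $\kappa(Y,-K_Y)$ and the growth of the twisted series on $Y$.
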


Both \autoref{c-iitaka} and \autoref{t-chang_mt} are false in positive characteristic (see \cite{CEKZ,Benozzo}). Roughly speaking, what causes problems is the failure of generic smoothness: the general fibre $X_y$ can be Gorenstein but non-normal, or even non-reduced. On the other hand, some positive results have been obtained for contractions whose general fibres have good arithmetic properties. For example, we know that $C_{n,m}$ holds in positive characteristic, when $Y$ is of general type and the pluricanonical forms on $X_y$ are Frobenius-stable (\cite[Theorem 1.4]{ejiri2017weak}, \cite[Theorem 1.1]{Pat}). Similarly, it was proven by the first author that, when a general fibre is smooth, both $C_{n,1}^-$ and $C_{3,m}^-$ hold (\cite[Theorems 2.1 and 4.1]{Benozzo}). The general philosophy one extrapolates from this is that when $f\colon X\to Y$ is a positive characteristic fibration whose (smooth) general fibre has sufficiently well--behaved Frobenius, then we can expect behaviour analogous to the complex case. On the other hand, it is expected that good Frobenius behaviour is a (very) general condition over $\Spec(\bZ)$. More precisely, if one starts with $X$ a smooth projective variety in characteristic zero, then the Weak Ordinarity conjecture predicts that for infinitely many $p$ the action of Frobenius on $H^{\dim( X)}(X_p,\cO_{X_p})$ is bijective (\cite{MS,BST}). Similarly, when $(X,D)$ is a local pair, in characteristic zero its singularities are measured by the multiplier ideal sheaf $\cI(X,D)$, while in characteristic $p>0$ its $F$-singularities are measured by the test ideal $\tau(X,D)$. In this setup, it is known that $\cI(X,D)$ reduces modulo $p$ to $\tau(X_p,D_p)$, for all $p\gg 0$ (see \cite{Hara,Smith, Tak}). The asymptotic multiplier ideal sheaf $\cI(X_y,0;\vvert{-K_X}_y)$ in \autoref{t-main_intro} measures the singularities of a general element of the anticanonical $\bQ$-linear series $|-K_X|_{\bQ}$ on a general fibre. In characteristic $p>0$ one can define an \textit{asymptotic test ideal} $\tau(X,B;\vvert{-K_X}_y)$ measuring the $F$-singularities of a general element of $|-K_X|_{\bQ}$ on $X_y$ (see \cite{Mus}). The subspace $Q^0_{y}(X,0)\subseteq H^0(X_y,\cO_{X_y})$ appearing in \autoref{t-main_intro} is a global version of this asymptotic test ideal. Inspired by the aforementioned results and conjecture, we speculate that a similar relation holds between the conditions $\cI(X_y,0;\vvert{-K_X}_y)=\cO_{X_y}$ in \autoref{t-chang_mt} and $Q^0_y(X,0)=k(y)$ in \autoref{t-main_intro}.

\begin{conjecture}[See \autoref{c-ourconj1}]
   Let $f\colon X\to Y$ be a fibration of smooth complex projective varieties. Assume $\cI(X_y,0;\vvert{-K_X}_y)$ is trivial for $y \in Y$ general and $\bB(-K_X)$ does not dominate $Y$. Then there are infinitely many primes $p$ such that
   $Q^0_{y_{p}}(X_p,0)=H^0(X_p,\cO_{X_p})$.
\end{conjecture}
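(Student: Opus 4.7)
The plan is to bridge the characteristic-zero triviality of $\cI(X_y,0;\vvert{-K_X}_y)$ to the characteristic-$p$ triviality of $Q^0_{y_p}(X_p,0)$ by combining two pillars of $F$-singularity theory: the reduction-modulo-$p$ correspondence between asymptotic multiplier and asymptotic test ideals, and a Frobenius-ordinarity input in the spirit of the Weak Ordinarity conjecture.

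First I would spread the data $(X,Y,f)$ over a finitely generated $\bZ$-subalgebra $A\subset\bC$, producing a flat family $\cX_A\to\cY_A$ whose geometric generic fibre recovers the original setup. After shrinking $\Spec A$, for each closed point $\frp\in\Spec A$ one obtains reductions $(X_p,Y_p,f_p)$ of smooth projective varieties in characteristic $p:=\charac k(\frp)$, with $f_p$ generically smooth and $\bB(-K_{X_p})$ not dominating $Y_p$, as both properties are preserved under general reduction. By the results of Hara and Hara--Takagi cited in the introduction, for $p\gg 0$ the asymptotic test ideal $\tau(X_{y_p},0;\vvert{-K_{X_p}}_{y_p})$ coincides with the mod-$p$ reduction of $\cI(X_y,0;\vvert{-K_X}_y)$, and is therefore trivial on a dense open subset of $\Spec A$.

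The crucial and most delicate step is upgrading this fibrewise triviality of the test ideal to the identity $Q^0_{y_p}(X_p,0)=H^0(X_p,\cO_{X_p})$. Unpacking the definition of $Q^0$ as the stable image of a descending chain of iterated Frobenius trace maps twisted by sections of $|-\wb{m}K_{X_p}|$ for $\wb{m}$ coprime to $p$, the triviality of the asymptotic test ideal guarantees surjectivity of these trace maps after sufficient twisting. Descending surjectivity to untwisted global sections on the fibre then requires Frobenius to act bijectively on $H^{\dim X_{y_p}}(X_{y_p},\cO_{X_{y_p}})$, which is precisely the assertion of Weak Ordinarity for $X_{y_p}$. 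The non-dominance hypothesis on $\bB(-K_X)$ is used here to guarantee that after reduction one can find twisting sections which restrict nontrivially to the general fibre, so that the Frobenius-trace argument is nonvacuous.

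The principal obstacle is that this final step essentially reduces the conjecture to Weak Ordinarity, itself a deep open problem. Any unconditional proof along these lines would require either progress on Weak Ordinarity or a workaround exploiting the specific positivity of $-K_X$: for instance, one might hope that Kawamata--Viehweg-type vanishing on the total space, combined with compatibility of $Q^0$ under restriction to general fibres, allows a direct comparison of the global $Q^0$ space over $X_p$ with the mod-$p$ reduction of the characteristic-zero global asymptotic multiplier ideal, thereby bypassing the need for pointwise ordinarity of the fibre.
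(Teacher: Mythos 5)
The statement you are attempting is \autoref{c-ourconj1}: in the paper it is posed as an \emph{open conjecture}, not a theorem, and no proof is given. The paper only offers evidence: a weakened version with $S^0_{y_{\wb{\frp}}}$ in place of $Q^0_{y_{\wb{\frp}}}$, conditional on \autoref{c-logCYmodpGFS}; the implication towards \autoref{c-ourconj2} via $K$-global $F$-regularity; and verifications in special cases (isotrivial fibrations, the Legendre family). Your proposal should therefore be judged as an attack on an open problem, and as written it does not close it.

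Two concrete gaps. First, as you yourself concede, the key step is conditional on Weak Ordinarity (or an unspecified workaround), so the argument is not a proof. Second, and more structurally, the passage from triviality of the asymptotic test ideal $\tau(X_{y_p},0;\vvert{-K_{X_p}}_{y_p})$ to $Q^0_{y_p}(X_p,0)=k(y_p)$ conflates a local condition on the fibre with a global condition on the total space. By \autoref{c-conditions_F_complements_KGFR}, the equality $Q^0_{y}(X_p,0)=k(y)$ amounts to producing, for \emph{every} $\Gamma\in|-K_{X_p}|_{\bZ_{(p)}}$ and suitable small $\epsilon$, an $F$-complement for $(X_p/Y_p,\epsilon\Gamma)$, i.e.\ a section of $(1-p^e)(K_{X_p}+\epsilon\Gamma)$ defined on all of $X_p$ whose restriction splits the Frobenius trace on $X_{y_p}$. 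Triviality of the (restricted asymptotic) test ideal controls stalks on the fibre; it says nothing about whether the relevant splitting sections lie in the image of $H^0(X_p,\cdot)\to H^0(X_{y_p},\cdot)$, which is exactly what enters the definition of $Q^0_y$ via the maps $T^e_{B+D/(p^e-1),y}$. This global-to-fibre lifting is where the paper locates the difficulty: even granting \autoref{c-logCYmodpGFS} one only obtains the $S^0$-statement (see the remark following that conjecture), and upgrading $S^0$ to $Q^0$ is reduced in \autoref{r-equiv_conj1_conj2} to the open \autoref{q-finite_part}, answered there only in the ``compatible'' case of \autoref{dp-CBF_split_finite}. So your outline correctly senses that ordinarity-type inputs are relevant, but it neither supplies the missing global splitting argument nor isolates the finite-morphism descent problem that the paper identifies as the genuine obstruction.
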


We discuss this and other related conjectures in \autoref{s-comparison}, and provide some evidence towards it.

\subsection{Proof idea}\label{s-proof_idea}

We outline the proof of our main result. We closely follow the third author's original argument, which in turn was inspired by the results in \cite{EG}. First of all, note that if one drops the assumption on the multiplier ideal, then \autoref{t-chang_mt} fails even for smooth morphisms: $\bQ$-effectivity of $-K_X$ does not descend to $\bQ$-effectivity of $-K_Y$.

\begin{example}[{\cite[Example 1.7]{Chang}}]\label{e-horizontalB(-K_X)}
    Let $Y$ be a curve of genus $\geq 2$, let $D$ be a divisor of degree $>2g-2$, and consider the ruled surface $X\coloneqq \bP(\cO_Y\oplus\cO_Y(-K_Y-D))\to Y$. Then $-K_X$ is big, but $\kappa(Y,-K_Y)=-\infty$. One can check that in these examples $|-K_X|_{\bQ}$ has fixed part with coefficient $\geq 1$ dominating $Y$, whence $\cI(X_y,0;\vvert{-K_X}_y)\subsetneq\cO_{X_y}$.
\end{example}

To prove \autoref{t-chang_mt} one argues as follows.

\begin{itemize}
    \item[\textbf{(1)}] \textbf{Descent of $\bQ$-effectivity for $-K_X$.}
    
    \noindent
    If $\cI(X_y,0;\vvert{-K_X}_y)=\cO_{X_y}$ then we can pick $\Lambda\in |-K_X|_{\bQ}$ such that $f\colon (X,\Lambda)\to Y$ is a \textit{lc-trivial fibration} in the sense of \cite{Amb_MBD}. The canonical bundle formula in \textit{loc.\ cit.\ } yields
    \[
    K_X+\Lambda\sim_{\bQ}f^*(K_Y+M_Y+B_Y)\sim_{\bQ}0.
    \]
    Since $M_Y+B_Y$ is $\bQ$-effective, then so is $-K_Y$.

    \item[\textbf{(2)}] \textbf{Precise descent of $\bQ$-effectivity for $-K_X$.}

    \noindent
    Combine Step (1) with a perturbation argument to show that if $E\geq 0$ is a $\bQ$-divisor such that $-K_X-f^*E$ is $\bQ$-effective, then so is $-K_Y-\epsilon E$ for all $\epsilon\geq 0$ small enough.

    \item[\textbf{(3)}] \textbf{Injectivity theorem.} 

    \noindent
    Use the result in Step (2) to show that, when $\kappa(Y,-K_Y)=0$, then the natural restriction map
    \[
    \bigoplus_{l\geq 0}H^0(X,-lK_X)\to \bigoplus_{l\geq 0}H^0(X_y,-lK_{X_y})
    \]
    is injective. In particular, \autoref{t-chang_mt} holds in this case.

    \item[\textbf{(4)}] \textbf{Reduction to a Calabi--Yau base.} Suppose for simplicity one has $-K_Y$ semiample. Then we have a Cartesian diagram
    \begin{equation}\label{e-diagram_intro}
        \begin{tikzcd}
            X\arrow[d,"f"] & X_z\arrow[d,"f_z"]\arrow[l] & X_y\arrow[d]\arrow[l]\\
            Y\arrow[d,"g"] & Y_z\arrow[d]\arrow[l] & y\arrow[l]\\
            Z & z,\arrow[l] & 
        \end{tikzcd}
    \end{equation}
    where $Y\to Z$ denotes the Iitaka fibration of $-K_Y$ and $y$ and $z$ are general points. In particular, we have that $\kappa(Y,-K_Y)=\dim(Z)$ and, since $K_{Y_z}\sim_{\bQ}0$, Step (3) yields $\kappa(X_z,-K_{X_z})\leq \kappa(X_y,-K_{X_y})$. An application of Easy Additivity (\autoref{l-easyadditivity}) to the composition $g\circ f$ concludes the proof.
\end{itemize}
    
In general the Iitaka fibration of $-K_Y$ is just a rational map, hence we will have a diagram as the one above only after resolving indeterminacies. Some careful bookkeeping is then necessary, as the Iitaka dimension $\kappa(X,-K_X)$ is \textit{not} a birational invariant of smooth varieties, unlike the Kodaira dimension $\kappa(X, K_X)$; for example, a minimal rational elliptic surface $X$ has $\kappa(X,-K_X)=1$ while being birational to $\bP^2$.

The same strategy also yields the proof of \autoref{t-main_intro}. We remark that, in general, there is no canonical bundle formula \textit{à la }Ambro in characteristic $p>0$ (\cite[Example 3.5]{Wit_CBF}). When the generic fibre is \textit{globally $F$-split}\footnote{See \autoref{d-GFS}.} however, a result of Das and Schwede (\cite{DS}), provides a positive characteristic replacement which turns out to be enough for carrying out Steps (1) to (3) on fibrations satisfying condition (a) in \autoref{t-main_intro}. Another obstacle is the lack of generic smoothness on $g$: even when the Iitaka fibration of $-K_Y$ is a morphism, we have no way of controlling the singularities of a general fibre. We work around this issue by base changing the diagram \autoref{e-diagram_intro} via a power of the Frobenius on $Z$ and taking the normalised reduction of the resulting spaces. This process yields new fibrations
\[
X_e\xrightarrow{f_e}Y_e\xrightarrow{g_e}Z,
\]
universally homeomorphic to the ones we started with, but now all general fibres will be at least normal. Positive characteristic foliation theory tells us how the canonical divisor transforms under this kind of operations, hence we can assume $g$ has normal fibres and conclude by arguing as in Step (4) above.

\subsection*{Acknowledgements}
We would like to thank our mentors Paolo Cascini, Jungkai Chen, and Mihnea Popa for their guidance and helpful suggestions throughout the project. We thank Yoshinori Gongyo and Hiromu Tanaka for answering our questions and Karl Schwede for interesting discussions and for pointing out a gap in a previous version of the paper. We thank the anonymous referee for the very detailed comments.
The first author was supported by the Engineering and Physical Sciences Research Council [EP/S021590/1], the EPSRC Centre for Doctoral Training in Geometry and Number Theory (The London School of Geometry and Number Theory), University College London. The second author is supported by the National Center for Theoretical Sciences and a grant from the Ministry of Science and Technology, grant number MOST-110-2123-M-002-005. The third author is supported by the PhD student’s scholarship of National Taiwan University, and by the scholarship of National Science and Technology Council for PhD students to study abroad, which he used to visit the University of Tokyo. We would also like to thank the National Centre for Theoretical Sciences Mathematics division and the Centre of Mathematical Sciences and Applications for their support that allowed the first two authors to meet in person.

\section{Preliminaries}%

\subsection{Conventions and notation}

All of our schemes will be separated and of finite type over a field.
When the field is of positive characteristic, it is assumed to be $F$-finite.

\begin{itemize}
    \item If $\bk$ is a field, a $\bk$\textit{-variety} is an integral $\bk$-scheme of finite type. When $\bk$ is clear from the context we will refer to $X$ just as a variety. If $X$ is non-normal, we denote by $\nu\colon X^\nu\to X$ its normalisation morphism.
    \item A $\bK$\textit{-divisor} $D$ on a scheme $X$ is a formal finite linear combination $D=\sum_ia_iD_i$, where $D_i$ are irreducible closed codimension-one subsets of $X$ and $a_i\in\bK$. We will take $\bK\in \lbrace \bZ,\bZ_{(p)},\bQ\rbrace$. If $\bK=\bZ$ we refer to $D$ as an \textit{integral divisor} or simply a \textit{divisor}. We define the \textit{positive part} (resp.\ \textit{negative part}) of $D$ to be $D^+\coloneqq\sum_{a_i>0}a_iD_i$ (resp.\ $D^-\coloneqq\sum_{a_i<0}(-a_i)D_i$).
    \item A $\bQ$-divisor $D$ is \textit{$\bQ$-Cartier} if $mD$ is Cartier for some $m>0$. If such $m$ is not divisible by $p$, then we say $D$ is a \textit{$\bZ_{(p)}$-Cartier} $\bZ_{(p)}$-divisor.
    \item If $D_1,D_2$ are $\bQ$-divisors on a scheme $X$ such that $mD_i$ is integral for $i=1,2$ and $mD_1\sim mD_2$ for some positive integer $m$, then we say $D_1$ and $D_2$ are \textit{$\bQ$-linearly equivalent} $\bQ$-divisors, denoted by $D_1\sim_{\bQ}D_2$. If $m$ is not divisible by $p$, we say $D_1$ and $D_2$ are \textit{$\bZ_{(p)}$-linearly equivalent} $\bZ_{(p)}$-divisors, denoted $D_1\sim_{\bZ_{(p)}}D_2$.
    \item Let $f\colon X\to Y$ be a morphism of schemes and let $D$ be a divisor on $X$: we write $D\sim_{Y}0$ if $D\sim f^*M$ where $M$ is a Cartier divisor on $Y$. If $D$ is a $\bQ$-divisor (resp.\ a $\bZ_{(p)}$-divisor) we write $D\sim_{\bQ,Y}0$ (resp.\ $D\sim_{\bZ_{(p)},Y}0$) if for some $m\geq 1$ (resp.\ for some $m \geq 1$ not divisible by $p$) we have that $mD$ is integral and $mD\sim_Y0$. In particular, we have that $D$ is Cartier (resp.\ $\bQ$ or $\bZ_{(p)}$-Cartier).
    \item Let $D$ be a $\bQ$-divisor on a scheme $X$: we say $D$ is \textit{effective} ($D\geq 0$) if all of its coefficients are non-negative. We say $D$ is $\bQ$\textit{-effective} (resp.\ $\bZ_{(p)}$\textit{-effective}) if, for some $m\geq 1$ (resp.\ for some $m\geq 1$ not divisible by $p$) $mD$ is integral and $H^0(X,mD)\neq 0$.
    \item Let $D$ and $D'$ be $\bQ$-divisors on a scheme $X$: we write $D \leq D'$ if $D'-D$ is an effective $\bQ$-divisor, and $D \leq_{\bQ} D'$ if $D'-D$ is a $\bQ$-effective $\bQ$-divisor.
    \item A \textit{sub-couple} $(X,B)$ over a field $\bk$ consists of a normal $\bk$-variety $X$ and a $\bQ$-divisor $B$. If $B\geq 0$ we say $(X,B)$ is a \textit{couple}. If $B$ is a $\bZ_{(p)}$-divisor, we call $(X,B)$ a \textit{$\bZ_{(p)}$-(sub)-couple}. A \textit{sub-pair} is a sub-couple $(X,B)$ such that $K_X+B$ is $\bQ$-Cartier. If $B\geq 0$ we say $(X,B)$ is a \textit{pair}. If $K_X+B$ is $\bZ_{(p)}$-Cartier, we call $(X,B)$ a \textit{$\bZ_{(p)}$-(sub)-pair}.
    \item We refer the reader to \cite{Kol_SMMP} for the definitions of the various classes of singularities appearing in the Minimal Model Program.
    \item Let $\bk$ be an algebraically closed field and let $f\colon X\to Y$ be a morphism of $\bk$-varieties. A \textit{general} (resp.\ \textit{very general}) \textit{fibre of f} is $X_y\coloneqq f^{-1}(y)$ where $y$ is a $\bk$-point belonging to a dense open subset of $Y$ (resp.\ to a countable intersection of dense open subsets of $Y$). If $\eta$ is the generic point of $Y$, we denote by $\wb{\eta}$ its geometric generic point and by $X_{\eta}$ (resp.\ $X_{\wb{\eta}}$) the generic (resp.\ geometric generic) fibre of $f$.
    \item A projective morphism of varieties $f\colon X\to Y$ such that $f_*\cO_X=\cO_Y$ is called a \textit{fibration} (or a \textit{contraction}). Note that $f$ has connected geometric fibres, and $Y$ is normal whenever $X$ is.
    \item Let $f\colon X \rightarrow Y$ be a fibration of normal varieties, let $D$ be a $\bQ$-divisor on $X$, and let $y,\eta\in Y$ denote a general point and the generic point, respectively. We will denote by $D_{\eta}$ the base change of $D$ to the generic fibre. If furthermore $X_y$ is normal, then $D$ is $\bQ$-Cartier along any codimension $1$ point of $X_y$, hence the restricted divisor $D_y\coloneqq D|_{X_y}$ is well-defined.
\end{itemize}

\begin{remark}\label{r-pullbackWeilED}
Let $f\colon X\to Y$ be an equidimensional morphism of normal varieties, and let $D$ be a $\bQ$-divisor on $Y$. Then we can define $f^*D$ even when $D$ is not $\bQ$-Cartier. Let $Y^0\subseteq Y$ denote the regular locus and let $f^0\colon X^0\coloneqq f^{-1}(Y^0)\to Y^0$ be the induced morphism. As $\codim_X(X\setminus X^0)\geq 2$ we can define $f^*D$ as the closure of $f^{0*}(D|_{Y^0})$ inside $X$.
\end{remark}

We recall a technical result that is used in the proofs of the main theorems.

\begin{lemma}[{Flattening lemma, \cite[Theorem 5.2.2]{flattening}}] \label{l-flattening}
Let $f\colon X \to Y$ be a projective dominant morphism of normal varieties over a field $\bk$, and let $\eta$ be the generic point of $Y$. Then, there exists a projective birational morphism $Y' \to Y$ such that, if $\widetilde{X}\subseteq X\times_Y Y'$ is the Zariski closure of the generic fibre $X_{\eta} \times_{Y'} Y$, the induced morphism $\Tilde{f}\colon \widetilde{X} \to Y'$ is flat.
In particular, if $X'\coloneqq (\widetilde{X})^{\nu}$, the morphism $f'\colon X' \to Y'$ is equidimensional.
\end{lemma}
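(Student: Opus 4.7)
The plan is to reduce the claim to the Raynaud--Gruson flattening theorem. The first step will be to apply generic flatness, obtaining a dense open subset $U \subseteq Y$ over which $f$ is already flat; in particular, the generic point $\eta$ lies in $U$. Feeding this into Raynaud--Gruson then produces a $U$-admissible blowup $\pi\colon Y' \to Y$, which is automatically projective and birational, such that the strict transform of $X$ inside $X \times_Y Y'$ becomes flat over $Y'$. Because $\eta \in U$, this strict transform coincides with the Zariski closure $\widetilde{X}$ of $X_\eta$ inside $X \times_Y Y'$ described in the statement, so $\widetilde{f}\colon \widetilde{X} \to Y'$ is the required flat, projective morphism.

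From flatness I then expect equidimensionality of $\widetilde{f}$ to be immediate. Since $X$ is integral and $f$ is dominant, the generic fibre $X_\eta$ is integral (its coordinate ring is a localisation of a domain), and hence so is its closure $\widetilde{X}$ in $X\times_Y Y'$. For a flat projective morphism between integral noetherian schemes, upper semi-continuity of fibre dimension bounds each fibre from above by $\dim \widetilde{X} - \dim Y'$, while flatness bounds it from below by the same quantity, forcing equality on every fibre.

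For the last sentence of the statement, I will use that the normalisation $\nu\colon X' \to \widetilde{X}$ is finite (varieties over a field are excellent, so normalisation is a finite morphism) and surjective with zero-dimensional fibres. The composition $f' = \widetilde{f}\circ \nu$ therefore has the same fibre dimensions as $\widetilde{f}$ and is equidimensional as well, completing the argument.

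The only nontrivial input is the Raynaud--Gruson flattening theorem itself, which is used as a black box; once it is invoked, the remainder of the argument is a formal verification. Conceptually, the main subtlety I would need to keep in mind is the compatibility between the strict transform as defined by Raynaud--Gruson (the closure of $f^{-1}(U)\times_Y Y'$ in $X\times_Y Y'$) and the description in the statement as the closure of $X_\eta$, but these agree as soon as $\eta\in U$ and $X$ is irreducible.
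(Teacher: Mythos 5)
Your proposal is correct and follows the same route as the paper, which proves nothing beyond citing Raynaud--Gruson (Theorem 5.2.2 of the flattening paper): you simply unpack that citation, check that the strict transform agrees with the closure of $X_\eta$ because the blowup is an isomorphism over a flattening open $U\ni\eta$ and $X$ is integral, and verify the equidimensionality consequence, which the paper leaves implicit. One small slip: in your equidimensionality step the roles of the two inputs are swapped --- upper semicontinuity of fibre dimension gives the \emph{lower} bound $\dim\widetilde{X}-\dim Y'$ on every component of every fibre (special fibres can only jump up), while flatness, via $\dim \mathcal{O}_{\widetilde{X},x}=\dim \mathcal{O}_{Y',\widetilde{f}(x)}+\dim \mathcal{O}_{\widetilde{X}_{\widetilde{f}(x)},x}$, supplies the matching upper bound; with the labels corrected the argument, including the passage to the finite normalisation, is complete.
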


\subsection{Weil divisors and divisorial sheaves}%

In this section $X$ will denote a normal variety over a field $\bk$. A coherent sheaf $\cF$ on $X$ is called \textit{reflexive} if the natural map $\cF\to \cF^{**}$ is an isomorphism, where $\cF^{**}$ denotes the double dual. We say that a coherent sheaf $\cL$ on $X$ is \textit{divisorial} if it is of rank one and reflexive. It is well known that Weil divisors on $X$ correspond bijectively to divisorial sheaves through the assignment $L\mapsto \cO_X(L)$. The set of divisorial sheaves forms a group with the reflexified tensor product $\cL_1 [\otimes]\cL_2\coloneqq (\cL_1\otimes\cL_2)^{**}$. In particular, if $\cL_i=\cO_X(L_i)$ for some divisor $L_i$, we have $\cL_1 [\otimes]\cL_2=\cO_X(L_1+L_2)$, and $\cL_1\simeq\cL_2$ if and only if $L_1\sim L_2$. Throughout the rest of the paper we will often confuse between a divisorial sheaf and its associated Weil divisor; for example we will write $H^i(X,L)$ rather than $H^i(X,\cO_X(L))$.
Similarly, we denote by $|L|$ the associated linear series. More generally, when $V$ is a subspace of $H^0(X,L)$, we denote by $|V|\subseteq |L|$ the corresponding linear series. This notation extends naturally to $\bQ$-divisors: given a $\bQ$-divisor $L$ we denote by $|L|_{\bQ}$ the set of all effective $\bQ$-divisors $L'$ such that $L'\sim_{\bQ}L$, and refer to it as the $\bQ$\textit{-linear series} of $L$. The $\bZ_{(p)}$-linear series $|L|_{\bZ_{(p)}}$ is similarly defined.
Let $f\colon X\to Y$ be a contraction of normal varieties with normal general fibre $X_y$, let $L$ be a divisor on $X$ and let $V\subseteq H^0(X,L)$ be a subspace. We will denote by $|V|_y\subseteq |L_y|$ the linear series generated by the image of $V$ under the natural restriction map $H^0(X,L)\to H^0(X_y,L_y)$. The \textit{base locus of $L$} will be denoted by $\Bs(L)$ and the \textit{stable base locus} by $\bB(L)\coloneqq\bigcap_{m\geq 0}\Bs(mL)$. The last notion naturally extends to $\bQ$-divisors as well. 

\begin{remark}\label{r-reflexivesonnormal}
A reflexive sheaf $\cF$ on a normal variety is determined in codimension one. If $U\subseteq X$ is an open subset with $\codim_X(X\setminus U)\geq 2$ and $\iota$ denotes the natural inclusion, then the natural map $\cF\to \iota_*\iota^*\cF$ is an isomorphism. Recall also that, if $\cG$ is any coherent sheaf on $X$, then $\cHom_X(\cG,\cF)$ is again reflexive by \cite[Tag 0AV3]{stacks}, and so are the sheaves $\cO_X(D)$ for any Weil divisor $D$. In particular, as $X$ is $R1$, we have that $\cO_X(D)$ always restricts to a line bundle on $X\setminus\sing (X)$, thus we have isomorphisms
\[
\cHom_X(\cG(-D),\cF)\cong\cHom_X(\cG,\cF(D)).
\]
This fact will be tacitly used when applying Grothendieck duality for a finite morphism.
\end{remark}

\subsection{Iitaka dimension and Iitaka fibration} \label{ss-Iitaka_dimension}

Let $X$ be a normal projective $\bk$-variety. If $L$ is a Weil divisor on $X$ and $V$ is a subspace of $H^0(X,L)$, we have an induced rational map $\phi_{|V|}\colon X\dashrightarrow \bP V^*$. Note that, since $\cO_X(L)$ restricts to a line bundle on the regular locus $X_{\textup{reg}}\subseteq X$, the map $\phi_{|V|}|_{X_{\textup{reg}}}$ is given in the usual way (see \cite[Theorem II.7.1]{Har_AG}). When $V=H^0(X,L)$ we write $\phi_{|L|}$ for $\phi_{|V|}$.

\begin{definition}
Let $L$ be a divisor. The \textit{Iitaka dimension of $L$} is defined as
\begin{equation}
    \begin{split}
            \kappa(X, L) \coloneqq
                \begin{cases}
                    -\infty \quad  &\text{if} \; |L|_{\bQ}= \emptyset;\\
                    \max_{m\geq 1}\dim(\phi_{|mL|}(X)) &\text{if} \; |L|_{\bQ}\neq \emptyset. 
                \end{cases}
    \end{split}
\end{equation} 
\end{definition}

\begin{theorem}[{\cite[Theorem 2.1.3.3]{Laz1}}]\label{t-IF}
    Let $X$ be a normal projective variety and let $L$ be a Cartier divisor such that $|L|_{\bQ}\neq \emptyset$. Then there exists a fibration of normal projective varieties $\phi_\infty\colon X_\infty\to Y_\infty$ fitting in the following commutative diagram for all sufficiently divisible $m$
    \begin{center}
        \begin{tikzcd}
            X_\infty\arrow[r,"u"]\arrow[d,"\phi_\infty"] & X\arrow[d,"\phi_{|mL|}", dashed]\\
            Y_\infty\arrow[r,"v", dashed] & Y_m,
        \end{tikzcd}
    \end{center}
    where $u, v$ are birational, $Y_m$ is the closure of the image of $\phi_{|mL|}$, and $\dim H^0(X_{\infty,y},(u^*(mL))|_{X_{\infty,y}})=1$ for very general $y\in Y_\infty$. Moreover, $\dim(Y_\infty)=\kappa(X,L)$.
\end{theorem}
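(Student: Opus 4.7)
The plan is to follow the classical Iitaka construction, building $X_\infty$ as a birational model on which the rational maps $\phi_{|mL|}$ all become morphisms onto a common base. Set $\kappa\coloneqq\kappa(X,L)\geq 0$. By the definition of Iitaka dimension there exists $m_0\geq 1$ with $\dim\phi_{|m_0 L|}(X)=\kappa$; after replacing $m_0$ by a sufficiently divisible multiple, I may assume that $\dim\phi_{|m_0 k L|}(X)=\kappa$ for every $k\geq 1$.

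First I would fix a birational morphism $u\colon X_\infty\to X$ from a smooth projective variety resolving the indeterminacy of $\phi_{|m_0 L|}$, so that $u^*|m_0 L|=|M|+F$ with $|M|$ base-point-free and $F$ the fixed part. Let $\psi\colon X_\infty\to Y_{m_0}$ be the induced morphism to the closure of the image, and let $\phi_\infty\colon X_\infty\to Y_\infty$ together with $v\colon Y_\infty\to Y_{m_0}$ be its Stein factorisation. By construction $\phi_\infty$ is a fibration of normal projective varieties with $\dim(Y_\infty)=\dim(Y_{m_0})=\kappa$, which supplies the candidate model.

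Next I would verify birational stability as $m$ grows. For any $m=m_0 k$, the multiplication map $H^0(X,m_0L)^{\otimes k}\to H^0(X,mL)$ induces a dominant rational map $Y_m\dashrightarrow Y_{m_0}$, and hence a dominant rational map $v_m\colon Y_m\dashrightarrow Y_\infty$. Since $\dim(Y_m)=\dim(Y_\infty)=\kappa$, the map $v_m$ is birational. Replacing $X_\infty$ by a further smooth blow-up if necessary, which affects neither $Y_\infty$ nor the section ring of $u^*L$ by the projection formula for the birational morphism $u$, gives the claimed commutative square for all sufficiently divisible $m$.

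The main difficulty lies in checking the fibrewise vanishing $\kappa\bigl(X_{\infty,y},(u^*L)|_{X_{\infty,y}}\bigr)=0$ for very general $y\in Y_\infty$. The inequality $\geq 0$ follows from $\bQ$-effectivity of $u^*L$ together with the fact that a very general fibre is not contained in the fixed part of $u^*|mL|$ for any sufficiently divisible $m$. For the upper bound I would argue by contradiction: if the restriction had Iitaka dimension $\geq 1$, spreading out an Iitaka-type map on the generic fibre would produce, for $N$ sufficiently divisible, a factorisation of $\phi_{|NL|}\circ u$ through a variety strictly dominating $Y_\infty$, contradicting the birationality $Y_N\dashrightarrow Y_\infty$ established above. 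Combined with the identification $H^0(X_\infty,mu^*L)=H^0(X,mL)$ and hence $\kappa(X_\infty,u^*L)=\kappa$, this pins down the fibre Iitaka dimension. This fibrewise step is the technical heart of the argument, and is where the \emph{very general} (rather than just general) hypothesis enters, since one must simultaneously exclude the countably many loci in $Y_\infty$ along which the restriction of each $|mu^*L|$ fails to behave generically.
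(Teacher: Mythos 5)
The paper gives no proof of this statement: it is quoted verbatim from Lazarsfeld \cite[Theorem 2.1.3.3]{Laz1}, so the only meaningful comparison is with the classical argument there. Your overall scaffolding (resolve the indeterminacy of $\phi_{|m_0L|}$, Stein factorise to get $\phi_\infty\colon X_\infty\to Y_\infty$, then stabilise over larger $m$) is indeed the standard one, but the two places you flag as delicate are exactly where your argument has genuine gaps. First, a small one: from the dominant rational maps $Y_m\dashrightarrow Y_\infty$ and $\dim Y_m=\dim Y_\infty$ you only get \emph{generically finite}, not birational; the birational stabilisation of the $\phi_{|mL|}$ for large divisible $m$ is part of what has to be proved (in the classical treatment it is extracted from the Stein factorisation together with the fibrewise statement), so it cannot be waved through by a dimension count.

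The serious gap is the fibrewise statement itself. Your contradiction argument assumes that $\kappa(X_{\infty,y},u^*L|_{X_{\infty,y}})\geq 1$ on a very general fibre would ``spread out'' to a factorisation of $\phi_{|NL|}\circ u$ through a variety strictly dominating $Y_\infty$. But $\phi_{|NL|}$ only sees the image of the restriction map $H^0(X_\infty,Nu^*L)\to H^0(X_{\infty,y},Nu^*L|_{X_{\infty,y}})$, and there is no reason sections on a fibre extend to $X$; a priori the fibres could carry many sections invisible to the global maps, and ruling this out is precisely the content of the theorem. The standard mechanism is different: write $u^*|m_0L|=|M|+F$ with $|M|$ free, and observe that $M$ is the pullback under $\phi_\infty$ of an ample divisor $A$ on $Y_\infty$ (via the Stein factorisation), so that $m_0u^*L-\phi_\infty^*A$ is effective. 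One then applies additivity with an ample/big twist, as in \autoref{l-easyadditivity}(ii): for $k\gg 0$, $\kappa(X,L)=\kappa(X_\infty,(k+1)m_0u^*L)\geq\kappa\bigl(X_\infty,m_0u^*L+k\,\phi_\infty^*A\bigr)\geq\kappa\bigl(X_{\infty,y},u^*L|_{X_{\infty,y}}\bigr)+\dim Y_\infty$, which forces the fibre Iitaka dimension to be $\leq 0$, while $\bQ$-effectivity of $u^*L$ and genericity of $y$ give $\geq 0$. Without this twisting device (or some other argument that genuinely controls sections on the fibre that do not extend), the heart of the theorem remains unproven in your write-up.
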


\begin{remark}
In \cite[Theorem 2.1.3.3]{Laz1}, the author assumes that $\bk$ is a field of characteristic $0$ and in that case a general fibre of $\phi_{\infty}$ is always normal, therefore $\kappa(X_{\infty,y},(u^*L)|_{X_{\infty,y}})=0$ for very general $y\in Y_\infty$.
However, if $\bk$ is a field of positive characteristic, a general fibre of $\phi_{\infty}$ is not necessarily normal. Nonetheless, the same proof goes through in this setting as well and we can conclude that $\dim H^0(X_{\infty,y},(u^*(mL))|_{X_{\infty,y}})=1$ for every $m$ sufficiently divisible and very general $y\in Y_\infty$.
\end{remark}

\begin{remark}[Semiample contractions] \label{r-Iitaka_fibration}
    Let $X$ be a normal $\bk$-variety and $L$ be a semiample Cartier divisor; i.e.\ $\phi_{|mL|}$ is a morphism for some $m\geq 1$. Then its section ring $R(X,L)\coloneqq\bigoplus_{m\geq 0}H^0(X,mL)$ is a finitely generated $\bk$-algebra (\cite[Example 2.1.30]{Laz1}), and the Iitaka fibration of $L$ coincides with
    \[
    \phi_{|mL|}\colon X\to Y_{\infty}=\Proj R(X,L)
    \]
    for all $m\geq 1$ sufficiently divisible. Such $\phi_{|mL|}$ will also be called the \textit{semiample contraction of L}. It can be characterised as the unique contraction $g\colon X\to Y$ such that $L\sim_{\bQ}g^*A$ for some ample $\bQ$-divisor. More generally, for $m\geq 1$ sufficiently divisible, suppose that $|V|\subseteq |mL|$ is a base point free linear subseries so that, in particular, $L$ is semiample. If we consider the Stein factorization
    \[
    \phi_{|V|}\colon X\xrightarrow{g}Y\xrightarrow{h}Z
    \]
    then $g$ is the Iitaka fibration of $L$. We will make use of this fact repeatedly.
\end{remark}

The following is a straightforward extension of the projection formula.

\begin{lemma}\label{l-pfdivisorialED}
Let $f\colon X \to Y$ be an equidimensional projective morphism between normal varieties, let $\cL$ be a divisorial sheaf on $Y$ and $\cF$ a reflexive sheaf on $X$. Then we have a natural isomorphism
\[
f_*\cF [\otimes] \cL\xrightarrow{\simeq}f_*(\cF[\otimes] f^*\cL).
\]
\end{lemma}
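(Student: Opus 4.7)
The plan is to verify the claimed isomorphism on a suitable big open subset of $Y$ and then extend it to all of $Y$ using reflexivity. The key observation is that both the source and the target of the natural map are determined by their restriction to a big open $Y^0\subseteq Y$.

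\medskip

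\noindent\textbf{Setting up a big open.} Since $\cL$ is divisorial on the normal variety $Y$, I would choose a big open subset $Y^0\subseteq Y$ over which $\cL$ is invertible (e.g., the smooth locus, or more precisely the locus where $\cL$ is Cartier). Setting $X^0\coloneqq f^{-1}(Y^0)$, the equidimensionality of $f$ gives $\dim f^{-1}(Z)=\dim Z+d\leq \dim X-2$ for any subset $Z\subseteq Y$ of codimension $\geq 2$, where $d$ is the fibre dimension; hence $X^0\subseteq X$ is also a big open.

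\medskip

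\noindent\textbf{Projection formula on the big open.} Writing $f'\colon X^0\to Y^0$ for the restriction of $f$, I would first invoke the classical projection formula for $f'$, which is applicable because $\cL|_{Y^0}$ is invertible, to obtain
\[
(f_*\cF)|_{Y^0}\otimes \cL|_{Y^0}\;\simeq\; f'_*\!\left(\cF|_{X^0}\otimes (f')^*\cL|_{Y^0}\right).
\]
Since tensoring the reflexive sheaf $\cF|_{X^0}$ with the line bundle $(f')^*\cL|_{Y^0}$ already yields a reflexive sheaf, the ordinary tensor product agrees with the reflexified one on $X^0$, giving
\[
(f_*\cF\otimes \cL)|_{Y^0}\;\simeq\; \bigl(f_*(\cF[\otimes]f^*\cL)\bigr)|_{Y^0}.
\]

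\medskip

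\noindent\textbf{Extending to $Y$.} Next I would argue that both sheaves are canonically the $j_*$-pushforward of their restrictions to $Y^0$, where $j\colon Y^0\hookrightarrow Y$. For the left-hand side this is immediate, since $f_*\cF[\otimes]\cL$ is reflexive by construction. For the right-hand side, note that $\cF[\otimes]f^*\cL$ is reflexive on $X$, so it equals $\widetilde{j}_*$ of its restriction to $X^0$, where $\widetilde{j}\colon X^0\hookrightarrow X$; applying $f_*$ and using $f\circ\widetilde{j}=j\circ f'$, one gets $f_*(\cF[\otimes]f^*\cL)=j_*\bigl((f_*(\cF[\otimes]f^*\cL))|_{Y^0}\bigr)$. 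Applying $j_*$ to the isomorphism of the previous step then yields the desired natural isomorphism on $Y$.

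\medskip

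\noindent\textbf{Main obstacle.} The delicate point is showing that $f_*(\cF[\otimes]f^*\cL)$ is determined in codimension one, since the pushforward of a reflexive sheaf need not be reflexive in general. This is where equidimensionality of $f$ is essential: without it, a codimension $\geq 2$ locus in $Y$ could pull back to a divisor in $X$, and the reflexivity of $\cF[\otimes]f^*\cL$ on $X$ would no longer translate into a codimension-one determination property on $Y$.
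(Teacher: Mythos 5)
Your overall strategy---restrict to a big open $Y^0$ where $\cL$ is invertible, apply the usual projection formula there, and extend using determination in codimension one, with equidimensionality guaranteeing that preimages of codimension-$\geq 2$ sets remain codimension $\geq 2$---is the same as the paper's. But there is a gap at the final step. The isomorphism you establish over $Y^0$ has the \emph{ordinary} tensor product $(f_*\cF\otimes\cL)|_{Y^0}$ on the left, whereas the left-hand side of the lemma restricts to $(f_*\cF[\otimes]\cL)|_{Y^0}=\bigl((f_*\cF)|_{Y^0}\bigr)^{**}\otimes\cL|_{Y^0}$, since double duals commute with restriction to opens and $\cL|_{Y^0}$ is invertible. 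So applying $j_*$ to your step-two isomorphism identifies $f_*(\cF[\otimes]f^*\cL)$ with $j_*\bigl((f_*\cF)|_{Y^0}\otimes\cL|_{Y^0}\bigr)$, and this agrees with $f_*\cF[\otimes]\cL$ only if $(f_*\cF)|_{Y^0}$ is reflexive---a fact you never establish, and which is not formal: $j_*$ of a torsion-free but non-reflexive sheaf on a big open is not its double dual. Indeed, over $Y^0$ the assertion of the lemma is \emph{equivalent} to this reflexivity, so it cannot be bypassed. You handled the analogous reflexification point upstairs on $X$ (harmless there, because $\cF$ is assumed reflexive), but overlooked it downstairs, where reflexivity of $f_*\cF$ is exactly the nontrivial input.

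The missing statement is true and is precisely what the paper's proof imports from \cite[Corollary 1.7]{H_old}: the pushforward of a coherent reflexive sheaf is again coherent and reflexive. Alternatively, you can prove it by the same mechanism you used for the right-hand side: $f_*\cF$ is torsion-free, and for any closed $Z\subseteq V\subseteq Y$ of codimension $\geq 2$ the preimage $f^{-1}(Z)$ has codimension $\geq 2$ in $X$ by equidimensionality, so sections of $\cF$, hence of $f_*\cF$, extend across it; therefore $f_*\cF$ is reflexive, and then $(f_*\cF[\otimes]\cL)|_{Y^0}\simeq (f_*\cF)|_{Y^0}\otimes\cL|_{Y^0}$ as you need. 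With this one addition your argument closes up and is essentially the paper's proof, which records that both sides are reflexive (via the cited result) and isomorphic over the smooth locus of $Y$, and concludes by normality of $Y$.
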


\begin{proof}
By the usual projection formula \cite[Exercise II.5.1(d)]{Har_AG}, we have an isomorphism as above over the regular locus of $Y$. By \cite[Corollary 1.7]{H_old}, if $\cG$ is a coherent reflexive sheaf on $X$, $f_*\cG$ is a coherent reflexive sheaf on $Y$. Therefore, the sheaves on both sides of the equation are reflexive. As $Y$ is normal, we conclude by restricting on the regular locus of $Y$.
\end{proof}

\begin{lemma} \label{l-kodaira dimension under finite morphisms}
Let $\varphi \colon X' \rightarrow X$ be a surjective morphism between normal projective $\bk$-varieties and let $L$ be a divisor on $X$. Suppose that $L$ is Cartier or that $\varphi$ is equidimensional. Then $\kappa(X', \varphi^*L) = \kappa(X, L)$.
\end{lemma}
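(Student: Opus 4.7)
The plan is to use the Stein factorisation of $\varphi = h\circ g$, where $g\colon X'\to Z$ is a fibration onto a normal variety $Z$ (so $g_*\cO_{X'} = \cO_Z$) and $h\colon Z\to X$ is finite surjective, and to treat the two factors separately. A first observation is that $g$ is automatically equidimensional whenever $\varphi$ is: every fibre of $\varphi$ decomposes as a finite disjoint union of fibres of $g$ indexed by $h^{-1}(x)$, all of common dimension $\dim X' - \dim Z$. Consequently, under both hypotheses of the statement, the pullbacks $h^*L$ and $g^*h^*L = \varphi^*L$ are well-defined as Weil divisors via \autoref{r-pullbackWeilED}.

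For the fibration $g$, \autoref{l-pfdivisorialED} combined with $g_*\cO_{X'} = \cO_Z$ yields an isomorphism
\[
g_*\cO_{X'}(mg^*h^*L)\;\simeq\; g_*\cO_{X'}[\otimes]\cO_Z(mh^*L)\;\simeq\;\cO_Z(mh^*L)
\]
for every integer $m$ such that $mh^*L$ is integral. Taking global sections gives $h^0(X', m\varphi^*L) = h^0(Z, mh^*L)$ for all such $m$, and hence $\kappa(X',\varphi^*L) = \kappa(Z, h^*L)$.

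It then suffices to check $\kappa(Z, h^*L) = \kappa(X, L)$ for the finite surjective morphism $h$. The inequality $\kappa(Z, h^*L)\geq \kappa(X, L)$ is immediate from the injection $H^0(X, mL)\hookrightarrow H^0(Z, mh^*L)$ induced by pullback. For the reverse, set $\cE \coloneqq h_*\cO_Z$, a torsion-free reflexive $\cO_X$-module of generic rank $d = \deg h$. Trivialising $\cE$ at the generic point of $X$ and clearing the resulting denominators produces an effective Weil divisor $E$ on $X$ together with an injection $\cE \hookrightarrow \cO_X(E)^{\oplus d}$. Combining this with the projection formula \autoref{l-pfdivisorialED} applied to the (automatically equidimensional) finite morphism $h$ yields the bound
\[
h^0(Z, mh^*L) \;\leq\; d\cdot h^0(X, mL + E)
\]
for all $m\geq 1$ such that $mL$ is integral.

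The main obstacle is then the asymptotic estimate $h^0(X, mL + E) = O(m^{\kappa(X,L)})$. For $L$ Cartier this is classical, obtained from the Iitaka fibration of $L$: the graded module $\bigoplus_m H^0(X, mL+E)$ is finitely generated over the section ring $\bigoplus_m H^0(X, mL)$ in the appropriate asymptotic sense, forcing the stated polynomial growth. In the purely equidimensional case with $L$ Weil, one reduces to the Cartier case by passing to a suitable birational modification $\pi\colon\widetilde{X}\to X$ on which $\pi^*L$ becomes $\bQ$-Cartier, using the fibration step above to guarantee $\kappa(\widetilde{X},\pi^*L) = \kappa(X,L)$, and then transferring the asymptotic bound back to $X$. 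Combining this estimate with the previous display finally yields $\kappa(Z, h^*L) \leq \kappa(X, L)$, completing the argument.
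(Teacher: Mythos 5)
Your reduction via Stein factorisation and the treatment of the fibration part (projection formula for divisorial sheaves plus $g_*\cO_{X'}=\cO_Z$) is fine and matches the paper. The problem is the finite part. Your bound $h^0(Z,mh^*L)\leq d\cdot h^0(X,mL+E)$ is correct, but the step you lean on afterwards --- ``$h^0(X,mL+E)=O(m^{\kappa(X,L)})$, classical for $L$ Cartier'' --- is not a true statement: the growth of $h^0(mL+E)$ for a fixed twist $E$ is governed by the numerical behaviour of $L$, not by its Iitaka dimension. A standard counterexample: let $C$ be an elliptic curve, $\cE$ the non-split extension of $\cO_C$ by $\cO_C$, $X=\bP(\cE)$, $L=\cO_{\bP(\cE)}(1)$ and $E=\pi^*B$ for an effective divisor $B$ of positive degree on $C$. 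Then $\kappa(X,L)=0$ (each $h^0(mL)=1$), while $h^0(X,mL+E)=h^0(C,S^m\cE\otimes\cO_C(B))$ grows linearly in $m$. So the asymptotic estimate you need has no general justification, and nothing in your construction of $E$ (clearing denominators in a generic trivialisation of $h_*\cO_Z$) rules out exactly this kind of behaviour. The classical way to handle the finite morphism is different: pass to a Galois closure and use the norm trick, i.e.\ send $s\in H^0(Z,mh^*L)$ to the product of its Galois conjugates in $H^0(X,|G|mL)$ (this is \cite[Proposition 1.5]{Mori}, which the paper invokes), and in characteristic $p$ deal separately with the purely inseparable part by factoring a power of Frobenius through it, so that $\kappa$ is squeezed between two copies of $\kappa(X,L)$. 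Without an argument of this type (or some other substitute), your proof of $\kappa(Z,h^*L)\leq\kappa(X,L)$ does not go through.

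A second, more minor, gap is the equidimensional Weil case. You propose to pass to a birational modification $\pi\colon\widetilde X\to X$ on which $\pi^*L$ is $\bQ$-Cartier and to ``transfer the asymptotic bound back''. But the Iitaka dimension of a Weil divisor is not a birational invariant (the paper stresses this for $-K_X$), the modification you need is in general not equidimensional, so neither \autoref{r-pullbackWeilED} nor \autoref{l-pfdivisorialED} applies to it, and in positive characteristic the existence of such a model is itself not automatic. The paper avoids all of this: it runs the same Stein/finite argument directly for equidimensional $\varphi$, noting that finite morphisms are equidimensional, that $\varphi^*L$ is then defined by \autoref{r-pullbackWeilED}, and that the ordinary projection formula can simply be replaced by \autoref{l-pfdivisorialED}.
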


\begin{proof}
Suppose first that $L$ is Cartier. Since $\varphi$ is surjective we have $\kappa(X,L) \leq \kappa(X', \varphi^*L)$. If $\varphi$ is a fibration then the result follows by the projection formula. By considering the Stein factorisation of $\varphi$ we can thus reduce to the case where $\varphi$ is finite.

If $\varphi$ is purely inseparable, there exists $\psi \colon X \rightarrow X'$ and $e \geq 0$ such that $\varphi \circ \psi = \Fr^e$, where $\Fr^e$ is the $e$-th power of the Frobenius morphism (see \autoref{d-relfrob}).
Then:
\begin{equation*}
    \kappa(X,L) \leq \kappa(X', \varphi^*L) \leq \kappa(X, \psi^*\phi^*L) = \kappa(X,L).
\end{equation*}

If, instead, $\varphi$ is a Galois cover, the result is proven in \cite[Proposition 1.5]{Mori}\footnote{In \textit{loc.\ cit.\ } this is proven over fields of characteristic $0$, but the same proof works in positive characteristic.}.
If $\varphi$ is separable, there exists $\psi\colon X'' \rightarrow X'$ such that $\varphi \circ \psi$ is Galois.
Thus:
\[
\kappa(X,L) \leq \kappa(X', \varphi^*L) \leq \kappa(X'', \psi^*\phi^*L) = \kappa(X,L).
\]

The general case follows by writing $\varphi$ as composition of a separable and a purely inseparable morphism.

When $\varphi$ is equidimensional, the result follows from the same argument, after observing that $\varphi^*L$ is well-defined by \autoref{r-pullbackWeilED}, and replacing the projection formula by \autoref{l-pfdivisorialED}.
\end{proof}

\begin{lemma}[{Easy Additivity}]\label{l-easyadditivity}
Let $f \colon X \to Y$ be a fibration between normal projective varieties with normal general fibre. Let $L$ be a Weil divisor on $X$, $y \in Y$ a general point and $\eta \in Y$ the generic point.
Then
\begin{itemize}
\item[(i)] $\kappa(X,L) \leq \kappa(X_y, L_y)+ \dim(Y)$ and $\kappa(X,L) \leq \kappa(X_{\eta}, L_{\eta})+ \dim(Y)$;
\item[(ii)] if $L$ is $\bQ$-Cartier $\bQ$-effective and $H$ is a big $\bQ$-Cartier $\bQ$-divisor on $Y$, then $\kappa(X, L+ f^*H) \geq \kappa(X_y, L_y) + \dim(Y)$ and $\kappa(X, L+f^*H) \geq \kappa(X_\eta, L_{\eta}) + \dim(Y)$.
\end{itemize}
\end{lemma}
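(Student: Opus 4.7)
This is the standard ``easy additivity'' for Iitaka dimensions under a fibration, with two parts that call for complementary constructions.

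For (i), the plan is to combine the rational map $\phi_{|mL|}$ with $f$ into a single rational map
\[
\psi_m\coloneqq (f,\phi_{|mL|})\colon X\dashrightarrow Y\times \bP(H^0(X,mL)^*),
\]
and bound $\dim\psi_m(X)$ in two ways. Choose $m$ sufficiently divisible with $\dim\phi_{|mL|}(X)=\kappa(X,L)$: projecting $\psi_m(X)$ onto the second factor recovers $\phi_{|mL|}(X)$, so $\dim\psi_m(X)\geq \kappa(X,L)$. On the other hand, normality of a general $X_y$ makes the restriction $H^0(X,mL)\to H^0(X_y,mL_y)$ well-defined, and its image is a sub-linear-system of $|mL_y|$; hence the fibre of $\psi_m(X)\to Y$ over a general $y$ has dimension at most $\kappa(X_y,L_y)$. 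The fibre-dimension inequality $\dim\psi_m(X)\leq \dim Y+\dim \psi_m(X)_y$ yields $\kappa(X,L)\leq \kappa(X_y,L_y)+\dim Y$. The generic-fibre version follows by the same reasoning at $\eta$, after replacing $f$ by an equidimensional birational model via \autoref{l-flattening} if necessary.

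For (ii), since $L$ is $\bQ$-effective we may replace it by an effective representative without changing $\kappa(X,L+f^*H)$. Write $d\coloneqq \kappa(X_y,L_y)$ and $n\coloneqq \dim Y$. The idea is to exhibit a sub-linear-system of $|m(L+f^*H)|$ whose associated rational map has image of dimension $n+d$. Fix $m_0$ so that $|m_0L_y|$ induces a rational map $X_y\dashrightarrow \bP^a$ of image dimension $d$ for $y$ general, and $m_1$ so that $|m_1H|$ induces a birational map from $Y$ onto its image in $\bP^b$ (possible by bigness of $H$). For $m$ a common multiple, the product sections
\[
\{s_i\cdot f^*t_j\}\subseteq H^0\bigl(X,m(L+f^*H)\bigr),\quad s_i\in H^0(X,mL),\ t_j\in H^0(Y,mH),
\]
span a sub-linear-system whose associated rational map factors through the Segre composition $X\dashrightarrow \bP^a\times\bP^b\hookrightarrow \bP^{(a+1)(b+1)-1}$. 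The image projects onto $\phi_{|mH|}(Y)$ (dimension $n$) with general fibre equal to $\phi_{|mL|}(X_y)$ (dimension $d$), and so has dimension $n+d$; this proves $\kappa(X,L+f^*H)\geq n+d$. The generic-fibre analogue follows from the same construction at $\eta$.

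The main technical subtlety is that both constructions require the restriction $H^0(X,mL)\to H^0(X_y,mL_y)$ to have image large enough to realise the Iitaka dimension of $L_y$ on a general fibre; equivalently, one needs the relative Iitaka map of $mL$ over $Y$ to have the correct general fibre. This is standard and is handled via generic flatness of the pushforward $f_*\cO_X(mL)$ combined with upper-semicontinuity of cohomology, both of which hold on a dense open of $Y$ depending on the chosen $m$. Intersecting this open with the (open) locus where $X_y$ is normal provides the general $y$ over which the arguments above apply, and the case of $X_\eta$ is then a direct translation of the fibre arguments to the generic point.
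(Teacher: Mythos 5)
Your argument for (i) is essentially the standard one and is fine (the paper itself gives no argument, only citing \cite{Fujino}, \cite{fujita1977some}, \cite{BCZ} and noting they survive in positive characteristic when the general fibre is normal); the only quibble is that invoking \autoref{l-flattening} at $\eta$ is both unnecessary and slightly dangerous, since replacing $X$ by a birational model can change $\kappa(X,L)$ when $L$ is merely Weil, but you do not actually need it.

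Part (ii), however, has a genuine gap. Your sub-linear system $\{s_i\cdot f^*t_j\}$ with $s_i\in H^0(X,mL)$ only sees, along a general fibre, the image of the restriction map $H^0(X,mL)\to H^0(X_y,mL_y)$, and there is no reason this image realises $\kappa(X_y,L_y)$: part (i) gives exactly the opposite inequality, and the inequality you need can fail. Concretely, take $C$ a genus-two curve, $X=C\times C$, $f=\mathrm{pr}_1$, $L=\Delta$ the diagonal and $H$ any ample divisor on $Y=C$. Then $\Delta^2<0$, so $h^0(X,m\Delta)=1$ for every $m$, and your system $\{s_i\cdot f^*t_j\}$ has image of dimension $\dim(Y)=1$ only, whereas $\kappa(X_y,L_y)=1$, so the claimed bound $n+d=2$ is not produced by your construction. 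The final paragraph does not repair this: generic flatness and cohomology-and-base-change identify the fibre $f_*\cO_X(mL)\otimes k(y)$ with $H^0(X_y,mL_y)$ for general $y$, but they say nothing about the map from the \emph{global} sections $H^0(Y,f_*\cO_X(mL))$ to that fibre, which is exactly what can be small. The point of (ii) — and the reason the hypothesis ``$H$ big'' is there — is that one must twist on the base to force fibre sections to extend: fix $m_0$ with $\dim\phi_{|m_0L_y|}(X_y)=\kappa(X_y,L_y)$, choose $A$ ample on $Y$ (depending on $m_0$) so that $f_*\cO_X(m_0L)\otimes\cO_Y(A)$ is globally generated, so $H^0(X,m_0L+f^*A)\to H^0(X_y,m_0L_y)$ is surjective for general $y$; then $|m_0L+2f^*A|$ (or its product with $|f^*A|$) already has image of dimension $\geq \kappa(X_y,L_y)+\dim(Y)$, and finally bigness of $H$ together with $\bQ$-effectivity of $L$ lets you absorb: $m(L+f^*H)\geq_{\bQ} m_0L+2f^*A$ for $m\gg 0$, whence the bound for $\kappa(X,L+f^*H)$. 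This is Fujita's argument in \cite{fujita1977some}; without the twist by a sufficiently ample $A$ on the base, the product-of-sections construction does not prove (ii).
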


\begin{proof}
Note that \cite[Lemma 2.3.31 and Remark 2.3.32]{Fujino}, \cite[Proposition 1]{fujita1977some} \cite[Lemma 2.20]{BCZ} hold also over fields of positive characteristic, provided that a general fibre is normal.
\end{proof}

\begin{lemma} \label{l-Iitaka dimensions of fibres}
Let $f \colon X \rightarrow Y$ be a fibration between normal projective varieties.
Assume that a very general fibre $X_y$ is normal.
Let $\eta$ be the generic point of $Y$ and $L$ be a Cartier divisor on $X$.
Then,
\[
\kappa(X_{\wb{\eta}}, L_{\wb{\eta}}) =\kappa(X_\eta, L_\eta) = \kappa(X_y, L_y).
\]
Moreover, if $\kappa(X_{\eta}, L_{\eta}) \geq 0$, the above equalities hold also for a general fibre $X_y$.
\end{lemma}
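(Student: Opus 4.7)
The plan is to reduce to the case where $f$ is flat via generic flatness, then apply upper semicontinuity of cohomology in flat proper families to the line bundles $\mathcal{O}_X(mL)$ and use flat base change for the passage to the geometric generic fibre. The main obstacle is the upper bound on $\kappa(X_y, L_y)$ for a general (rather than very general) $y$ in the moreover part, which requires globalising the Iitaka fibration of $L_\eta$ over a neighbourhood of $\eta \in Y$.

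First, by generic flatness I may replace $Y$ with a dense open subset and assume $f$ is flat; this does not affect the generic point, a very general point, or a general point. Since $L$ is Cartier, each $\mathcal{O}_X(mL)$ is a line bundle, in particular flat over $Y$, so the semicontinuity theorem together with cohomology and base change produces, for every $m \geq 1$, a dense open $U_m \subseteq Y$ on which $h^0(X_y, mL_y) = h^0(X_\eta, mL_\eta)$. For the geometric generic fibre, flat base change gives $H^0(X_{\wb{\eta}}, mL_{\wb{\eta}}) \cong H^0(X_\eta, mL_\eta) \otimes_{k(\eta)} \overline{k(\eta)}$, and the $h^0$-characterisation of the Iitaka dimension recorded in Section \ref{ss-Iitaka_dimension} yields $\kappa(X_{\wb{\eta}}, L_{\wb{\eta}}) = \kappa(X_\eta, L_\eta)$. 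For a very general closed point $y \in \bigcap_{m \geq 1} U_m$ one obtains $h^0(X_y, mL_y) = h^0(X_\eta, mL_\eta)$ for all $m$ simultaneously; since $X_y$ is normal by hypothesis, the same $h^0$-characterisation gives $\kappa(X_y, L_y) = \kappa(X_\eta, L_\eta)$, completing the first assertion.

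For the moreover part, set $d \coloneqq \kappa(X_\eta, L_\eta) \geq 0$ and pick $m_0$ such that $|m_0 L_\eta|$ induces the Iitaka fibration of $L_\eta$ via \autoref{t-IF}, so in particular $\dim \phi_{|m_0 L_\eta|}(X_\eta) = d$. For $y \in U_{m_0}$ the natural map $f_*\mathcal{O}_X(m_0 L) \otimes k(y) \to H^0(X_y, m_0 L_y)$ is an isomorphism, hence spreading out the rational map $\phi_{|m_0 L|}$ over $U_{m_0}$ and applying upper semicontinuity of fibre dimension yields $\dim \phi_{|m_0 L_y|}(X_y) = d$ on a dense open; this gives the lower bound $\kappa(X_y, L_y) \geq d$. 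For the reverse inequality—the subtle step—I would use \autoref{t-IF} on $X_\eta$ to realise the Iitaka fibration of $L_\eta$ as an honest morphism $X_\eta^\infty \to Y_\eta^\infty$ after a birational modification, then spread this out to a relative diagram $\widetilde{X} \to \widetilde{Z} \to U$ over a dense open $U \subseteq Y$ such that for general $y \in U$ the induced morphism $\widetilde{X}_y \to \widetilde{Z}_y$ is the Iitaka fibration of $L_y$, forcing $\kappa(X_y, L_y) = \dim \widetilde{Z}_y = d$. The delicate point here is that the characterising property of the Iitaka fibration—namely that $L_\eta$ restricted to a very general fibre of $\phi_\infty$ has Iitaka dimension zero—must be propagated from the generic point of $Y$ to a dense open, which is where constructibility of Iitaka dimension in relative families is implicitly used.
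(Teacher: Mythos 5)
Your treatment of the two equalities at a very general fibre is correct and is essentially the paper's argument: generic flatness, semicontinuity of $h^0(X_y,mL_y)$ to get equality with $h^0(X_\eta,mL_\eta)$ on a dense open $U_m$ for each $m$, flat base change for the geometric generic fibre, and the $h^0$-growth characterisation of the Iitaka dimension on $\bigcap_m U_m$.

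The ``moreover'' part, however, has a genuine gap, and you have put your finger on it yourself: the step where you propagate the defining property of the Iitaka fibration (that $L$ restricted to a very general fibre of $\phi_\infty$ has Iitaka dimension $0$) from the generic fibre to the fibres over a dense open of $Y$ is not an available input --- it is essentially the statement being proved. Concretely, after spreading out to $\widetilde X\to\widetilde Z\to U$, to conclude $\kappa(X_y,L_y)\le d$ for \emph{general} $y$ you need that for general $y$ and very general $z\in\widetilde Z_y$ one has $\kappa(\widetilde X_z,L|_{\widetilde X_z})=0$. What \autoref{t-IF} together with the first part of the lemma gives you is this only for \emph{very general} $z\in\widetilde Z$, and a countable intersection of dense opens of $\widetilde Z$ meets the fibre $\widetilde Z_y$ in a dense subset only for very general $y\in U$; upgrading to general $y$ is exactly the ``moreover'' statement applied to the family $\widetilde X\to\widetilde Z$ (whose generic fibre has $\kappa=0$), so the argument is circular, and the ``constructibility of Iitaka dimension in relative families'' you invoke is not a known result in this generality (the lemma is also used in positive characteristic, where such invariance statements are precisely the delicate point; compare the remark after the lemma showing failure when $\kappa(X_\eta,L_\eta)=-\infty$). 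Note moreover that $\dim\phi_{|m_0L_y|}(X_y)=d$ only bounds $\kappa(X_y,L_y)$ from below; the whole issue is the upper bound, since the multiple computing $\kappa(X_y,L_y)$ could a priori grow with $y$. The paper avoids all of this with a sandwich argument: since $\kappa(X_\eta,L_\eta)\ge 0$, one can choose $H$ sufficiently ample on $Y$ so that $L+f^*H$ is $\bQ$-effective, and then \autoref{l-easyadditivity}(i) and (ii), which are stated for a general fibre, give $\kappa(X,L+f^*H)=\kappa(X_y,L_y)+\dim(Y)=\kappa(X_\eta,L_\eta)+\dim(Y)$, whence $\kappa(X_y,L_y)=\kappa(X_\eta,L_\eta)$ for general $y$. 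You should replace your spreading-out step by an argument of this kind.
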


\begin{proof}
Note that, since $X_y$ is normal, then $X_\eta$ is geometrically normal (\cite[Proposition 2.1]{LMMPZsoltJoe}). The first equality is a consequence of the flat base change theorem. 
As for the second, we can assume that $f$ is flat without loss of generality, hence we conclude by \cite[Theorem III.12.8]{Har_AG}.

Now, suppose $\kappa(X_{\eta}, L_{\eta}) \geq 0$.
Let $H$ be an ample enough Cartier divisor on $Y$ such that $L+f^*H$ is $\bQ$-effective.
By \autoref{l-easyadditivity}, for a general fibre $X_y$ we have:
\[
\kappa(X, L+ f^*H) = \kappa(X_y, L_y) + \dim(Y) \quad \text{and} \quad \kappa(X, L+f^*H) = \kappa(X_\eta, L_{\eta}) + \dim(Y).
\]
Thus, $\kappa(X_y, L_y)=\kappa(X_\eta, L_{\eta})$.
\end{proof}

\begin{remark}
With notation as in \autoref{l-Iitaka dimensions of fibres}, when $\kappa(X_\eta, L_\eta) = -\infty$ and $y\in Y$ is general, the equality $\kappa({X_y}, L_y)= \kappa(X_\eta, L_\eta)$ may fail. Let $A$ be an Abelian variety, let $Y$ denote its dual, and consider the second projection $f\colon X\coloneqq A\times Y\to Y$. Then the Poincar\'e line bundle $L$ on $X$ gives a counterexample, as $\kappa(X_y,L_y)=0$ for all torsion points $y\in Y$.
\end{remark}

\begin{proposition} \label{p-opposite_inequality}
Let $f \colon X \to Y$ be a fibration of normal projective varieties with normal general fibre $X_y$. Let $L$ be a divisor on $X$ and assume that $\bB(L)$ does not dominate $Y$.
Then $\kappa(X,L)\geq \kappa(X_y,L_y)$.
\end{proposition}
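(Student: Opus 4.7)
The plan is to first reduce to the case $\kappa_y \coloneqq \kappa(X_y, L_y) \geq 0$ (otherwise the conclusion is vacuous) and, by passing to a multiple, to the case where $L$ is Cartier, which preserves Iitaka dimensions. The main preliminary is the stabilisation of the base locus: by a standard Noetherianity argument there exists $m_0 \geq 1$ with $\Bs(m_0 L) = \bB(L)$ (take $m_0 = \prod_i m_i$ for a finite family with $\bB(L) = \bigcap_i \Bs(m_i L)$, and use that $\Bs(m_0 L) \subseteq \Bs(m_i L)$ for each $i$); then $\Bs(k m_0 L) = \bB(L)$ for every $k \geq 1$. Since $\bB(L)$ does not dominate $Y$, there is a dense open $U \subseteq Y$ such that $X_y \not\subseteq \bB(L)$ for all $y \in U$, and thus the restriction maps
\[
r_k \colon H^0(X, km_0 L) \longrightarrow H^0(X_y, km_0 L_y)
\]
have nonzero image $V_{k,y} \coloneqq r_k(H^0(X, km_0 L))$ for every $k \geq 1$.

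Next, I would relate the relevant rational maps. Since evaluation at a point $p \in X_y$ factors through restriction to $X_y$, the global rational map $\phi_{|km_0 L|} \colon X \dashrightarrow \bP^{N_k}$ coincides on $X_y$ (up to a linear embedding of the target) with $\phi_{|V_{k,y}|}$, the map defined by the restricted linear series. Consequently
\[
\kappa(X,L) \;\geq\; \dim \phi_{|km_0 L|}(X) \;\geq\; \dim \phi_{|V_{k,y}|}(X_y),
\]
the first inequality being an equality for $k$ sufficiently divisible by \autoref{t-IF}. Fixing $k$ also so that $|km_0 L_y|$ realises the Iitaka fibration $\psi_y \colon X_y \dashrightarrow Z_y$ of $L_y$ (with $\dim Z_y = \kappa_y$), the inclusion $|V_{k,y}| \subseteq |km_0 L_y|$ forces $\phi_{|V_{k,y}|}$ to factor through $\psi_y$ via a rational map $g \colon Z_y \dashrightarrow \phi_{|V_{k,y}|}(X_y)$, so that $\dim \phi_{|V_{k,y}|}(X_y) \leq \kappa_y$.

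The crux of the argument is the reverse inequality $\dim \phi_{|V_{k,y}|}(X_y) \geq \kappa_y$ for some sufficiently divisible $k$, equivalently the generic finiteness of $g$, i.e.\ the separation of general fibres of $\psi_y$ by elements of $V_{k,y}$. This is the main obstacle, since the restricted linear series $|V_{k,y}|$ can in principle be strictly coarser than $|km_0 L_y|$. The hypothesis provides the crucial stability $\Bs(km_0 L) = \bB(L) \not\supseteq X_y$ for all $k$; a clean formalisation is to compare $\phi_{|km_0 L|}$ with the relative Iitaka fibration $\phi_{\mathrm{rel}} \colon X \dashrightarrow V_{\mathrm{rel}}$ of $L$ over $Y$, whose general fibre agrees with $Z_y$. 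A dimension count for the joint map $f \times \phi_{|km_0 L|} \colon X \dashrightarrow Y \times \bP^{N_k}$, combined with the observation that the restricted linear series cannot collapse the fibres of $\psi_y$ without contradicting the stability of the stable base locus, then yields the required $\dim \phi_{|V_{k,y}|}(X_y) = \kappa_y$. Combining with the previous display gives $\kappa(X, L) \geq \kappa_y$, as desired.
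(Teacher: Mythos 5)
Your reduction steps and the easy inequality $\dim\phi_{|V_{k,y}|}(X_y)\leq\kappa(X_y,L_y)$ are fine, but the step you yourself identify as the crux --- that the restricted series cannot have image of dimension strictly less than $\kappa(X_y,L_y)$ --- is not actually proved. The final paragraph replaces an argument with the assertion that ``the restricted linear series cannot collapse the fibres of $\psi_y$ without contradicting the stability of the stable base locus,'' together with an unspecified dimension count against a relative Iitaka fibration; this is exactly the statement that needs proof, and stability of $\Bs(km_0L)$ by itself does not give it. Part of the problem is that you only extract from the hypothesis the weak consequence $X_y\not\subseteq\bB(L)$ (nonvanishing of the restriction), whereas the full strength is available and is what makes the argument close: since $\Bs(m_0L)=\bB(L)$ does not dominate $Y$ and $y$ is general, one has $X_y\cap\Bs(m_0L)=\emptyset$, so the restricted series $|m_0L|_y\subseteq|m_0L_y|$ is \emph{base point free} on $X_y$. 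Once you have base point freeness, no separation of fibres of $\psi_y$ needs to be checked: the morphism $\phi\coloneqq\phi_{|m_0L|_y}$ satisfies $\phi^*\cO(1)\cong\cO_{X_y}(m_0L_y)$, so $m_0L_y$ is semiample and, taking the Stein factorisation $X_y\xrightarrow{g}\wb{V}\xrightarrow{h}V$ of $\phi$, the finite part $h$ pulls back an ample class, whence $g$ is the semiample contraction (i.e.\ the Iitaka fibration) of $L_y$ and $\dim\phi(X_y)=\dim\wb{V}=\kappa(X_y,L_y)$ \emph{exactly}. This is precisely the content of \autoref{r-Iitaka_fibration}, and it is how the paper proves the proposition: $\kappa(X,L)=\dim Z\geq\dim V=\kappa(X_y,L_y)$, with $Z$ the image of $\phi_{|m_0L|}$. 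So the missing ingredient is the ampleness/finiteness mechanism for base point free subseries, not a base-locus stability argument.

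A secondary, more minor point: you cannot ``pass to a multiple so that $L$ is Cartier'' --- $L$ is only a Weil divisor on a normal variety and need not be $\bQ$-Cartier. This reduction is also unnecessary: the maps $\phi_{|V|}$ and the whole argument make sense for Weil divisors via divisorial sheaves, as set up in the paper's preliminaries.
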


\begin{proof}
Let $m\geq 1$ divisible enough so that $\textup{Bs}(mL)=\bB(L)$ and $\phi_{|mL|}$ and $\phi_{|mL_y|}$ are (birational to) the Iitaka fibrations. Then we have a commutative diagram
\begin{center}
    \begin{tikzcd}
        X\arrow[rr,dashed,"\phi_{|mL|}"] & & Z\\
        X_y\arrow[rr,"\phi_{|mL|_y}"]\arrow[dr,"g"]\arrow[u,hook] & & V\arrow[u,hook]\\
           & \wb{V},\arrow[ru,"h"] & 
    \end{tikzcd}
\end{center}
where the lower triangle is the Stein factorisation of $\phi_{|mL|_y}$. By \autoref{r-Iitaka_fibration} we have
\[
        \kappa(X,L) =\dim( Z)
                    \geq \dim( V)
                    =\kappa(X_y,L_y). \qedhere
\]

\end{proof}

\section{Characteristic zero}

In this section we prove a generalisation of \cite[Theorem 4.1]{Chang}. All varieties will be over an algebraically closed field $\bk$ of characteristic zero.

\begin{theorem} \label{t-main_char0}
    Consider the datum $(f\colon X\to Y;B, D)$, where
    \begin{itemize}
        \item $f$ is a fibration of normal projective varieties,
        \item $(X,B)$ is a pair,
        \item $Y$ is $\bQ$-Gorenstein and $D$ is a $\bQ$-Cartier $\bQ$-divisor on $Y$.
    \end{itemize}
    Let $L\coloneqq -K_X-B-f^*D$, let $y\in Y$ be a general point, and suppose that $\cI(X_y,B_y;\vvert{L}_y)$ is trivial\footnote{See \autoref{d-traceAMIS} below.}. Then
    \[
    \kappa(X,L)\leq \kappa(X_y, L_y)+\kappa(Y,-K_Y-D).
    \]
    Furthermore, if $\kappa(Y,-K_Y-D)=0$ and $\bB(L)$ does not dominate $Y$, then equality holds.
\end{theorem}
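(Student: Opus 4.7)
The plan is to follow the four-step strategy outlined in Section~\ref{s-proof_idea}, generalising Chang's argument from $L = -K_X$ to $L = -K_X - B - f^*D$. The crucial reduction is to the case $\kappa(Y, -K_Y-D) = 0$, from which the full statement will follow by passing to a Calabi--Yau base.

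For Step~1 (descent), the triviality of $\cI(X_y, B_y; \vvert{L}_y)$ lets me pick, for some $m$ sufficiently divisible, a general $\Lambda \in |mL|$ such that $(X_y, B_y + \tfrac{1}{m}\Lambda|_{X_y})$ is sub-klt. Since $K_X + B + \tfrac{1}{m}\Lambda \sim_{\bQ} -f^*D$, the map $f\colon (X, B + \tfrac{1}{m}\Lambda) \to Y$ is a sub-lc-trivial fibration, and Ambro's canonical bundle formula yields
\[
-K_Y - D \sim_{\bQ} B_Y + M_Y
\]
with $B_Y \geq 0$ a discriminant divisor and $M_Y$ a nef moduli divisor, hence $\bQ$-effective. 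For Step~2 (precise descent), given $\Lambda'' \in |L - f^*E|_{\bQ}$, I consider the convex combination $\Lambda_t = (1-t)\tfrac{1}{m}\Lambda + t\Lambda''$; by openness of the sub-klt condition this remains an lc-trivial fibration for small $t > 0$, and applying the CBF again gives $-K_Y - D - tE \sim_{\bQ} B_Y^t + M_Y^t$, still $\bQ$-effective.

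For Step~3 (injectivity), assume $\kappa(Y, -K_Y - D) = 0$ and let $N_0$ denote the unique effective representative of $-K_Y - D$ up to $\sim_{\bQ}$. A nonzero $s \in H^0(X, lL)$ vanishing on $X_y$ for $y$ general forces $X_y$ to lie inside some vertical component of $\textup{div}(s)$: one writes $\textup{div}(s) \geq f^*E$ with $E \geq 0$ nonzero and $y \in \textup{supp}(E)$, so $lL - f^*E$ is effective. Step~2 then produces $-K_Y - D - \tfrac{\epsilon}{l}E \sim_{\bQ} M_\epsilon \geq 0$, and uniqueness of the effective representative of $-K_Y - D$ forces $M_\epsilon + \tfrac{\epsilon}{l}E \sim_{\bQ} N_0$, whence $\textup{supp}(E) \subseteq \textup{supp}(N_0)$. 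For $y$ in the open set $Y \setminus \textup{supp}(N_0)$, no vertical component of $\textup{div}(s)$ contains $X_y$, and a horizontal component cannot do so either by a generic dimension count. Thus $H^0(X, lL) \to H^0(X_y, lL_y)$ is injective for every $l$ and such $y$, yielding $\kappa(X, L) \leq \kappa(X_y, L_y)$.

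For Step~4 (reduction to a Calabi--Yau base), when $\kappa(Y, -K_Y - D) = k \geq 0$, let $g\colon Y \dashrightarrow Z$ be the Iitaka fibration of $-K_Y - D$. After flattening (\autoref{l-flattening}) and resolving indeterminacies I obtain a tower $X' \xrightarrow{f'} Y' \xrightarrow{g'} Z$ with $Y' \to Y$ birational; a general fibre $Y'_z$ satisfies $-K_{Y'_z} - D|_{Y'_z} \sim_{\bQ} 0$, so Step~3 applies to $f'_z\colon X'_z \to Y'_z$ and gives $\kappa(X'_z, L'|_{X'_z}) \leq \kappa(X_y, L_y)$. Easy Additivity (\autoref{l-easyadditivity}) applied to $g' \circ f'$ then delivers the desired inequality, and the equality case follows by combining this with the reverse bound \autoref{p-opposite_inequality}. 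The main technical obstacles I foresee are (i) ensuring Ambro's CBF yields $\bQ$-effectivity (and not merely pseudo-effectivity) of $B_Y + M_Y$ after the perturbation in Step~2, and (ii) the bookkeeping in Step~4 for controlling $\kappa(X, L)$ through the birational modifications, since unlike $\kappa(X, K_X)$ it is not a birational invariant; this will require careful use of \autoref{l-kodaira dimension under finite morphisms} on an equidimensional flattening.
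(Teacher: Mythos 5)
Your outline follows the same four-step strategy as the paper, but Step~3 as written contains a genuine gap. From a section $s\in H^0(X,lL)$ vanishing along $X_y$ you cannot conclude $\textup{div}(s)\geq f^*E$ for an effective divisor $E$ on $Y$ with $y\in\supp(E)$: when $\dim Y\geq 2$ the fibre $X_y$ has codimension $\dim Y\geq 2$ in $X$, so vanishing along $X_y$ only forces $X_y$ to lie in some vertical prime component $V$ of $\textup{div}(s)$, i.e.\ $\textup{div}(s)\geq cV$. The pullback $f^*(f(V))$ in general has further components over $f(V)$ (with uncontrolled multiplicities), so $lL-\epsilon f^*(f(V))$ need not be $\bQ$-effective for any $\epsilon>0$, and your perturbation step cannot be invoked. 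This is precisely why the paper (following Chang) proves the injectivity theorem \autoref{t-chang4.3inj} after blowing up $Y$ at $y$ and $X$ along $X_y$: on the blow-up the fibre becomes $G=f'^*E$ with $E$ the exceptional Cartier divisor, so $X_y\subseteq\supp(N)$ honestly yields $\pi^*N-\delta G\geq 0$, an effective member of $|\pi^*L-f'^*(\delta E)|_{\bQ}$ to which \autoref{p-chang4.2} applies; the contradiction is then obtained against the uniqueness of the member of a $\bQ$-linear series on $X$ (namely $|f^*(-K_Y-D)+P|_{\bQ}$), not against a representative $N_0$ on $Y$.

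Relatedly, Step~4 is under-specified exactly where the paper works hardest, and one assertion is wrong: on the resolved model it is not true that $-K_{Y'_z}-D|_{Y'_z}\sim_{\bQ}0$ on a general fibre; \autoref{t-IF} only gives $\kappa\bigl(Y'_z,(\mu^*(-K_Y-D))|_{Y'_z}\bigr)=0$, and $K_{Y'_z}$ differs from $\mu^*K_Y|_{Y'_z}$ by exceptional terms, while the crepant boundary $\Delta$ on $X'$ acquires a negative part supported on $\pi$-exceptional divisors. To apply injectivity to $X'_z\to Y'_z$ one therefore needs the auxiliary divisor $P\geq\Delta^-$ with $\kappa(X'_z,f_z^*(\,\cdot\,)+P_z)=0$ built into \autoref{t-chang4.3inj}; the paper constructs $P$ as a large multiple of $f'^*\mu^*N$ for $N\in|-K_Y-D|_{\bQ}$, using that $\mu$ only blows up centres inside $\bB(-K_Y-D)$ so that $\supp(f'^*\mu^*N)\supseteq\supp(\Delta^-)$, and then checks the $\kappa=0$ condition on the fibre via \autoref{t-IF}. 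Without this mechanism (and with Step~3 as written, whose uniqueness argument on $Y$ breaks down as soon as the boundary has a negative part, since the descent only gives effectivity of $f^*(-K_Y-D-\epsilon E)+B^-$ plus possible exceptional terms), the reduction to a Calabi--Yau-type base does not go through. The descent of the triviality of the asymptotic multiplier ideal to $X'_z$, and the separate treatment of singular $Y$ via a resolution and an exceptional correction $\Theta'$, also need to be supplied.
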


\subsection{Preliminaries}
We assume the reader is familiar with the various classes of singularities appearing in the Minimal Model Program (\cite{Kol_SMMP}). We will also freely use the language of multiplier ideal sheaves and their asymptotic version. We refer the reader to \cite[Chapters 9, 10, 11]{Laz2} for the relevant definitions. We point out to the reader that many of the results from \cite[Chapter 11]{Laz2} that we will quote are stated for smooth pairs, however they can be extended to singular pairs using \cite[Definition 9.3.56 and Proposition 9.3.62]{Laz2}. Here we just recall some properties that we will use repeatedly in the remainder of the section, together with the definition of the ``trace'' multiplier ideal. We remind the reader that, when the sub-boundary divisor $B$ is not effective, multiplier ideals are just sheaves of \textit{fractional} ideals.

\begin{definition}[{\cite[Example 11.2.2]{Laz2}}]\label{d-traceAMIS}
    Let $f\colon X\to Y$ be a fibration of normal projective varieties, let $(X,B)$ be a sub-pair, let $L$ be a $\bQ$-Cartier $\bQ$-divisor, and let $y\in Y$ be a general point. If $(X_y,B_y)$ is a sub-pair, we define $\cI(X_y,B_y;\vvert{L}_y)$ to be the asymptotic multiplier ideal $\cI(X_y,B_y;(1/n)\cdot V_\bullet)$ where
    \[
    V_m\coloneqq \Image\left( H^0(X,nmL)\to H^0(X_y,nmL_y) \right)
    \]
    and $n\geq 1$ is any integer such that $nL$ is Cartier.
\end{definition}

\begin{proposition}[{\cite[Propositions 9.2.26, 11.1.18]{Laz2}}]\label{p-comparision_asy_linsys_genmem}
    Let $(X,B)$ be a sub-pair, and let $V_\bullet$ be a graded linear series. Then for all $m\geq 1$ divisible enough, and all general $D_m\in |V_m|$ we have
    \[
    \cI(X,B;V_\bullet)=\cI(X,B;(1/m)\cdot |V_m|)=\cI(X,B;D_m/m).
    \]
\end{proposition}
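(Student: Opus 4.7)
The plan is to reduce both equalities to a direct computation on a single log resolution, exploiting the graded structure of $V_\bullet$. For each $m \geq 1$ such that $mL$ is Cartier (where $V_m \subseteq H^0(X,mL)$), let $\mathfrak{b}_m \coloneqq \mathfrak{b}(|V_m|)\subseteq \cO_X$ be the base ideal, and recall the graded submultiplicativity $\mathfrak{b}_m^k \subseteq \mathfrak{b}_{km}$.

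For the second equality, I would fix $m$ and choose a common log resolution $\mu\colon X'\to X$ of the sub-pair $(X,B)$ and of $\mathfrak{b}_m$, writing $\mu^{-1}\mathfrak{b}_m\cdot \cO_{X'}=\cO_{X'}(-E_m)$ with $E_m$ supported on a simple normal crossings divisor. By definition
\[
\cI(X,B;(1/m)\cdot|V_m|)=\mu_*\cO_{X'}\bigl(K_{X'/X}-\mu^*B-\lfloor (1/m)E_m\rfloor\bigr).
\]
The moving part of $\mu^*|V_m|$ is the base-point-free linear series $|\mu^*V_m-E_m|$ on the smooth variety $X'$, so characteristic zero Bertini guarantees that for a general $D_m\in|V_m|$ the strict transform $\widetilde{D_m}$ is smooth and meets $E_m\cup\Exc(\mu)\cup\mu^{-1}\supp(B)$ transversally. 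In particular $\mu$ is also a log resolution of $(X,B+D_m/m)$, and since $\widetilde{D_m}$ is reduced with coefficient $1/m<1$ and shares no components with $E_m$, the rounding-down identity
\[
\lfloor (1/m)(E_m+\widetilde{D_m})\rfloor=\lfloor (1/m)E_m\rfloor
\]
gives $\cI(X,B;D_m/m)=\cI(X,B;(1/m)\cdot|V_m|)$.

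For the first equality, the argument is via a monotonicity plus Noetherianity strategy. The submultiplicativity $\mathfrak{b}_m^k\subseteq \mathfrak{b}_{km}$, combined with the standard identities $\cI(X,B;c\cdot \mathfrak{a}^k)=\cI(X,B;(kc)\cdot\mathfrak{a})$ and monotonicity in the ideal, yields
\[
\cI(X,B;(1/m)\cdot|V_m|)=\cI(X,B;(1/km)\cdot \mathfrak{b}_m^k)\subseteq \cI(X,B;(1/km)\cdot|V_{km}|).
\]
Hence the family of ideals $\{\cI(X,B;(1/m)\cdot|V_m|)\}$, partially ordered by divisibility, is increasing. By Noetherianity of $\cO_X$ it stabilises, and its stable value is by definition $\cI(X,B;V_\bullet)$; thus equality holds for all $m$ sufficiently divisible.

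The main technical point is the Bertini step in Step 1: one must ensure that ``general member'' can be chosen so that the strict transform is SNC-compatible on $X'$, which is where characteristic zero is essential and where one relies on the mobile part being base-point-free on the smooth resolution. The $\bQ$-sub-boundary $B$ introduces no extra difficulty because, per \cite[Definition 9.3.56 and Proposition 9.3.62]{Laz2}, the whole machinery is set up to accommodate $\mu^*B$ as an SNC $\bQ$-divisor absorbed into the discrepancy calculation.
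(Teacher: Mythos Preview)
Your argument is correct and is precisely the standard one: the paper does not supply its own proof but simply cites \cite[Propositions 9.2.26 and 11.1.18]{Laz2}, and what you have written is exactly the Bertini-on-a-log-resolution argument for the second equality together with the submultiplicativity-plus-Noetherianity argument for the first that appear there. No deviation to report.
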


\begin{remark}
    If $f\colon (X,B)\to Y$ is as in \autoref{d-traceAMIS}, then $\cI(X_y,B_y;\vvert{L}_y)=\cO_{X_y}$ for $y \in Y$ general if and only if $\cI(X,B;\vvert{L})|_{f^{-1}(U)}\cong \cO_{f^{-1}(U)}$ for some dense open $U\subseteq Y$. This follows at once from \autoref{p-comparision_asy_linsys_genmem} and \cite[Theorem 9.5.35]{Laz2}.
\end{remark}

\begin{remark}\label{r-klt}
    Given a sub-pair $(X,B)$ and an effective $\bQ$-Cartier $\bQ$-divisor $D$, we have that $(X,B+D)$ is klt at $x\in X$ if and only if $\cI(X,B;D)_x=\cO_{X,x}$. The locus of points where $(X,B+D)$ is not klt is denoted by $\nklt{(X,B+D)}$. It is a closed subset of $X$. It is well known that klt singularities are stable under small perturbations of the boundary divisor; that is $\nklt{(X,B)}=\nklt{(X,B+\epsilon D)}$ for all effective $\bQ$-Cartier $\bQ$-divisors $D$ and all $0<\epsilon\ll 1$ (\cite[Corollary 2.35]{KM}).
\end{remark}

\subsection{Proof of \texorpdfstring{$C_{n,m}^-$}{}}

We follow closely the structure of the proof in \cite{Chang}. To begin, we establish Step (1) in \autoref{s-proof_idea}.

\begin{theorem}[{see \cite[Theorem 3.8]{Chang}}] \label{t-chang3.8ED}
    Consider the datum $(f\colon X\to Y;B, D)$ where 
    \begin{itemize}
        \item $f$ is an equidimensional fibration of normal projective varieties,
        \item $(X,B)$ is a sub-pair and $\supp(B^-)$ does not dominate $Y$,
        \item $D$ is a $\bQ$-divisor on $Y$.
    \end{itemize}
    Let $L\coloneqq -K_X-B-f^*D$, let $y\in Y$ be a general point, and suppose that $\cI(X_y,B_y;\vvert{L}_y)$ is trivial. Then $f^*(-K_Y-D)+B^-$ is $\bQ$-effective.
\end{theorem}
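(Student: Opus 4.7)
The strategy is to equip $f$ with the structure of a sub-lc-trivial fibration and then invoke Ambro's canonical bundle formula. First, using the hypothesis $\cI(X_y, B_y; \vvert{L}_y) = \cO_{X_y}$ together with \autoref{p-comparision_asy_linsys_genmem}, I would choose $m\gg 0$ sufficiently divisible and a general $\Lambda \coloneqq D_m/m \in |L|_{\bQ}$ so that $(X_y, B_y + \Lambda_y)$ is sub-klt on the general fibre. Since $\supp(B^-)$ does not dominate $Y$, one has $B_y = B_y^+$ on a general fibre, so this sub-pair is actually klt. Rearranging $\Lambda \sim_{\bQ} -K_X - B - f^*D$ yields $K_X + B + \Lambda \sim_{\bQ} -f^*D$, making $f\colon (X, B+\Lambda) \to Y$ into a sub-lc-trivial fibration with klt generic fibre.

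Next, apply Ambro's canonical bundle formula (in its sub-pair version, cf.\ \cite{Amb_MBD}), possibly after passing to a higher birational model of $Y$. This produces $\bQ$-divisors $B_Y$ (discriminant) and $M_Y$ (moduli) on $Y$ satisfying $K_X + B + \Lambda \sim_{\bQ} f^*(K_Y + B_Y + M_Y)$. Combining with $K_X + B + \Lambda \sim_{\bQ} -f^*D$, and descending via $f_*\cO_X = \cO_Y$, we obtain $-K_Y - D \sim_{\bQ} B_Y + M_Y$, so that
\[
f^*(-K_Y - D) + B^- \sim_{\bQ} (f^*B_Y + B^-) + f^*M_Y.
\]

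The central explicit computation, which relies crucially on equidimensionality, is that $f^*B_Y + B^-$ is literally an effective divisor. For each prime $P \subset Y$ with $f^*P = \sum_i \mu_i E_i$, let $\gamma_i = b_i + \lambda_i$ denote the coefficient of $E_i$ in $B + \Lambda$. The log canonical threshold defining the coefficient of $P$ in $B_Y$ is $t_P = \min_i (1-\gamma_i)/\mu_i$, so the coefficient of $E_i$ in $f^*B_Y + B^-$ equals
\[
(1 - t_P)\mu_i + b_i^- \;\geq\; \mu_i - 1 + b_i + \lambda_i + b_i^- \;=\; \mu_i - 1 + b_i^+ + \lambda_i \;\geq\; 0,
\]
using $\mu_i \geq 1$, $\lambda_i \geq 0$, and $b_i + b_i^- = b_i^+ \geq 0$.

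The main obstacle is the moduli contribution $f^*M_Y$: Ambro's theorem only guarantees that $M_Y$ is b-nef, not $\bQ$-effective. I would address this by working in the characteristic zero setting with klt generic fibre, invoking positivity results for moduli divisors of klt-trivial fibrations (after a further birational modification, b-semiampleness-type results yield $\bQ$-effectivity of $f^*M_Y$). Together with the discriminant estimate above, this yields the desired $\bQ$-effectivity of $f^*(-K_Y - D) + B^-$.
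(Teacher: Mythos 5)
Your proposal follows essentially the same route as the paper: pick a general $\Lambda\in|L|_{\bQ}$ using triviality of the asymptotic multiplier ideal so that $f\colon(X,B+\Lambda)\to Y$ is a klt-trivial fibration, apply Ambro's canonical bundle formula on $Y$ itself, check $f^*B_Y+B^-\geq 0$ by the coefficient estimate over each prime of $Y$ (equidimensionality giving $\mu_i\geq 1$ and ruling out exceptional components), and invoke positivity of the moduli part. Two small corrections: the log canonical threshold satisfies only $t_P\leq\min_i(1-\gamma_i)/\mu_i$ rather than equality (the inequality is all your estimate uses, so this is harmless), and the moduli contribution is settled directly by Ambro's theorem that the moduli b-divisor of a klt-trivial fibration is b-nef and abundant, which already makes $M_Y$ $\bQ$-effective on $Y$ -- no appeal to (unproven) b-semiampleness and no further birational modification of $Y$ is required.
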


\begin{proof}
    First of all note that, since $f$ is equidimensional, the pull-back $f^*N$ is well defined for any $\bQ$-divisor $N$ on $Y$. Let $\Lambda\coloneqq \Gamma/m$, where $\Gamma\in |mL|$ is a general element and $m\geq 1$ is sufficiently divisible. As $\cI(X,B;\vvert{L})|_{f^{-1}(U)}\cong \cO_{f^{-1}(U)}$ for some open dense $U\subseteq Y$, by \autoref{p-comparision_asy_linsys_genmem} and \autoref{r-klt} we have that $f\colon (X,\Delta\coloneqq B+\Lambda)\to Y$ is a $K$-trivial fibration in the sense of \cite[Definition 2.1]{Amb_Shokurov}. By the canonical bundle formula (\cite{Amb_Shokurov,Amb_MBD}) there are induced moduli and discriminant b-divisors $\bM$ and $\bB$ on $Y$ such that 
    \[
    K_X+\Delta\sim_{\bQ} f^*(-D)\sim_{\bQ} f^*(K_Y+\bM_Y+\bB_Y),
    \]
    hence it suffices to show the $\bQ$-effectiveness of  
    \[
    f^*(\bM_Y+\bB_Y)+B^-.
    \]
    By \cite[Theorem 3.3]{Amb_MBD} we have that $\bM$ is b-nef and abundant, in particular $\bM_Y$ is $\bQ$-effective. By construction we have $\Delta^v+B^-\geq 0$. Also, in general, we have that $(f^*\bB_Y-\Delta^v)^-$ is supported on some $f$-exceptional divisors. To see this, fix $Q$ a codimension one point in $Y$ and let $P$ be a codimension one point of $X$ such that $f(P)=Q$. Let $d_Q$ be the sub-log canonical threshold of $(X,\Delta)$ with respect to $f^*Q$ over the generic point of $Q$, so that $\coeff_P(\Delta+f^*(d_QQ))\leq 1$ (see \cite[Definition 2.4]{Amb_Shokurov}). Then
    \begin{equation*}
        \begin{split}
            \coeff_P(\Delta) & \leq 1-d_Q\coeff_P(f^*Q)\\
                        & \leq \coeff_P(f^*Q)(1-d_Q).
        \end{split}     
    \end{equation*}
    Letting $Q$ vary and summing over all $P$ as above yields $f^*\bB_Y-\Delta^v\geq 0$ up to some $f$-exceptional divisor. Since $f$ is equidimensional there are no $f$-exceptional divisors, $\bB_Y$ is effective too and we conclude.
\end{proof}

\begin{corollary} \label{t-chang3.8}
    Consider the datum $(f\colon X\to Y;B, D)$ where 
    \begin{itemize}
        \item $f$ is a fibration of normal projective varieties,
        \item $(X,B)$ is a sub-pair and $\supp(B^-)$ does not dominate $Y$,
        \item $Y$ is $\bQ$-Gorenstein and $D$ is a $\bQ$-Cartier $\bQ$-divisor on $Y$.
    \end{itemize}
    Let $L\coloneqq -K_X-B-f^*D$, let $y\in Y$ be a general point, and suppose that $\cI(X_y,B_y;\vvert{L}_y)$ is trivial. Then $f^*(-K_Y-D)+B^-+E_X$ is $\bQ$-effective for some effective $f$-exceptional $\bQ$-divisor $E_X\geq 0$. Furthermore, we can take $E_X=0$ in the following cases:
    \begin{enumerate}
        \item $f$ is equidimensional;
        \item $B$ is effective;
        \item $Y$ has canonical singularities.
    \end{enumerate}
\end{corollary}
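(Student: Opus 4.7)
The plan is to reduce the general case to the equidimensional setting handled by \autoref{t-chang3.8ED} by flattening, and then push forward to $X$ while tracking the resulting $f$-exceptional correction. Apply \autoref{l-flattening} to obtain a commutative diagram
\[
\begin{tikzcd}
X' \arrow[r, "\pi_X"] \arrow[d, "f'"] & X \arrow[d, "f"] \\
Y' \arrow[r, "\pi_Y"] & Y
\end{tikzcd}
\]
with $\pi_X,\pi_Y$ projective birational and $f'$ equidimensional. Define $B'$ on $X'$ by the crepant pullback $K_{X'}+B'=\pi_X^*(K_X+B)$ and $D'\coloneqq \pi_Y^*D$ on $Y'$, so that $L'\coloneqq\pi_X^*L=-K_{X'}-B'-f'^*D'$. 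Writing $B'=\widetilde{B}+E'_X$, with $\widetilde{B}$ the strict transform of $B$ under $\pi_X$ and $E'_X$ a $\pi_X$-exceptional $\bQ$-divisor, one has $(B')^-=\widetilde{B^-}+(E'_X)^-$. The hypothesis on $B^-$ together with the observation that the $\pi_X$-exceptional locus is contained in $f'^{-1}(\pi_Y^{-1}(Z))$ for some proper closed $Z\subsetneq Y$ (arising from the flattening construction) shows that $\supp((B')^-)$ does not dominate $Y'$. The triviality of $\cI(X'_{y'},B'_{y'};\vvert{L'}_{y'})$ for general $y'\in Y'$ is inherited from the hypothesis, since $\pi_X$ is an isomorphism above the generic point of $Y$.

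Invoking \autoref{t-chang3.8ED} yields that $f'^*(-K_{Y'}-D')+(B')^-$ is $\bQ$-effective on $X'$. Writing $K_{Y'}=\pi_Y^*K_Y+E_Y$ with $E_Y$ the $\pi_Y$-exceptional (possibly non-effective) discrepancy divisor, this expression rewrites as $\pi_X^*f^*(-K_Y-D)-f'^*E_Y+(B')^-$. Pushing forward via $\pi_X$, which vanishes on $\pi_X$-exceptional divisors and preserves $\bQ$-effectivity, we obtain that $f^*(-K_Y-D)+B^--E''$ is $\bQ$-effective on $X$, where $E''\coloneqq\pi_{X*}(f'^*E_Y)$. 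A component of $E''$ arises as $\pi_X(G)$ for a prime divisor $G\subseteq\supp(f'^*E_Y)$; since $f(\pi_X(G))=\pi_Y(f'(G))\subseteq\pi_Y(\supp E_Y)$, which has codimension $\geq 2$ in $Y$, the divisor $E''$ is $f$-exceptional. Setting $E_X\coloneqq(E'')^-\geq 0$ and absorbing $(E'')^+\geq 0$ into the effective representative gives the claimed $\bQ$-effectivity of $f^*(-K_Y-D)+B^-+E_X$.

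The three special cases are then obtained as follows. (1) If $f$ is equidimensional, there are no $f$-exceptional divisors on $X$, so $E_X=0$ automatically (or, more simply, apply \autoref{t-chang3.8ED} directly). (3) If $Y$ has canonical singularities, then $E_Y\geq 0$, so $f'^*E_Y\geq 0$ by \autoref{r-pullbackWeilED} and $E''\geq 0$, giving $E_X=(E'')^-=0$. (2) If $B\geq 0$, so $B^-=0$, the projection formula combined with $f_*\cO_X(mE_X)=\cO_Y$ (which holds because $X$ is normal and $E_X$ is an effective $f$-exceptional divisor) yields $f_*\cO_X(m(f^*(-K_Y-D)+E_X))=\cO_Y(m(-K_Y-D))$; hence $-K_Y-D$ is $\bQ$-effective on $Y$, so $f^*(-K_Y-D)$ is $\bQ$-effective on $X$ and we may take $E_X=0$.

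The main technical delicacy will be tracking the behaviour of $\pi_X$- and $\pi_Y$-exceptional divisors under pushforward—in particular, verifying both the non-domination property needed to apply \autoref{t-chang3.8ED} and the $f$-exceptionality of the correction $E''$ on $X$; both rely on the equidimensionality of $f'$ and the compatibility $\pi_Y\circ f'=f\circ\pi_X$ from the flattening.
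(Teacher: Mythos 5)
Your proposal is correct and follows essentially the same route as the paper: flatten $f$ via \autoref{l-flattening}, take crepant pullbacks, apply \autoref{t-chang3.8ED} on the equidimensional model, compare $K_{Y'}$ with $\pi_Y^*K_Y$, push forward, and treat cases (a), (b), (c) exactly as the paper does (no $f$-exceptional divisors, projection formula, and $E_Y\geq 0$, respectively). The only cosmetic difference is that you take $E_X=(\pi_{X*}f'^*E_Y)^-$ whereas the paper takes $E_X=\pi_{X*}f'^*(E_Y^-)$; both are effective, $f$-exceptional, and serve the same purpose.
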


\begin{proof}
    Pick a birational equidimensional model constructed as in \autoref{l-flattening}
    \begin{center}
        \begin{tikzcd}
            X'\arrow[r,"\pi"]\arrow[d,"f'"] & X\arrow[d,"f"]\\
            Y'\arrow[r,"\mu"] & Y,
        \end{tikzcd}
    \end{center}
    and define $\bQ$-divisors $B'$ and $E$ via the formulae
    \[
    K_{X'}+B'=\pi^*(K_X+B)\hspace{5mm}K_{Y'}=\mu^*K_Y+E.
    \]
    Note that neither divisor is effective, however $\supp(B'^-)$ does not dominate $Y'$. Indeed, its support is made up of $\pi$-exceptional divisors and the strict transform $\pi^{-1}_*B^-$; the former does not dominate $Y'$ since $\Exc(\pi)\subseteq f'^{-1}\Exc(\mu)$, while the latter does not dominate $Y'$ by hypothesis. Let now $U\subseteq Y$ be a dense open such that $\cI(X,B;\vvert{L})|_{f^{-1}(U)}\cong \cO_{f^{-1}(U)}$. Set $U'\coloneqq \mu^{-1}(U)$ and $L'\coloneqq\pi^*L=-K_{X'}-B'-f'^*\mu^*D$: modulo shrinking $U$ we may assume that $\pi$ is an isomorphism over $f^{-1}(U)$, thus we have $\cI(X',B';\vvert{L'})|_{f'^{-1}(U')}\cong \cO_{f'^{-1}(U')}$. Hence  we apply \autoref{t-chang3.8ED} to $(f'\colon (X',B')\to Y',\mu^*D)$, which yields the $\bQ$-effectivity of
    \begin{equation*}
        \begin{split}
            f'^*(-K_{Y'}-\mu^*D)+B'^- & =f'^*(-\mu^*K_Y-E-\mu^*D)+B'^-\\
                                           & \leq f'^*(\mu^*(-K_Y-D)+E^-)+B'^-.
        \end{split}
    \end{equation*}
    Letting $E_X\coloneqq \pi_*f'^*E^-$, we have that $E_X$ is an effective $f$-exceptional $\bQ$-divisor by construction. By pushing forward the above inequality via $\pi$ we have that 
    \[
    f^*(-K_Y-D)+B^-+E_X
    \] 
    is $\bQ$-effective too. If $f$ is equidimensional then there are no $f$-exceptional divisors, showing (a). If $Y$ has canonical singularities then $E^-=0$, thus $E_X=0$ as well, showing (c). In case (b), since $f^*(-K_Y-D)+E_X$ is $\bQ$-effective and $E_X\geq 0$ is $f$-exceptional, the projection formula yields the $\bQ$-effectivity of $f^*(-K_Y-D)$ as well.
\end{proof}

By perturbing the boundary divisor we obtain the precise descent of positivity in Step (2) of \autoref{s-proof_idea}.

\begin{proposition}[{see \cite[Proposition 4.2]{Chang}}]\label{p-chang4.2}
    Consider the datum $(f\colon X\to Y;B,D,E,\Gamma)$ where 
    \begin{itemize}
        \item $f$ is a fibration of normal projective varieties,
        \item $(X,B)$ is a sub-pair and $\supp(B^-)$ does not dominate $Y$,
        \item $Y$ is $\bQ$-Gorenstein and $D$ and $E$ are $\bQ$-Cartier $\bQ$-divisors on $Y$,
        \item $0\leq \Gamma\sim_{\bQ} -K_X-B-f^*(D+E)$.
    \end{itemize}
    Let $L\coloneqq -K_X-B-f^*D$, let $y\in Y$ be a general point, and suppose that $\cI(X_y,B_y;\vvert{L}_y)$ is trivial. Then $f^*(-K_Y-D-\epsilon E)+B^-+E_X$ is $\bQ$-effective for some effective $f$-exceptional $\bQ$-divisor $E_X\geq 0$ and all $0<\epsilon\ll 1$. Furthermore, we can take $E_X=0$ in the following cases:
    \begin{enumerate}
        \item $f$ is equidimensional;
        \item $B$ is effective;
        \item $Y$ has canonical singularities.
    \end{enumerate}
\end{proposition}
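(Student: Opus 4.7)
The plan is to reduce the statement to an application of \autoref{t-chang3.8} on a carefully perturbed sub-pair. Since $\cI(X_y,B_y;\vvert{L}_y)=\cO_{X_y}$, by \autoref{p-comparision_asy_linsys_genmem} we can pick a $\bQ$-divisor $\Lambda\sim_{\bQ}L$, arising from a sufficiently general element of some linear series $|V_{nm}|$ attached to $L$, such that $(X_y,B_y+\Lambda_y)$ is sub-klt. The key idea is then to transfer the $\epsilon E$ twist on the base into the boundary on $X$ by absorbing a fraction of $\Gamma$ together with a complementary fraction of $\Lambda$.

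Concretely, I would set $B'\coloneqq B+(1-\epsilon)\Lambda+\epsilon\Gamma$. Using $\Lambda\sim_{\bQ}L$ and $\Gamma\sim_{\bQ}L-f^*E$, a direct computation gives
\[
-K_X-B'-f^*(D+\epsilon E)\sim_{\bQ}L-(1-\epsilon)\Lambda-\epsilon(\Gamma+f^*E)\sim_{\bQ}0.
\]
Thus the new anticanonical twist $L'\coloneqq -K_X-B'-f^*(D+\epsilon E)$ is $\bQ$-linearly trivial, so $\cI(X_y,B'_y;\vvert{L'}_y)$ collapses to the ordinary multiplier ideal $\cI(X_y,B'_y)$, which is trivial if and only if $(X_y,B'_y)$ is sub-klt. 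But $B'_y=B_y+(1-\epsilon)\Lambda_y+\epsilon\Gamma_y$: decreasing $\Lambda_y$ to $(1-\epsilon)\Lambda_y$ preserves sub-klt by monotonicity of log discrepancies, and adding the small effective term $\epsilon\Gamma_y$ preserves it by stability of sub-klt under small perturbations (the sub-pair analogue of \autoref{r-klt}), for all $0<\epsilon\ll 1$. Moreover, since $\Lambda,\Gamma\geq 0$, we have $B'^-\leq B^-$, so $\supp(B'^-)$ still does not dominate $Y$.

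Applying \autoref{t-chang3.8} to the datum $(f\colon X\to Y;B',D+\epsilon E)$ then yields an $f$-exceptional effective $\bQ$-divisor $E_X\geq 0$ such that $f^*(-K_Y-D-\epsilon E)+B'^-+E_X$ is $\bQ$-effective; since $B'^-\leq B^-$, so is $f^*(-K_Y-D-\epsilon E)+B^-+E_X$. The three special cases follow immediately: (a) is unchanged for the perturbed datum; (c) holds because $Y$ is still canonical; and for (b), if $B\geq 0$ then $B'=B+(1-\epsilon)\Lambda+\epsilon\Gamma\geq 0$ is still effective, so case (b) of \autoref{t-chang3.8} applies to give $E_X=0$.

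The main subtlety is the transfer of the asymptotic multiplier ideal hypothesis: this is exactly why we absorb $(1-\epsilon)\Lambda$ into the new boundary, so that the perturbed twist $L'$ becomes $\bQ$-trivial and the asymptotic multiplier ideal reduces to an ordinary multiplier ideal condition that can be handled by the monotonicity and small-perturbation stability of sub-klt singularities. Everything else is bookkeeping.
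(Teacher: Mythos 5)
Your proof is correct and follows essentially the same route as the paper: the paper likewise perturbs the datum to $(B_\epsilon\coloneqq B+\epsilon\Gamma,\,D_\epsilon\coloneqq D+\epsilon E)$, notes $L_\epsilon\sim_{\bQ}(1-\epsilon)L$, checks the multiplier-ideal hypothesis via \autoref{p-comparision_asy_linsys_genmem} and \autoref{r-klt}, and applies \autoref{t-chang3.8}. Your additional step of absorbing $(1-\epsilon)\Lambda$ into the boundary so the twist becomes $\bQ$-trivial merely makes explicit the general-member/klt-perturbation check that the paper leaves implicit, and all the remaining bookkeeping (including $B'^-\leq B^-$ and the three cases for $E_X=0$) matches.
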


\begin{proof}
    Define 
    \begin{equation*}
        B_\epsilon\coloneqq B+\epsilon\Gamma \hspace{5mm} D_\epsilon\coloneqq D+\epsilon E \hspace{5mm} L_\epsilon\coloneqq -K_X-B_\epsilon-f^*D_\epsilon.
    \end{equation*}
    Then $L_\epsilon\sim_{\bQ}(1-\epsilon)L$ and, by \autoref{p-comparision_asy_linsys_genmem} and \autoref{r-klt}, the hypotheses of \autoref{t-chang3.8} are satisfied with respect to the datum $(f\colon X\to Y;B_\epsilon,D_\epsilon)$, provided $\epsilon>0$ is small enough. Then \autoref{t-chang3.8} yields the $\bQ$-effectivity of 
    \[
    f^*(-K_Y-D_\epsilon)+B_\epsilon^-+E_X
    \]
    for some effective $f$-exceptional $\bQ$-divisor $E_X$.
    Since $B_{\epsilon}^- \leq B^-$, we obtain $\bQ$-effectivity of $f^*(-K_Y-D-\epsilon E)+B^-+E_X$.
    In particular, we can take $E_X=0$ in cases (a), (b), and (c).
\end{proof}

Next, we prove the Injectivity theorem (Step (3)).

\begin{theorem}[{see \cite[Theorem 4.3]{Chang}}]\label{t-chang4.3inj}
    Consider the datum $(f\colon X\to Y;B,D, P)$ where 
    \begin{itemize}
        \item $f$ is a fibration of normal projective varieties,
        \item $(X,B)$ is a sub-pair and $\supp(B^-)$ does not dominate $Y$,
        \item $Y$ is $\bQ$-Gorenstein and $D$ is a $\bQ$-Cartier $\bQ$-divisor on $Y$,
        \item $P$ is a $\bQ$-Cartier $\bQ$-divisor on $X$ such that $P \geq B^-$ and $\kappa(X,f^*(-K_Y-D)+P)=0$.
    \end{itemize}
    Let $L\coloneqq -K_X-B-f^*D$, let $y\in Y$ be a general point, and suppose that $\cI(X_y,B_y;\vvert{L}_y)$ is trivial.
    Assume one of the following holds:
    \begin{enumerate}
        \item $f$ is equidimensional;
        \item $B$ is effective;
        \item $Y$ has canonical singularities.
    \end{enumerate}
    Then the natural restriction map
    $
    H^0(X,nL)\to H^0(X_y,nL_y)
    $
    is injective for all $n\geq 0$ such that $nL$ is integral.
    In particular, the inequality $\kappa(X,L)\leq \kappa(X_y,L_y)$ holds.
\end{theorem}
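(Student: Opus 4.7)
The plan is to adapt the strategy of \cite[Theorem 4.3]{Chang}. Assume for contradiction that some nonzero $s\in H^0(X,nL)$ satisfies $s\vert_{X_y}=0$; I aim to derive a contradiction from the rigidity dictated by $\kappa(X,f^*(-K_Y-D)+P)=0$, via the descent of positivity supplied by \autoref{p-chang4.2}. The first step is to extract from $s$ an effective $\bQ$-divisor on $X$ sitting in the class $L-f^*E$, for some effective $\bQ$-Cartier $\bQ$-divisor $E$ on $Y$ with $y\in\supp(E)$.

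When $\dim Y\geq 2$ this is essentially vacuous: any irreducible divisor on $X$ is either horizontal---and then meets the codimension-$\dim Y$ fiber $X_y$ in a proper subvariety---or vertical and disjoint from $X_y$ for general $y$, so no divisor on $X$ can contain $X_y$ and $s\vert_{X_y}=0$ already forces $s=0$. The genuine content lies in the case $\dim Y=1$: here $X_y=f^*y$ is a Cartier divisor and the vanishing $s\vert_{X_y}=0$ is equivalent to $\mathrm{Div}(s)\geq X_y$, so that $\Gamma\coloneqq(\mathrm{Div}(s)-X_y)/n\geq 0$ lies in the class $L-f^*E$ for $E\coloneqq y/n$, and the desired data is in place.

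I would then apply \autoref{p-chang4.2} to the datum $(f;B,D,E,\Gamma)$; the multiplier-ideal hypothesis $\cI(X_y,B_y;\vvert{L}_y)=\cO_{X_y}$ and the non-domination of $\supp(B^-)$ carry over unchanged, and assumption (a), (b), or (c) is exactly what eliminates the $f$-exceptional contribution. The conclusion is that $f^*(-K_Y-D-\epsilon E)+B^-$ is $\bQ$-effective for all sufficiently small $\epsilon>0$, and since $P\geq B^-$ the same holds for $f^*(-K_Y-D)+P-\epsilon f^*E$. Finally, $\kappa(X,f^*(-K_Y-D)+P)=0$ forces the class $f^*(-K_Y-D)+P$ to admit a unique effective $\bQ$-representative $\Theta$: adding $\epsilon f^*E$ to any effective representative of the perturbed class yields an effective representative of $f^*(-K_Y-D)+P$, hence by uniqueness $\Theta\geq \epsilon f^*E=(\epsilon/n)X_y$, so every irreducible component of $X_y$ appears as a vertical component of the fixed divisor $\Theta$ with positive coefficient. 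Since $\Theta$ has only finitely many vertical components, this pins $y$ down to a specific finite subset of $Y$, contradicting its generality. The main obstacle is the extraction step in the $\dim Y=1$ case, where the Cartier nature of $X_y$ is essential; once in hand, the proof is a formal combination of \autoref{p-chang4.2} with the uniqueness of the effective representative coming from $\kappa=0$.
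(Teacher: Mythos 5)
Your reduction to the case $\dim Y=1$ is where the argument breaks. The claim that for $\dim Y\geq 2$ no divisor on $X$ can contain $X_y$, so that $s|_{X_y}=0$ already forces $s=0$, confuses the order of quantifiers: the divisor $\mathrm{Div}(s)$ depends on $s$, while $y$ must be chosen general \emph{before} $n$ and $s$, i.e.\ uniformly against the infinitely many divisors in $\bigcup_n|nL|$. Vertical divisors in these linear series move with $s$: for instance, if some $nL\sim f^*M$ with $h^0(Y,M)\geq 2$ (or merely if $|nL|$ contains the pullback of a pencil from $Y$), then for \emph{every} $y$ there is a nonzero $s\in H^0(X,nL)$ whose zero divisor is a vertical divisor containing $X_y$, so $s|_{X_y}=0$ and the restriction map has nontrivial kernel. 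Hence the statement is far from vacuous when $\dim Y\geq 2$; the hypotheses $\cI(X_y,B_y;\vvert{L}_y)=\cO_{X_y}$ and $\kappa(X,f^*(-K_Y-D)+P)=0$ are precisely what rule out such kernels, and they must be exploited in every base dimension.

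The paper supplies the missing extraction step in general by blowing up: let $Y'\to Y$ be the blow-up at $y$ with exceptional divisor $E$, and $X'\to X$ the blow-up along $X_y$ with exceptional divisor $G=f'^*E$. A section vanishing on $X_y$ pulls back to $N'$ with $G\subseteq\supp(N')$, so one may set $\Gamma\coloneqq N'-\delta G\sim_{\bQ}L'-f'^*(\delta E)$ and apply \autoref{p-chang4.2} to the blown-up datum $(f'\colon X'\to Y';\pi^{-1}_*B,\mu^*D-aE,\delta E,\Gamma)$, the discrepancy $aE$ being absorbed into the divisor on the base; pushing forward then produces an effective member of $|f^*(-K_Y-D)+P|_{\bQ}$ containing $X_y$, contradicting the uniqueness coming from $\kappa=0$ together with the generality of $y$ --- exactly the rigidity step you invoke. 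Your $\dim Y=1$ computation and the final uniqueness argument are correct and coincide with the paper's (in relative dimension one over a curve the fibre is already a Cartier divisor, so no blow-up is needed), but without the blow-up construction the case $\dim Y\geq 2$ is untreated, and that is the main content of the theorem.
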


\begin{proof}
    As $y$ is general, we may assume that it lies in the smooth locus of $Y$, that the map $f$ is flat over a neighbourhood of $y$, and that $\supp(B)$ does not contain $X_y$. By contradiction suppose the map $H^0(X,nL)\to H^0(X_y,nL_y)$ is not injective for some $n>0$ such that $nL$ is integral. Then there exists $0\leq N\sim_{\bQ} L$ such that $X_y\subseteq \supp(N)$. Note that by hypothesis there exists a unique effective $\bQ$-divisor $M\sim_{\bQ} f^*(-K_Y-D)+P$, hence we may also assume $X_y\nsubseteq \supp(M)$. Consider the following Cartesian diagrams 
    \begin{center}
        \begin{tikzcd}
            G\arrow[r,hook]\arrow[d] & X'\arrow[r,"\pi"]\arrow[d,"f'"] & X\arrow[d,"f"]\\
            E\arrow[r,hook] & Y'\arrow[r,"\mu"] & Y,
        \end{tikzcd}
    \end{center}
    where the notation is as follows
    \begin{itemize}
        \item $Y'$ is the blow-up of $Y$ at $y$ with exceptional divisor $E$;
        \item the fibre product $X'$ coincides with the blow-up of $X$ at $X_y$ with exceptional divisor $G$, since blow-ups commute with flat base change (\cite{stacks}*{Tag 0805}).
    \end{itemize}
    
    Note that $X'$ is normal as well: first, it is $S_3$ since $G=E\times X_y$ is $S_2$ and a Cartier divisor. To show $X'$ is $R_1$ it suffices to show regularity of $X'$ at the generic point of $G$. But this is clear, since generically on $G$ the variety $X'$ is the blow-up of $X$ at a smooth subvariety, as the fibre $X_y$ is normal itself. We have pull-back formulae
    \[
    \pi^*(K_X+B)+aG= K_{X'}+B', \hspace{5mm} K_{Y'}=\mu^*K_Y+aE,
    \]
    where $B'$ is the strict transform of $B$ and $a=\dim (Y)-1$. Note that $(X',B')$ is still a sub-pair, since $(X,B)$ is one and $G$ is Cartier. Furthermore, note that when $f\colon (X,B)\to Y$ satisfies (a), (b), or (c) respectively, then so does $f'\colon (X',B')\to Y'$. Let now $\delta$ be a positive rational number and consider the following $\bQ$-divisors:
    \begin{itemize}
        \item $D'\coloneqq \mu^*D-aE$;
        \item $L'\coloneqq -K_{X'}-B'-f'^*D'\sim_{\bZ_{(p)}}\pi^*L$;
        \item $N'\coloneqq \pi^*N$, so that $G\subseteq \supp(N')$ since $X_y\subseteq \supp(N)$;
        \item $E'\coloneqq \delta E\geq 0$;
        \item $\Gamma'\coloneqq N'-\delta G\sim_{\bZ_{(p)}}L'-f'^*E'$.
    \end{itemize}
    Note that, as $G\subseteq \supp(N')$ and $N'\geq 0$, by picking $\delta$ to be sufficiently small we may assume $\Gamma'\geq 0$. Let $U\subseteq Y$ be a dense open subset such that $\cI(X,B;\vvert{L})|_{f^{-1}(U)}\cong \cO_{f^{-1}(U)}$. Set $U'\coloneqq\mu^{-1}(U)$ and note that, modulo shrinking $U$, we may assume that $\pi$ is an isomorphism over $f^{-1}(U)$, thus we have $\cI(X',B';\vvert{L'})|_{f'^{-1}(U')}\cong \cO_{f'^{-1}(U')}$, since by the projection formula $H^0(X,mL)=H^0(X',mL')$ for all $m\geq 0$ divisible enough.  By applying \autoref{p-chang4.2} to the datum $(f'\colon X'\to Y';B',D',E',\Gamma')$ we obtain the $\bQ$-effectivity of $f'^*(-K_{Y'}-\mu^*D+(a-\delta\epsilon) E)+B'^-$. Since $P \geq B^-$, we have $\bQ$-effectivity of $f'^*(-K_{Y'}-\mu^*D+(a-\delta\epsilon) E)+\pi^*P$ as well, i.e.\ we can find a $\bQ$-divisor $M'\in |f'^*(-K_{Y'}-\mu^*D+aE)+\pi^*P|_{\bQ}=|\pi^*(f^*(-K_{Y}-D)+P)|_{\bQ}$ vanishing on $G$. By the projection formula, this gives an effective $\bQ$-divisor $\wb{M} \in |f^*(-K_Y-D)+P|_{\bQ}$ vanishing on $G$. But then $\wb{M}=M$, contradicting the assumption that $X_y \not\subseteq \supp(M)$.
\end{proof}

We are now ready to prove the main theorem of this section.

\begin{proof}[Proof of \autoref{t-main_char0}]
    Since we are going to consider very general fibres of $f$, we base change to an uncountable base field: note that all the hypotheses and the thesis of the theorem are stable under field extension.
    
    We begin by addressing the inequality: assume first that $Y$ is smooth. 
    By \autoref{t-chang3.8}, since $\kappa(X,L) \geq 0$, then $\kappa(Y, -K_Y-D) \geq 0$ as well.
    Moreover, by \autoref{t-chang4.3inj} we may assume that $\kappa(Y,-K_Y-D)>0$. Consider the following diagram
    \begin{equation*}
        \begin{tikzcd}
            X_y\arrow[d]\arrow[r] & X'_z\arrow[d,"f'_z"]\arrow[r] & X'\arrow[d,"f'"]\arrow[r,"\pi"] & X\arrow[d,"f"]\\
            y\arrow[r]            & Y'_z\arrow[d]\arrow[r]           & Y'\arrow[d,"g'"]\arrow[r,"\mu"] & Y\arrow[d,"g",dashed]\\
                                  & z\arrow[r]                       & Z'\arrow[r,dashed]               & Z,
        \end{tikzcd}
    \end{equation*}
    where the notation is as follows.
    
    \begin{itemize}
        \item[(i)] $g$ is the rational map induced by $|m(-K_Y-D)|$ for some sufficiently divisible $m>0$.
        \item[(ii)] The bottom-right square is given by \autoref{t-IF}. In particular, by the proof of the aforementioned theorem, we can take $\mu$ to be a composition of blow-ups along smooth centres contained in the stable base locus $\mathbf{B}(-K_Y-D)$, hence $Y'$ is smooth too.
        \item[(iii)] $X'$ is the normalisation of the main component of $X\times_Y Y'$, and $\pi$ is the induced birational morphism.
        \item[(iv)] $z\in Z'$ and $y\in Y'$ are very general points.
    \end{itemize}
Since $\dim( Z')=\kappa(Y,-K_Y-D)$, \autoref{l-easyadditivity}(i) applied to the composition $g'\circ f'$ yields
\begin{equation*}
    \begin{split}
        \kappa(X',\pi^*L) & \leq \kappa(X'_z,(\pi^*L)|_{X'_z})+\dim( Z')\\
                          & =\kappa(X'_z,(\pi^*L)|_{X'_z})+\kappa(Y,-K_Y-D).
    \end{split}
\end{equation*}
As $\kappa(X,L)=\kappa(X',\pi^*L)$ by the projection formula, we are reduced to show
\[
\kappa(X'_z,(\pi^*L)|_{X'_z})\leq \kappa(X_y,L_y).
\]
Letting $\Delta\coloneqq \pi^*(K_{X/Y}+B)-K_{X'/Y'}$ we have 
\[
L'\coloneqq \pi^*L=-K_{X'}-\Delta-f'^{*}(\mu^*(K_Y+D)-K_{Y'}),
\]
hence
\[
\pi^*L|_{X'_z}=-K_{X'_z}-\Delta_z-f_z'^{*}(\mu^*(K_Y+D)|_{Y'_z}-K_{Y'_z}).
\]
Note that $\supp(\Delta^-)$ does not dominate $Y'$, as it consists of $\pi$-exceptional divisors, and $\Exc(\pi)\subseteq f'^{-1}\Exc(\mu)$. In particular, since $\mu$ consists of a sequence of blow-ups at smooth centres contained in $\mathbf{B}(-K_Y-D)$, we have $\supp(f'^{*}\mu^*N)\supseteq \supp(\Delta^-)$ for all $N\in |-K_Y-D|_{\bQ}$. By letting $P$ be a sufficiently large positive multiple of $f'^*\mu^*N$ for some $N$ as above, we can then assume $P\geq \Delta^-$. 
Next, we want to apply \autoref{t-chang4.3inj} to the datum $(f'_z\colon X'_z\to Y'_z;\Delta_z,\mu^*(K_Y+D)|_{Y'_z}-K_{Y'_z},P_z)$, therefore we verify that all the assumptions are satisfied. First, we have
\[
\kappa(X'_z,f'^{*}_z(-K_{Y'_z}-(\mu^*(K_Y+D)|_{Y'_z}-K_{Y'_z})+P_z)=0
\]
by construction, \autoref{t-IF} and \autoref{l-kodaira dimension under finite morphisms}. Let $U \subseteq Y$ be a dense open subset such that $\pi$ is an isomorphism over $f^{-1}(U)$. Modulo shrinking $U$, letting $U'\coloneqq \mu^{-1}(U)$, we can assume $\cI(X',\Delta;\vvert{L'})|_{f'^{-1}(U')}\cong \cO_{f'^{-1}(U')}$. By \autoref{p-comparision_asy_linsys_genmem} and \autoref{r-klt} this means that for all $m\geq 1$ divisible enough and all general $D_m\in |mL'|$ the sub-pair $(X',\Delta+D/m)$ is klt over $U'$. As $z\in Z'$ is general, we have that $(X'_z,\Delta_z+D_z/m)$ is klt over $U'_z$. Again, by \autoref{p-comparision_asy_linsys_genmem} and \autoref{r-klt} we have $\cI(X'_z,\Delta_z;\vvert{L'}_{z})|_{f'^{-1}_z(U'_z)}\cong \cO_{f'^{-1}_z(U'_z)}$. Since $|mL'|_{z}\subseteq |mL'_z|$ for all $m$ divisible enough, and $\supp(\Delta_z^-)$ does not dominate $Y'_z$, we have $\cI(X'_z,\Delta_z;\vvert{L'}_{z})\subseteq \cI(X'_z,\Delta_z;\vvert{L'_z})$ (\cite[Proposition 9.2.32]{Laz2}), hence yielding an isomorphism $\cI(X'_z,\Delta_z;\vvert{L'_z})|_{f'^{-1}_z(U'_z)}\cong \cO_{f'^{-1}_z(U'_z)}$. As $Y'$ is smooth, so is $Y'_z$, hence we apply \autoref{t-chang4.3inj} to the datum $(f'_z\colon X'_z\to Y'_z;\Delta_z,\mu^*(K_Y+D)|_{Y'_z}-K_{Y'_z},P_z)$ and conclude 
\[
\kappa(X'_z,(\pi^*L)|_{X'_z})\leq \kappa(X_y,L_y).
\]

Suppose now that $Y$ is not smooth. Consider the following diagram
\begin{center}
    \begin{tikzcd}
        X'\arrow[r,"\pi"]\arrow[d,"f'"] & X\arrow[d,"f"]\\
        Y'\arrow[r,"\mu"] & Y,
    \end{tikzcd}
\end{center}
where $\mu$ is a resolution of singularities, and $X'$ is the normalisation of the main component of $X\times_Y Y'$. By construction we have again $\Exc(\pi)\subseteq f'^{-1}\Exc(\mu)$. Let $B'$ be defined by crepant pull-back $\pi^*(K_X+B)=K_{X'}+B'$. Although $B'$ is not necessarily effective, its negative part is entirely made up of $\pi$-exceptional divisors. Hence we can find an effective $\mu$-exceptional $\bQ$-divisor $\Theta'$ on $Y'$ such that $f'^*\Theta'\geq B'^-$. Then we can consider the datum 
\[
(f'\colon X'\to Y';B'+f'^*\Theta',\mu^*D-\Theta'),
\]
so that 
\[
    L'\coloneqq -K_{X'}-B'-f'^*\Theta'-f'^*(\mu^*D-\Theta') \sim_{\bQ}\pi^*L. 
\]
Again, modulo shrinking $U$, we can assume $\pi$ is an isomorphism over $f^{-1}(U)$. Thus, letting $U'\coloneqq \mu^{-1}(U)$ we have $\cI(X',B'+f'^*\Theta';\vvert{L'})|_{f'^{-1}(U')}\cong\cO_{f'^{-1}(U')}$.
We can then apply the previous case of the theorem to conclude
\[
\kappa(X',L')\leq \kappa(X'_y,L'_y)+\kappa(Y',-K_{Y'}-\mu^*D+\Theta').
\]
By the projection formula we have $\kappa(X',L')=\kappa(X,L)$, and by construction $\kappa(X'_y,L'_y)=\kappa(X_y,L_y)$. Write $K_{Y'}\sim_{\bQ}\mu^*K_Y+E$, where $E$ is a $\mu$-exceptional $\bQ$-divisor, not necessarily effective. In particular
\begin{equation*}
    \begin{split}
        -K_{Y'}-\mu^*D+\Theta' & =\mu^*(-K_Y-D)+E^--E^++\Theta'\\
                               & \leq \mu^*(-K_Y-D)+E^-+\Theta'.
    \end{split} 
\end{equation*}
As both $E^-$ and $\Theta'$ are effective and $\mu$-exceptional, the projection formula yields $\kappa(Y',-K_{Y'}-\mu^*D+\Theta')\leq \kappa(Y,-K_Y-D)$. Putting everything together we conclude
\[
\kappa(X,L)\leq \kappa(X_y,L_y)+\kappa(Y,-K_Y-D),
\]
for $y \in Y$ very general.
Since $\kappa(X_{\eta}, L_{\eta}) \geq 0$, the above inequality holds also for $y \in Y$ general by \autoref{l-Iitaka dimensions of fibres}.

The ``furthermore'' part follows from \autoref{p-opposite_inequality} and \autoref{t-chang4.3inj}.
\end{proof}

\section{\texorpdfstring{$F$}{}-singularities}%

Throughout this section, we will work over an algebraically closed field $k$ of characteristic $p>0$.
If $X$ is any $k$-scheme, and $e\geq 1$, we denote by $F^e_X\colon X^{e}\to X$ the ($e$-th power of the) \textit{absolute Frobenius morphism of X}: this is the identity at the level of topological spaces, and the map $(F^e_X)^\sharp\colon \cO_X\to F^e_{X,*}\cO_{X^e}$ raises a function to its $p^e$-th power. Note that $X$ and $X^e$ are abstractly isomorphic, although not as $k$-schemes. Similarly, $F^e_X$ is not a morphism of $k$-schemes in general.

\begin{definition}\label{d-relfrob}
Let $\pi\colon X\to Y$ be a morphism of $k$-schemes. For all $e\geq 1$ we have the following commutative diagram
\begin{equation}\label{e-relfrob}
    \begin{tikzcd}
        X^e \arrow[ddrr, "\pi^e", bend right=20] \arrow[drr, "\Fr^{e}_{X/Y}"] \arrow[drrrr, bend left=20, "\Fr^{e}_{X}"] & &  \\
         & &X_{Y^e} \arrow[d, "\pi_{Y^e}"] \arrow[rr, "(\Fr^{e}_{Y})_{X}"] & & X \arrow[d, "\pi"]\\
         & & Y^e \arrow[rr, "\Fr^{e}_{Y}"] & & Y,
    \end{tikzcd}
\end{equation}
where the square is Cartesian. The morphism $F^e_{X/Y}$ is called the \textit{relative Frobenius of X over Y}, or \textit{$Y$-linear Frobenius}.
\end{definition}

We will drop the subscript from $F^e_X$ and the superscript from $X^e$ and $\pi^e$ whenever it is safe to do so.

In positive characteristic one often studies singularities through the action of the Frobenius morphism. A famous result of Kunz (\cite{kunz}, see also \cite[Tag 0EC0]{stacks}) states that a local ring $R$ is regular if and only if $F_*R$ is a free $R$-module. More generally, one is interested in singularities for which $R\to F_*R$ is a split map. In the global case this leads to the study of \textit{$F$-split} varieties; that is, varieties $X$ for which $\cO_X\to \Fr_*\cO_X$ is a split map of $\cO_X$-modules.

\subsection{Traces of Frobenius morphisms}

We introduce notation relative to trace maps of Frobenius morphisms. Throughout this section $(X,B)$ will denote a $\bZ_{(p)}$-sub-couple over $k$. Let $a\geq 1$ be the smallest integer such that $aB$ is integral, and let $d\geq 1$ be the smallest positive integer such that $a$ divides $(p^d-1)$. As $(p^e-1)B$ is integral for all $e\in d\bN$, we have divisorial sheaves
\[
\cL_{X,B}^{(e)}\coloneqq \cO_X((1-p^e)(K_X+B)).
\]
When $X$ is clear from the context, we will simply write $\cL^{(e)}_B$. In particular, $\cL^{(e)}=\cO_X((1-p^e)K_X)$. Grothendieck duality yields a trace map $\Fr^e_*\cO_X(K_X)\to\cO_X(K_X)$. When $B\geq 0$, twisting by $-K_X$ and pre-composing with the inclusion $\cL_{B}^{(e)}\subseteq \cL^{(e)}$ yields a map 
\[
T^e_{B}\colon \Fr^e_* \cL^{(e)}_{B}\subseteq \Fr_*^e\cL^{(e)} \to \cO_X.
\]
By further twisting the above map by any Cartier divisor $M$ we then obtain
\[
T^e_{B}(M)\colon \Fr^e_* \cL^{(e)}_B\otimes\cO_X(M) \to \cO_X(M).
\]

We now extend the above construction to the case where $B$ is not necessarily effective. Let $k(X)$ denote the field of rational functions of $X$: as $B$ is a $\bZ_{(p)}$-divisor, we have an $\cO_X$-linear twisted trace map $T^e_B$, fitting in the following commutative diagram
    \begin{center}
        \begin{tikzcd}
            \Fr^e_*\cL^{(e)}_{B^+}\arrow[d, hook]\arrow[r,"T_{B^+}^e"] & \cO_X\arrow[d, hook]\\
            \Fr^e_*\cL^{(e)}_{B}\arrow[r,"T^e_B"] & k(X)
        \end{tikzcd}
    \end{center}
    for all $e\in d\bN$. To construct it we work locally over the regular locus of $X$. Write $(p^e-1)B=E^+-E^-$, and let $\varphi$ be a regular function such that $E^-=(\varphi=0)$. If $\sigma$ is a section of $\cL^{(e)}_B$, then locally $\sigma = s/\varphi$, where $s\in \cL^{(e)}_{B^+}$. Then, by $\cO_X$-linearity we must have
    \[
    T^e_B(F^e_*(\sigma))\coloneqq\frac{T^e_{B^+}(F^e_*(\varphi^{p^e-1}s))}{\varphi}.
    \]
    In particular, $\Image(T^e_B)\subseteq\cO_X(E^-)$.

\begin{proposition}[{\cite[Section 2]{DS}}]\label{p-correspondence}
The assignment $B\mapsto (T^e_B\colon \Fr^e_*\cL_B^{(e)}\to k(X))$ defines bijections
\begin{center}
    \begin{tikzcd}
        \left\{ \begin{array}{c}\text{$\bQ$-divisors $B\geq 0$ such that}\\ \text{ $(1-p^e)(K_X + B)$ is integral}\end{array} \right\} \arrow[rr]\arrow[d,hook] & & \left\{ \begin{array}{c}\text{divisorial sheaves $\cL$ and}\\ \text{$\cO_X$-linear maps}\\ \text{$\psi : \Fr^e_* \cL \xrightarrow{\neq 0} \cO_X$} \end{array} \right\}\Big/\sim \arrow[d,hook] \arrow[ll] \\
        \left\{ \begin{array}{c}\text{$\bQ$-divisors $B$ such that}\\ \text{ $(1-p^e)(K_X + B)$ is integral}\end{array} \right\} \arrow[rr] & & \left\{ \begin{array}{c}\text{divisorial sheaves $\cL$ and}\\ \text{$\cO_X$-linear maps}\\ \text{$\psi : \Fr^e_* \cL \xrightarrow{\neq 0} k(X)$}\end{array} \right\}\Big/\sim, \arrow[ll]\\
    \end{tikzcd}
\end{center}
where $\psi_1\sim\psi_2$ if the two maps agree up to multiplication by a unit of $H^0(X, \cO_X)$.
\end{proposition}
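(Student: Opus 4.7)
Both bijections are instances of Grothendieck duality for the Frobenius morphism, combined with reflexivity on the normal scheme $X$. The key identification I plan to exploit is
\[
\cHom_{\cO_X}(\Fr^e_* \cL, \cO_X) \cong \Fr^e_* \bigl(\cL^{\vee} [\otimes] \cO_X((1-p^e)K_X)\bigr),
\]
valid for any divisorial sheaf $\cL$, which comes from combining duality for the finite morphism $\Fr^e$ with the identification $\omega_{X^e/X} \cong \cO_X((1-p^e)K_X)$ under the abstract isomorphism $X^e \cong X$ of schemes. Both sides are reflexive, so the isomorphism on the regular locus extends to all of $X$ by \autoref{r-reflexivesonnormal}.

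For the top bijection, given $B \geq 0$ with $(1-p^e)(K_X+B)$ integral, setting $\cL = \cL_B^{(e)}$ the above duality reads
\[
\Hom_{\cO_X}\bigl(\Fr^e_*\cL_B^{(e)},\, \cO_X\bigr) \cong H^0\bigl(X,\, \cO_X((p^e-1)B)\bigr),
\]
and the trace $T^e_B$ constructed in the text corresponds to the canonical global section $1 \in H^0(X, \cO_X((p^e-1)B))$ determined by $B \geq 0$. Conversely, given a nonzero $\psi\colon \Fr^e_* \cL \to \cO_X$ with $\cL = \cO_X(L)$, duality exhibits $\psi$ as a nonzero global section of $\cO_X((1-p^e)K_X - L)$, cutting out a well-defined effective integral divisor $D$; I then set $B \coloneqq D/(p^e-1) \geq 0$, so that $L \sim (1-p^e)(K_X+B)$ and hence $\cL \cong \cL_B^{(e)}$. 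These two constructions are mutually inverse modulo scaling by units: two maps $\psi, \psi'$ inducing the same divisor $D$ differ by an element of $H^0(X, \cO_X)^{\times}$, and conversely multiplying $\psi$ by a unit scales the corresponding section and leaves $D$ invariant.

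The bottom bijection is obtained by running the same argument, but now allowing $\psi$ to take values in the constant sheaf $k(X)$ instead of $\cO_X$. In this setting, a nonzero $\psi$ corresponds to a nonzero \emph{rational} section of $\cL^{\vee} [\otimes] \cO_X((1-p^e)K_X)$, which defines a (possibly non-effective) Weil divisor $D$, and one sets $B \coloneqq D/(p^e-1)$; conversely, the twisted trace $T^e_B$ produced by the local formula in the paragraph preceding the statement is the rational section whose associated divisor is precisely $(p^e-1)B$, so the two recipes agree. The main point requiring verification is that the construction is well-defined modulo multiplication by units of $H^0(X, \cO_X)$, which is immediate from the $\cO_X$-linearity of the duality isomorphism. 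The only real obstacle in writing out the proof is bookkeeping, namely ensuring at each step that the Hom and tensor operations are reflexified correctly so that the duality isomorphism holds on all of the normal variety $X$, not just on its regular locus.
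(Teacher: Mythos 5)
Your proposal is correct and follows essentially the same route as the paper's sketch: Grothendieck duality for $\Fr^e$ identifies maps $\Fr^e_*\cL\to\cO_X$ (resp.\ $k(X)$) with global (resp.\ rational) sections of $\cL^{-1}((1-p^e)K_X)$, whose divisor divided by $p^e-1$ recovers $B$, with reflexivity on the normal $X$ handling the extension from the regular locus. The only difference is cosmetic: the paper writes out only the effective (top) row and cites \cite{DS} for the general case, whereas you also sketch the bottom row via rational sections, which matches the cited construction.
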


\begin{proof}[Sketch of proof]
We construct the top horizontal map and refer the reader to \cite[Section 2.1.1]{DS} for the general case. Given $B\geq 0$ we set $\cL\coloneqq \cL^{(e)}_{X,B}$ and $\psi\coloneqq T^e_B$. Note that $\psi\neq 0$, since $X$ is generically smooth, hence generically $F$-split. Conversely, given a non-zero $\psi \colon F^e_*\cL \to \cO_X$, Grothendieck duality yields
    \begin{equation*}
        \begin{split}
            \psi\in \Hom_{\cO_X} (\Fr^e_*\cL,\cO_X)&\simeq \Hom_{\cO_X} (\Fr^e_*(\cL(p^eK_X)),\cO_X(K_X))\\
            &\simeq \Fr_*^e \Hom_{\cO_X} (\cL(p^eK_X),\cO_X(K_X))\\
            &\simeq \Fr_*^e H^0(X,\cL^{-1}((1-p^e)K_X)).
        \end{split}
    \end{equation*}
    Up to multiplication by a unit in $H^0(X,\cO_X)$ we can identify $\psi$ with an element $D_{\psi}\in H^0(X,\cL^{-1}((1-p^e)K_X))$, and we then set $B\coloneqq D_{\psi}/(p^e-1)$.
\end{proof}

\subsection{Globally \texorpdfstring{$F$}{}-split and globally \texorpdfstring{$F$}{}-regular varieties}

Throughout this section $(X,B)$ will denote a $\bZ_{(p)}$-sub-couple over $k$.

\begin{definition} \label{d-GFS}
     A $\bZ_{(p)}$-sub-couple $(X,B)$ is \textit{globally sub-$F$-split} (GsFS) if, for some $e\in d\bN$, there exists a map $\sigma^e\colon \cO_X\to\Fr^e_*\cL^{(e)}_B$ such that $T^e_B\circ \sigma^e$ is the identity on $\cO_X$. We say it is \textit{globally sub-$F$-regular} (GsFR) if, for every divisor $E\geq 0$, the $\bZ_{(p)}$-sub-couple $(X,B+E/(p^e-1))$ is globally sub-$F$-split for some $e\gg 0$. If $B\geq 0$, we say $(X,B)$ is \textit{globally $F$-split} (GFS), resp.\ \textit{globally $F$-regular} (GFR).
\end{definition}

\begin{lemma} \label{l-Karl<3}
Let $(X, B)$ be a $\bZ_{(p)}$-sub-couple and assume there exists an ample divisor $H$ such that, for every divisor $E\in |H|_{\bZ_{(p)}}$, the $\bZ_{(p)}$-sub-couple $(X,B+E/(p^e-1))$ is globally sub-$F$-split for some $e\gg 0$.
Then $(X,B)$ is globally $F$-regular.
\end{lemma}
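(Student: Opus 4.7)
Let $D\geq 0$ be an arbitrary effective Weil divisor on $X$; the goal is to exhibit some $e'\in\bN$ such that $(X,B+D/(p^{e'}-1))$ is globally sub-$F$-split, which is precisely the condition in \autoref{d-GFS} for $(X,B)$ to be GFR. The plan is to compare $D$ with a suitable element of $|H|_{\bZ_{(p)}}$, apply the standing hypothesis to that element, and then propagate the resulting splitting via monotonicity.

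First I would exploit the ampleness of $H$. By Serre vanishing and Riemann--Roch, for $n\gg 0$ we have $h^0(X, nH-D)>0$, and among large integers there are plenty coprime to $p$. Pick such an $n$, and write $nH\sim D+D_0$ with $D_0\geq 0$ effective. Setting $H':=(D+D_0)/n$, the $\bQ$-divisor $H'$ lies in $|H|_{\bZ_{(p)}}$ since $nH'=D+D_0$ is integral, linearly equivalent to $nH$, and $n$ is coprime to $p$; moreover $nH'\geq D$ by construction. Applying the hypothesis to $H'$ produces some $e_0\in \bN$ with $(X,B+H'/(p^{e_0}-1))$ globally sub-$F$-split.

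Next I would choose $e'\in e_0\bN$ large enough that $p^{e'}-1\geq n(p^{e_0}-1)$, which gives the chain
\[
\frac{D}{p^{e'}-1}\;\leq\;\frac{D+D_0}{n(p^{e_0}-1)}\;=\;\frac{H'}{p^{e_0}-1},
\]
and hence $B+D/(p^{e'}-1)\leq B+H'/(p^{e_0}-1)$. To conclude I would invoke two standard properties of Frobenius traces. First, GsFS at level $e_0$ propagates to every level in $e_0\bN$: iterating a splitting $\sigma\colon\cO_X\to \Fr^{e_0}_*\cL^{(e_0)}_\Delta$ and using that $\cL^{(me_0)}_\Delta$ is obtained by reflexive twists of iterated pushforwards of $\cL^{(e_0)}_\Delta$ (via the projection formula \autoref{l-pfdivisorialED} and \autoref{p-correspondence}) produces a splitting at level $me_0$. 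Second, at a fixed level, GsFS is inherited by any pair with smaller sub-boundary: if $\Delta_1\leq \Delta_2$ then $\cL^{(e')}_{\Delta_2}\hookrightarrow \cL^{(e')}_{\Delta_1}$, and by construction $T^{e'}_{\Delta_1}$ restricts to $T^{e'}_{\Delta_2}$ along this inclusion, so any splitting of $T^{e'}_{\Delta_2}$ composes into one of $T^{e'}_{\Delta_1}$. Applying the first fact to promote the splitting for $B+H'/(p^{e_0}-1)$ from level $e_0$ to level $e'$, and then the second to pass to the smaller sub-boundary $B+D/(p^{e'}-1)$, yields the desired GsFS.

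The main technical point is the level-raising in the sub-couple setting. One must match $\cL^{(me_0)}_\Delta$ with the correct reflexive twist of $\Fr^{e_0}_*\cL^{(e_0)}_\Delta$ and track the behavior of trace maps when $\Delta$ is not effective, where $T^e_\Delta$ lands in $k(X)$ rather than $\cO_X$ and the image must be cut out by the polar divisor $((p^e-1)\Delta)^-$. This bookkeeping is formal, following the construction preceding \autoref{p-correspondence}, but deserves some care.
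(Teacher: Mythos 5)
Your argument is correct, but it is a genuinely different route from the one the paper takes. The paper invokes the Schwede--Smith criterion \cite[Theorem 3.9]{SS}: it chooses a single effective $E\sim mH$ whose support contains $\supp(B)$ and $\sing(X)$ and whose complement is affine (hence, being regular and affine, globally $F$-regular), and then the hypothesis applied to this one $E$ supplies the splitting ``along $E$'' needed by that criterion. You instead verify the definition of global sub-$F$-regularity directly: given an arbitrary effective $D$, you use ampleness of $H$ to find $n$ prime to $p$ with $nH\sim D+D_0$, apply the hypothesis to $H'=(D+D_0)/n\in|H|_{\bZ_{(p)}}$, and transfer the resulting splitting to the smaller sub-boundary $B+D/(p^{e'}-1)$. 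This works; note, though, that the transfer step you sketch (level-raising plus monotonicity in the sub-boundary, including the bookkeeping with traces valued in $k(X)$ and the need for a common level of integrality) is exactly the content of \autoref{l-GFR_perturbation} (i.e.\ \cite[Lemma 3.5, Corollary 6.1]{SS}), which moreover does not require the two sub-boundaries to be split at the same level; citing it would let you delete the level-raising discussion entirely, since $B+D/(p^{e'}-1)\leq B+H'/(p^{e_0}-1)$ already suffices once $p^{e'}-1\geq n(p^{e_0}-1)$. Comparing the two: your proof is more elementary and self-contained (no need for \cite[Theorem 3.9]{SS} or for a divisor with regular affine complement), while the paper's proof is shorter given that criterion and lands directly in the notion of global $F$-regularity as used in \cite{SS}, which the paper quotes elsewhere; your proof establishes the paper's \autoref{d-GFS} formulation, which agrees with it by standard arguments.
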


\begin{proof}
    Let $S \subseteq X$ be the singular locus of $X$ and let $E\sim mH$ be an effective divisor for some $m \gg 0$ such that $\supp(E) \supseteq \supp(B) \cup S$ and $X \setminus \supp(E)$ is affine.
Then $(X \setminus \supp(E), B|_{X \setminus \supp(E)})$ is globally $F$-regular since it is affine and regular.
By assumption, there exist $e, e'>0$ such that $T^e_{B+E/(p^{e'}-1)}$ splits.
Therefore, by \cite[Theorem 3.9]{SS}, $(X,B)$ is globally $F$-regular.
\end{proof}

Globally $F$-regular couples behave similarly to klt Fano pairs with respect to perturbations of the boundary.

\begin{lemma}[{\cite[Lemma 3.5, Corollary 6.1]{SS}}] \label{l-GFR_perturbation}
    Let $(X,B)$ be a $\bZ_{(p)}$-sub-couple, and let $B'\leq B$ be another $\bZ_{(p)}$-divisor. If $(X,B)$ is globally sub-$F$-split, resp.\ globally sub-$F$-regular, then so is $(X,B')$.
    Furthermore, if $(X,B)$ is globally sub-$F$-regular, and $D\geq 0$ is a divisor, then $(X,B+\epsilon D)$ is globally sub-$F$-regular for all sufficiently small and positive $\epsilon\in\bZ_{(p)}$.
\end{lemma}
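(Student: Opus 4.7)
The plan is to argue both assertions by exploiting the natural inclusions between the divisorial sheaves $\cL^{(e)}_B$ under perturbations of $B$, together with the compatibility of the twisted trace maps $T^e_B$.

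For the monotonicity statement, the key observation is that $B'\leq B$ together with $1-p^e<0$ gives $(1-p^e)(K_X+B')\geq (1-p^e)(K_X+B)$, producing a natural inclusion $\cL^{(e)}_B\hookrightarrow \cL^{(e)}_{B'}$ as divisorial subsheaves of the constant sheaf $k(X)$. Since both $T^e_B$ and $T^e_{B'}$ are restrictions of the same canonical trace $\Fr^e_*k(X)\to k(X)$ to the respective subsheaves (as can be checked via the explicit local description recalled after \autoref{d-traceAMIS}), the restriction of $T^e_{B'}$ to $\Fr^e_*\cL^{(e)}_B$ coincides with $T^e_B$. Given a splitting $\sigma\colon \cO_X\to \Fr^e_*\cL^{(e)}_B$ of $T^e_B$, post-composition with the inclusion above produces a splitting of $T^e_{B'}$, proving the GsFS case. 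The GsFR case is then immediate: for every effective divisor $E$, the inequality $B'+E/(p^e-1)\leq B+E/(p^e-1)$ allows the GsFS case to transfer GsFS from $(X,B+E/(p^e-1))$ to $(X,B'+E/(p^e-1))$ for the same $e$.

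For the perturbation statement, the guiding principle is the positive characteristic analogue of the openness of klt singularities under small boundary perturbations. Applying the GsFR hypothesis for $(X,B)$ to the effective divisor $D$, there exists $e_0$ such that $(X,B+D/(p^{e_0}-1))$ is GsFS. Setting $\epsilon_0\coloneqq 1/(p^{e_0}-1)\in \bZ_{(p)}$, any $\epsilon\in \bZ_{(p)}$ with $0<\epsilon\leq \epsilon_0$ satisfies $B+\epsilon D\leq B+D/(p^{e_0}-1)$, so by the monotonicity just established $(X,B+\epsilon D)$ is GsFS.

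To promote this GsFS statement to GsFR for $(X,B+\epsilon D)$, one needs, for each effective divisor $E$, an exponent $e'$ with $(X,B+\epsilon D+E/(p^{e'}-1))$ GsFS. The natural route is to iterate the Frobenius splitting: composing the splitting of $T^{e_0}_{B+D/(p^{e_0}-1)}$ with itself $m$ times yields a splitting of $T^{me_0}_{B+D/(p^{e_0}-1)}$ whose vanishing order along any prescribed divisor grows with $m$. For $m$ sufficiently large, this vanishing dominates the multiplicities appearing in $E$, and the iterated splitting factors through the subsheaf $\Fr^{me_0}_*\cL^{(me_0)}_{B+\epsilon D+E/(p^{me_0}-1)}\hookrightarrow \Fr^{me_0}_*\cL^{(me_0)}_{B+D/(p^{e_0}-1)}$, yielding the desired GsFS. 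Alternatively, one can appeal to \autoref{l-Karl<3} to reduce GsFR to a splitting condition tested on a single ample divisor, then perturb this test divisor by $\epsilon D$ and handle the resulting perturbation via GsFR of $(X,B)$. I expect the main obstacle to lie precisely in this upgrade step: the monotonicity statement and the initial GsFS reduction for $(X,B+\epsilon D)$ are essentially formal, whereas the simultaneous absorption of the perturbation $\epsilon D$ and an arbitrary effective $E$ into a single iterated trace is the technical core of the argument.
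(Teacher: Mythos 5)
Your monotonicity argument (the first assertion) is correct and is the standard one; since the paper offers no proof beyond the citation to \cite[Lemma 3.5, Corollary 6.1]{SS}, there is nothing internal to compare it with, and your inclusion $\cL^{(e)}_B\hookrightarrow\cL^{(e)}_{B'}$ together with compatibility of the traces is exactly the argument in \emph{loc.\ cit.} (One small point to add: $T^e_{B'}$ is only defined when $(p^e-1)B'$ is integral, so you should first replace $e$ by a suitable common multiple, using that a splitting at level $e$ iterates to one at level $me$ for the same boundary.)

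The ``furthermore'' part has a genuine gap, and it sits exactly where you yourself flagged it. Your primary route claims that composing the splitting of $T^{e_0}_{B+D/(p^{e_0}-1)}$ with itself $m$ times yields splittings whose vanishing order along any prescribed divisor grows with $m$. This is false: under the correspondence of \autoref{p-correspondence}, the $m$-fold self-composition of a splitting at level $e_0$ with associated $\bQ$-divisor $\Lambda$ is a splitting at level $me_0$ with the \emph{same} associated $\bQ$-divisor, since $(p^{me_0}-1)\Lambda=(1+p^{e_0}+\dots+p^{(m-1)e_0})(p^{e_0}-1)\Lambda$; no vanishing along components of $E$ outside $\supp(\Lambda)\cup\supp(B^+ +D)$ is ever created, so the iterate does not factor through $\Fr^{me_0}_*\cL^{(me_0)}_{B+\epsilon D+E/(p^{me_0}-1)}$. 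If this step were correct it would prove that every globally sub-$F$-split sub-couple is globally sub-$F$-regular, which already fails for an ordinary elliptic curve with $B=D=0$. The real issue is that the sub-$F$-regularity of $(X,B)$ must be invoked to absorb \emph{each} $E$, while $\epsilon$ must be uniform in $E$; your set-up uses it only once, to produce the initial GsFS pair. Two standard repairs: (i) compose \emph{one} splitting furnished by GsFR of $(X,B)$ applied to $E$ (boundary $B+E/(p^{e_1}-1)$) with many iterates of the splitting for $B+D/(p^{e_0}-1)$; the composite corresponds to a weighted average of the two boundaries whose weight on the second tends to $1$, hence dominates $B+\epsilon D+E/(p^{e_1+me_0}-1)$ for any fixed $\epsilon<1/(p^{e_0}-1)$ and $m\gg0$, and one concludes by your monotonicity statement; or (ii) the route taken in \cite{SS} and mirrored in the paper's proof of \autoref{l-Karl<3}: choose a single effective divisor $C$ whose support contains $\supp(D)$, $\supp(B)$ and $\sing(X)$ with affine complement, use GsFR of $(X,B)$ to make $(X,B+C/(p^{e_1}-1))$ globally sub-$F$-split, take $\epsilon_0$ with $\epsilon_0 D\leq C/(2(p^{e_1}-1))$, and conclude by the one-divisor criterion \cite[Theorem 3.9]{SS}. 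Your ``alternative'' sentence points towards (ii) but is not carried out, so as written the proposal does not establish the perturbation statement.
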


We recall the definitions of certain subspaces of ``Frobenius stable'' sections.

\begin{definition}[{\cite[Definition 8.1, Proposition 8.3]{division-thm}}]
Let $(X,B)$ be a $\bZ_{(p)}$-couple, and let $M$ be a Cartier divisor on $X$.
We define
\[
S^0(X,B;M)\coloneqq\bigcap_{e\in d\bN}\Image(H^0(X,T^e_B(M)))\subseteq H^0(X,\cO_X(M)),
\]
and
\[
P^0(X, B;M)\coloneqq
\bigcap_{D \subseteq X} \bigcap_{e_0 \geq 0} \sum_{e \geq e_0} \Image(H^0(X,T_{B+D/(p^e-1)}^e(M)))
\]
where $D$ runs over all effective divisors on $X$.
\end{definition}

\begin{lemma}[{\cite[Remark 2.2]{ejiri2017weak}}]\label{l-S^0/P^0charactofGFS/GFR}
Let $(X,B)$ be a $\bZ_{(p)}$-couple. Then
\begin{itemize}
    \item $(X,B)$ is globally $F$-split if and only if $S^0(X,B;\cO_X)=H^0(X,\cO_X)$, and
    \item $(X,B)$ is globally $F$-regular if and only if $P^0(X,B;\cO_X)=H^0(X,\cO_X)$.
\end{itemize}
\end{lemma}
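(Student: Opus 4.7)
The plan is to unwind the definitions and exploit the multiplicativity of the Frobenius trace maps under composition. Since $X$ is projective and connected, $H^0(X,\cO_X)=k$ is one-dimensional, so the condition ``$1\in\Image(H^0(T^e_B))$'' is equivalent to surjectivity of $H^0(T^e_B)$; by \autoref{p-correspondence} and $\cO_X$-linearity, it is further equivalent to the existence of a splitting $\sigma^e\colon\cO_X\to F^e_*\cL^{(e)}_B$ of $T^e_B$. This disposes of the backward implications: for the GFS part, $S^0=k$ produces a splitting for each $e\in d\bN$; for the GFR part, fixing any effective divisor $D$, one-dimensionality of $k$ forces $1$ to lie in a single summand of $\sum_{e\geq 0}\Image(H^0(T^e_{B+D/(p^e-1)}))=k$, and the corresponding splitting shows that $(X,B+D/(p^e-1))$ is GsFS for some $e$.

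Both forward directions rest on the compositional identity
\[
T^{a+b}_B=T^a_B\circ F^a_*\bigl(T^b_B\otimes\id_{\cL^{(a)}_B}\bigr),
\]
which follows from Grothendieck duality together with the tensorial decomposition $\cL^{(a+b)}_B=\cL^{(b)}_B\otimes F^{b*}\cL^{(a)}_B$ and the projection formula. This identity has two consequences: a splitting of $T^{e_0}_B$ iterates to a splitting of $T^{ne_0}_B$ for every $n\geq 1$; and $\Image(H^0(T^{e+f}_B))\subseteq\Image(H^0(T^e_B))$, so the images are non-increasing in $e\in d\bN$. For the GFS forward direction, if $T^{e_0}_B$ is split, the first fact yields $\Image(H^0(T^{ne_0}_B))=k$ for all $n$, and monotonicity then gives $\Image(H^0(T^e_B))=k$ for every $e\in d\bN$, i.e.\ $S^0=k$.

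For the GFR forward direction, fix an effective divisor $D$ and a threshold $e_0$. By GFR applied to $D$, there exist $e_1,e_2$ with $T^{e_2}_{B+D/(p^{e_1}-1)}$ split. Iterating as in the GFS case, $T^{ne_2}_{B+D/(p^{e_1}-1)}$ is split for all $n$; choose $n$ so that $e\coloneqq ne_2\in d\bN$ and $e\geq\max(e_0,e_1)$. The inequality $B+D/(p^e-1)\leq B+D/(p^{e_1}-1)$ produces an inclusion of divisorial sheaves $\cL^{(e)}_{B+D/(p^{e_1}-1)}\subseteq\cL^{(e)}_{B+D/(p^e-1)}$ compatible with the trace maps (cf.\ \autoref{l-GFR_perturbation}), so the splitting transfers to $T^e_{B+D/(p^e-1)}$. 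Hence $\Image(H^0(T^e_{B+D/(p^e-1)}))=k$ contributes to the sum defining $P^0$, yielding $P^0=k$.

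The main technical obstacle is the compositional identity for the $T$-maps: once it is in hand, the remainder is bookkeeping with Frobenius indices, monotonicity of images, and the standard compatibility of trace maps under shrinking the boundary, which combines cleanly with \autoref{l-GFR_perturbation}. The subtler point is the index-matching in the GFR forward direction, where one must pass from a splitting of $T^{e_2}$ of a pair with denominator $p^{e_1}-1$ to a splitting of $T^e$ of a pair with denominator $p^e-1$; this is achieved by the ``iterate then shrink'' maneuver outlined above.
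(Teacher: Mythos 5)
Your argument is correct, and it is essentially the standard one: the paper itself gives no proof of this lemma, delegating it to the citation of Ejiri's Remark~2.2, and what you write is precisely the definition-unwinding behind that remark (composition rule for the trace maps, monotonicity of images, iteration of a splitting, and transfer of splittings along the inclusion $\cL^{(e)}_{B''}\subseteq\cL^{(e)}_{B'}$ when $B'\leq B''$, which is the mechanism of \autoref{l-GFR_perturbation}). Two small points that you use silently and should state: first, the lemma is implicitly for $X$ proper over the algebraically closed base field (as in all the paper's applications and in the cited source), since your reductions hinge on $H^0(X,\cO_X)=k$ being one-dimensional; second, in the backward GFR direction the inference ``the infinite sum equals $k$, hence some single summand equals $k$'' is not automatic for additive subgroups --- it works because each $\Image(H^0(T^e_{B+D/(p^e-1)}))$ is a $k$-subspace of $H^0(X,\cO_X)$, which follows from $\cO_X$-linearity of the trace, namely $T^e(c^{p^e}s)=c\,T^e(s)$ for $c\in k$, so a nonzero image is already all of $k$. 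With those remarks made explicit, the index bookkeeping in your GFR forward step (choosing $n$ so that $ne_2$ is a multiple of $d$ and exceeds $\max(e_0,e_1)$, then shrinking the boundary from denominator $p^{e_1}-1$ to $p^{e}-1$) is sound, and the whole proof goes through.
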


\begin{remark}\label{r-geometricallyGFS}
    We point out that, over a perfect field, being globally $F$-split is an absolute condition. More precisely, suppose $(X,B)$ is a $\bZ_{(p)}$-sub-couple over some perfect field $\bk$ of characteristic $p>0$. Let $(\wb{X},\wb{B})$ denote the base change to an algebraic closure $\wb{\bk}\supseteq\bk$. Then we have the following Cartesian diagrams of $\bF_p$-schemes
    \begin{center}
        \begin{tikzcd}
            \wb{X}\arrow[rr]\arrow[d,"F_{\wb{X}}"]        &            & X\arrow[d,"F_{X}"]\\
            \wb{X}\arrow[rr]\arrow[d]        &            & X\arrow[d]\\
            \Spec(\wb{\bk})\arrow[rr]\arrow[dr] &            & \Spec(\bk)\arrow[dl]\\
                                    & \Spec(\bF_p). &  
        \end{tikzcd}
    \end{center}
    Since forming cohomology and the Frobenius trace map commutes with (limit of) \'etale base change, we have that $H^0(X,T^e_B)$ is surjective if and only if $H^0(\wb{X},T^e_{\wb{B}})$ is surjective. Hence we conclude by \autoref{l-S^0/P^0charactofGFS/GFR}.
    
    On the other hand, when $\bk$ is perfect, we can define globally $F$-split couples by replacing $F_X$ with the $\bk$-linear Frobenius $F_{X/\bk}$, as the two differ by the automorphism $F_{\bk}$. Analogous considerations apply towards global $F$-regularity.
\end{remark}

We conclude by showing that globally $F$-split is an open condition in Calabi--Yau families.

\begin{lemma}\label{l-defo_CYGFS}
    Let $R$ be a smooth $k$-algebra, let $(X,B=\sum_i a_iB_i)$ be a $\bZ_{(p)}$-pair, and let $\pi\colon X\to\Spec(R)$ be a contraction with geometrically normal fibres, such that $\pi$ and $\pi|_{B_i}$ are flat for all $i$. Suppose that $(1-p^e)(K_X+B)\sim 0$ and that $(X_s,B_s)$ is globally $F$-split for some $k$-point $s \in \Spec(R)$. Then $(X_t,B_t)$ is globally $F$-split for all points $t$ in a neighbourhood of $s$.
\end{lemma}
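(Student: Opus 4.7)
I would prove this by exhibiting a single element $\lambda\in R$ such that $(X_t,B_t)$ is globally $F$-split if and only if $\lambda(t)\neq 0$; the desired neighbourhood of $s$ will then be the principal open $D(\lambda)\subseteq \Spec R$.

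First, I would use the relation $(1-p^e)(K_X+B)\sim 0$ to fix an isomorphism $\varphi\colon \cL^{(e)}_{X,B}\xrightarrow{\sim}\cO_X$, and compose its inverse with $T^e_B$ to obtain an $\cO_X$-linear map $\Psi\colon F^e_*\cO_X\to\cO_X$. The flatness of $\pi$ and of each $\pi|_{B_i}$, combined with the normality of the fibres, will imply that $\cL^{(e)}_{X,B}$ restricts fibrewise to $\cL^{(e)}_{X_t,B_t}$, so that $\varphi$ induces isomorphisms $\varphi_t$ on each $X_t$. Then, by base-change compatibility of the trace construction on the relative smooth locus of $\pi$ (extended to all of $X_t$ via reflexivity and normality of the fibres), we shall have $\Psi|_{X_t}=\Psi_t$, where $\Psi_t$ is the map attached to $(X_t,B_t)$ through \autoref{p-correspondence} and $\varphi_t$.

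Next, I would observe that, because $\pi$ is a contraction with normal fibres, Stein factorisation gives $H^0(X_t,\cO_{X_t})=k(t)$ for every $t\in \Spec R$; and since $\cO_{X_t}$ and $F^e_*\cO_{X_t}$ coincide as sheaves of abelian groups, the same identity gives $H^0(X_t,F^e_*\cO_{X_t})=k(t)$. Both source and target of $H^0(\Psi_t)\colon k(t)\to k(t)$ are therefore one-dimensional, making this map multiplication by a scalar $\lambda(t)\in k(t)$. By \autoref{l-S^0/P^0charactofGFS/GFR}, the pair $(X_t,B_t)$ is globally $F$-split precisely when $\lambda(t)\neq 0$.

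To globalise, I would let $1\in H^0(X,F^e_*\cO_X)=H^0(X,\cO_X)=R$ be the canonical section and set $\lambda\coloneqq H^0(\Psi)(1)\in R$. The base-change identity $\Psi|_{X_t}=\Psi_t$ will force $\lambda \bmod \mathfrak{m}_t = \lambda(t)\in k(t)$ for every $t$, so $D(\lambda)\subseteq\Spec R$ is exactly the open locus of globally $F$-split fibres. Since $\lambda(s)\neq 0$ by hypothesis, $D(\lambda)$ is the required open neighbourhood of $s$. The main obstacle is verifying the base-change identity for $\Psi$; this ultimately reduces to base change for Grothendieck duality applied to the absolute Frobenius, and is valid here because $\pi$ is flat with normal fibres and $\Spec R$ is smooth, so that the construction of the trace on the relative smooth locus is compatible with fibrewise restriction and extends uniquely by reflexivity.
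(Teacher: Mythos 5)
The decisive step of your plan, the identity $\Psi|_{X_t}=\Psi_t$ (and with it the property $\lambda\bmod\mathfrak{m}_t=\lambda(t)$), fails for the \emph{absolute} Frobenius trace, and this is not a repairable technicality of your bookkeeping: the square formed by $F^e_X$ and the inclusion $X_t\hookrightarrow X$ is commutative but not Cartesian, and $(F^e_{X,*}\cO_X)\otimes\cO_{X_t}\cong F^e_*(\cO_X/I_t^{[p^e]})$ is not $F^e_{X_t,*}\cO_{X_t}$, so ``base change for Grothendieck duality applied to the absolute Frobenius'' is not available. Concretely, $T^e_X$ factors as (trace of the $R$-linear Frobenius $F^e_{X/R}$) followed by the pullback of the trace of $F^e_{\Spec R}$, and the latter kills constant sections. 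Test your recipe on $X=E\times\bA^1\to\bA^1=\Spec k[x]$ with $E$ an ordinary elliptic curve and $B=0$: all hypotheses of the lemma hold and every fibre is globally $F$-split, yet $\lambda=H^0(\Psi)(1)$ vanishes identically, because under the K\"unneth factorisation it is $T_E(1)\cdot T_{\bA^1}(1)$ and the Cartier-type trace on $k[x]$ sends $1\mapsto 0$ (only exponents $\equiv p^e-1 \pmod{p^e}$ survive). So $D(\lambda)=\emptyset$ while the globally $F$-split locus is all of $\bA^1$; your proposed $\lambda$ simply does not compute the fibrewise splitting datum.

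The repair is exactly the paper's route: replace the absolute trace by the \emph{relative} trace $T^e_{X/R,B}\colon F^e_{X/R,*}\cO_X((1-p^e)(K_{X/R}+B))\to\cO_{X^{(e)}}$, whose formation does commute with base change along $\Spec k(t)\to\Spec R$ by \cite[Lemma 2.18]{PSZ}, after discarding from $X$ a closed set of codimension $\geq 2$ in every fibre (reflexivity plus normality of the fibres makes this harmless) and after shrinking $\Spec R$ around $s$ so that $K_{\Spec R}$ is trivial, so that $(1-p^e)(K_{X/R}+B)\sim 0$. With this replacement your ``single function on the base'' idea becomes essentially the paper's argument: on global sections one gets a map of finitely generated $R$-modules $H^0(X,\cO_X((1-p^e)(K_{X/R}+B)))\simeq R\to R$, its reduction at $t$ is surjective exactly when $S^0(X_t,B_t;\cO_{X_t})=k(t)$, i.e.\ when $(X_t,B_t)$ is globally $F$-split by \autoref{l-S^0/P^0charactofGFS/GFR}, and surjectivity at $s$ propagates to a neighbourhood by Nakayama (equivalently, by nonvanishing of the image of a generator). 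Two smaller points to fix along the way: $H^0(\Psi_t)$ is $p^{-e}$-semilinear, not multiplication by a scalar, although over the perfect residue fields relevant here its surjectivity is still detected by its value on $1$; and the divisor you must trivialise is $(1-p^e)(K_{X/R}+B)$ rather than $(1-p^e)(K_X+B)$, which is precisely why the shrinking of $\Spec R$ is needed.
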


\begin{proof}

    We claim that $X_{R^e}$ is a normal variety. Since $X_{R^e}\to X$ is a universal
    homeomorphism, $X_{R^e}$ is irreducible. Since the morphism above is also flat
    by regularity of $R$, we obtain that $X_{R^e}$ is $S_2$ by \cite[Tag 0339]{stacks}. Thus, to deduce that $X_{R^e}$ is integral, it is enough to show
    that it is generically integral. In fact, we will show that $X_{R^e}$ is $R_1$ thereby
    also concluding normality. Let
    \begin{center}
        $U\coloneqq \lbrace x\in X \textup{ s.t. } \pi \textup{ is smooth at }x\rbrace$, \hspace{10mm} $U_t\coloneqq \lbrace x\in X_t \textup{ s.t. } X_t \textup{ is smooth at }x\rbrace$,
    \end{center}
    for all $t\in \Spec(R)$. Since by definition $U\to \Spec(R)$ is smooth,
    so is $U_{R^e}\to\Spec(R^e)$, which shows in particular that $U_{R^e}$ is smooth.
    Thus, we are left to show that $\codim_X(X\setminus U)\geq 2$. Recall that a finite
    type morphism is smooth if and only if it is flat and with smooth fibres
    \cite[Tag 01V8]{stacks}. This automatically gives us that
    \[
    U=\bigcup_{t\in \Spec(R)} U_t,
    \]
    so given that each $X_t$ is normal (and therefore smooth in codimension one), we deduce that indeed $\codim_X(X \setminus U) \geq 2$.
    
    As all sheaves involved are reflexive and all varieties normal, we can freely replace $X$ with the open set $U\subset X$ defined above. Consider the trace map $F^e_{X/R,*}\cO_{X}(K_{X/R})\to \cO_{X_{R^e}}(K_{X_{R^e}/R^e})$ of the $R$-linear Frobenius obtained by Grothendieck duality.
    Twisting by $-K_{X_{R^e}/R^e}$ we obtain
    \begin{equation}\label{e-relfrobtrace}
        T^e_{X/R,B}\colon F^e_{X/R,*}\cO_{X}((1-p^e)(K_{X/R}+B))\subseteq F^e_{X/R,*}\cO_{X}((1-p^e)K_{X/R})\to\cO_{X_{R^e}}.
    \end{equation}
    Taking global sections yields a map of finitely generated $R$-modules
    \[
    H^0(X,T^e_{X/R,B})\colon H^0(X,\cO_{X}((1-p^e)(K_{X/R}+B)))\to R.
    \]
    Note that $H^0(X,\cO_{X}((1-p^e)(K_{X/R}+B)))\simeq R$, since $(1-p^e)(K_X+B)\sim 0$. By \cite[Lemma 2.18]{PSZ} the trace map in \autoref{e-relfrobtrace} is compatible with base change. Since $(X_s,B_s)$ is globally $F$-split we have that $T^e_{X/R,B}\otimes k(s)$ is surjective. By Nakayama's lemma we then have that $T^e_{X/R,B}\otimes k(t)$ is also surjective for all $t$ in a neighbourhood of $s$. In particular, $S^0(X_t,B_t;\cO_{X_t})=k$ for all such $t$, hence we conclude by \autoref{l-S^0/P^0charactofGFS/GFR}.
\end{proof}

\subsection{General fibres and \texorpdfstring{$F$}{}-singularities of \texorpdfstring{$|-K_X|_{\bZ_{(p)}}$}{}} \label{s-P0relative}

If $f\colon X\to Y$ is a fibration of normal projective varieties over a field $\bk$ of characteristic 0 and $y\in Y$ is a general point, we introduced the multiplier ideal $\cI(X_y,0; \vvert{-K_X}_y)$ as a measure of the singularities of $|-K_X|_{\bQ}$ on the fibre $X_y$ (\autoref{d-traceAMIS}). In this section we define a positive-characteristic global analogue, measuring the $F$-singularities of $|-K_X|_{\bZ_{(p)}}$ on a general fibre. We further discuss this new notion in \autoref{s-comparison}.

Let $(X,B)$ be a projective $\bZ_{(p)}$-sub-couple over $k$, let $f\colon X\to Y$ be a fibration with normal general fibre, and let $B$ be a $\bZ_{(p)}$-divisor such that $\supp(B^-)$ does not dominate $Y$. Let $M$ be a Cartier divisor on $X$ and let $y \in Y$ be a general point. For $e\in d\bN$, we write $(p^e-1)B = E^+-E^-$, then we have:
\[
H^0(X, F^e_*(\cL_{X, B}^{(e)}(p^eM)))\otimes_{k}\cO_X \subseteq F^e_*\cL_{X,B}^{(e)}(p^eM)\xrightarrow{T^e_B(M)} \cO_X(M+E^-).
\]
By pushing forward via $f$ we obtain
\[
H^0(X, F^e_*(\cL_{X, B}^{(e)}(p^eM)))\otimes_{k}\cO_Y \subseteq f_*F^e_*\cL_{X,B}^{(e)}(p^eM)\xrightarrow{f_*T^e_B(M)} f_*\cO_X(M+E^-).
\]
The base change to $y$ then yields
\[
T_{B,y}^e(M) \colon H^0(X, F^e_*(\cL_{X, B}^{(e)}(p^e M))) \otimes_k k(y) \to H^0(X_y, \cO_{X_y}(M_y)).
\]

\begin{remark} \label{r-traces}
Very concretely, the map $T^e_{B,y}(M)$ is just the restriction of the Frobenius trace of $(X_y,B_y)$ to the sections of $H^0(X_y, F^e_*(\cL_{X_y, B_y}^{(e)}(p^eM_y)))$ which extend to global sections on $X$. This can be seen explicitly by factoring $F^e_X$ through $F^e_{X/Y}$ and using the fact that forming the trace of relative Frobenius commutes with base change (\cite[Lemma 2.18]{PSZ}), together with the explicit description in local coordinates of the trace of $F_Y^e$ around the smooth point $y$.
\end{remark}

\begin{definition} \label{d-P0}
Let $X$ be a proper variety, let $f\colon X\to Y$ be a fibration with normal general fibre, and let $B$ be a $\bZ_{(p)}$-divisor such that $\supp(B^-)$ does not dominate $Y$. Let $M$ be a Cartier divisor on $X$ and $y \in Y$ a general point.
We define the following subspaces of $H^0(X_y, \cO_{X_y}(M_y))$:
\[
S^0_{y}(X,B;M) \coloneqq \bigcap_{e\geq 0}\Image(T^e_{B,y}(M))\subseteq H^0(X_y,M_y),
\]
\[
P^0_{y}(X, B;M) \coloneqq 
    \bigcap_{D \geq 0} \bigcap_{e_0 \geq 0} \sum_{e \in d\bN_{\geq e_0}} \Image(T_{B+D/(p^e-1),y}^e(M))\subseteq H^0(X_y,M_y).
\]
Suppose now that $-K_X-B$ is $\bZ_{(p)}$-effective. Then we define
\[
    Q^0_{y}(X, B;M) \coloneqq 
    \bigcap_{m \in \bN(X,B)} \bigcap_{D \in |-m(K_X+B)|} \bigcap_{e_0 \geq 0} \sum_{e \in d\bN_{\geq e_0}} \Image(T_{B+D/(p^e-1),y}^e(M)),
\]
where $\bN(X,B)$ is the set of $m\geq 1$ such that $|-m(K_X+B)|\neq \emptyset$. 

When $f\colon X \to \Spec(k)$ is the structural morphism, we simply write $Q^0(X,B;M)$. When $M=0$, we will write $S^0_{y}(X, B)$ for $S^0_{y}(X,B;0)$, and adopt a similar notation for $P^0_{y}(X, B;0)$, $Q^0_{y}(X, B;0)$, $S^0(X,B;0)$, $P^0(X,B;0)$ and $Q^0(X,B;0)$.
\end{definition}

\begin{remark} 
We point out that, since $-K_X-B$ is assumed to be $\bZ_{(p)}$-effective,
\[
    Q^0_{y}(X, B;M) = 
    \bigcap_{m \in \bN(X,B)\setminus p\bN} \bigcap_{D \in |-m(K_X+B)|} \bigcap_{e_0 \geq 0} \sum_{e \in d\bN_{\geq e_0}} \Image(T_{B+D/(p^e-1),y}^e(M)).
    \]
Indeed, if $D \in |-m(K_X+B)|$ for some $m \in \bN(X,B)$ which is divisible by $p$, let $D' \in |-m'(K_X+B)|$ for some $m'\in \bN(X,B) \setminus p\bN$, then for $e \gg 0$ and sufficiently divisible,
\[
\Image(T_{B+(D+D')/(p^e-1),y}^e(M)) \subseteq \Image(T_{B+D/(p^e-1),y}^e(M))
\]
and $D+D' \in |-(m+m')(K_X+B)|$ with $m+m' \in \bN(X,B) \setminus p\bN$.
\end{remark}

\begin{remark} \label{r-ZpvsQ}
    In general the inclusion $S^0(X,B;M)\otimes_k k(y)\subseteq S^0_y(X,B;M)$ may be strict. For example, if $E$ and $Y$ denote an ordinary, resp.\ supersingular, elliptic curve and $ f\colon X \coloneqq E \times Y\to Y$ is the projection, then $S^0(X,0)= 0$, while $S^0_y(X, 0)= k(y)$. Replacing $E$ with a globally $F$-regular variety, resp.\ a $K$-globally $F$-regular variety\footnote{See \autoref{d-KGFR}}, yields examples of strict inclusions $P^0(X,B;M)\otimes_k k(y)\subseteq P^0_y(X,B;M)$ and $Q^0(X,B;M)\otimes_k k(y)\subseteq Q^0_y(X,B;M)$.
\end{remark}

We introduce a positive characteristic analogue of the notion of \textit{complement} (see \cite[5]{shokurov19933}). Recall that, when $X$ is a normal projective variety, a complement is an effective $\bQ$-divisor $\Lambda\in |-K_X|_{\bQ}$ such that $(X,\Lambda)$ is log canonical.

\begin{definition}
    Let $f\colon X\to Y$ be a fibration of normal projective varieties with normal general fibre $X_y$, and let $B$ be a $\bZ_{(p)}$-divisor such that $\supp (B^-)$ does not dominate $Y$ and $(X_y,B_y)$ is globally $F$-split.
    We say a $\bZ_{(p)}$-divisor $\Lambda$ is an \textit{$F$-complement for $(X/Y,B)$} if $\Lambda\in |-K_X-B|_{\bZ_{(p)}}$ and $(X_y,B_y+\Lambda_y)$ is globally $F$-split.
\end{definition}

\begin{theorem}[{\cite[Theorem 4.3(ii) and its proof]{SS}}]\label{t-ss4.3ii}
    Let $(X,B)$ be a globally $F$-split $\bZ_{(p)}$-couple. Then there exists an $F$-complement $\Lambda$ for $(X,B)$. Furthermore, every such $\Lambda$ is of the form $\textup{div}(s)/(p^e-1)$ for some $s\colon \cO_X\to F^e_*\cL_{X,B}^{(e)}$ splitting $T_B^e$.
\end{theorem}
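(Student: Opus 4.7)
\smallskip

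The plan is to unwind the Schwede--Smith correspondence of \autoref{p-correspondence} directly, translating the given splitting into the desired divisor. Since $(X,B)$ is globally $F$-split, fix $e \in d\bN$ and a splitting $\sigma^e\colon \cO_X\to F^e_*\cL^{(e)}_{X,B}$ of $T^e_B$. The map $\sigma^e$ is determined by the nonzero global section
\[
s \coloneqq \sigma^e(1) \in H^0(X, \cL^{(e)}_{X,B}) = H^0(X, \cO_X((1-p^e)(K_X+B))),
\]
corresponding to an effective integral divisor $\textup{div}(s) \sim (p^e-1)(-K_X-B)$. I would then set $\Lambda \coloneqq \textup{div}(s)/(p^e-1)$, which is manifestly an effective $\bZ_{(p)}$-divisor in $|-K_X-B|_{\bZ_{(p)}}$.

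\smallskip

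Next, I would verify that $(X, B+\Lambda)$ is globally $F$-split using the same $e$. The key identification is
\[
\cL^{(e)}_{X,B+\Lambda} \;=\; \cL^{(e)}_{X,B}\otimes \cO_X(-(p^e-1)\Lambda) \;=\; \cL^{(e)}_{X,B}\otimes \cO_X(-\textup{div}(s)),
\]
realized as the image subsheaf of the injection $\cdot s\colon \cO_X\hookrightarrow \cL^{(e)}_{X,B}$. By the very construction of the trace map for an enlarged boundary, $T^e_{B+\Lambda}$ is the restriction of $T^e_B$ along the inclusion $\iota\colon F^e_*\cL^{(e)}_{X,B+\Lambda}\hookrightarrow F^e_*\cL^{(e)}_{X,B}$. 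Consequently $\sigma^e$ factors as $\sigma^e = \iota\circ \tau$, with $\tau\colon \cO_X\xrightarrow{\cong} F^e_*\cL^{(e)}_{X,B+\Lambda}$, and one immediately has $T^e_{B+\Lambda}\circ \tau = T^e_B\circ \sigma^e = \id$, furnishing the required splitting.

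\smallskip

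For the \emph{furthermore} part, let $\Lambda$ be any $F$-complement for $(X,B)$. I would pick $e\in d\bN$ sufficiently divisible so that $(p^e-1)\Lambda$ is integral and $(X,B+\Lambda)$ admits a splitting $\tau\colon \cO_X \to F^e_*\cL^{(e)}_{X,B+\Lambda}$ of $T^e_{B+\Lambda}$; such $e$ exists since we may iterate any given splitting of $T^{e_0}_{B+\Lambda}$ to higher multiples. Because $K_X+B+\Lambda\sim_{\bZ_{(p)}} 0$, the divisorial sheaf $\cL^{(e)}_{X,B+\Lambda}\cong \cO_X$, with the isomorphism implemented by a canonical section $s_\Lambda\in \cL^{(e)}_{X,B}$ satisfying $\textup{div}(s_\Lambda) = (p^e-1)\Lambda$; thus $\tau(1)$ is a unit multiple of $s_\Lambda$. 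Setting $s\coloneqq \iota\circ \tau\colon \cO_X \to F^e_*\cL^{(e)}_{X,B}$, one computes $T^e_B\circ s = T^e_{B+\Lambda}\circ \tau = \id$, so $s$ splits $T^e_B$, and by construction $\textup{div}(s) = (p^e-1)\Lambda$, giving $\Lambda = \textup{div}(s)/(p^e-1)$ as required.

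\smallskip

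The only technical point that requires care is ensuring that the identifications of divisorial sheaves (tensor product, multiplication by $s$, inclusion $\iota$) and the associated trace maps behave correctly when $X$ is merely normal. This is not a real obstacle: all the sheaves involved are reflexive and hence determined by their restriction to the regular locus (\autoref{r-reflexivesonnormal}), and the Schwede--Smith construction is compatible with this extension. A secondary bookkeeping issue is choosing $e$ compatible for both $B$ and $B+\Lambda$; taking $e$ to be a sufficiently divisible element of $d'\bN$ (where $d'$ denotes the period of $B+\Lambda$) resolves this, since $d\,|\,d'$.
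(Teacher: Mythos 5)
Your proof is correct and is essentially the argument the paper delegates to \cite{SS}: existence by converting a splitting of $T^e_B$ into an effective divisor $\Lambda=\textup{div}(\sigma^e(1))/(p^e-1)$ via \autoref{p-correspondence}, and both directions hinging on the fact that $T^e_{B+\Lambda}$ is the restriction of $T^e_B$ to the subsheaf $F^e_*\cL^{(e)}_{X,B+\Lambda}\subseteq F^e_*\cL^{(e)}_{X,B}$. The only slips are cosmetic: $\tau\colon\cO_X\to F^e_*\cL^{(e)}_{X,B+\Lambda}$ is not an isomorphism but merely the factorisation of $\sigma^e$ through the subsheaf (which is all you use), and in the converse direction it is properness of $X$, together with choosing $e$ so that $(p^e-1)\Lambda\sim(1-p^e)(K_X+B)$ and not just so that $(p^e-1)\Lambda$ is integral, that makes $\tau(1)$ a nonzero constant multiple of $s_\Lambda$.
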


\begin{corollary}\label{c-conditions_F_complements}
    Consider the datum $(f\colon X\to Y; B,D)$ where
    \begin{itemize}
        \item $f$ is a fibration of normal projective varieties with normal general fibre,
        \item $D$ is a $\bZ_{(p)}$-Cartier $\bZ_{(p)}$-divisor on $Y$, and $B$ is a $\bZ_{(p)}$-divisor on $X$ such that $\supp(B^-)$ does not dominate $Y$.
    \end{itemize}
Let $y\in Y$ be a general point. Then there exists an $F$-complement for $(X/Y,B+f^*D)$ if and only if $S^0_{y}(X,B+f^*D)=k(y)$ for $y \in Y$ general.
\end{corollary}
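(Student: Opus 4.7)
The plan is to translate the condition $S^0_y(X,B+f^*D)=k(y)$ directly into the existence of an $F$-complement (and vice versa) via the bijection between splittings and divisors (\autoref{p-correspondence} and \autoref{t-ss4.3ii}), combined with the base-change compatibility of trace maps (\autoref{r-traces}). The crucial geometric input is that $(f^*D)_y=0$ on the general fibre, so $(B+f^*D)_y=B_y$ and $\cL^{(e)}_{X,B+f^*D}$ restricts to $\cL^{(e)}_{X_y,B_y}$.

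For $(\Leftarrow)$, I would pick $e\in d\bN$ and $\sigma\in H^0(X,F^e_*\cL^{(e)}_{X,B+f^*D})$ with $T^e_{B+f^*D,y}(\sigma)\neq 0$, and set $\Lambda\coloneqq \textup{div}(\sigma)/(p^e-1)\in|-K_X-B-f^*D|_{\bZ_{(p)}}$. After rescaling $\sigma$ by a constant in $k^\times$ (which does not affect $\textup{div}(\sigma)$), \autoref{r-traces} identifies $\sigma|_{X_y}$ as a splitting of $T^e_{B_y}$ with divisor $(p^e-1)\Lambda_y$; thus $(X_y,B_y+\Lambda_y)$ is globally $F$-split by \autoref{t-ss4.3ii} and $\Lambda$ is an $F$-complement for $(X/Y,B+f^*D)$.

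For $(\Rightarrow)$, starting from an $F$-complement $\Lambda$, \autoref{t-ss4.3ii} on $X_y$ yields some $e_0\in d\bN$ (with $(p^{e_0}-1)\Lambda$ integral after possibly enlarging) and a splitting $s_y$ of $T^{e_0}_{B_y}$ with $\textup{div}(s_y)=(p^{e_0}-1)\Lambda_y$. The global section $\sigma\in H^0(X,F^{e_0}_*\cL^{(e_0)}_{X,B+f^*D})$ cutting out $(p^{e_0}-1)\Lambda$ restricts on a general $X_y$ to a nonzero scalar multiple of $s_y$ (same divisor, nonzero generic restriction), so $T^{e_0}_{B_y}(\sigma|_{X_y})\in k^\times$ and \autoref{r-traces} gives $\Image(T^{e_0}_{B+f^*D,y})=k(y)$.

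The main technical point is upgrading this single-level conclusion to $S^0_y=k(y)$, which is the intersection over all $e$. I would first rescale $\sigma$ into a genuine global splitting of $T^{e_0}_{B+f^*D}$, then iterate via Frobenius push-forwards of the form $F^{e_0}_*(\sigma\otimes\textup{id})\circ\sigma$ to produce splittings at every multiple $ne_0$. Finally, the factorisation $T^{e+e'}_{B+f^*D}=T^e_{B+f^*D}\circ F^e_*(T^{e'}_{B+f^*D}\otimes\textup{id})$ (obtained from $\cL^{(e+e')}_{X,B+f^*D}\cong \cL^{(e')}_{X,B+f^*D}[\otimes] F^{e'*}\cL^{(e)}_{X,B+f^*D}$) gives the monotonicity $\Image(T^{e+e'}_{B+f^*D,y})\subseteq\Image(T^e_{B+f^*D,y})$, so full image at arbitrarily large multiples of $e_0$ sandwiches every intermediate level and forces $\Image(T^e_{B+f^*D,y})=k(y)$ for all $e\in d\bN$.
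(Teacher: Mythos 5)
Your proposal is essentially the paper's proof: both translate $S^0_{y}(X,B+f^*D)=k(y)$ into non-vanishing of the trace applied to restricted global sections of $(1-p^e)L$ via \autoref{r-traces}, and convert back and forth between such sections and $F$-complements using \autoref{t-ss4.3ii}; the only difference is that you spell out the iteration-plus-monotonicity argument needed to pass from surjectivity at one level $e_0$ to all levels, which the paper leaves implicit. One small caveat: you cannot in general rescale $\sigma$ by a constant into a genuine global splitting of $T^{e_0}_{B+f^*D}$ on $X$ (its trace is only constant along general fibres, not on all of $X$), but this step is not needed, since iterating $\sigma$ itself and evaluating the trace on $X_y$ already yields a nonzero multiple of the iterated fibre splitting, hence full image at every multiple of $e_0$.
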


\begin{proof}
Let $L \coloneqq -K_X-B-f^*D$.
By \autoref{r-traces}, the equality $S^0_{y}(X,B+f^*D)=k(y)$ holds if and only if the composition
\[
H^0(X,(p^e-1)L)\xrightarrow{\textup{res}_{X_y}} H^0(X_y,(p^e-1)L_y)\xrightarrow{H^0(X_y,T^e_{B_y})} H^0(X_y,\cO_{X_y})\cong k(y)
\]
is surjective.
Let $s\in H^0(X,(p^e-1)L)$ be a section with non-zero image and let $\Lambda\coloneqq \frac{\textup{div}(s)}{p^e-1}$. By \autoref{t-ss4.3ii} we have that $(X_y,B_y+\Lambda_y)$ is globally $F$-split. Conversely, if $\Lambda\in |L|_{\bZ_{(p)}}$ is an $F$-complement for $(X/Y,B+f^*D)$ and $s$ is a section representing $(p^e-1)\Lambda$ for some sufficiently divisible $e\geq 1$, then $s$ will have non-zero image through the above composition, again by \autoref{t-ss4.3ii}.
\end{proof}

\begin{corollary}\label{c-conditions_F_complements_KGFR}
    Consider the datum $(f\colon X\to Y;B,D,E,\Gamma)$ where
    \begin{itemize}
        \item $f$ is an equidimensional fibration of normal projective varieties with normal general fibre,
        \item $B$ is a $\bZ_{(p)}$-divisor on $X$ such that $\supp(B^-)$ does not dominate $Y$,
        \item $D$ and $E$ are $\bZ_{(p)}$-divisors on $Y$, with $E\geq 0$, 
        \item $0\leq \Gamma \sim_{\bZ_{(p)}}-K_X-B-f^*(D+E)$.
    \end{itemize}
    Let $y\in Y$ be a general point, and suppose $Q^0_{y}(X,B+f^*D)=k(y)$. Then there exists an $F$-complement for $(X/Y,B+f^*D+\epsilon(\Gamma+f^*E))$ for all sufficiently small $\epsilon\in \bZ_{(p), \geq 0}$. 
    Conversely, if $E=0$ and for all $\Gamma$ as above there exists $\epsilon\in\bZ_{(p),>0}$ such that $(X/Y,B+f^*D+\epsilon\Gamma)$ admits an $F$-complement, then $Q^0_y(X,B+f^*D)=k(y)$.
\end{corollary}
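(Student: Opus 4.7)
The plan is to treat the two directions separately. In both, the key device I will use is that, because $\Gamma + f^*E$ is effective, any $F$-complement produced at one value of $\epsilon$ can be transported to an $F$-complement at any smaller value: if $\Lambda$ is an $F$-complement for $(X/Y, B + f^*D + \epsilon'(\Gamma + f^*E))$ and $0 \leq \epsilon \leq \epsilon'$ in $\bZ_{(p)}$, then $\Lambda + (\epsilon' - \epsilon)(\Gamma + f^*E)$ serves as an $F$-complement for $(X/Y, B + f^*D + \epsilon(\Gamma + f^*E))$, being effective, having the correct $\bZ_{(p)}$-linear equivalence class, and restricting to exactly the same divisor on $X_y$.

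For the forward direction, I would pick $m_0$ coprime to $p$ with $m_0(\Gamma + f^*E)$ integral, so that $m_0(\Gamma + f^*E) \in |-m_0(K_X + B + f^*D)|$. Applying the hypothesis $Q^0_y(X, B + f^*D) = k(y)$ to the triple $(m_0,\, m_0(\Gamma + f^*E),\, e_0 = 0)$ produces some $e_1 \in d\bN$ for which $T^{e_1}_{B + f^*D + m_0(\Gamma + f^*E)/(p^{e_1}-1),\, y}$ is surjective onto $k(y)$. The duality correspondence of \autoref{t-ss4.3ii} then yields an effective $\bZ_{(p)}$-divisor $\Lambda_1$ with $\Lambda_1 \sim_{\bZ_{(p)}} -K_X - B - f^*D - \epsilon_1(\Gamma + f^*E)$, where $\epsilon_1 := m_0/(p^{e_1}-1)$, realising $(X_y,(B + f^*D + \epsilon_1(\Gamma + f^*E) + \Lambda_1)_y)$ as globally $F$-split. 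The transport trick above will then convert this single $F$-complement into one for every $\epsilon \in [0, \epsilon_1] \cap \bZ_{(p)}$, which is exactly the conclusion.

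For the converse (with $E = 0$), I would fix a triple $(m, D', e_0)$ appearing in the intersection defining $Q^0_y(X, B + f^*D)$. In the main case $\gcd(m, p) = 1$, set $\Gamma := D'/m$, a $\bZ_{(p)}$-effective divisor $\bZ_{(p)}$-linearly equivalent to $-K_X - B - f^*D$. The hypothesis produces some $\epsilon_0 > 0$ for which $(X/Y, B + f^*D + \epsilon_0 \Gamma)$ admits an $F$-complement, and the transport trick supplies one for every $\epsilon \in [0, \epsilon_0] \cap \bZ_{(p)}$. By \autoref{c-conditions_F_complements} this translates into $S^0_y(X, B + f^*D + \epsilon \Gamma) = k(y)$ for each such $\epsilon$, so every Frobenius trace map of that pair is surjective. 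Choosing $e_1 \geq e_0$ in $d\bN$ large enough that $\epsilon := m/(p^{e_1}-1) \leq \epsilon_0$ and using the identity $\epsilon \Gamma = D'/(p^{e_1}-1)$, one obtains surjectivity of $T^{e_1}_{B + f^*D + D'/(p^{e_1}-1),\, y}$, so that the sum in the definition of $Q^0_y$ attached to $(m, D', e_0)$ contains $k(y)$.

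The step I expect to require the most care is the residual case $p \mid m$ in the converse, since $D'/m$ is then no longer $\bZ_{(p)}$-rational and the hypothesis cannot be applied to it directly. My plan is to reduce to the $p$-coprime case by exploiting that the Cartier index of $K_X + B + f^*D$ is coprime to $p$ and that the sums occurring in $Q^0_y$ depend only on the $\bZ_{(p)}$-rational perturbation $D'/(p^e-1)$, so that the intersection in the definition of $Q^0_y$ can be restricted to $m$ coprime to $p$. If this reduction proves delicate, the fallback is to work with a $p$-coprime substitute divisor in a nearby linear system producing the same boundary perturbation up to integral divisors, and to verify that the hypothesis applied to the associated $\Gamma$ still produces the desired surjectivity at the right index $e_1$.
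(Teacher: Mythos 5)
Your forward direction and the $p\nmid m$ half of the converse are correct and are essentially the paper's own argument: the paper likewise takes $D'=m(\Gamma+f^*E)$ with $p\nmid m$, reads off from $Q^0_y(X,B+f^*D)=k(y)$ a surjective trace at some level $e$, identifies $\epsilon=m/(p^e-1)$, and concludes through \autoref{c-conditions_F_complements} and \autoref{t-ss4.3ii}; your ``transport'' observation (add $(\epsilon'-\epsilon)(\Gamma+f^*E)$ to the complement) is exactly the monotonicity the paper leaves implicit, both to get all sufficiently small $\epsilon$ in the forward direction and to pass from the given $\epsilon_0$ to the values $m/(p^{e_1}-1)$ in the converse.

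The one point you leave open, the residual case $p\mid m$ in the converse, is a genuine hole in the proposal as written, and neither of your two sketches works as stated: there is no hypothesis that the Cartier index of $K_X+B+f^*D$ is prime to $p$ (only the coefficients of $B$ and $D$ lie in $\bZ_{(p)}$), and a perturbation $D'/(p^e-1)$ with $D'\in|mL|$, $p\mid m$, cannot be re-expressed through a $p$-coprime linear system, since rescaling preserves divisibility of the degree by $p$; likewise ``the same perturbation up to integral divisors'' is not the right mechanism. What closes the case is domination rather than substitution. The hypothesis is only non-vacuous when $|L|_{\bZ_{(p)}}\neq\emptyset$, so pick $\Gamma_0\in|L|_{\bZ_{(p)}}$ with $m_0\Gamma_0\in|m_0L|$ and $p\nmid m_0$, and set $D''\coloneqq D'+m_0\Gamma_0\in|(m+m_0)L|$; then $p\nmid m+m_0$ and $D''\geq D'$, hence $D''/(p^e-1)\geq D'/(p^e-1)$ for every $e$. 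Since all the twisted traces are restrictions of one generic trace $F^e_*k(X)\to k(X)$, enlarging the boundary only shrinks the image of $T^e_{\cdot,y}$, so the surjectivity you have already established for the pair $(m+m_0,D'')$ at suitable levels $e\geq e_0$ gives it for $(m,D')$ as well. (For comparison, the paper's own converse is two sentences and does not discuss this case, nor the $\epsilon$-adjustment, at all; your write-up is otherwise more careful than the original.)
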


\begin{proof}
   Assume that $Q^0_{y}(X,B+f^*D)=k(y)$, let $L\coloneqq -K_X-B-f^*D$ and let
    \[
    B_\epsilon\coloneqq B+\epsilon\Gamma ,\hspace{3mm} D_\epsilon\coloneqq D+\epsilon E, \hspace{3mm} L_\epsilon\coloneqq -K_X-B_\epsilon-f^*D_\epsilon
    \]
    so that $L_\epsilon\sim_{\bZ_{(p)}}(1-\epsilon)L$. Let $D'\coloneqq m(\Gamma+f^*E) \sim mL$, where $m$ is an integer not divisible by $p$ chosen so that $D'$ is an integral divisor.
    Then, for $e \gg 0$ sufficiently divisible, $S^0_{y}(X,B+f^*D + D'/(p^e-1))=k(y)$. By setting $\epsilon\coloneqq m/(p^e-1)$ we have $B+f^*D+D'/(p^e-1)=B_\epsilon+f^*D_{\epsilon}$, thus we conclude by \autoref{c-conditions_F_complements}.

    As for the converse, if $(X/Y, B+f^*D+ \epsilon\Gamma)$ admits an $F$-complement then $S^0_y(X,B+f^*D+\epsilon \Gamma)=k(y)$ by \autoref{c-conditions_F_complements}.
    Since this holds for all $\Gamma \in |-K_X-B-f^*D|_{\bZ_{(p)}}$, we conclude that $Q^0_y(X,B+f^*D)=k(y)$ by \autoref{r-ZpvsQ}.
\end{proof}

\subsection{Globally \texorpdfstring{$F$}{}-split varieties and canonical bundle formula}

We recall a version of the canonical bundle formula for positive characteristic Calabi--Yau fibrations which are generically globally $F$-split (see \cite{DS} and \cite{ejiri2017weak}). From now until the end of this section $(X,B)$ will denote a $\bZ_{(p)}$-sub-pair over $k$.

\begin{proposition}\label{dp-CBF_fibration} 
    Let $f\colon X\to Y$ be a fibration of normal varieties with normal general fibre $X_y$, and let $B$ be a $\bZ_{(p)}$-divisor on $X$. Assume that $(1-p^e)(K_X+B)\sim_Y 0$ for some $e\geq 1$, and that $(X_y,B_y)$ is globally $F$-split. Then there exists a canonically defined effective $\bZ_{(p)}$-divisor $B^Y$ on $Y$ such that
    \begin{enumerate}
        \item[(i)] $(1-p^e)(K_X+B)\sim f^*((1-p^e)(K_Y+B^Y))$;
        \item[(ii)] if $B\geq 0$ then $B^Y\geq 0$;
        \item[(iii)] $(X,B)$ is globally (sub-)$F$-split if and only if $(Y,B^Y)$ is globally (sub-)$F$-split;
        \item[(iv)] if $\Lambda\geq 0$ is a $\bZ_{(p)}$-Cartier $\bZ_{(p)}$-divisor on $Y$, then $(B+f^*\Lambda)^Y=B^Y+\Lambda$.
    \end{enumerate}
\end{proposition}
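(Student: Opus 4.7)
The plan is to construct $B^Y$ via \autoref{p-correspondence} applied to a suitable pushforward of the Frobenius trace of $(X,B)$, following the canonical bundle formula of Das--Schwede \cite{DS}. The assumption $(1-p^e)(K_X+B)\sim_Y 0$ yields a canonical divisorial sheaf $\cN\coloneqq f_*\cL^{(e)}_{X,B}$ on $Y$ together with a natural isomorphism $f^*\cN\cong \cL^{(e)}_{X,B}$ (using \autoref{l-pfdivisorialED} over the locus where $f$ is equidimensional, and extending by reflexivity). Pushing the trace $T^e_B\colon \Fr^e_{X,*}\cL^{(e)}_{X,B}\to k(X)$ forward through $f$ and exploiting the commutation $f\circ \Fr^e_X=\Fr^e_Y\circ f^e$, one obtains an $\cO_Y$-linear morphism
\[
\tau\colon \Fr^e_{Y,*}\cN\longrightarrow k(Y).
\]
The crucial input is nonvanishing of $\tau$, which follows from the hypothesis that $(X_y,B_y)$ is globally $F$-split for $y\in Y$ general: by \autoref{r-traces}, $T^e_{B,y}$ surjects onto $k(y)$, so $\tau$ is nonzero. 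Then \autoref{p-correspondence} applied to $\tau$ produces a canonical $\bZ_{(p)}$-divisor $B^Y$ on $Y$ with $\cN\cong \cL^{(e)}_{Y,B^Y}$ and $\tau=T^e_{B^Y}$, proving (i). For (ii), if $B\geq 0$ the image of $T^e_B$ lies in $\cO_X$, hence that of $\tau$ lies in $f_*\cO_X=\cO_Y$, and the effective part of \autoref{p-correspondence} yields $B^Y\geq 0$.

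For (iii), a global (sub-)$F$-splitting of $(X,B)$ is a map $s\colon \cO_X\to \Fr^e_{X,*}\cL^{(e)}_{X,B}$ with $T^e_B\circ s=\id$. Pushing forward along $f$ and using $\cL^{(e)}_{X,B}\cong f^*\cN$ together with $f_*\cO_X=\cO_Y$, the pushed-forward section $f_*s\colon\cO_Y\to \Fr^e_{Y,*}\cN$ satisfies $\tau\circ f_*s=f_*(T^e_B\circ s)=\id_{\cO_Y}$, producing a splitting of $T^e_{B^Y}$. Conversely, a splitting $t$ of $T^e_{B^Y}$ pulls back via $f$ to a candidate splitting of $T^e_B$; the global $F$-split assumption on the general fibre ensures that its composition with $T^e_B$ restricts to the identity on $X_y$, and hence equals $\id_{\cO_X}$ after extension by reflexivity. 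The sub-$F$-split case is analogous, working with maps to $k(X)$ in place of $\cO_X$. For (iv), replacing $B$ by $B+f^*\Lambda$ replaces $\cL^{(e)}_{X,B}$ by $\cL^{(e)}_{X,B}\otimes f^*\cO_Y(-(p^e-1)\Lambda)$, so $\cN$ is replaced by $\cN\otimes\cO_Y(-(p^e-1)\Lambda)$; the pushed-forward trace is tensored with the identity, and \autoref{p-correspondence} identifies the new divisor with $B^Y+\Lambda$.

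The delicate point is the first step: verifying that $\tau$ is genuinely the Frobenius trace of a canonical pair on $Y$, and checking its nonvanishing. This requires the relative duality analysis developed in \cite{DS}, together with the compatibility of the Grothendieck trace of Frobenius with pushforward along $f$. Once this is in place, properties (ii)--(iv) follow formally from the correspondence of \autoref{p-correspondence}.
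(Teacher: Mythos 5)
Your overall strategy coincides with the paper's: push the twisted trace $T^e_B$ forward along $f$, identify the pushforward with the trace $T^e_{B^Y}$ of a canonically defined divisor on $Y$ via \autoref{p-correspondence} (the paper packages this step as \cite[Theorem 5.2]{DS}), and then deduce (ii)--(iv) formally from the resulting commutative diagrams, exactly as in the paper's proof. The genuine problem is your justification of the nonvanishing of $\tau=f_*T^e_B$. You claim that, since $(X_y,B_y)$ is globally $F$-split, ``$T^e_{B,y}$ surjects onto $k(y)$'' by \autoref{r-traces}. This does not follow from the hypothesis: by \autoref{d-P0} and \autoref{r-traces}, the source of $T^e_{B,y}$ consists only of those sections of $\cL^{(e)}_{X_y,B_y}$ that extend to \emph{global} sections on $X$, whereas global $F$-splitness of the fibre asserts surjectivity of the fibre trace on \emph{all} of $H^0(X_y,F^e_*\cL^{(e)}_{X_y,B_y})$. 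Under the hypotheses of the proposition these can differ drastically: since $\cL^{(e)}_{X,B}\cong f^*\cO_Y(M)$ for some Cartier $M$, if $M$ has no global sections (e.g.\ a trivial ordinary elliptic fibration with $B=f^*N$, $N$ ample) then the source of $T^e_{B,y}$ is zero while $(X_y,B_y)$ is still GFS. Surjectivity of $T^e_{B,y}$ is essentially the condition $S^0_y(X,B)=k(y)$, which is strictly stronger than GFS of the fibre and is not assumed here.

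What is actually needed, and what the paper proves, is nonvanishing of $f_*T^e_B$ as a map of \emph{sheaves}, i.e.\ at the generic point of $Y$. This requires knowing that the (geometric) generic fibre $(X_{\wb{\eta}},B_{\wb{\eta}})$ is globally $F$-split, which is not a tautological consequence of GFS of a general closed fibre: the paper deduces it from the openness statement \autoref{l-defo_CYGFS} (whose proof uses the base-change compatibility of the relative Frobenius trace \cite[Lemma 2.18]{PSZ} and Nakayama) and then invokes \cite[Observation 3.19]{ejiri2017weak} together with \cite[Theorem 5.2]{DS} to conclude $f_*T^e_B\neq 0$. With that replacement your construction of $B^Y$ and your arguments for (ii)--(iv) are essentially the paper's. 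Two smaller points: in the converse direction of (iii), what forces the transferred section to split $T^e_B$ is the commutativity of the pushforward diagram (an $\cO_X$-linear map $\cO_X\to k(X)$ is multiplication by a rational function, which must be $1$ because its pushforward is $1$), not the $F$-splitness of the general fibre; and in (iv) the nonvanishing of $f_*T^e_{B+f^*\Lambda}$ again has to be checked at the (geometric) generic fibre, as the paper notes.
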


\begin{proof}
    Let $\eta$ be the generic point of $Y$.
    Note that, by \autoref{l-defo_CYGFS}, $(X_{\wb{\eta}}, B_{\wb{\eta}})$ is GFS, therefore point (i) follows by \cite[Theorem 5.2]{DS}\footnote{If $(X_{\wb{\eta}},B_{\wb{\eta}})$ is GFS, then so is $(X_\eta,B_\eta)$.}.
    This boils down to the following fact: write ${(1-p^e)(K_X+B)}\sim f^*M$ for some Cartier divisor $M$ on $Y$, so that we have $T^e_B\colon \Fr_*^e\cO_X(f^*M)\to k(X)$. By pushing forward via $f$ and using the projection formula we obtain $f_*T^e_B \colon \Fr_*^e\cO_Y(M)\to k(Y)$.
    As $(X_{\wb{\eta}},B_{\wb{\eta}})$ is GFS we have $f_*T^e_B\neq 0$ (\cite[Observation 3.19]{ejiri2017weak}), and \autoref{p-correspondence} yields a canonically defined $\bZ_{(p)}$-divisor $B^Y$ such that $M\sim(1-p^e)(K_Y+B^Y)$ and $f_*T^e_B=T^e_{B^Y}$. As for point (ii), if $B\geq 0$ then the images of $T^e_B$ and $f_* T^e_B$ are contained in $\cO_X$ and $\cO_Y$, respectively, hence we conclude by \autoref{p-correspondence}.
    
    To prove point (iii), first assume $(X,B)$ is GsFS, i.e.\ there exists a map $\sigma\colon \cO_X\to F^r_*\cL_{X,B}^{(r)}$ such that $T^r_{B}\circ \sigma=\id_{\cO_X}$, for some $r \geq 1$. Without loss of generality we may assume $r$ is a multiple of $e$. In particular, we have the following commutative diagram

    \begin{equation}\label{e-pushforward_Fsplitting_fibration}
        \begin{tikzcd}
            f_*\cO_X\arrow[r,"f_*\sigma"] & f_*\Fr_*^r\cL_{X,B}^{(r)} \arrow[r,"f_*T_{B}^r"] & k(Y)\\
            \cO_Y\arrow[r,"\tau"]\arrow[u] & \Fr_*^r\cL^{(r)}_{Y,B^Y}\arrow[u] \arrow[r,"T_{{B}^Y}^r"] & k(Y),\arrow[u]
        \end{tikzcd}
    \end{equation}
    where $\tau$ is the morphism induced by $f_*\sigma$.
    As the vertical arrows are isomorphisms by the projection formula, $T^r_{B^Y}\circ \tau=\id_{\cO_Y}$, i.e.\ $(Y,B^Y)$ is GsFS. The GFS case follows by the same argument. Conversely, suppose $(Y,B^Y)$ is GsFS, and let $\tau\colon \cO_Y\to F^r_*\cL_{Y,B^Y}^{(r)}$ such that $T^r_{B^Y}\circ \tau=\id_{\cO_Y}$. As $\tau$ corresponds to a global section of $\cL^{(r)}_{Y,B^Y}$ and we have an isomorphism $H^0(Y,\cL^{(r)}_{Y,B^Y})\simeq H^0(X,\cL^{(r)}_{X,B})$ by the projection formula, we obtain a section $\sigma$, which satisfies $T^r_{X,B} \circ \sigma=\id_{\cO_X}$ by commutativity of \autoref{e-pushforward_Fsplitting_fibration}. Again, the GFS case follows by the same argument.

    As for point (iv), note that the rightmost half of diagram \autoref{e-pushforward_Fsplitting_fibration} can be completed to
    \begin{equation*}
    \begin{tikzcd}
    f_*\Fr_*^r\cL^{(r)}_{X,B+f^*\Lambda}\arrow[rr,bend left=30,"f_*T_{B+f^*\Lambda}^r"]\arrow[r] & f_*\Fr_*^r\cL^{(r)}_{X,B} \arrow[r,"T_{B}^r"] & k(Y)\\
    \Fr_*^r\cL^{(r)}_{Y,B^Y+\Lambda}\arrow[r]\arrow[rr,bend right=30,"T_{B^Y+\Lambda}^r"]\arrow[u] & \Fr_*^r\cL^{(r)}_{Y,B^Y}\arrow[u] \arrow[r,"T_{B^Y}^r"] & k(Y),\arrow[u]
    \end{tikzcd}
    \end{equation*}
    where the leftmost vertical arrow is also an isomorphism by the projection formula and $f_*T^r_{B+f^*\Lambda}=T^r_{B^Y+\Lambda}$. Note that the latter map is non-zero, as it coincides with $H^0(X_{\wb{\eta}}, T^r_{B_{\wb{\eta}}})$ at the geometric generic point of $Y$. We then conclude by the same argument as in point (i).
\end{proof}

\begin{remark}[{\cite[Proposition 5.7]{DS}}] \label{r-boundarypartfpt}
The $\bZ_{(p)}$-divisor $B^Y$ constructed in \autoref{dp-CBF_fibration} is called the \textit{$F$-discriminant of the fibration}. It can be described in terms of the $F$-singularities of $(X,B)$ over codimension one points of $Y$, similarly to the discriminant part for the characteristic zero canonical bundle formula (\cite{Amb_Shokurov,Amb_MBD}).
If $Q$ is a prime divisor of $Y$, let
\[
d_Q \coloneqq \sup \{ t \; \text{s.t.} \; (X, B+tg^*Q) \text{ is globally sub-$F$-split over the generic point of $Q$} \},
\]
then $B^Y = \sum_{Q} (1-d_Q)Q$, where the sum is taken over all prime divisors $Q$ of $Y$.
\end{remark}

\section{Injectivity Theorem}%

Throughout this section we will work over an algebraically closed field $k$, of characteristic $p>0$.
The main result of this section is the following injectivity theorem (see \autoref{t-chang4.3inj}).

\begin{theorem}[Injectivity Theorem]\label{t-chang4.3_gen}
Consider the datum $(f\colon X \to Y; B,D)$ where
\begin{itemize}
    \item $f$ is an equidimensional fibration of normal projective varieties with normal general fibre,
    \item  $B$ is a $\bZ_{(p)}$-divisor on $X$ such that $\supp(B^-)$ does not dominate $Y$,
    \item $D$ is a $\bZ_{(p)}$-divisor on $Y$.
\end{itemize}
Let $L\coloneqq -K_X-B-f^*D$ and let $y \in Y$ be a general point. Assume
    \begin{itemize}
        \item[(a)] $L$ is $\bZ_{(p)}$-effective and $Q^0_{y}(X,B+f^*D)=k(y)$,
        \item[(b)] there exists $m \geq 1$ not divisible by $p$ such that $mL$ is integral and $mL_y$ is Cartier,
        \item[(c)] there exists a $\bZ_{(p)}$-Cartier $\bZ_{(p)}$-divisor $P \geq B^-$ such that $\kappa(X, f^*(-K_Y-D)+P)=0$.
    \end{itemize}
    Then the restriction map $H^0(X, nL) \to H^0(X_y,nL_{y})$ is injective for all $n \geq 0$ such that $nL$ is integral. In particular, the inequality $\kappa(X,L)\leq \kappa(X_y,L_y)$ holds.
\end{theorem}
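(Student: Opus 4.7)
The plan is to mimic the characteristic-zero argument of \autoref{t-chang4.3inj}, replacing the multiplier-ideal perturbation (\autoref{p-chang4.2}) by the $F$-complement perturbation (\autoref{c-conditions_F_complements_KGFR}), and using the canonical bundle formula for generically globally $F$-split fibrations (\autoref{dp-CBF_fibration}) to descend $\bZ_{(p)}$-effectivity from $X$ to $Y$. Since $y\in Y$ is general it lies in the smooth locus, so I would let $\mu\colon Y'\to Y$ be the blow-up at $y$ with exceptional divisor $E$, and $\pi\colon X'\to X$ the blow-up along $X_y$ with exceptional divisor $G=f'^*E$; the induced $f'\colon X'\to Y'$ would again be an equidimensional fibration with normal general fibres. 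Writing $a\coloneqq \dim Y-1$, $B'\coloneqq \pi^{-1}_*B$, and $D'\coloneqq \mu^*D-aE$, the usual pullback formulae yield $L'\coloneqq -K_{X'}-B'-f'^*D'=\pi^*L$, and the projection formula identifies $H^0(X,nL)\cong H^0(X',nL')$ for $n$ divisible enough with $nL$ integral. Since $\mu$ is an isomorphism near any general $y'\in Y'$, hypothesis (a) transfers to $Q^0_{y'}(X',B'+f'^*D')=k(y')$.

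For the contradiction, I would assume that some $0\neq s\in H^0(X,nL)$ vanishes along $X_y$, and set $N\coloneqq \operatorname{div}(s)$, $N'\coloneqq \pi^*N\geq G$. For $0<\delta\in\bZ_{(p)}$ sufficiently small, $\Gamma\coloneqq N'/n-\delta G\geq 0$ satisfies $\Gamma\sim_{\bZ_{(p)}}L'-f'^*(\delta E)$. Applying \autoref{c-conditions_F_complements_KGFR} to $(f';B',D',\delta E,\Gamma)$ will produce, for some small $\epsilon\in\bZ_{(p),>0}$, an effective $F$-complement $\Lambda'\sim_{\bZ_{(p)}}L'-\epsilon(\Gamma+f'^*(\delta E))$ such that $(X'_{y'},B'_{y'}+\epsilon\Gamma_{y'}+\Lambda'_{y'})$ is globally $F$-split. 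The sub-pair $(X',B'+\epsilon\Gamma+\Lambda')\to Y'$ then satisfies the hypotheses of \autoref{dp-CBF_fibration}, yielding an $F$-discriminant $(B'+\epsilon\Gamma+\Lambda')^{Y'}\sim_{\bZ_{(p)}}\mu^*(-K_Y-D)-\epsilon\delta E$. A prime-by-prime analysis via \autoref{r-boundarypartfpt}, exploiting the equidimensionality of $f'$ and the verticality of $\supp((B')^-)$, should show that $f'^*\bigl(\mu^*(-K_Y-D)-\epsilon\delta E\bigr)+(B')^-$ is $\bZ_{(p)}$-effective---the char-$p$ analogue of the output of \autoref{t-chang3.8ED}. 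Since $P\geq B^-$ and $B^-$ avoids $X_y$ for general $y$, $\pi^*P\geq (B')^-$, and adding the effective divisor $\pi^*P-(B')^-+\epsilon\delta G$ produces $\widetilde M\geq 0$ in $|\pi^*(f^*(-K_Y-D)+P)|_{\bZ_{(p)}}$ with $\coeff_G(\widetilde M)\geq\epsilon\delta>0$. By the projection formula, $\widetilde M=\pi^*M$ for a unique $M\in|f^*(-K_Y-D)+P|_{\bZ_{(p)}}$, and the identity $\coeff_G(\pi^*M)=m_{X_y}(M)>0$ forces $X_y\subseteq\supp M$.

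Finally, hypothesis (c) asserts $\kappa(X,f^*(-K_Y-D)+P)=0$, so the $\bZ_{(p)}$-linear series $|f^*(-K_Y-D)+P|_{\bZ_{(p)}}$ has an essentially unique effective element, which would have to contain $X_y$ for every $y$ in a dense open subset of $Y$---an impossibility for a proper effective divisor. The bound $\kappa(X,L)\leq\kappa(X_y,L_y)$ follows immediately from injectivity of the restriction maps on all graded pieces, via the definition of Iitaka dimension. I expect the main obstacle to be the $F$-discriminant descent in the second paragraph: \autoref{dp-CBF_fibration} produces a $\bZ_{(p)}$-divisor whose effectivity is only automatic for honest boundaries, so one must unwind \autoref{r-boundarypartfpt} prime by prime to show that, in the equidimensional case with $(B')^-$ vertical, the appropriate negative correction is bounded by $(B')^-$, recovering the ``correction by $B^-$'' output of \autoref{t-chang3.8ED} in characteristic zero.
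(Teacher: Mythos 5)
Your proposal follows the paper's own proof essentially verbatim: the same blow-up of $Y$ at $y$ and of $X$ along $X_y$, the same perturbation $N'-\delta G$, the same appeal to \autoref{c-conditions_F_complements_KGFR} to produce $F$-complements for the perturbed datum, the same descent of $\bZ_{(p)}$-effectivity to $f'^{*}\bigl(\mu^*(-K_Y-D)-\epsilon\delta E\bigr)+(B')^-$ via \autoref{dp-CBF_fibration} and \autoref{r-boundarypartfpt} (the step you flag as the main obstacle is precisely \autoref{t-chang3.8_gen} and \autoref{p-chang4.2_gen}, proved in the same section; the one input your sketch omits is that globally sub-$F$-split sub-couples are sub-log canonical in codimension one, which together with equidimensionality gives the prime-by-prime bound), and the same contradiction with the unique effective member of $|f^*(-K_Y-D)+P|_{\bQ}$. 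The only slip is that before setting $\Gamma=N'/n-\delta G$ and claiming $\Gamma\sim_{\bZ_{(p)}}L'-f'^{*}(\delta E)$ you must arrange $p\nmid n$ (replace $\textup{div}(s)$ by $\textup{div}(s)+\Delta'$ with $\Delta'\in|mL|$ for some $m$ not divisible by $p$, which exists because hypothesis (a) yields an $F$-complement and hence $\bZ_{(p)}$-effectivity of $L$), exactly as in the paper.
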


Provided that there exist $F$-complements for $(X/Y,B+f^*D)$, we can follow the same proof as in \cite[Theorem 3.8 and Proposition 4.2]{Chang}.

\begin{proposition}\label{t-chang3.8_gen}
Consider the datum $(f\colon X \to Y; B, D)$ where
\begin{itemize}
    \item $f$ is a fibration of normal projective varieties with normal general fibre,
    \item $B$ is a $\bZ_{(p)}$-divisor such that $\supp(B^-)$ does not dominate $Y$,
    \item $D$ is a $\bZ_{(p)}$-divisor on $Y$.
\end{itemize}
Assume
    \begin{itemize}
        \item[(a)] there exists an $F$-complement for $(X/Y,B+f^*D)$, and either
        \item[(b)] $f$ is equidimensional, or
        \item[(b')] $B\geq 0$, $Y$ is $\bZ_{(p)}$-Gorenstein and $D$ is $\bZ_{(p)}$-Cartier.
    \end{itemize}
    Then, $f^*(-K_Y-D)+B^-$ is $\bZ_{(p)}$-effective.
\end{proposition}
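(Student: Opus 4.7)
The plan is to adapt the characteristic zero proof of \autoref{t-chang3.8ED}, substituting the positive characteristic canonical bundle formula \autoref{dp-CBF_fibration} for Ambro's. Let $\Lambda\in|-K_X-B-f^*D|_{\bZ_{(p)}}$ be the $F$-complement from hypothesis (a), and set $\Delta\coloneqq B+\Lambda$, so that $(X_y,\Delta_y)$ is globally $F$-split for general $y\in Y$ and $K_X+\Delta\sim_{\bZ_{(p)}}-f^*D$. Picking $e\geq 1$ such that $(p^e-1)D$ is an integral Cartier divisor, we have $(1-p^e)(K_X+\Delta)\sim_Y 0$, and \autoref{dp-CBF_fibration} furnishes a canonically defined $\bZ_{(p)}$-divisor $\Delta^Y$ on $Y$ satisfying $(1-p^e)(K_X+\Delta)\sim f^*((1-p^e)(K_Y+\Delta^Y))$. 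Comparing this with $(1-p^e)(K_X+\Delta)\sim f^*((p^e-1)D)$ and using $f_*\cO_X=\cO_Y$ to descend a linear equivalence of Cartier pullbacks, one obtains $-K_Y-D\sim_{\bZ_{(p)}}\Delta^Y$.

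Consider first case (b), where $f$ is equidimensional. Since $\Delta$ need not be effective (as $B$ may have a nontrivial negative part), \autoref{dp-CBF_fibration}(ii) does not directly yield $\Delta^Y\geq 0$; instead, I would use the explicit formula for $\Delta^Y$ from \autoref{r-boundarypartfpt}. For each prime divisor $Q\subseteq Y$ and prime divisor $P\subseteq X$ with $f(P)=Q$, global sub-$F$-splitness of $(X,\Delta+t f^*Q)$ over the generic point of $Q$ forces $\coeff_P(\Delta)+t\coeff_P(f^*Q)\leq 1$, which reduces at $P$ to the standard $F$-pure threshold computation for a smooth divisor in a regular local ring (normality of $X$ makes the generic point of $P$ regular). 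Hence $\coeff_P(\Delta)\leq 1-d_Q\coeff_P(f^*Q)\leq(1-d_Q)\coeff_P(f^*Q)=\coeff_P(f^*\Delta^Y)$, the second inequality using $\coeff_P(f^*Q)\geq 1$ (valid since $f$ is equidimensional). Summing over $P$ gives $\Delta^v\leq f^*\Delta^Y$. Since $\supp(B^-)$ is vertical, $\Delta^v+B^-=(B^+)^v+\Lambda^v\geq 0$, hence $f^*\Delta^Y+B^-\geq 0$. Using that equidimensional pullback of Weil divisors preserves $\bZ_{(p)}$-linear equivalence (divisorial sheaves are determined in codimension one), one deduces $\bZ_{(p)}$-effectivity of $f^*(-K_Y-D)+B^-$.

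For case (b'), I would reduce to case (b) via flattening. Apply \autoref{l-flattening} to obtain $\mu\colon Y'\to Y$ birational and an equidimensional $f'\colon X'\to Y'$, with induced birational $\pi\colon X'\to X$. Define $B'$ by $K_{X'}+B'=\pi^*(K_X+B)$; since $B\geq 0$ and $\Exc(\pi)\subseteq f'^{-1}\Exc(\mu)$, the support of $B'^-$ consists of $\pi$-exceptional divisors. Over a general $y'\in Y'$ the map $\pi$ is an isomorphism in a neighbourhood of $X_{\mu(y')}$, so $\pi^*\Lambda$ is an $F$-complement for $(X'/Y',B'+f'^*\mu^*D)$ and case (b) applies on $X'$. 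Since $Y$ is $\bZ_{(p)}$-Gorenstein, write $K_{Y'}\sim_{\bZ_{(p)}}\mu^*K_Y+E$ for a $\mu$-exceptional $\bZ_{(p)}$-divisor $E$; combining with the output of case (b) and absorbing $E^-$ into an effective term yields $\bZ_{(p)}$-effectivity of $\pi^*f^*(-K_Y-D)+f'^*E^-+B'^-$ on $X'$. Pushing forward by $\pi$ kills the $\pi$-exceptional $B'^-$, sends $f'^*E^-$ to an effective $f$-exceptional divisor $E_X$ on $X$, and leaves $f^*(-K_Y-D)$ unchanged. The projection formula then kills $E_X$ in cohomology, yielding $\bZ_{(p)}$-effectivity of $-K_Y-D$, hence of $f^*(-K_Y-D)=f^*(-K_Y-D)+B^-$.

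The technical heart of the proof is the coefficient bound in case (b), which translates global sub-$F$-splitness of $(X,\Delta+tf^*Q)$ over the generic point of $Q$ into the expected inequality on coefficients. Normality of $X$ ensures regularity at the generic point of each vertical prime $P$, but one must verify that the Das--Schwede description in \autoref{r-boundarypartfpt} of $\Delta^Y$ holds for the possibly non-effective $\Delta=B+\Lambda$ we consider, and that descent of linear equivalences via $f_*\cO_X=\cO_Y$ interacts correctly with the $\bZ_{(p)}$-coefficients. The remainder of the argument is essentially bookkeeping modelled on the characteristic-zero version.
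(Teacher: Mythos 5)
Your treatment of case (b) is essentially the paper's own argument: take the $F$-complement $\Lambda$, set $\Delta=B+\Lambda$, invoke \autoref{dp-CBF_fibration} to get $\Delta^Y$ with $-K_Y-D\sim_{\bZ_{(p)}}\Delta^Y$, and then use the $F$-discriminant description of \autoref{r-boundarypartfpt} together with sub-log canonicity in codimension one of globally sub-$F$-split sub-couples (this is exactly \cite[Lemma 2.14]{DS}, which the paper cites and which you re-derive by hand) to get $\coeff_P(\Delta^v)\leq\coeff_P(f^*\Delta^Y)$, hence $f^*\Delta^Y+B^-\geq 0$. That part is correct and matches the paper.

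Case (b') is where there is a genuine problem. You define $B'$ by the crepant pull-back $K_{X'}+B'=\pi^*(K_X+B)$, but nothing in the hypotheses makes $K_X+B$ $\bQ$-Cartier: $B$ is only a $\bZ_{(p)}$-divisor, and hypothesis (a) only gives that $K_X+B+\Lambda+f^*D$ is $\bZ_{(p)}$-linearly trivial (hence $\bZ_{(p)}$-Cartier); neither $K_X+B$ nor $\Lambda$ separately need be $\bQ$-Cartier. (Unlike the characteristic-zero \autoref{t-chang3.8}, whose hypotheses do include that $(X,B)$ is a sub-pair, and this extra generality is actually exploited later: in the proof of \autoref{t-main} the proposition is applied in case (b') to divisors that are not assumed $\bQ$-Cartier.) So the flattening reduction does not parse as written. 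It could be repaired — pull back $K_X+\Delta$, which is $\bZ_{(p)}$-Cartier since $K_X+\Delta\sim_{\bZ_{(p)}}-f^*D$, set $\Lambda'\coloneqq\pi^{-1}_*\Lambda$ and $B'\coloneqq\Delta'-\Lambda'$, and check that $\Lambda'$ is still an $F$-complement and that $\supp(B'^-)$ is $\pi$-exceptional — but the detour is also unnecessary. In case (b') one has $B\geq 0$, hence $\Delta=B+\Lambda\geq 0$, hence $\Delta^Y\geq 0$ by \autoref{dp-CBF_fibration}(ii); since $Y$ is $\bZ_{(p)}$-Gorenstein and $D$ is $\bZ_{(p)}$-Cartier, $-K_Y-D\sim_{\bZ_{(p)}}\Delta^Y$ is $\bZ_{(p)}$-Cartier, so $f^*(-K_Y-D)+B^-=f^*(-K_Y-D)$ is $\bZ_{(p)}$-effective immediately. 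This is exactly the paper's one-line disposal of case (b'): no flattening, discrepancy bookkeeping, or projection-formula step is needed there.
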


\begin{proof} 
Let $L\coloneqq -K_X-B-f^*D$ and let $\Lambda\in |L|_{\bZ_{(p)}}$ be an $F$-complement for $(X/Y,B+f^*D)$. Consider $\Delta\coloneqq B+\Lambda$, so that  we have $(X_y,\Delta_y)$ is globally $F$-split for $y \in Y$ general and $K_X+\Delta\sim_{\bZ_{(p)},Y}0$.
    By \autoref{dp-CBF_fibration}, there is a canonically defined $\bZ_{(p)}$-divisor $\Delta^Y$ such that
    \[
    K_X+\Delta\sim_{\bZ_{(p)}}f^*(K_Y+\Delta^Y)\sim_{\bZ_{(p)}} f^*(-D).
    \]
    Hence it is enough to show that $f^*\Delta^Y+B^-$ is an effective $\bZ_{(p)}$-divisor. If $B\geq 0$ then $\Delta^Y$ is effective by \autoref{dp-CBF_fibration}(ii). Suppose now that $f$ is equidimensional, and let us show that $f^*\Delta^Y\geq \Delta^v$. Every component $P$ of $\supp(\Delta^v)$ is mapped to a prime divisor $Q$ of $Y$, then \autoref{r-boundarypartfpt} yields $\coeff_{Q}(\Delta^Y)=1-d_Q$, where
    \[
    d_Q\coloneqq\sup\lbrace t \textup{ s.t. } (X,\Delta+f^*(tQ)) \textup{ is globally sub-\textit{F}-split over the generic point of } Q\rbrace.
    \]
    As globally sub-$F$-split sub-couples are sub-log canonical in codimension one (\cite[Lemma 2.14]{DS}), we have: 
    \begin{equation*}
        \begin{split}
            \coeff_P(\Delta^v)&\leq 1-d_Q\coeff_P(f^*(Q))\\
                              &\leq \coeff_P(f^*(Q))(1-d_Q)\\
                              &=\coeff_P(f^*(\Delta^Y)).
        \end{split} 
    \end{equation*}
    Since $\Delta^v+B^-\geq 0$, we conclude that $f^*\Delta^Y+B^-\geq 0$.
\end{proof}

\begin{corollary} \label{p-chang4.2_gen}
Consider the datum $(f\colon X \to Y; B,D,E, \Gamma)$ where
\begin{itemize}
    \item $f$ is a fibration of normal projective varieties with normal general fibre,
    \item $B$ is a $\bZ_{(p)}$-divisor on $X$ such that $\supp(B^-)$ does not dominate $Y$,
    \item $D,E$ are $\bZ_{(p)}$-divisors on $Y$,
    \item $0\leq\Gamma\sim_{\bZ_{(p)}} -K_X-B-f^*(D+E)$.
\end{itemize}
Assume that
    \begin{itemize}
        \item[(a)] there exists an $F$-complement for $(X,B+f^*D+\epsilon( \Gamma+f^*E))$, for $\epsilon\in [0,1)\cap\bZ_{(p)}$, and either
        \item[(b)] $f$ is equidimensional, or
        \item[(b')] $B\geq 0$, $Y$ is $\bZ_{(p)}$-Gorenstein and $D, E$ are $\bZ_{(p)}$-Cartier.
    \end{itemize}
    Then, $f^*(-K_Y-D-\epsilon E)+B^-$ is $\bZ_{(p)}$-effective.
\end{corollary}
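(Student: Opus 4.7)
The plan is to mirror the characteristic zero perturbation argument of \autoref{p-chang4.2}: introduce perturbed boundary and base data to which \autoref{t-chang3.8_gen} applies directly, and then extract the claimed $\bZ_{(p)}$-effectivity by an absorption trick. Concretely, I set
\[
B_\epsilon \coloneqq B + \epsilon \Gamma, \qquad D_\epsilon \coloneqq D + \epsilon E,
\]
for $\epsilon \in [0,1)\cap\bZ_{(p)}$. A direct computation, using $\Gamma \sim_{\bZ_{(p)}} -K_X-B-f^*D-f^*E$, gives
\[
-K_X - B_\epsilon - f^*D_\epsilon = (-K_X-B-f^*D) - \epsilon(\Gamma + f^*E) \sim_{\bZ_{(p)}} (1-\epsilon)(-K_X-B-f^*D),
\]
so that the relative anticanonical class of the perturbed datum is just a positive multiple of the original.

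Next I would verify that $(f\colon X\to Y; B_\epsilon, D_\epsilon)$ satisfies the hypotheses of \autoref{t-chang3.8_gen}. Since $\Gamma \geq 0$, we have $B_\epsilon^- \leq B^-$ coefficient-wise, so $\supp(B_\epsilon^-)$ still does not dominate $Y$; the divisor $D_\epsilon = D + \epsilon E$ is $\bZ_{(p)}$-Cartier as a sum of such. Assumption (a) of the current statement is precisely the existence of an $F$-complement for $(X/Y, B_\epsilon + f^*D_\epsilon)$, which matches hypothesis (a) of \autoref{t-chang3.8_gen}. In case (b) the morphism $f$ is equidimensional, matching hypothesis (b) of \autoref{t-chang3.8_gen}; in case (b') we have $B_\epsilon = B + \epsilon\Gamma \geq 0$ since both summands are effective, and $Y$ remains $\bZ_{(p)}$-Gorenstein, matching hypothesis (b') of \autoref{t-chang3.8_gen}.

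Applying \autoref{t-chang3.8_gen} to the perturbed datum yields that $f^*(-K_Y - D_\epsilon) + B_\epsilon^-$ is $\bZ_{(p)}$-effective. Since $B_\epsilon^- \leq B^-$, the $\bZ_{(p)}$-divisor
\[
f^*(-K_Y-D-\epsilon E) + B^- = \big(f^*(-K_Y-D_\epsilon) + B_\epsilon^-\big) + (B^- - B_\epsilon^-)
\]
is a sum of a $\bZ_{(p)}$-effective divisor and an effective divisor, hence is itself $\bZ_{(p)}$-effective, as desired. There is no real obstacle: the content is entirely packaged in hypothesis (a) and in the analogous statement \autoref{t-chang3.8_gen}, and the proof is a one-line perturbation calculation followed by bookkeeping on negative parts.
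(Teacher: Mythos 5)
Your proposal is correct and follows essentially the same route as the paper: define $B_\epsilon=B+\epsilon\Gamma$, $D_\epsilon=D+\epsilon E$, observe $L_\epsilon\sim_{\bZ_{(p)}}(1-\epsilon)L$, check the hypotheses of \autoref{t-chang3.8_gen} for the perturbed datum, and apply it. Your final bookkeeping with $B_\epsilon^-\leq B^-$ is in fact slightly more careful than the paper's, which asserts $B_\epsilon^-=B^-$ outright, but either way one only adds an effective divisor, so the conclusion is the same.
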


\begin{proof}
    Let $L\coloneqq-K_X-B-f^*D$, and let
    \[
    B_\epsilon\coloneqq B+\epsilon\Gamma ,\hspace{3mm} D_\epsilon\coloneqq D+\epsilon E, \hspace{3mm} L_\epsilon=-K_X-B_\epsilon-f^*D_\epsilon
    \]
    so that $L_\epsilon\sim_{\bZ_{(p)}}(1-\epsilon)L$ and all the hypotheses of \autoref{t-chang3.8_gen} are satisfied with respect to $(f\colon X\to Y,B_\epsilon,D_\epsilon)$. Then \autoref{t-chang3.8_gen} yields $f^*(-K_Y-D_\epsilon)+B_\epsilon^-$ is $\bZ_{(p)}$-effective, whence $f^*(-K_Y-D-\epsilon E)+B^-$ is also $\bZ_{(p)}$-effective since $B_{\epsilon}^- \leq B^-$.
\end{proof}

We are now ready to prove our injectivity theorem.

\begin{proof}[{Proof of \autoref{t-chang4.3_gen}}]
    The proof closely follows that of \autoref{t-chang4.3inj}. As $y$ is general, we may assume that it lies in the smooth locus of $Y$, that the map $f$ is flat over a neighbourhood of $y$, and that $\supp(B)$ does not contain $X_y$. By contradiction suppose that the map $H^0(X,nL)\to H^0(X_y,nL_y)$ is not injective for some $n>0$ such that $nL$ is integral. Then, there exists a divisor $0\leq \Delta\sim nL$ such that $X_y\subseteq \supp(\Delta)$. As $L$ is $\bZ_{(p)}$-effective, after possibly replacing $\Delta$ with $\Delta+\Delta'$ for some $\Delta' \sim mL$, we may assume $n$ is not divisible by $p$.
    Set $N\coloneqq \frac{1}{n}\Delta$ so that $0\leq N \sim_{\bZ_{(p)}} L$. Note that by hypothesis there exists a unique effective $\bQ$-divisor $M\sim_{\bQ} f^*(-K_Y-D)+P$, hence we may also assume $X_y\nsubseteq \supp(M)$. Consider the following Cartesian diagrams 
    \begin{center}
        \begin{tikzcd}
            G\arrow[r,hook]\arrow[d] & X'\arrow[r,"\pi"]\arrow[d,"f'"] & X\arrow[d,"f"]\\
            E\arrow[r,hook] & Y'\arrow[r,"\mu"] & Y,
        \end{tikzcd}
    \end{center}
    where notation is as follows
    \begin{itemize}
        \item $Y'$ is the blow-up of $Y$ at $y$ with exceptional divisor $E$;
        \item the fiber product $X'$ coincides with the blow-up of $X$ at $X_y$ with exceptional divisor $G$, since blow-ups commute with flat base change (\cite{stacks}*{Tag 0805}).
    \end{itemize} 
    Since we are assuming that the general fibre of $f$ is normal, the same argument as in the proof of \autoref{t-chang4.3inj} yields the normality of $X'$. We have pull-back formulae
    \[
    \pi^*(K_X+B)+aG= K_{X'}+B', \hspace{5mm} K_{Y'}=\mu^*K_Y+aE,
    \]
    where $B'$ is the strict transform of $B$ and $a=\dim (Y)-1$\footnote{$\mu^*\Theta$ is well-defined for all $\bQ$-divisors $\Theta$, since they are all $\bQ$-Cartier in a neighbourhood of $y$. Similarly, $\pi^*(K_X+B)$ is well-defined since $K_X+B$ is $\bQ$-Cartier in a neighbourhood of the fibre $X_y$ by hypothesis (b).}. Note that $(X',B')$ is still a sub-pair, since $(X,B)$ is one and $G$ is Cartier. Furthermore, since $f$ is equidimensional, the induced morphism $f'$ is too. Let now $\delta \in \bZ_{(p), >0}$ and consider the following $\bZ_{(p)}$-divisors:
    \begin{itemize}
        \item $D'\coloneqq \mu^*D-aE$;
        \item $L'\coloneqq -K_{X'}-B'-f'^*D'\sim_{\bZ_{(p)}}\pi^*L$;
        \item $N'\coloneqq \pi^*N$, so that $G\subseteq \supp(N')$ since $X_y\subseteq \supp(N)$;
        \item $E'\coloneqq \delta E\geq 0$;
        \item $\Gamma'\coloneqq N'-\delta G\sim_{\bZ_{(p)}}L'-f'^*E'$.
    \end{itemize}
    Note that, as $G\subseteq \supp(N')$ and $N'\geq 0$, by picking $\delta$ to be sufficiently small we may assume $\Gamma'\geq 0$. We now claim that the datum $(f'\colon X'\to Y';B',D',E',\Gamma')$ satisfies the hypotheses of \autoref{c-conditions_F_complements_KGFR}. Indeed, the morphism $\pi$ induces an isomorphism over a dense open of $Y$, and by the projection formula we have $H^0(X',mL')=H^0(X,mL)$ for all $m\geq 0$ divisible enough. In particular
\[
Q^0_{y'}(X',B'+f'^*D') = Q^0_{\mu(y')}(X,B+f^*D)=k(\mu(y')),
\]
where $y' \in Y'$ is a general point.
Therefore, by \autoref{c-conditions_F_complements_KGFR}, we conclude that there exists an $F$-complement for $(X'/Y',B'+f'^*D'+\epsilon (\Gamma'+f'^*E'))$. By \autoref{p-chang4.2_gen} there exists a $\bZ_{(p)}$-divisor $\wb{\Gamma}$ such that
\begin{equation*}
    \begin{split}
        0\leq \wb{\Gamma} &\sim_{\bZ_{(p)}} f'^{*}(-K_{Y'}-D'-\epsilon E')+(B')^-\\
                                &=f'^{*}(\mu^*(-K_Y-D)-\epsilon E')+(B')^-\\
                                &\leq f'^{*}(\mu^*(-K_Y-D)-\epsilon E')+\pi^*P\\
                                &=\pi^*(f^*(-K_Y-D)+P)-\epsilon\delta G\\                   &\sim_{\bQ}\pi^*M-\epsilon\delta G,
    \end{split}
\end{equation*}
contradicting the assumption $X_y\not\subseteq \supp(M)$.
\end{proof}

\section{Foliations and inseparable base change} 

Throughout this section we will work over an algebraically closed field $k$, of characteristic $p>0$.

Over fields of positive characteristic, there is a correspondence between foliations and certain purely inseparable maps.

\begin{definition}
A purely inseparable $k$-morphism of normal varieties $a \colon X' \rightarrow X$ is called of \textit{height one} if there exists a morphism $\alpha \colon X \rightarrow X'$ such that $a\circ \alpha = \Fr$.
\end{definition}

\begin{definition}
Let $X$ be a normal variety and let $T_X$ be its tangent sheaf. A \textit{foliation} on $X$ is a subsheaf $\mathcal{F} \subseteq T_X$ which is saturated, closed under Lie brackets, and closed under $p$-powers.
\end{definition}

\begin{proposition}[{\cite[Proposition 2.4]{Ekedahl}, \cite[Proposition 2.9]{LMMPZsoltJoe}}]
Let $X'$ be a normal variety. There is a $1$-to-$1$ correspondence

\begin{center}
    \begin{tikzcd}
        \left\{ \begin{array}{c}\text{Height-one morphisms}\\ \text{ $X'\to X$ with $X$ normal}\end{array} \right\} \arrow[rr] & & \left\{ \begin{array}{c}\text{Foliations $\cF\subseteq T_{X'}$}\arrow[ll]\\
        \end{array} \right\}
    \end{tikzcd}
\end{center}
given by:
\begin{itemize}
\item[($\leftarrow$)] $X\coloneqq \relSpec_{X'} \left(\cO_{X'}^{\cF}\right)$, where $\cO_{X'}^{\cF}\subseteq \cO_{X'}$ is the subsheaf of $\cO_{X'}$ that is taken to zero by all the sections of $\cF$;
\item[($\rightarrow$)] $\cF\coloneqq\lbrace \partial\in T_{X'} \textup{ s.t. } \partial\cO_{X}=0\rbrace$.
\end{itemize}
Moreover, morphisms of degree $p^r$ correspond to foliations of rank $r$.
\end{proposition}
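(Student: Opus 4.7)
The plan is to work affine-locally and invoke Jacobson's correspondence for restricted Lie algebras, then globalize. Since both $\relSpec$ and the formation of $T_{X'}$, $\cO_{X'}^{\cF}$, and $\{\partial : \partial \cO_X = 0\}$ are local on $X'$, I may assume $X' = \Spec(R')$ with $R'$ a normal Noetherian $k$-algebra. I first check that the two constructions are well-defined. In the direction $(\leftarrow)$, if $a\colon X'\to X$ is height-one, the subsheaf $\cF \coloneqq \{\partial\in T_{X'} : \partial\cO_X = 0\}$ is automatically $\cO_{X'}$-saturated (as $T_{X'}/\cF$ injects into $\cHom_{\cO_X}(\cO_{X'},\cO_{X'})/(\cdots)$, which is torsion-free on a normal variety), closed under Lie brackets (if $\partial_1 f = \partial_2 f = 0$ for $f \in \cO_X$ then $[\partial_1,\partial_2]f = 0$), and closed under $p$-th powers (by induction via the Leibniz rule over $\bF_p$). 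In the direction $(\rightarrow)$, if $\cF \subseteq T_{X'}$ is a foliation, the sheaf $\cA \coloneqq \cO_{X'}^{\cF}$ contains $\cO_{X'}^p$ since every derivation kills $p$-th powers; hence the inclusion $\cA \hookrightarrow \cO_{X'}$ is finite (locally, $\cO_{X'}$ is generated as an $\cA$-module by elements with $p$-th powers in $\cA$) and purely inseparable of height one.

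The heart of the proof is showing that $X \coloneqq \relSpec_{X'}(\cA)$ is normal and that the two constructions are mutually inverse. Normality of $X$ follows because $\cO_{X'}$ is an integral extension of $\cA$ with the same fraction field (after saturation by $p$-th powers, or more precisely the extension of fraction fields is purely inseparable of height one), and for any normal domain $R'$ containing a subring $A$ with $R'^p \subseteq A \subseteq R'$ and $\Frac(A) \cap R' = A$, the ring $A$ is itself normal; the latter equality of fraction fields intersected with $R'$ is exactly the assertion that $A$ is a full subring cut out by the vanishing of a restricted Lie subalgebra of derivations on $\Frac(R')$, which is guaranteed by the foliation axioms. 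For the bijectivity, the key step is Jacobson's Galois correspondence for purely inseparable extensions of height one (see e.g.\ Jacobson's \emph{Lectures in Abstract Algebra III}): over the function field $K' \coloneqq \Frac(R')$, restricted Lie subalgebras of $\mathrm{Der}_k(K')$ correspond bijectively to intermediate fields $K \subseteq K'$ with $K'^p \subseteq K$, via the same formulas. Both $\cF \mapsto \cA \mapsto \cF'$ and $\cO_X \mapsto \cF \mapsto \cO_{X'}^{\cF}$ then agree with their originals at the generic point, and hence everywhere on $X'$ since $\cF$ is saturated and $\cA$ is integrally closed in $\cO_{X'}$.

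Finally, the rank/degree correspondence is a local computation. At a generic point, $K'/K$ is purely inseparable of height one, so $[K':K] = p^r$ where $r$ is the dimension of $\mathrm{Der}_K(K')$, and this dimension is precisely $\mathrm{rk}(\cF)$ by the generic matching above. The map $a$ is finite of the same generic degree, yielding $\deg(a) = p^{\mathrm{rk}\,\cF}$. The main obstacles I anticipate are twofold: (i) verifying normality of the candidate $X$ without assuming it — this is where the $p$-power closure of $\cF$ and the saturation hypothesis do all the work, and (ii) patching the Jacobson correspondence, which is purely field-theoretic, to a statement about subsheaves on $X'$ and quotient schemes; this latter point is handled by the finiteness of $\cO_{X'}$ over $\cA$ and by verifying compatibility on overlaps, which reduces to the fact that both the invariants $\cO_{X'}^{\cF}$ and the annihilator foliation $\{\partial : \partial \cO_X = 0\}$ are defined by equations that glue canonically.
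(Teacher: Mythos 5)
This proposition is not proved in the paper at all: it is quoted verbatim from \cite{LMMPZsoltJoe}*{Proposition 2.9} (going back to Ekedahl's correspondence between foliations and height-one quotients), so there is no in-paper argument to compare against. Your sketch follows exactly the standard route used in that literature --- reduce to the affine case, check the two assignments are well defined, prove normality of $\relSpec_{X'}(\cO_{X'}^{\cF})$ via $\cA=\Frac(\cA)\cap\cO_{X'}$, apply Jacobson's Galois correspondence for height-one purely inseparable extensions at the generic point, and then spread out using saturation of $\cF$ and normality of $\cA$ --- and as a sketch it is essentially correct, including the degree--rank computation $[\,K':K\,]=p^{\rk\cF}$. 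A few justifications as written are imprecise, though easily repaired: saturation of $\{\partial\in T_{X'}:\partial\cO_X=0\}$ follows simply from $\cO_{X'}$ being a domain (if $f\partial$ kills $\cO_X$ with $f\neq 0$, so does $\partial$), not from the torsion-freeness statement you gesture at; finiteness of $\cO_{X'}$ over $\cA$ rests on $F$-finiteness of $X'$ (a variety over an $F$-finite field), since the parenthetical you give is circular; and in the second composite the phrase ``$\cA$ is integrally closed in $\cO_{X'}$'' should read that $\cA=\cO_X$ is normal while $\cO_{X'}$ is integral over it (every element satisfies $T^p-r^p$), so that $\Frac(\cA)\cap\cO_{X'}=\cA$, which together with Jacobson at the generic point gives $\cO_X=\cO_{X'}^{\cF}$. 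With those points tightened, your argument is a faithful reconstruction of the cited proof.
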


\begin{proposition}[{\cite[Corollary 3.4]{Ekedahl}, \cite[Proposition 2.10]{LMMPZsoltJoe}}] \label{p-foliations and relative canonical}
Let $X' \rightarrow X$ be a purely inseparable morphism of height one between normal varieties, and let $\mathcal{F}$ be the corresponding foliation.
Then
\[
K_{X'/X} \sim (\det \mathcal{F})^{[p-1]}.
\]
\end{proposition}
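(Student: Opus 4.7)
My plan is to reduce the statement to a local computation on the smooth loci of $X$ and $X'$, use a coordinate system adapted to the $p$-closed foliation $\cF$, and then compare the two sides explicitly via Grothendieck duality for the finite flat map $a$.

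First, since $X$ and $X'$ are normal, both $\cO_{X'}(K_{X'/X})$ and $(\det\cF)^{[p-1]}$ are rank-one reflexive sheaves on $X'$, hence determined by their restriction to any open whose complement has codimension at least two (\autoref{r-reflexivesonnormal}). As $a$ is purely inseparable it is a homeomorphism, so after removing closed subsets of codimension $\geq 2$ we may assume that both $X$ and $X'$ are smooth. Then $a$ is finite flat of degree $p^{r}$ with $r\coloneqq \rk \cF$, and $\omega_{X'/X}\coloneqq \omega_{X'}\otimes a^*\omega_X^{-1}$ is an honest line bundle.

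Next, since $\cF\subseteq T_{X'}$ is an involutive $p$-closed subbundle of rank $r$, a classical Frobenius-type theorem for $p$-closed distributions yields, \'etale-locally at every point of $X'$, regular parameters $x_1,\dots,x_n$ of $\cO_{X'}$ such that $\cF$ is freely generated by $\partial_1\coloneqq\partial/\partial x_1,\dots,\partial_r\coloneqq\partial/\partial x_r$. The subring $\cO_X=\cO_{X'}^{\cF}$ is then \'etale-locally generated over $\cO_{X'}^{p}$ by $x_{r+1},\dots,x_n$; equivalently, $a$ is \'etale-locally modelled on the partial Frobenius in the first $r$ coordinates, and $a_*\cO_{X'}$ is free over $\cO_X$ on the monomial basis $\{x_1^{i_1}\cdots x_r^{i_r}:0\leq i_j\leq p-1\}$. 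Grothendieck duality then identifies $\omega_{X'/X}\cong \cHom_{\cO_X}(a_*\cO_{X'},\cO_X)$ as an $\cO_{X'}$-module. The differential operator $\partial_1^{p-1}\cdots\partial_r^{p-1}$ sends the monomial basis to $((p-1)!)^{r}\in k^{\times}$ on the top element $x_1^{p-1}\cdots x_r^{p-1}$ and to $0$ otherwise, and is $\cO_X$-linear because $\binom{p}{k}\equiv 0\pmod p$ for $1\leq k\leq p-1$; it therefore provides a free $\cO_{X'}$-module generator of $\omega_{X'/X}$. On the other hand, $(\det\cF)^{[p-1]}$ is the free $\cO_{X'}$-module generated by $(\partial_1\wedge\dots\wedge\partial_r)^{\otimes(p-1)}$, and sending one generator to the other defines the desired isomorphism on the chart.

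The main obstacle is verifying that this local isomorphism is intrinsic, i.e.\ independent of the choice of adapted coordinates, so that it glues to a global map on the smooth locus and then extends by reflexivity over $X'$. Under an \'etale change of coordinates preserving $\cF$, both generators above transform by the $(p-1)$-st power of the determinant of the Jacobian of the transformation restricted to the first $r$ coordinates; checking that these factors match is a Leibniz-style combinatorial computation that again hinges on the vanishing of the intermediate binomial coefficients modulo $p$. An alternative route avoiding this global check would be to induct on $r$ by factoring $a$ \'etale-locally into $r$ successive height-one rank-one foliation quotients, for each of which the identification $\omega_{X'/X}\cong \cG^{[p-1]}$ can be verified by the direct dual-basis argument sketched above; the multiplicativity of relative dualising sheaves and of determinants then gives the general formula.
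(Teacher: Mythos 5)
The paper itself offers no proof of this proposition: it is imported verbatim from \cite[Proposition 2.10]{LMMPZsoltJoe} (ultimately going back to Ekedahl's work on foliations and inseparable morphisms), so there is no internal argument to measure yours against. Judged on its own, your outline gets the local picture essentially right (the dual-basis description of $\omega_{X'/X}\cong\cHom_{\cO_X}(a_*\cO_{X'},\cO_X)$, the fact that $\partial_1^{p-1}\cdots\partial_r^{p-1}$ is $\cO_X$-linear --- though the reason is simply that each $\partial_j\in\cF$ kills $\cO_X=\cO_{X'}^{\cF}$, not a binomial-coefficient identity --- and that it generates the dual module), but it has two genuine gaps.

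First, the ``classical Frobenius-type theorem'' you invoke is not an off-the-shelf citation. Even after shrinking to the smooth loci you must also discard the locus (of codimension $\geq 2$) where $\cF$ or $T_{X'}/\cF$ fails to be locally free before $\cF$ is a subbundle; and the simultaneous \'etale-local straightening of a $p$-closed involutive subbundle to coordinate fields $\partial_1,\dots,\partial_r$ is essentially equivalent to redoing the structure theory of height-one quotients (straighten one nonvanishing $\delta\in\cF$ using $\delta^p=0$, prove the invariant ring is regular, descend $\cF/\cO_{X'}\delta$, induct on the rank) --- that is, a large part of the content you are trying to prove. Second, and decisively, the statement is a linear equivalence, i.e.\ an isomorphism of two \emph{global} divisorial sheaves, so local isomorphisms prove nothing until the transition data are matched; the identity $\partial_{x_1}^{p-1}\cdots\partial_{x_r}^{p-1}=\det(A)^{p-1}\,\partial_{y_1}^{p-1}\cdots\partial_{y_r}^{p-1}$ in $\cHom_{\cO_X}(a_*\cO_{X'},\cO_X)$ is exactly the nontrivial point (the multi-variable analogue of the classical computation identifying $\cHom_{\cO_X}(F_*\cO_X,\cO_X)$ with $F_*\omega_X^{1-p}$), and you leave it unverified. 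Your fallback --- factoring $a$ \'etale-locally into rank-one quotients and invoking multiplicativity --- does not repair this, because those factorizations are local and non-canonical, so the same gluing problem recurs. The standard proof avoids all of this by arguing globally: on the subbundle locus one has exact sequences $0\to\cF\to T_{X'}\to a^*(T_X/\cG)\to 0$ and the analogous sequence on $X$ for the complementary foliation $\cG$ corresponding to $b\colon X\to X'^{(p)}$; taking determinants and combining with $K_{X'/X'^{(p)}}\sim(1-p)K_{X'}$ yields the formula with no normal form and nothing to glue. Either carry out the cocycle computation honestly, or switch to that exact-sequence argument (or simply cite Ekedahl, as the paper does).
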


The following lemma is useful for studying fibrations with non-normal geometric generic fibre.

\begin{lemma}[{\cite[Lemma 2.4]{Lena-Joe}, \cite[Proposition 2.1]{LMMPZsoltJoe}}] \label{l-singularities of fibres after Frob base change}
    Let $f: X \rightarrow Y$ be a fibration of normal varieties and let $\wb{\eta}$ and $\wb{\eta}^e$ be the geometric generic points of $Y$ and $Y^e$, respectively.
    Consider the diagram
\begin{center}
        \begin{tikzcd}
            X_e\arrow[r]\arrow[dr,"f_e", swap] & X_{Y^e} \arrow[r,"(\Fr^e)_X"] \arrow[d,"f_{Y^e}"] & X \arrow[d,"f"]\\
             & Y^e \arrow[r,"\Fr^e"] & Y,
        \end{tikzcd}
    \end{center}
    where the square is Cartesian and $X_e$ is the normalisation of $(X_{Y^e})_{\red}$. Then, there exists $e\geq 1$ such that $X_{e,\wb{\eta}^e}=(X_{\wb{\eta},\red})^\nu$.
    In particular, for $e \gg 0$, a general fibre of $f_{e}$ is normal.
\end{lemma}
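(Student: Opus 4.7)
The plan is to reduce the statement to a descent problem over the function field $K \coloneqq k(Y)$. Identifying the residue field $k(\eta^e)$ with $K^{1/p^e}$ via Frobenius, and using that reduction, normalisation and localisation all commute, restricting the construction of $X_e$ over the generic point $\eta^e$ yields
\[
X_{e,\eta^e} = \bigl((X_\eta \otimes_K K^{1/p^e})_{\red}\bigr)^{\nu}.
\]
The first assertion will then follow once we show that, for $e \gg 0$, this scheme is geometrically normal over $K^{1/p^e}$: its base change to $\wb{K} = k(\wb{\eta}^e)$ will be a normal scheme, finite and birational onto $X_{\wb{\eta},\red}$, forcing it to coincide with $(X_{\wb{\eta},\red})^{\nu}$ by the universal property of the normalisation.

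To achieve geometric normality, first work over the perfect closure $K^{\textup{perf}} \coloneqq \bigcup_{e}K^{1/p^e}$. Since $K^{\textup{perf}}$ is perfect, the reduction $T \coloneqq (X_\eta \otimes_K K^{\textup{perf}})_{\red}$ is geometrically reduced, and its normalisation $Z \coloneqq T^{\nu}$ is a geometrically normal $K^{\textup{perf}}$-scheme of finite type. Since every object involved is of finite presentation, a standard limit argument produces an integer $e_0 \geq 1$, a normal $K^{1/p^{e_0}}$-scheme $Z_{e_0}$, and a finite birational morphism $Z_{e_0} \to T_{e_0}$ onto a reduced closed subscheme $T_{e_0} \subseteq X_\eta \otimes_K K^{1/p^{e_0}}$, whose base change to $K^{\textup{perf}}$ recovers $Z \to T$.

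Two compatibilities need verification for $e_0$ large. First, since $X_\eta$ is Noetherian, the nilradical of $X_\eta \otimes_K K^{\textup{perf}}$ is finitely generated and hence defined over some $K^{1/p^{e_0}}$; this forces $T_{e_0} = (X_\eta \otimes_K K^{1/p^{e_0}})_{\red}$ and moreover $T_{e_0}$ is geometrically reduced over $K^{1/p^{e_0}}$. Second, since $Z_{e_0}$ is normal and $Z_{e_0} \to T_{e_0}$ is finite and birational, the universal property of the normalisation yields $Z_{e_0} = T_{e_0}^{\nu} = ((X_\eta \otimes_K K^{1/p^{e_0}})_{\red})^{\nu}$. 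Finally, geometric normality of $Z_{e_0}$ over $K^{1/p^{e_0}}$ can be detected on the purely inseparable extension $K^{1/p^{e_0}} \subseteq K^{\textup{perf}}$, and $Z_{e_0} \otimes_{K^{1/p^{e_0}}} K^{\textup{perf}} = Z$ is normal; hence $Z_{e_0}$ is geometrically normal. The principal technical hurdle is precisely this descent step, since reduction and normalisation do not commute with purely inseparable base change in general, and one needs Noetherianity of $X_\eta$ to bound how many Frobenius roots are required to stabilise both operations.

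The ``in particular'' statement then follows by combining generic flatness of $f_e$ with the openness of the normal locus in a flat finite-type family, which propagates normality of the geometric generic fibre $X_{e,\wb{\eta}^e}$ to a dense open subset of $Y^e$.
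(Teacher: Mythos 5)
Your proof is correct. The paper in fact gives no argument of its own for this lemma --- it is quoted from the two cited references --- and your proof reconstructs exactly the standard argument used there: identify $X_{e,\eta^e}$ with $\bigl((X_\eta\otimes_K K^{1/p^e})_{\red}\bigr)^{\nu}$, note that over $K^{\textup{perf}}$ reducedness and normality are automatically geometric, descend to a finite level $K^{1/p^{e_0}}$ by finite presentation, and conclude for general fibres via generic flatness and openness of the locus of geometrically normal fibres. The only steps worth an extra line are harmless: reducedness and normality of the descended data $Z_{e_0}\to T_{e_0}$ are automatic by faithfully flat descent along $K^{1/p^{e_0}}\subseteq K^{\textup{perf}}$ (no enlargement of $e_0$ needed), and birationality of $Z_{e_0}\otimes_{K^{1/p^{e_0}}}\wb{K}\to X_{\wb{\eta},\red}$ persists because the open locus over which $Z_{e_0}\to T_{e_0}$ is an isomorphism pulls back to a dense open under any field extension, all schemes involved being irreducible.
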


The next results concern the relation between the canonical divisors of $X$ and $X_e$. 

\begin{theorem}[{\cite[Theorem 3.1]{LMMPZsoltJoe}}] \label{t-theorem 3.1 ZsoltJoe}
Let $f \colon X \rightarrow Y$ be a morphism of normal varieties. Let $a \colon Y' \rightarrow Y$ be a purely inseparable morphism of height one from a normal variety, let $X'$ be the normalisation of the reduction of $X \times_Y Y'$ and let $f' \colon X' \rightarrow Y'$ be the induced morphism.
Set $\mathcal{A}$ to be the foliation induced by $a$.
Then:
\begin{itemize}
    \item[(i)] $K_{X'/X} \sim (p-1)D$ for some Weil divisor $D$ on $X'$;
    \item[(ii)] there is a non-empty open subset $U \subseteq Y'$ and an effective divisor $C$ on $f'^{-1}(U)$ such that $C \sim -D|_{f'^{-1}(U)}$.
\end{itemize}
Moreover, assume $X_{\wb{\eta}}$ is reduced, where $\wb{\eta}$ is the geometric generic point of $Y$, and $f$ is equidimensional. Then:
\begin{itemize}
\item[(iii)] $f'^{*}( \det \cA) -D \sim C'$ for some effective divisor $C'$ on $X'$.
\end{itemize}
\end{theorem}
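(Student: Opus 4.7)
The plan is to establish (i) by identifying $b\colon X'\to X$ as a height-one purely inseparable morphism and then invoking Propositions 2.9 and 2.10, and to derive (ii) and (iii) from a local and a global determinant comparison between the resulting foliation on $X'$ and the pullback of $\cA$ from $Y'$.

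The first and main observation for (i) is that $b$ is itself purely inseparable of height one. Since $a$ has a section $\alpha\colon Y\to Y'$ with $a\circ\alpha=\Fr_Y$, the morphisms $\Fr_X\colon X\to X$ and $\alpha\circ f\colon X\to Y'$ become equal after composition with the structure maps to $Y$ (both give $f\circ \Fr_X=\Fr_Y\circ f$), so by the universal property of the fibre product they assemble into $\tilde\beta\colon X\to X\times_Y Y'$ with $\mathrm{pr}_1\circ\tilde\beta=\Fr_X$. Normality of $X$ then lets $\tilde\beta$ lift through the reduction and normalisation steps to a morphism $\beta\colon X\to X'$ with $b\circ\beta=\Fr_X$. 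By Proposition 2.9, $b$ corresponds to a foliation $\cF\subseteq T_{X'}$, and by Proposition 2.10 we have $K_{X'/X}\sim(p-1)\det\cF$, so one takes $D\coloneqq\det\cF$.

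For (ii) and (iii), the plan is to construct a natural $\cO_{X'}$-linear morphism $\phi\colon\cF\to f'^*\cA$ over the smooth locus of $f'$, using the cotangent sequence $T_{X'}\twoheadrightarrow f'^*T_{Y'}$: for $\partial\in\cF$ and any $u\in\cO_Y$, one has $\partial(f'^*a^*u)=\partial(b^*f^*u)=0$ since $b^*f^*u\in\cO_X$; interpreted via its horizontal component in $f'^*T_{Y'}$, this vanishing on $a^*\cO_Y$ says exactly that the horizontal component lies in $f'^*\cA$. For (ii), shrink $Y'$ to a dense open $U$ over which $\cA|_U$ is free (so $\det\cA|_U\sim 0$) and $f'$ is smooth; on $V\coloneqq f'^{-1}(U)$ the saturation of the image of $\phi$ inside $f'^*\cA|_V$ yields, via a determinant computation, $\det\cF|_V\sim -C$ for some effective divisor $C$ on $V$. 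For (iii), the extra hypotheses upgrade this to a global statement: reducedness of $X_{\wb{\eta}}$ makes $\rk\cF=\rk\cA=r$ and precludes any nonzero vertical derivation from lying in $\cF$, while equidimensionality of $f$ (hence of $f'$) makes $f'^*\det\cA$ a well-defined Weil divisor via \autoref{r-pullbackWeilED}. Then $\phi\colon\cF\hookrightarrow f'^*\cA$ is a global inclusion of rank-$r$ reflexive sheaves, so $\det\cF\hookrightarrow f'^*\det\cA$ with quotient supported on an effective Weil divisor $C'$, giving $f'^*\det\cA-D\sim C'$.

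The main technical obstacle will be verifying that $\phi$ is generically injective. Its kernel is the vertical part $\cF\cap T_{X'/Y'}$, consisting of derivations in $\cF$ that also annihilate $f'^*\cO_{Y'}$; such vertical derivations come from nilpotent tangent directions on fibres of $f'$ introduced by the reduction step in the definition of $X'$. In (iii) the reducedness of $X_{\wb{\eta}}$ is what kills these nilpotent directions and enables a clean global comparison; in (ii) one must bypass the issue through the saturation trick on a small enough $U$. A secondary subtlety, invisible in (i), is that the ranks of $\cF$ and $\cA$ need not coincide in general: they match exactly when the generic inseparable degree of $b$ equals that of $a$, which again reducedness of $X_{\wb{\eta}}$ guarantees.
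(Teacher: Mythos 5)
Your proposal does not follow the paper's proof at all: the paper simply quotes parts (a),(b) of \cite[Theorem 3.1]{LMMPZsoltJoe} for (i)--(ii), and proves (iii) by reducing to the case where $X\times_Y Y'$ is reduced (using flatness of $f$ over a big open subset of $Y$ together with reducedness of $X_{\wb{\eta}}$), citing part (d) of the same reference there, and then spreading the linear equivalence across a codimension-$\geq 2$ locus via normality of $X'$ and equidimensionality of $f'$. You instead set out to reprove the cited theorem from scratch via the foliation correspondence; that is a reasonable plan (it is essentially the strategy of the reference itself), and your argument for (i) -- producing $\beta\colon X\to X'$ with $b\circ\beta=\Fr_X$ and applying the two propositions on foliations -- is fine. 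However, the execution of (ii) and (iii) has genuine gaps.

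The main gap is your reliance on the smooth locus of $f'$. In characteristic $p$ there is no generic smoothness: $f'$ may be smooth nowhere (its generic fibre need not be smooth, and nothing in (i)--(ii) even forces it to be reduced), so you cannot ``shrink $U$ so that $f'$ is smooth'' in (ii); and even when the smooth locus is dense, its complement can contain divisors of $X'$ (non-reduced or everywhere-singular fibres over codimension-one points of $Y'$), so a map $\phi$ defined only there cannot yield the global statement (iii): to get $f'^*\det\cA-D\sim C'$ on all of $X'$ you need the nonzero map $\cO_{X'}(D)\to\cO_{X'}(f'^*\det\cA)$ to exist outside a subset of codimension $\geq 2$. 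The asserted ``global inclusion $\cF\hookrightarrow f'^*\cA$'' is precisely the point that needs proof: the everywhere-defined map is $\cF\to\cHom_{\cO_{X'}}(f'^*\Omega_{Y'/Y},\cO_{X'})$, obtained by restricting a derivation to $f'^{-1}\cO_{Y'}$, and one must check that the natural map from $f'^*\cA$ to this Hom sheaf is an isomorphism in codimension one -- this is where equidimensionality enters beyond making $f'^*\det\cA$ well defined, since it guarantees every codimension-one point of $X'$ lies over a point of $Y'$ of codimension at most one, where $\Omega_{Y'/Y}$ is free-plus-torsion over a discrete valuation ring. Secondly, your diagnosis of the main difficulty is off: generic injectivity of $\phi$ is automatic, because a derivation of $K(X')$ annihilating both $K(X)$ and $K(Y')$ annihilates their compositum, which is all of $K(X')$; no ``saturation trick'' is needed for (ii) (shrink $Y'$ so that $\Omega_{Y'/Y}$, equivalently $\cA$, is free and take the top wedge of the generically injective map into a free sheaf). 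What reducedness of $X_{\wb{\eta}}$ actually buys in (iii) is the equality $\rk\,\cF=\rk\,\cA$, i.e.\ equality of the inseparable degrees of $b$ and $a$, which you mention only as a ``secondary subtlety'' but which is the genuine reason the determinant comparison makes sense; without it your argument would only bound $\det$ of a proper-rank subsheaf.
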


\begin{proof}
    Points (i) and (ii) correspond to \cite[Theorem 3.1(a),(b)]{LMMPZsoltJoe}. We prove (iii).

Note that we can freely remove closed subsets of $X'$ with codimension $\geq 2$. In particular, we may assume that $X \times_Y Y'$ is reduced. Indeed, there exists an open $U \subseteq Y$ with $\codim(Y \setminus U) \geq 2$ such that $f|_{X_U} \colon X_U \rightarrow U$ is flat, where $X_U\coloneqq f^{-1}(U)$. Let $U'\coloneqq a^{-1}(U)$ and $X_{U'}\coloneqq f'^{-1}(U')$. By \cite[Remark 2.5]{Wit_CBF} we have that $X_{U'}$ is reduced, since $f|_{X_U}$ is flat and $X_{\wb{\eta}}$ is reduced. Let $X_{U'}^{\nu} \subseteq X'$ be the normalisation of $X_{U'}$. Since $f$ is equidimensional, so is $f'$, hence $X_{U'}^{\nu}$ is a big open subset of $X'$. 

Now, we further reduce the statement to having $Y$ and $Y'$ regular.
As $Y$ and $Y'$ are $R1$ we can replace $Y$ by $Y_0\coloneqq Y\setminus(\sing(Y)\cup a(\sing(Y')))$, $Y'$ by $Y'_{0}\coloneqq a^{-1}(Y_0)$, $X$ by $f^{-1}(Y_0)$ and $X'$ by $f'^{-1}(Y_{0}')$. Then point (iii) follows from point (d) of \cite[Theorem 3.1]{LMMPZsoltJoe}.
\end{proof}

We will need to consider base changes with purely inseparable maps that are not necessarily of height one.
\autoref{t-theorem 3.1 ZsoltJoe} extends to this situation by induction on the height.

\begin{corollary} \label{c-comparing conductors}
Let $f \colon X \rightarrow Y$ be an equidimensional fibration between normal projective varieties and let $g\colon Y \rightarrow Z$ be a morphism between normal projective varieties.
Let $Y_e$ be the normalisation of the reduction of $Y \times_Z Z^e$, and $X_e$ the normalisation of the reduction of $X \times_Y Y_e$.
Assume that $X_{\wb{\eta}}$ is reduced, where $\wb{\eta}$ is the geometric generic point of $Y$.
Let $f_e \colon X_e \rightarrow Y_e$ and $g_e\colon Y_e\to Z^e$ be the induced morphisms.
Then:
\begin{enumerate}
\item[(i)] $K_{X_e/X} - f_e^*K_{Y_e/Y} \sim (1-p)C$ for some effective Weil divisor $C$ on $X_e$;
\item[(ii)] $K_{Y_e/Y} \sim (p-1)D$ for some Weil divisor $D$ on $Y_e$ and there is a non-empty open subset $U \subseteq Z^e$ with an effective divisor $C'$ on $g_e^{-1}(U)$ such that $-D|_{g_e^{-1}(U)} \sim C'$.
\end{enumerate}
\end{corollary}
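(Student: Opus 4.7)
The plan is to iterate \autoref{t-theorem 3.1 ZsoltJoe} along the factorisation of the $e$-th absolute Frobenius $\Fr^e_Z$ as a composition $Z^e \to Z^{e-1} \to \cdots \to Z^1 \to Z$ of $e$ height-one purely inseparable morphisms. For each $0 \leq i \leq e$ I would build inductively the normalised reductions $Y_i$ of $Y_{i-1} \times_{Z^{i-1}} Z^i$ and $X_i$ of $X_{i-1} \times_{Y_{i-1}} Y_i$; a quick compatibility check shows these coincide with the $Y_i, X_i$ of the statement. The tower yields morphisms $\sigma_i \colon Y_i \to Y_{i-1}$ and $\pi_i \colon X_i \to X_{i-1}$, and the proof proceeds by induction on $e$.

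For the base case $e=1$, (ii) is immediate from \autoref{t-theorem 3.1 ZsoltJoe}(i),(ii) applied to $g$ with $a\colon Z^1 \to Z$. For (i), I would apply \autoref{t-theorem 3.1 ZsoltJoe}(i),(ii),(iii) to $f$ with $a \colon Y_1 \to Y$, invoking the standing hypotheses that $X_{\wb{\eta}}$ is reduced and $f$ is equidimensional. Letting $\cA$ denote the foliation on $Y_1$ corresponding to $a$, \autoref{p-foliations and relative canonical} yields $K_{Y_1/Y} \sim (p-1)\det\cA$, while the theorem produces a Weil divisor $D$ on $X_1$ with $K_{X_1/X} \sim (p-1)D$ and an effective $C'$ on $X_1$ with $C' \sim f_1^*\det\cA - D$. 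Subtracting then gives
\[
K_{X_1/X} - f_1^* K_{Y_1/Y} \sim (p-1)(D - f_1^* \det\cA) = -(p-1)C' = (1-p)C'.
\]

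For the inductive step, assume both (i) and (ii) hold at level $e-1$. For (ii), I would apply \autoref{t-theorem 3.1 ZsoltJoe}(i),(ii) to $g_{e-1}$ with $a\colon Z^e \to Z^{e-1}$ and combine via $K_{Y_e/Y} \sim K_{Y_e/Y_{e-1}} + \sigma_e^* K_{Y_{e-1}/Y}$, a Weil-divisor identity valid since $\sigma_e$ is finite, hence equidimensional (\autoref{r-pullbackWeilED}); the effective-pullback open subset is obtained by intersecting the two opens from the two applications. For (i), I would apply the base case to $f_{e-1}\colon X_{e-1}\to Y_{e-1}$ with $a\colon Y_e \to Y_{e-1}$ and combine through
\[
K_{X_e/X} - f_e^* K_{Y_e/Y} = \bigl(K_{X_e/X_{e-1}} - f_e^* K_{Y_e/Y_{e-1}}\bigr) + \pi_e^*\bigl(K_{X_{e-1}/X} - f_{e-1}^* K_{Y_{e-1}/Y}\bigr),
\]
which by the base case and induction hypothesis is $\sim (1-p)(C'_e + \pi_e^* C_{e-1})$, with the right-hand side effective since $\pi_e^*$ preserves effectivity.

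The main obstacle will be verifying at each inductive stage the hypotheses needed to invoke \autoref{t-theorem 3.1 ZsoltJoe}(iii): that $f_{i-1}$ stays equidimensional (immediate, since equidimensionality is preserved under base change and normalisation in codimension one) and that $X_{i-1,\wb\eta_{i-1}}$ remains reduced. For the latter I would argue exactly as in the proof of \autoref{t-theorem 3.1 ZsoltJoe}(iii): restrict to an open of $Y_{i-1}$ with codimension-two complement where $f_{i-1}$ is flat, use geometric reducedness of the initial generic fibre combined with flatness to deduce reducedness after the next height-one base change, and then extend the resulting linear equivalences to all of $X_e$ using the normality of $X_e$. The stabilisation of the geometric generic fibre furnished by \autoref{l-singularities of fibres after Frob base change} ensures this reducedness propagates along the entire tower.
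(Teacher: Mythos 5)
Your proposal is essentially the paper's own proof: the same induction on $e$, with the base case obtained from \autoref{t-theorem 3.1 ZsoltJoe} together with \autoref{p-foliations and relative canonical}, and the inductive step obtained by telescoping $K_{X_e/X}$, $K_{Y_e/Y}$ and the exceptional/effective divisors through pullback along the intermediate (finite, resp.\ equidimensional) maps, exactly as in the paper's choice $C=\pi_2^*C_1+C_2$, $D=p_2^*D_1+D_2$. Your extra care in checking that equidimensionality and geometric reducedness of the generic fibre persist along the tower (which the paper leaves implicit) is welcome and correct in substance, though the cleanest justification for reducedness is the function-field computation $k(X_{i-1})\otimes_{k(Y_{i-1})}\wb{k(Y_{i-1})}\cong k(X)\otimes_{k(Y)}\wb{k(Y)}$ rather than the stabilisation statement of \autoref{l-singularities of fibres after Frob base change}, which only concerns $e\gg 0$.
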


\begin{proof}
We proceed by induction.
When $e=1$, let $\mathcal{A}$ be the foliation on $Y_1$ corresponding to $Y_1\to Y$.
By \autoref{p-foliations and relative canonical}, $K_{Y_1/Y} \sim (\det \mathcal{A})^{\left[ p-1 \right]}$ and, by \autoref{t-theorem 3.1 ZsoltJoe}(i), (iii),
\[
K_{X_1/X}-(p-1)f_1^*(\det \mathcal{A}) \sim (1-p)C
\]
for some effective divisor $C$ on $X_1$, giving point (i).
Point (ii) follows from \autoref{t-theorem 3.1 ZsoltJoe}(i), (ii).
If $e>1$, consider the diagram:
\begin{center}
\begin{tikzcd}
    X_e \arrow[d,"f_e"] \arrow[r,"\pi_2"] & X_{e-1} \arrow[r, "\pi_1"] \arrow[d,"f_{e-1}"] & X \arrow[d, "f"] \\
    Y_e \arrow[r,"p_2"] \arrow[d, "g_e"] & Y_{e-1} \arrow[r, "p_1"] \arrow[d, "g_{e-1}"] & Y \arrow[d, "g"]\\
    Z^e \arrow[r, "\Fr"] & Z^{e-1} \arrow[r, "\Fr^{e-1}"] & Z,
\end{tikzcd}
\end{center}
where $\pi_1$, $\pi_2$, $p_1$, and $p_2$ are the induced maps.
By the inductive assumptions, there exist $C_1$, $C_2$, $D_1$, and $D_2$ Weil divisors on $X_{e-1}$, $X_e$, $Y_{e-1}$, and $Y_e$ respectively, such that:
\begin{itemize}
    \item $K_{X_{e-1}/X} -f_{e-1}^* K_{Y_{e-1}/Y} \sim (1-p)C_1$ and $C_1\geq 0$;
    \item $K_{X_e/X_{e-1}} - f_e^*K_{Y_e/Y_{e-1}} \sim (1-p)C_2$ and $C_2\geq 0$;
    \item $K_{Y_{e-1}/Y} \sim (p-1)D_1$ and there exist a dense open $U_1\subseteq Z^{e-1}$ and an effective divisor $C'_1$ on $g_{e-1}^{-1}(U_1)$, such that $-D_1|_{g_{e-1}^{-1}(U_1)} \sim C'_1$;
    \item $K_{Y_e/Y_{e-1}} \sim (p-1)D_2$ and there exist a dense open $U_2\subseteq Z^{e}$ and an effective divisor $C'_2$ on $g_e^{-1}(U_2)$, such that $-D_2|_{g_e^{-1}(U_2)} \sim C'_2$.
\end{itemize}
Setting $C\coloneqq\pi_2^*C_1 + C_2$, $D\coloneqq p_2^*D_1 + D_2$, $U\coloneqq U_1\cap U_2$, and $C' \coloneqq (p_2|_{g_e^{-1}(U)})^*C_1+C_2|_{g_e^{-1}(U)}$ we conclude the proof.
\end{proof}

\section{Proof of the main theorem}%

Throughout this section we will work over an algebraically closed field $k$, of characteristic $p>0$.

\begin{lemma}\label{l-subfibration}
    Consider the datum $(f\colon X \to Y; B, D)$ where
\begin{itemize}
    \item $f$ is an equidimensional fibration of normal projective varieties with normal general fibre,
    \item $B$ is a $\bZ_{(p)}$-divisor on $X$ such that $\supp(B^-)$ does not dominate $Y$,
    \item $D$ is a $\bZ_{(p)}$-divisor on $Y$.
\end{itemize}
Let $L\coloneqq -K_X-B-f^*D$ and let $y \in Y$ be a general point. Let $g\colon Y\to Z$ be a fibration with normal general fibre $Y_z$, and let $f_z\colon X_z\to Y_z$ be the induced fibration. Assume that
    \begin{enumerate}
        \item $Q^0_{y}(X,B+f^*D)=k(y)$, and
        \item there exists $m \geq 1$ not divisible by $p$ such that the sub-linear series $|mL|_y$ is base point free.
    \end{enumerate}
    Then $Q^0_{y}(X_z,B_z+f_z^*D_z)=k(y)$.
\end{lemma}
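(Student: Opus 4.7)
The plan is to translate both $Q^0$-conditions into existence of $F$-complements via \autoref{c-conditions_F_complements_KGFR}, and then to transfer $F$-complements from $X/Y$ to $X_z/Y_z$ by restriction.

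First, I would reformulate the goal. By the converse direction of \autoref{c-conditions_F_complements_KGFR} applied to $f_z$, it suffices to show: for every $\Gamma'\in|L_z|_{\bZ_{(p)}}$, there exists $\epsilon\in\bZ_{(p),>0}$ and $\Lambda'\in|L_z-\epsilon\Gamma'|_{\bZ_{(p)}}$ such that $(X_y,B_y+\epsilon\Gamma'_y+\Lambda'_y)$ is globally $F$-split (the contributions $(f^*D)|_{X_y}$ and $(f_z^*D_z)|_{X_y}$ both vanish since $y$ is a general point). Symmetrically, the hypothesis $Q^0_{y}(X,B+f^*D)=k(y)$ produces, for every $\Gamma\in|L|_{\bZ_{(p)}}$, an $\epsilon>0$ and an $F$-complement $\Lambda\in|L-\epsilon\Gamma|_{\bZ_{(p)}}$ for $(X/Y,B+f^*D+\epsilon\Gamma)$. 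Since \autoref{c-conditions_F_complements_KGFR} requires equidimensionality, this step may need to be preceded by replacing $f$ with an equidimensional model via \autoref{l-flattening}.

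Next, given $\Gamma'\in|L_z|_{\bZ_{(p)}}$, I would write $m\Gamma'\in|mL_z|$ for some $m$ coprime to $p$, enlarged (up to replacing $m$ with a suitable multiple) so that assumption (b) applies with the same $m$. Using the base point freeness of the sub-linear series $|mL|_y\subseteq|mL_y|$, I would construct $\Gamma\in|L|_{\bZ_{(p)}}$ whose restriction to the fibre $X_y$ matches $\Gamma'_y$ via a Bertini-type argument on $X_y$. Applying the reformulated hypothesis produces $\Lambda\in|L-\epsilon\Gamma|_{\bZ_{(p)}}$ with $(X_y,B_y+\epsilon\Gamma_y+\Lambda_y)$ globally $F$-split. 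Restricting $\Lambda$ to $X_z$ and using that the Frobenius trace commutes with flat base change (\cite[Lemma 2.18]{PSZ}, invoked in \autoref{r-traces}), one checks that $\Lambda|_{X_z}\in|L_z-\epsilon\,\Gamma|_{X_z}|_{\bZ_{(p)}}$ is an $F$-complement for $(X_z/Y_z,B_z+f_z^*D_z+\epsilon\,\Gamma|_{X_z})$, and the GFS condition on $X_y$ transfers thanks to the matching $\Gamma|_{X_y}=\Gamma'_y$.

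The main obstacle is the lifting step, since the restriction map $H^0(X,mL)\to H^0(X_y,mL_y)$ is generally not surjective, so $\Gamma$ cannot have arbitrarily prescribed $\Gamma_y$; only the sub-linear series $|mL|_y$ is accessible. Assumption (b) is precisely what enables a Bertini-type approximation of $\Gamma'_y$ by restrictions of global divisors, but some additional care is required because the matching of $F$-complement structures must be carried out within $\bZ_{(p)}$-linear equivalence classes, and the equidimensionality reduction at the beginning interacts non-trivially with assumption (b). Once this lifting is cleanly achieved, the remaining bookkeeping is a direct application of the compatibility of the Frobenius trace with base change.
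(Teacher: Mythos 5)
Your opening reduction is the same as the paper's: by the converse direction of \autoref{c-conditions_F_complements_KGFR} it suffices to produce, for every $\Gamma'\in|L_z|_{\bZ_{(p)}}$, an $F$-complement for $(X_z/Y_z,B_z+f_z^*D_z+\epsilon\Gamma')$ for some small $\epsilon\in\bZ_{(p),>0}$. The genuine gap is in your lifting step. You propose to construct $\Gamma\in|L|_{\bZ_{(p)}}$ with $\Gamma|_{X_y}=\Gamma'_y$ and your whole transfer of the GFS condition hinges on that identity of boundaries on $X_y$. But such a $\Gamma$ does not exist in general: $\Gamma'_y$ lies in the restricted series $|nmL_z|_y$ (restrictions of sections from $X_z$), whereas divisors coming from $X$ restrict only into the sub-series $|nmL|_y$, which can be strictly smaller. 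Base point freeness of $|mL|_y$ (assumption (b)) says nothing about the inclusion $|nmL|_y\subseteq|nmL_z|_y$ being an equality, and the ``Bertini-type approximation'' you invoke cannot repair this, because your argument needs the exact equality $\Gamma|_{X_y}=\Gamma'_y$, not a divisor that is merely close to it in some sense. (The detour through \autoref{l-flattening} is also a red herring: replacing $f$ by an equidimensional model changes the fibration and the restricted linear series your construction relies on; the paper applies \autoref{c-conditions_F_complements_KGFR} directly.)

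The missing idea, which is how the paper proceeds, is to replace matching by \emph{support domination plus scaling}. Both $|nmL|_y$ and $|nmL_z|_y$ are base point free sub-series of the same complete series $|nmL_y|$, so they contract exactly the $nmL_y$-trivial curves; by \cite[Theorem 1.2]{ChangJow} the induced maps $\varphi$ and $\psi$ stabilise for large $n$ and $\varphi=\pi\circ\psi$ with $\pi\colon W_y\to V_y$ finite. Writing $\Gamma'_y=\psi^*(H/nm)$ for a hypersurface section $H$ of $W_y$, finiteness of $\pi$ allows one to choose a hypersurface section $H'$ of $V_y$ with $\supp\bigl(\varphi^*(H'/nm)\bigr)\supseteq\supp(\Gamma'_y)$; since $\varphi^*(H'/nm)$ belongs to the restricted series it extends to some $\Theta\in|L|_{\bZ_{(p)}}$ on $X$. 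Hypothesis (a) (via \autoref{c-conditions_F_complements_KGFR}) yields $\delta\in\bZ_{(p),>0}$ and an $F$-complement $\Lambda$ for $(X/Y,B+f^*D+\delta\Theta)$; then for $0<\epsilon\ll\delta$ one arranges $\delta\Theta_z-\epsilon\Gamma'\geq 0$, and $\Lambda_z+(\delta\Theta_z-\epsilon\Gamma')$ is an $F$-complement for $(X_z/Y_z,B_z+f_z^*D_z+\epsilon\Gamma')$, the GFS condition on $X_y$ following from monotonicity of global $F$-splitting under decreasing the boundary (\autoref{l-GFR_perturbation}). Without this dominate-then-absorb step (and the finiteness of $\pi$ that makes the domination possible), the proof does not go through.
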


\begin{proof}
    First we set up some notation. We have two base point free $\bZ_{(p)}$-linear subseries of $|L_y|_{\bZ_{(p)}}$
    \[
    |V_\bullet|_{\bZ_{(p)}}\coloneqq\lbrace D/mn \textup{ such that } D\in |nmL|_y \textup{ and }n\in \bN\setminus p\bN\rbrace,  
    \]
    and
    \[
    |W_\bullet|_{\bZ_{(p)}}\coloneqq\lbrace D/mn \textup{ such that } D\in |nmL_z|_y \textup{ and }n \in \bN\setminus p\bN\rbrace.  
    \]
    By \cite[Theorem 1.2]{ChangJow} the morphisms $\varphi$ and $\psi$ induced by $|nmL|_y$ and $|nmL_z|_y$ stabilise for all large enough $n$, hence we have the following commutative diagram
    \begin{center}
        \begin{tikzcd}
               & V_y\\
            X_y\arrow[ru,"\varphi"]\arrow[rd,"\psi",swap] & \\
               & W_y,\arrow[uu,"\pi",swap]
        \end{tikzcd}
    \end{center}
    where $\pi$ is a finite morphism, induced by the linear projection coming from the inclusion $|nmL|_y\subseteq |nmL_z|_y$. For the remainder of the proof we denote by $n\geq 1$ a sufficiently large integer not divisible by $p$, so that $nmL_y\sim \varphi^*\cO_{V_y}(1)\sim \psi^*\cO_{W_y}(1)$. We are now ready to prove the lemma.
In order to prove that $Q^0_y(X_z, B_z+f_z^*D_z)=k(y)$ by \autoref{r-ZpvsQ} it is enough to show that, for every $\Gamma\in |\ell mn L_z|$ for $\ell \geq 1$ not divisible by $p$, there exists some $e \geq 1$ such that the map
\[
T^e_{B_z+f_z^*D_z + \Gamma/(p^e-1),y} \colon H^0(X_z, F^e_*(\cL^{(e)}_{X_z, B_z+f_z^*D_z + \Gamma/(p^e-1)}))\otimes_k k(y) = H^0(X_z, (p^e-1-n)L_z)|_{X_y} \to k(y)
\]
is surjective.
Note that $\Gamma_y/\ell mn\in |W_\bullet|_{\bZ_{(p)}}$, therefore we may write $\Gamma_y=\psi^*(H)$ for some $H\in |\cO_{W_y}(\ell)|$.
For some $\ell' \geq \ell$ not divisible by $p$, we can construct $H' \in |\cO_{V_y}(\ell')|$ such that $\Gamma'_y\coloneqq \varphi^*(H') \geq \Gamma_y$.
By construction we have $\Gamma'_y \in |V_{\ell' mn}|$, hence we can write $\Gamma'_y=\Theta_y$ for some $\Theta\in |\ell' mnL|$. 
By the assumption $Q^0_{y}(X,B+f^*D)=k(y)$, the map:
\[
T^e_{B+f^*D+\Theta/(p^e-1),y} \colon H^0(X,F^e_*(\cL^{(e)}_{B+f^*D+\Theta/(p^e-1)})) \otimes_k k(y)=H^0(X,(p^e-1-\ell'mn)L)|_{X_y} \to k(y),
\]
is surjective for $e\gg0$.
Note that $\mathcal{V}\coloneqq H^0(X,(p^e-1-\ell'mn)L)|_{X_y} \subseteq H^0(X_z, (p^e-1-\ell'mn)L_z)|_{X_y} \xhookrightarrow{i} H^0(X_z, (p^e-1-\ell mn)L_z)|_{X_y}$, where $i$ is the injective map given by multiplication by $\Gamma'_y - \Gamma_y$.
Moreover, by \autoref{r-traces},
\[
(T^e_{B_z+f_z^*D_z + \Gamma/(p^e-1),y}\circ i)|_{\mathcal{V}}=
T^e_{B_z+f_z^*D_z + \Theta_z/(p^e-1),y}|_{\mathcal{V}}= T^e_{B+f^*D+\Theta/(p^e-1),y}.
\]
Therefore, since the latter trace map is surjective for $e \gg 0$, $T^e_{B_z+f_z^*D_z + \Gamma/(p^e-1),y}$ is surjective too, which concludes the proof.
\end{proof}

We are now ready to prove \autoref{t-main_intro}.

\begin{theorem}\label{t-main}
Consider the datum $(f\colon X \to Y; B, D)$ where
\begin{itemize}
    \item $f$ is a fibration of normal projective varieties with normal general fibre,
    \item $B$ is an effective $\bZ_{(p)}$-divisor on $X$,
    \item $Y$ is $\bZ_{(p)}$-Gorenstein and $D$ is a $\bZ_{(p)}$-Cartier $\bZ_{(p)}$-divisor on $Y$.
\end{itemize}
Let $L\coloneqq -K_X-B-f^*D$ and let $y \in Y$ be a general point. Assume
    \begin{enumerate}
        \item $Q^0_{y}(X,B+f^*D)=k(y)$,
        \item there exists $m \geq 1$ not divisible by $p$ such that the sub-linear series $|mL|_y$ is base point free and $mL$ is Cartier.
    \end{enumerate}
    Then
    \[
        \kappa(X,L)\leq \kappa(X_y,L_y)+\kappa(Y,-K_Y-D).
    \]
    Furthermore, if $\kappa(Y,-K_Y-D) = 0$, equality holds.
\end{theorem}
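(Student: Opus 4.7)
The plan is to follow the four-step strategy outlined in \autoref{s-proof_idea}. The case $\kappa(Y,-K_Y-D)=0$ will be handled directly by the Injectivity Theorem \autoref{t-chang4.3_gen}, whose hypotheses will follow from (a), (b), and the vanishing of $\kappa(Y,-K_Y-D)$ (which supplies the divisor $P=0$ required by condition (c)). Together with the reverse inequality $\kappa(X,L)\geq \kappa(X_y,L_y)$ coming from \autoref{p-opposite_inequality} (note that hypothesis (b) forces $\bB(L)$ not to dominate $Y$), this will give the equality in the ``furthermore'' part.

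For the general case, I would reduce to a Calabi--Yau base via the Iitaka fibration $g\colon Y\dashrightarrow Z$ of $-K_Y-D$. Resolving its indeterminacies using \autoref{l-flattening} produces a birational morphism $\mu\colon Y'\to Y$ and an equidimensional fibration $g'\colon Y'\to Z'$. Setting $X'$ to be the normalisation of the main component of $X\times_Y Y'$, with induced morphisms $\pi\colon X'\to X$ and $f'\colon X'\to Y'$, and applying \autoref{l-easyadditivity} to $g'\circ f'$ will yield
\[
\kappa(X,L)=\kappa(X',\pi^*L)\leq \kappa(X'_z,(\pi^*L)|_{X'_z})+\kappa(Y,-K_Y-D)
\]
for $z\in Z'$ very general, so the problem reduces to showing $\kappa(X'_z,(\pi^*L)|_{X'_z})\leq \kappa(X_y,L_y)$.

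The main obstacle is that in positive characteristic the general fibres of $g'$ (and hence of $f'_z$) need not be normal. To remedy this, I would base change by a sufficiently high Frobenius power $F^e\colon Z^e\to Z'$ and pass to normalised reductions, obtaining fibrations $g_e\colon Y_e\to Z^e$ and $f_e\colon X_e\to Y_e$ with $g_e$ having normal general fibres by \autoref{l-singularities of fibres after Frob base change}. The canonical divisors transform according to \autoref{c-comparing conductors}: the relative discrepancies $K_{X_e/X}-f_e^*K_{Y_e/Y}$ and $K_{Y_e/Y}$ take the form $(1-p)C$ and $(p-1)D$ respectively, with $C$ effective and $-D$ generically $\bZ_{(p)}$-effective over $Z^e$. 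These correction divisors will be absorbed into a new $\bZ_{(p)}$-boundary on $X_e$, while the $Q^0$-hypothesis descends along the way by means of \autoref{l-subfibration} applied to the induced fibration over $Z^e$.

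Once this reduction is in place, restricting to a very general $z\in Z^e$ gives a fibration $f_{e,z}\colon X_{e,z}\to Y_{e,z}$ with normal general fibre and $K_{Y_{e,z}}\sim_{\bZ_{(p)}}0$, so the Injectivity Theorem \autoref{t-chang4.3_gen} applies (with $P=0$) and yields $\kappa(X_{e,z},L_{e,z})\leq \kappa(X_{e,y},L_{e,y})$. Via \autoref{l-kodaira dimension under finite morphisms} and the projection formula for $\pi$ this will deliver the sought inequality $\kappa(X'_z,(\pi^*L)|_{X'_z})\leq \kappa(X_y,L_y)$. The hardest step, I expect, is the precise bookkeeping required for the correction terms produced by the Frobenius base change: one must verify that the divisors $C$ and $D$ above can be absorbed into the boundary of the relevant $\bZ_{(p)}$-sub-couple without destroying the $Q^0$ condition, nor the inequality $P\geq B^-$ in condition (c) of \autoref{t-chang4.3_gen}, while simultaneously not affecting the Iitaka dimensions that are pushed back down.
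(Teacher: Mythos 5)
Your strategy is the same as the paper's (resolve the Iitaka fibration of $-K_Y-D$, flatten, Frobenius base change plus normalised reduction, track canonical divisors via \autoref{c-comparing conductors}, descend the $Q^0$ condition with \autoref{l-subfibration}, apply \autoref{t-chang4.3_gen} on the restricted fibration, finish with \autoref{l-easyadditivity} and \autoref{p-opposite_inequality}), but there is a genuine gap exactly at the decisive step. You assert that after the reduction the Injectivity Theorem applies ``with $P=0$'' because $K_{Y_{e,z}}\sim_{\bZ_{(p)}}0$. Neither claim is correct. The variety $Y_{e,z}$ is an inseparable modification of a fibre of the \emph{resolved} Iitaka fibration $g'\colon Y'\to Z$, and \autoref{t-IF} only gives $\kappa\bigl(Y'_z,(\mu^*(-K_Y-D))|_{Y'_z}\bigr)=0$, not $\bZ_{(p)}$-triviality of $K_{Y_{e,z}}$ or of $-K_{Y_{e,z}}-D_{e,z}$. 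More seriously, the boundary one is forced to use on $X_e$ is not effective: it comes from the crepant pullback $K_{X'}+B'=\pi^*(K_X+B)$ together with the term $-f'^{*}\Xi'$, where $K_{Y'}+\Xi'=\mu^*K_Y$, so in general it has a nonzero negative part supported over $\Exc(\mu)$ (on top of the corrections $C$, $D_Y$ from \autoref{c-comparing conductors}). Hence $P=0$ violates the requirement $P\geq B^-$ in condition (c) of \autoref{t-chang4.3_gen}. The crux of the paper's proof is precisely the construction of an admissible $P$: one takes $P=f_e^*a^*E$ for an effective $\mu$-exceptional Cartier divisor $E$ dominating this negative part, and then must prove $\kappa\bigl(X_{e,z},f_{e,z}^*(-K_{Y_{e,z}}-D_{e,z})+P_z\bigr)=0$, which requires splitting $E=E_{\textup{b}}+E_{\textup{f}}$ according to whether a component lies in the support of every member of $\mu^*|-K_Y-D|_{\bQ}$, absorbing $E_{\textup{b}}$ into $(1+\alpha)\mu^*(-K_Y-D)$ and discarding $E_{\textup{f}}$, which is $(\mu_z\circ a_z)$-exceptional, via the projection formula. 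You do flag this bookkeeping as the hardest step, but as written your argument would fail there, and you propose no mechanism for it.

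Two further points. The flattening lemma must be applied so that $f'\colon X'\to Y'$ is equidimensional, not $g'$: equidimensionality of $f'$ is what \autoref{c-comparing conductors}, \autoref{c-conditions_F_complements_KGFR} (hence \autoref{l-subfibration}), and \autoref{t-chang4.3_gen} on $f_{e,z}$ require. For the same reason your ``direct'' treatment of the case $\kappa(Y,-K_Y-D)=0$ is not direct: $f$ is not assumed equidimensional in \autoref{t-main}, so even there one must first pass to an equidimensional model, after which the boundary acquires a negative part and the $P$ issue above reappears. Finally, before forming the Iitaka fibration of $-K_Y-D$ at all, one needs $\kappa(X,L)\geq 0\Rightarrow\kappa(Y,-K_Y-D)\geq 0$; in the paper this is the preliminary reduction via \autoref{c-conditions_F_complements} and \autoref{t-chang3.8_gen}, which your sketch omits.
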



\begin{proof}
    The Iitaka dimension is invariant under field extension, hence we can assume that $k$ is uncountable. By \autoref{c-conditions_F_complements} and \autoref{t-chang3.8_gen} we have that $-K_Y-D$ is $\bZ_{(p)}$-effective. Let $d\geq 1$ be sufficiently divisible so that $\phi_{|d(-K_Y-D)|}$ is (birational to) the Iitaka fibration of $-K_Y-D$. Consider now the following commutative diagram:
    \begin{equation}\label{e-hugediagram}
    \begin{tikzcd}
        X_e \arrow[r, "b"] \arrow[d, "f_e "] & X'\arrow[d,"f'"] \arrow[r] \arrow[rr,"\pi", bend left=30] & X_{\infty} \arrow[r] \arrow[d, "f_{\infty}", pos=0.2] & X \arrow[d,"f"]\\
        Y_e \arrow[d,"g_e"] \arrow[r, "a"] & Y' \arrow[d,"g'"] \arrow[r,"u'"] \arrow[rr, crossing over, "\mu", bend left=30, pos=0.35] & Y_{\infty} \arrow[d, "\phi_{\infty}"] \arrow[r, "u"] & Y \arrow[d, dashed,"\phi_{|d(-K_Y-D)|}"]\\
        Z^e \arrow[r, "\Fr^e"] & Z \arrow[r, "="] & Z \arrow[r, dashed] & Z_d,\\
    \end{tikzcd}
    \end{equation}
    where the notation is as follows.

\begin{itemize}
    \item[(I)] The lower rightmost square is constructed using \autoref{t-IF} and $\phi_{\infty}$ is the Iitaka fibration of $-K_Y-D$.
    \item[(II)] The variety $X_{\infty}$ is the normalisation of the main component of $X \times_{Y}Y_{\infty}$, and $f_{\infty}$ is the induced morphism.
    \item[(III)] The central upper square is constructed using \autoref{l-flattening}, in particular $f'$ is equidimensional and $u'$ is birational. After possibly replacing $u'$ with its precomposition with some blow-ups along non-Cartier Weil divisors on $Y'$, and $X'$ by the corresponding normalised base change, we may further assume that there exists an effective Cartier divisor $\Theta'$ on $Y'$ such that $\Exc(\mu)=\supp(\Theta')$ and $\Exc(\pi) \subseteq \supp(f'^{*}\Theta')$, where $\Exc(\pi)$ and $\Exc(\mu)$ are the exceptional divisors of $\pi$ and $\mu$, respectively.
    \item[(IV)] We define $Y_e$ and $X_e$ to be the normalisations of $(Y'\times_Z Z^e)_{\red}$ and $(X'\times_Z Z^e)_{\red}$, respectively, and $a,b,g_e,f_e$ the naturally induced morphisms.
    \item[(V)] We will denote by $h_e$ and $h'$ the compositions $g_e\circ f_e$ and $g'\circ f'$, respectively.
\end{itemize}

By the projection formula and \autoref{t-IF}, for $z \in Z$ a very general point, we have 
\begin{equation*}
\kappa(Y'_z, (\mu^*(-K_Y-D))|_{Y'_z})= \kappa(Y_{\infty,z}, (u^*(-K_Y-D))|_{Y_{\infty,z}})=0.
\end{equation*}
By \autoref{l-singularities of fibres after Frob base change} we can pick $e$ so that, for general $z\in Z^e$, the fibres $X_{e,z},Y_{e,z}$ are normal. Since $f'$ is equidimensional, we can apply \autoref{c-comparing conductors}: there exist Weil divisors $D_X$, $D_Y$ on $X_e$ and $Y_e$ respectively, and an effective Weil divisor $C$ on $X_e$, such that \label{messup?}
    \begin{itemize}
        \item[(1)] $K_{X_e/X'}\sim D_X$ and $K_{Y_e/Y'}\sim D_Y$;
        \item[(2)] $K_{X_e/X'}- f_e^*K_{Y_e/Y'} \sim -C$;
        \item[(3)] $(f_e^*D_Y-D_X)|_{X_{e,z}} \sim C|_{X_{e,z}} \geq 0$.
    \end{itemize}
    Let now $y_e\in Y_e$ be a general point such that $y'\coloneqq a(y)$ and $y\coloneqq \mu(y')$ are also general. Since $X_y$ is normal by assumption, so are $X'_{y'}$ and $X_{e,y_e}$. Furthermore, the three fibres are isomorphic by construction of the diagram \autoref{e-hugediagram}.
    Combining this with point (1) we obtain, for $y \in Y_e$ general point,
    \begin{itemize}
        \item[(4)] $D_{X}|_{X_{e,y}} \sim 0$, whence $C|_{X_{e,y}}=0$. 
    \end{itemize}
    By \autoref{l-easyadditivity}(i) applied to $h_e$ we obtain
    \begin{equation*}
        \begin{split}
            \kappa(X_e,(\pi\circ b)^*L) & \leq \kappa(X_{e,z},((\pi \circ b)^*L)|_{X_{e,z}})+\dim (Z)\\
                      & =\kappa(X_{e,z},((\pi \circ b)^*L)|_{X_{e,z}})+\kappa(Y,-K_Y-D). 
        \end{split}
    \end{equation*}
    Since \autoref{l-kodaira dimension under finite morphisms} implies $\kappa(X_e,(\pi \circ b)^*L)=\kappa(X,L)$, it is then enough to show 
    \begin{equation}\label{e-mtproof_upshot}
        \kappa(X_{e,z},((\pi \circ b)^*L)|_{X_{e,z}})\leq \kappa(X_y,L_y).  
    \end{equation}
    The goal now is to apply \autoref{t-chang4.3_gen} to the fibration $f_{e,z}$ given by the Cartesian diagrams
    \begin{center}
        \begin{tikzcd}
            X_y \arrow[r] \arrow[d] & X_{e,z} \arrow[d,"f_{e,z}"] \arrow[r] & X_e\arrow[d,"f_e"] \\
            \lbrace y \rbrace \arrow[r] & Y_{e,z} \arrow[d] \arrow[r] & Y_e \arrow[d,"g_e"] \\
            & \lbrace z \rbrace\arrow[r] & Z^e, \\
        \end{tikzcd}
    \end{center}
    where $z$ is a very general point of $Z^e$ and $y$ is a general point of $Y_{e,z}$.
    We now introduce appropriate divisors to which we apply \autoref{t-chang4.3_gen}.
    \begin{enumerate}
        \item[(A)] $K_{X'}+B'\coloneqq \pi^*(K_X+B)$; note that $B'$ is a $\bZ_{(p)}$-divisor, and $\supp( B'^{-})$ is $\pi$-exceptional.
        \item[(B)] $K_{Y'}+\Xi'\coloneqq\mu^*K_Y$; note that $\Xi'$ is a $\mu$-exceptional $\bZ_{(p)}$-divisor.
        \item[(C)] $K_{Y'}+D'\coloneqq \mu^*(K_Y+D)$; note that $D'$ is still a $\bZ_{(p)}$-divisor, possibly non $\bQ$-Cartier. 
        \item[(D)] $L'\coloneqq \pi^*L$ and $\wb{B}'\coloneqq B'-f'^{*}\Xi'$, so that $L'=-K_{X'}-\wb{B}'-f'^{*}D'$.
        Note that, as $\pi$ is an isomorphism over the generic point of $Y$, condition (b) of \autoref{t-main} holds on $X'$ with respect to $L'$.
        We verify that condition (a) of \autoref{t-main} holds on $(f'\colon X' \to Y', \wb{B}', D')$, i.e.\ we have to prove that $Q^0_{y}(X', \wb{B}'+ f'^{*}D') = k(y)$ for $y \in Y'$ general. By \autoref{c-conditions_F_complements_KGFR} this is equivalent to showing existence of $F$-complements for $(X', \wb{B}'+ f'^{*}D'+\epsilon\Gamma')$ for every $\Gamma'\in |-K_{X'}-\wb{B}'- f'^{*}D'|_{\bZ_{(p)}}$ and sufficiently small $\epsilon\in\bZ_{(p),>0}$. Since this holds for $(X/Y,B+f^*D)$, the morphism $\pi$ is an isomorphism on a general fibre, and the projection formula identifies $|-K_X-B-f^*D|_{\bZ_{(p)}}$ with $|-K_{X'}-\wb{B}'- f'^{*}D'|_{\bZ_{(p)}}$, we conclude.
        Moreover, $mL'$ is integral and $mL'_y$ is Cartier.
        \item[(E)] $B_e\coloneqq b^*\wb{B}'+C$ and $D_e\coloneqq a^*D'-D_Y$; note that $\supp(C_z)$ does not dominate $Y_{e,z}$ by point (4), and $a^*\mu^*(K_Y+D)\sim_{\bZ_{(p)}}a^*(K_{Y'}+D') \sim_{\bZ_{(p)}} K_{Y_e}+D_e$.
        \item[(F)] $P\coloneqq f_e^*(a^*E)$, where $E\geq 0$ is a $\mu$-exceptional Cartier divisor, such that $P\geq B_e^-$; note that such an $E$ exists, since $\supp (B_e^-)\subseteq f_e^{-1}(a^{-1}(\Exc(\mu)))$ by points (III), (A), (B), and (D). Note also that $\kappa(X_{e,z},f_e^*(-K_{Y_{e,z}}-D_{e,z})+P_z)=0$. 
Indeed by the projection formula and since $a_z^*(E_z)$ is $(\mu_z\circ a_z)$-exceptional, for all $\ell$ sufficiently divisible, we obtain
        \begin{align*}
        \dim H^0(X_{e,z},\ell(f_e^*(-K_{Y_{e,z}}-D_{e,z})+P_z)) &= \dim H^0(Y_{e,z},\ell(a_z^*((\mu^*(-K_Y-D))|_{Y'_z})+a_z^*(E_z)))\\
        &= \dim H^0(Y_{e,z},\ell(a_z^*(\mu^*(-K_Y-D))|_{Y'_z}))
        \end{align*}
Moreover $a_z$ is purely inseparable, therefore there exists $\alpha_z \colon Y'_z \to Y_{e,z}$ such that $\alpha_z \circ a_z= F^{e'}$ for some $e' \geq 0$.
Since $\alpha_z$ is surjective and by \autoref{t-IF}, we have 
$\dim H^0(Y_{e,z},\ell(a_z^*(\mu^*(-K_Y-D))|_{Y'_z}))) \leq \dim H^0(Y'_{z},\ell(\alpha_z^*a_z^*(\mu^*(-K_Y-D))|_{Y'_z}))) = 1$, whence the conclusion.
        \item[(G)] $L_e\coloneqq b^*L'$, so that $L_e\sim_{\bZ_{(p)}}-K_{X_e}-B_e-f_e^*D_e$. Note that, by point (D), $mL_e$ is integral and $mL_{e,y}$ is Cartier. Moreover, $L_{e,z}$ is $\bZ_{(p)}$-effective since $|mL_{e,z}|\supseteq |mb^*L'|_{X_{e,z}}$.
        
    \end{enumerate} 

    Consider now the equidimensional fibration $f_{e,z}\colon (X_{e,z},B_{e,z})\to Y_{e,z}$ and the $\bZ_{(p)}$-divisors $D_{e,z}$, $L_{e,z}$, and $P_z$. 
    Conditions (a), (b), and (c) of \autoref{t-chang4.3_gen} hold by \autoref{l-subfibration} and point (G), point (G), and point (F), respectively.
    Then \autoref{e-mtproof_upshot} holds, since we have
    \begin{align*}
        \kappa(X_{e,z},((\pi \circ b)^*L)|_{X_{e,z}})&=\kappa(X_{e,z},L_{e,z})\hspace{50pt}\textup{\footnotesize{by points (E),(F) and \autoref{l-kodaira dimension under finite morphisms}}}\\
                                                    &\leq \kappa(X_y,L_{e,z}|_{X_y})\hspace{41pt}\textup{\footnotesize{by \autoref{t-chang4.3_gen} (injectivity)}}\\
                                                    &=\kappa(X_y,L_y)\hspace{63pt}\textup{\footnotesize{as $\pi\circ b$ is an isomorphism over $X_y$.}}
    \end{align*}
To conclude the first part of the proof, note that, by \autoref{l-Iitaka dimensions of fibres}, we can take $y\in Y$ to be general, rather than very general.

As for the ``furthermore'' part, apply \autoref{p-opposite_inequality} to obtain the opposite inequality.
\end{proof}

\section{Comparison between different characteristics} \label{s-comparison}

A natural question that \autoref{t-main} raises is: how restrictive is the assumption $Q^0_{y}(X,B+f^*D)=k(y)$? And how does it compare with the assumption on $\cI(X_y,B_y;\vvert{L}_y)$ in \autoref{t-main_char0}? 

\begin{definition}
Let $(X,B)$ be a sub-couple over $\bC$. A \textit{model} of $(X,B)$ is a normal, integral, separated, projective $A$-scheme of finite type $\cX\to\Spec(A)$, where $A$ is a finitely generated $\bZ$-algebra, together with a $\bQ$-divisor $\cB$ such that $(X,B)=(\cX,\cB)\times_{\Spec(A)}\Spec(\bC)$.
Let $\mathfrak{p}$ be a prime ideal of $A$ and $k(\mathfrak{p})$ the corresponding residue field. We denote by $(X_{\wb{\mathfrak{p}}}, B_{\wb{\mathfrak{p}}})$ the fibre product $(\cX,\cB)\times_{\Spec(A)}\Spec(\wb{k(\mathfrak{p})})$ and we call it the \textit{reduction modulo $\mathfrak{p}$} of $(X,B)$.
If $x \in X(\bC)$ is a point, we can always choose a model such that $x\in X(\Frac(A))$: letting $\widetilde{x} \in \cX(A)$ be its closure, we will denote by $x_{\wb{\frp}}$ the fibre product $\widetilde{x} \times_{\Spec(A)} \Spec(\wb{k(\mathfrak{p})})$.
\end{definition}

We propose the following conjecture.

\begin{conjecture}\label{c-ourconj1}
    Let $f\colon X\to Y$ be a fibration of complex normal projective varieties, let $(X,B)$ be a pair, let $L\coloneqq -K_X-B$, let $y\in Y$ be a general point, and assume $Y$ is $\bQ$-Gorenstein. Let $\widetilde{f}\colon (\cX,\cB)\to \cY\to\Spec(A)$ be any model of $f$. If $\bB(L)$ does not dominate $Y$, then the locus
    \[
    \lbrace \frp\in\Spec(A) \textup{ such that } Q^0_{{y}_{\wb{\frp}}}(X_{\wb{\frp}},B_{\wb{\frp}})=k(\wb{\frp}) \rbrace
    \]
    is dense in $\Spec(A)$.
\end{conjecture}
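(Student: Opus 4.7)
The plan is to deduce the conjecture in three stages: first, reduce via the converse direction of \autoref{c-conditions_F_complements_KGFR} to the existence of suitable $F$-complements after reduction modulo~$\mathfrak{p}$; second, construct in characteristic zero a canonical candidate for the $F$-complement from the triviality of the multiplier ideal; third, transfer this candidate to characteristic~$p$ via the reduction-modulo-$p$ correspondence between multiplier ideals and test ideals together with a form of the Weak Ordinarity conjecture.

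For the first two steps, because $\cI(X_y,B_y;\vvert{L}_y)=\cO_{X_y}$ and $\bB(L)$ does not dominate~$Y$, \autoref{p-comparision_asy_linsys_genmem} produces an integer $m>0$ and a general $D_m\in|mL|$ such that $\supp(D_m)$ does not contain~$X_y$ and $(X_y,B_y+D_{m,y}/m)$ is klt. Setting $\Lambda_0:=D_m/m\in|L|_{\bQ}$, one obtains a klt log Calabi--Yau pair $(X_y,B_y+\Lambda_{0,y})$ with $K_{X_y}+B_y+\Lambda_{0,y}\sim_{\bQ}0$. After enlarging~$A$, I would spread out so that $\Lambda_0$, the point~$y$, and a log resolution of $(X_y,B_y+\Lambda_{0,y})$ are all defined over~$A$, and so that the primes dividing~$m$ are avoided. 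Over a dense open $U\subseteq\Spec A$ one then obtains log Calabi--Yau reductions $(X_{y_{\wb{\frp}}},B_{y_{\wb{\frp}}}+\Lambda_{0,y_{\wb{\frp}}})$, and by \cite{Hara,Smith,Tak}, after shrinking~$U$, these reductions are locally strongly $F$-regular at every point.

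The hard part is upgrading this local strong $F$-regularity to the global statement that $(X_{y_{\wb{\frp}}},B_{y_{\wb{\frp}}}+\Lambda_{0,y_{\wb{\frp}}})$ is globally $F$-split: once this is established, combining with the local strong $F$-regularity via \autoref{l-Karl<3} yields global $F$-regularity. This upgrade cannot hold unconditionally --- the canonical obstruction being supersingular elliptic curves, where the Frobenius trace fails to split globally despite the pair being everywhere smooth --- and its validity on a dense set of~$\mathfrak{p}$ is precisely what is predicted by a relative form of the Weak Ordinarity conjecture, see \cite{MS,BST}, applied to $(X_{y_{\wb{\frp}}},B_{y_{\wb{\frp}}}+\Lambda_{0,y_{\wb{\frp}}})$. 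The plan is to invoke such a weak ordinarity input, possibly in a form tailored to log Calabi--Yau pairs, to obtain that $(X_{y_{\wb{\frp}}},B_{y_{\wb{\frp}}}+\Lambda_{0,y_{\wb{\frp}}})$ is globally $F$-regular for $\mathfrak{p}$ in a dense subset of $\Spec A$.

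The final step is a perturbation argument. For any effective $\Gamma\in|L_{\wb{\frp}}|_{\bZ_{(p)}}$ on $X_{\wb{\frp}}$ and any sufficiently small $\epsilon\in\bZ_{(p),>0}$, I would set $\Lambda_\epsilon:=(1-\epsilon)\Lambda_{0,\wb{\frp}}$, so that $\Lambda_\epsilon$ is effective and $\Lambda_\epsilon+\epsilon\Gamma\sim_{\bZ_{(p)}}L_{\wb{\frp}}$. Applying \autoref{l-GFR_perturbation} first with the inequality $B_{y_{\wb{\frp}}}+(1-\epsilon)\Lambda_{0,y_{\wb{\frp}}}\leq B_{y_{\wb{\frp}}}+\Lambda_{0,y_{\wb{\frp}}}$, and then with the effective perturbation $\epsilon\Gamma_{y_{\wb{\frp}}}\geq 0$, would give that $(X_{y_{\wb{\frp}}},B_{y_{\wb{\frp}}}+\Lambda_{\epsilon,y_{\wb{\frp}}}+\epsilon\Gamma_{y_{\wb{\frp}}})$ is globally $F$-regular, hence globally $F$-split. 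Thus $\Lambda_\epsilon$ would serve as an $F$-complement for $(X_{\wb{\frp}}/Y_{\wb{\frp}}, B_{\wb{\frp}}+\epsilon\Gamma)$, and the converse direction of \autoref{c-conditions_F_complements_KGFR} would deliver the desired conclusion $Q^0_{y_{\wb{\frp}}}(X_{\wb{\frp}},B_{\wb{\frp}})=k(\wb{\frp})$ for all such~$\mathfrak{p}$.
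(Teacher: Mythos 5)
First, note that the statement you are proving is stated in the paper as an open conjecture (\autoref{c-ourconj1}); the paper offers no proof, only the observation (in the remark following \autoref{c-logCYmodpGFS}) that \autoref{c-logCYmodpGFS} would yield the \emph{weaker} conclusion $S^0_{y_{\wb{\frp}}}(X_{\wb{\frp}},B_{\wb{\frp}})=k(\wb{\frp})$, i.e.\ the existence of a single $F$-complement after reduction. Your first two steps (producing $\Lambda_0\in|L|_{\bQ}$ with $(X_y,B_y+\Lambda_{0,y})$ klt log Calabi--Yau, spreading out, and invoking a weak-ordinarity/log-CY version of \autoref{c-logCYmodpGFS} to get global $F$-splitting of the reduction) are exactly this conditional argument, so up to that point you have at best reproved the paper's remark, conditionally on another open conjecture.

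The genuine gap is in your upgrade from $S^0$ to $Q^0$. You claim that global $F$-splitting of $(X_{y_{\wb{\frp}}},B_{y_{\wb{\frp}}}+\Lambda_{0,y_{\wb{\frp}}})$ combined with local strong $F$-regularity gives, via \autoref{l-Karl<3}, global $F$-\emph{regularity} of this pair. This is false on two counts. First, \autoref{l-Karl<3} requires that \emph{all} perturbations $B+E/(p^e-1)$ with $E\in|H|_{\bZ_{(p)}}$, $H$ ample, remain globally sub-$F$-split --- a global hypothesis not implied by GFS plus local $F$-regularity (an ordinary abelian variety is smooth and globally $F$-split but not globally $F$-regular). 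Second, and more fundamentally, the pair $(X_{y_{\wb{\frp}}},B_{y_{\wb{\frp}}}+\Lambda_{0,y_{\wb{\frp}}})$ satisfies $K+B+\Lambda_0\sim_{\bQ}0$ on the fibre, and a globally $F$-regular pair is of log Fano type (\cite{SS}), so it can never be globally $F$-regular when $\dim X_y>0$. Without global $F$-regularity your final perturbation step collapses: \autoref{l-GFR_perturbation} allows adding $\epsilon\Gamma_{y_{\wb{\frp}}}$ only to globally sub-$F$-\emph{regular} pairs, while global $F$-splitting is not stable under adding small effective divisors. Thus your fixed complement $(1-\epsilon)\Lambda_{0,\wb{\frp}}$ gives no control over the pairs $(X_{y_{\wb{\frp}}},B_{y_{\wb{\frp}}}+\epsilon\Gamma_{y_{\wb{\frp}}}+\Lambda_{y_{\wb{\frp}}})$ for arbitrary characteristic-$p$ divisors $\Gamma\in|L_{\wb{\frp}}|_{\bZ_{(p)}}$ (which need not lift to characteristic zero), and this is precisely the difficulty separating $Q^0$ from $S^0$ that the paper isolates in \autoref{r-equiv_conj1_conj2} and \autoref{q-finite_part}: one must produce, for each such $\Gamma$, an $F$-complement \emph{depending on} $\Gamma$ and extending from the fibre to $X$. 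Your proposal does not address this, so it does not prove the conjecture even granting the weak-ordinarity input.
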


There is a deep connection between $F$-singularities and the singularities of the Minimal Model Program. When $(X,B)$ is an \textit{affine} $\bZ_{(p)}$-couple over a field of characteristic $p>0$, the construction of $S^0(X,B)$ and $P^0(X,B)$ yields ideals $\sigma(X,B)$ and $\tau(X,B)$, called the \textit{non-$F$-pure} and the \textit{test ideal} of $(X,B)$, respectively (see \cite[Remark 6.24]{PST}). Such ideals measure how far $(X,B)$ is from being \emph{locally} $F$-split, resp.\ \emph{locally} $F$-regular\footnote{The correct nomenclature would be ``sharply $F$-pure'' and ``strongly $F$-regular'', respectively.}. Similarly, when $(X,B)$ is an affine pair over a field of characteristic zero, we have a \textit{non-lc ideal} $\cI_{NLC}(X,B)$ and a \textit{multiplier ideal} $\cI(X,B)$ detecting non-lc, resp.\ non-klt, singularities of $(X,B)$. It is shown in \cite{Tak} that the multiplier ideal reduces to the test ideal for big enough primes. Furthermore, it is expected that the same should hold for the non-$F$-pure and non-lc ideals, for infinitely many $p$ (\cite[Problem 5.1.2]{HaWa}). 

\begin{definition}
Let $(X,B)$ be a projective pair over an algebraically closed field. 
We say $(X,B)$ is \textit{log Fano} if $K_X+B$ is anti-ample and $(X,B)$ is klt. We say it is \textit{log Calabi--Yau} if $K_X+B$ is $\bQ$-linearly trivial and $(X,B)$ is log canonical. 
We say $(X,B)$ is of \textit{log Fano-type} (resp.\ of \textit{log Calabi--Yau-type}) if there exists an effective $\bQ$-divisor $\Delta$ such that $(X,B+\Delta)$ is log Fano (resp.\ log Calabi--Yau).
\end{definition}

Recall that taking cones over Fano, resp.\ Calabi--Yau, varieties yields klt, resp.\ lc, singularities (\cite[3.1]{Kol_SMMP}). Similarly we have that cones over globally $F$-split, resp.\ globally $F$-regular, varieties yields locally $F$-split, resp.\ locally $F$-regular, singularities (\cite[Proposition 5.3]{SS}). In the global case we then have the following.

\begin{theorem}[{\cite[Theorem 5.1]{SS}}]\label{t-logFanomodpGFR}
    Let $(X,B)$ be a projective pair of log Fano type over $\bC$, and let $(\cX,\cB)\to \Spec(A)$ be a model. Then the locus
    \[
    \lbrace \frp\in\Spec(A) \textup{ such that } (X_{\wb{\mathfrak{p}}},B_{\wb{\mathfrak{p}}}) \textup{ is globally \textit{F}-regular} \rbrace
    \]
    is open and dense in $\Spec(A)$.
\end{theorem}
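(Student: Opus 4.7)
The plan is to first reduce to the case where $(X,B)$ is itself log Fano via a perturbation argument, and then to verify the criterion of \autoref{l-Karl<3} on each reduction $(X_{\wb{\frp}}, B_{\wb{\frp}})$ using the classical reduction-mod-$p$ dictionary between klt singularities and strong $F$-regularity.

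First, I would replace $B$ by $B+\Delta$, where $\Delta \geq 0$ is chosen so that $(X,B+\Delta)$ is log Fano. By \autoref{l-GFR_perturbation}, global $F$-regularity of $(X_{\wb{\frp}}, B_{\wb{\frp}}+\Delta_{\wb{\frp}})$ implies that of $(X_{\wb{\frp}}, B_{\wb{\frp}})$, so we may assume from the start that $(X,B)$ is klt with $-(K_X+B)$ ample. Fix a very ample integral divisor $H \sim -m(K_X+B)$ for some sufficiently divisible $m \geq 1$ and spread this datum out over $\Spec(A)$. By \autoref{l-Karl<3}, to establish global $F$-regularity of $(X_{\wb{\frp}}, B_{\wb{\frp}})$ it is enough to exhibit, for every $E \in |H_{\wb{\frp}}|_{\bZ_{(p)}}$, some integer $e \gg 0$ such that $(X_{\wb{\frp}}, B_{\wb{\frp}} + E/(p^e-1))$ is globally $F$-split. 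For $e$ sufficiently large (depending only on the coefficients of $B$), klt-ness of $(X, B+E/(p^e-1))$ persists in characteristic zero and the perturbed anticanonical class $-(K_X + B + E/(p^e-1))$ remains ample, so the content reduces to the following statement: \emph{a complex klt pair $(X, \Gamma)$ with $-(K_X + \Gamma)$ ample reduces to a globally $F$-split pair for $\frp$ in a dense open subset of $\Spec(A)$}.

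The hardest step is this last reduction-mod-$p$ statement, and I expect it to be the main obstacle. I would invoke the Hara--Watanabe--Takagi theorem asserting that, for $\frp$ in a dense open subset of $\Spec(A)$, the multiplier ideal of $(X,\Gamma)$ reduces to the test ideal of $(X_{\wb{\frp}}, \Gamma_{\wb{\frp}})$: the trivial multiplier ideal of $(X,\Gamma)$ thus yields a trivial test ideal on the reduction, meaning $(X_{\wb{\frp}}, \Gamma_{\wb{\frp}})$ is strongly $F$-regular on every affine open. Combined with Kodaira-type vanishing (available on each reduction once $p$ is large) applied to $-(K_{X_{\wb{\frp}}} + \Gamma_{\wb{\frp}})$, this promotes the local splittings to a global one by producing a section of $H^0(X_{\wb{\frp}}, (1-p^e)(K_{X_{\wb{\frp}}} + \Gamma_{\wb{\frp}}))$ whose image under the trace map splits $\cO_{X_{\wb{\frp}}} \to F^e_* \cO_{X_{\wb{\frp}}}((1-p^e)(K_{X_{\wb{\frp}}}+\Gamma_{\wb{\frp}}))$, as required.

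Finally, openness of the globally $F$-regular locus in $\Spec(A)$ follows from the stronger version of the Schwede--Smith criterion: global $F$-regularity of $(X_{\wb{\frp}}, B_{\wb{\frp}})$ is equivalent to the splitting of a single trace map $F^e_* \cO_{X_{\wb{\frp}}}((1-p^e)(K_{X_{\wb{\frp}}}+B_{\wb{\frp}}) - H') \to \cO_{X_{\wb{\frp}}}$ for some sufficiently ample $H'$ and some $e$, and the existence of such a splitting is witnessed by the surjectivity of a morphism of coherent $A$-modules in a Zariski-open neighbourhood of $\frp$.
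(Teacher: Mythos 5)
First, note that the paper does not actually prove this statement: it is quoted directly from \cite[Theorem 5.1]{SS}, so the comparison here is between your sketch and the Schwede--Smith argument it cites.

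Your proposal has a genuine gap at its central step. After invoking \autoref{l-Karl<3} you must produce, for a \emph{fixed} prime $\frp$, a splitting of $(X_{\wb{\frp}},B_{\wb{\frp}}+E/(p^e-1))$ for \emph{every} $E\in |H_{\wb{\frp}}|_{\bZ_{(p)}}$; but you then treat each such pair as the reduction of a characteristic-zero klt log Fano pair $(X,B+E/(p^e-1))$. This does not typecheck: $E$ is a divisor on the reduction and in general does not lift to characteristic zero (members of $|mH_{\wb{\frp}}|$ need not come from $|mH|$ over $\bC$), so the Hara--Takagi multiplier-ideal/test-ideal comparison cannot be applied to it. Even if every $E$ did lift, that comparison theorem yields a dense set of primes \emph{depending on the pair}, i.e.\ on $E$; since at a fixed $\frp$ you must handle infinitely many $E$ (which themselves vary with $\frp$), you cannot intersect these sets, and the quantifiers come out in the wrong order. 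This uniformity problem is precisely what makes the theorem nontrivial (and what makes the Calabi--Yau analogue, \autoref{c-logCYmodpGFS}, an open conjecture): global $F$-regularity is a condition about all effective divisors in characteristic $p$, and it cannot be verified one perturbation at a time by reduction from characteristic zero. The fix, which is how \cite{SS} argue (and how the proof of \autoref{l-Karl<3} itself works via \cite[Theorem 3.9]{SS}), is to choose \emph{one} divisor in characteristic zero --- an effective $E\sim mH$ with $\supp(E)\supseteq \supp(B)\cup\sing(X)$ and affine complement --- spread it out, and prove for $p\gg 0$ only the single splitting of $(X_{\wb{\frp}},B_{\wb{\frp}}+\epsilon E_{\wb{\frp}})$ together with strong $F$-regularity of the affine chart; the ``for all divisors'' quantifier is then absorbed entirely by the characteristic-$p$ criterion.

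Your remaining ingredients are of the right flavour but would need this restructuring to bear weight: the multiplier-ideal/test-ideal comparison plus a vanishing argument is indeed how one obtains the single global splitting for the liftable perturbation (though the relevant vanishing is Serre vanishing for the increasing multiples $(1-p^e)(K+B+\epsilon E)$ of an ample divisor at the fixed prime, not Kodaira vanishing ``for $p$ large''), and your final openness paragraph is essentially correct in spirit, except that the criterion you invoke also requires the strong $F$-regularity of the complement of the auxiliary divisor, which must likewise be shown to be an open condition on $\Spec(A)$.
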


\begin{conjecture}[{\cite[Remark 5.2]{SS}}]\label{c-logCYmodpGFS}
    Let $(X,B)$ be a projective pair of log Calabi--Yau type over $\bC$, and let $(\cX,\cB)\to \Spec(A)$ be a model. Then the locus
    \[
    \lbrace \frp\in\Spec(A) \textup{ such that } (X_{\wb{\mathfrak{p}}},B_{\wb{\mathfrak{p}}}) \textup{ is globally \textit{F}-split} \rbrace
    \]
    is dense in $\Spec(A)$.
\end{conjecture}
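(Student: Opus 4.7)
My proposal is an inductive reduction, conditional on the Minimal Model Program and Abundance in characteristic zero, whose terminal base case is a weak-ordinarity-type hypothesis. The first step is to reduce to the case where $(X,B)$ itself is log Calabi--Yau. If $\Delta\geq 0$ is such that $(X,B+\Delta)$ is log Calabi--Yau, then by the monotonicity of global $F$-splitting (\autoref{l-GFR_perturbation}), the pair $(X_{\wb{\frp}},B_{\wb{\frp}})$ is globally $F$-split whenever $(X_{\wb{\frp}},(B+\Delta)_{\wb{\frp}})$ is. Since reducing modulo $\frp$ commutes with forming $B+\Delta$ on a suitable model, this reduces the statement to the assertion that the GFS locus is dense under the assumption $K_X+B\sim_{\bQ}0$ with $(X,B)$ lc.

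Next, I would induct on $\dim X$. Assuming MMP and Abundance, and after passing to a $\bQ$-factorial dlt good minimal model, we may produce an equidimensional fibration $f\colon X\to Z$ with $\dim Z<\dim X$ together with an Ambro-style canonical bundle formula $K_X+B\sim_{\bQ}f^*(K_Z+B_Z+M_Z)$, making $(Z,B_Z+M_Z)$ a generalised log Calabi--Yau pair whose general fibre is either of log Fano type or of lower-dimensional log Calabi--Yau type. The positive characteristic canonical bundle formula \autoref{dp-CBF_fibration} shows that, whenever the generic fibre $X_{\wb{\eta},\wb{\frp}}$ is globally $F$-split, global $F$-splitting of $(X_{\wb{\frp}},B_{\wb{\frp}})$ is equivalent to that of $(Z_{\wb{\frp}},(B_Z+M_Z)_{\wb{\frp}})$. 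By \autoref{t-logFanomodpGFR} (in the log Fano-type case) or the inductive hypothesis (in the log Calabi--Yau-type case) applied to the generic fibre $X_\eta$, the required genericity of GFS on fibres holds on a dense subset of $\Spec(A)$, and the problem transfers to the lower-dimensional generalised log Calabi--Yau pair $(Z,B_Z+M_Z)$, where induction closes the argument.

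The induction terminates in two base cases: (i) $(X,B)$ is of log Fano type, handled directly by \autoref{t-logFanomodpGFR}; or (ii) $(X,B)$ is a klt Calabi--Yau variety admitting no nontrivial fibration, so in particular $B=0$ and $K_X\sim_{\bQ}0$. Case (ii) is the \emph{main obstacle}: it is essentially the Weak Ordinarity Conjecture of Mustata--Srinivas (\cite{MS,BST}), asking that the action of Frobenius on $H^{\dim X}(X_{\wb{\frp}},\cO_{X_{\wb{\frp}}})$ be bijective on a dense set of primes. This is known for abelian varieties in dimensions one and two and for certain K3 families, but open in general, so any unconditional resolution of \autoref{c-logCYmodpGFS} appears out of reach with current techniques. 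A conditional proof, assuming MMP, Abundance, and Weak Ordinarity for klt Calabi--Yau varieties, should nonetheless be tractable along the lines above; the technical heart would be a careful compatibility check between the characteristic zero canonical bundle formula and its positive characteristic counterpart \autoref{dp-CBF_fibration} after spreading out over $\Spec(A)$.
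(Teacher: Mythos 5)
The statement you were asked about is not a theorem of the paper: \autoref{c-logCYmodpGFS} is quoted from \cite[Remark 5.2]{SS} precisely as an \emph{open conjecture}, and the paper offers no proof of it (it is used only as a hypothesis, e.g.\ in \autoref{p-isotrivial}). Your proposal does not prove it either, and you say so yourself: the terminal case of your induction is the Weak Ordinarity conjecture for klt Calabi--Yau varieties, so what you have written is a conditional reduction strategy, not an argument establishing the statement. That is the fundamental gap: a blind ``proof'' of this item cannot exist with current technology, and your text should be read as a research plan rather than a proof.

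Beyond that, several steps of the plan would need serious repair even as a conditional argument. First, the source of the fibration $f\colon X\to Z$ is unclear: for a log Calabi--Yau pair $K_X+B\sim_{\bQ}0$ already, so MMP/Abundance produce no Iitaka fibration, and the claimed dichotomy (``Fano-type fibres or lower-dimensional log CY fibres, terminating in klt CY with $B=0$'') is itself a conjectural structure result, not something you can invoke. Second, the comparison between the characteristic-zero canonical bundle formula and \autoref{dp-CBF_fibration} is exactly the delicate point: the positive-characteristic formula produces an $F$-discriminant $B^{Z}$ with no separate moduli part, and one only knows $B^{Z}_{\wb{\frp}}\geq B_{Z,\wb{\frp}}$ (cf.\ the use of \cite[Proposition 5.7]{DS} in \autoref{p-isotrivial}, which is why that proposition assumes $M_Z=0$); when $M_Z\neq 0$ the moduli part is only a b-nef class, ``global $F$-splitting of $(Z_{\wb{\frp}},(B_Z+M_Z)_{\wb{\frp}})$'' is not even well posed without choosing a representative, and no implication to or from GFS of $(Z_{\wb{\frp}},B^{Z}_{\wb{\frp}})$ is available. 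Third, your induction needs the set of primes where the generic fibre is GFS \emph{and} the set where the base pair is GFS to be simultaneously dense; in the log Calabi--Yau cases both loci are merely dense, not open, and the intersection of two dense sets of primes can be empty, so the inductive step does not close even granting the inputs. Only in the Fano-type situations, where \autoref{t-logFanomodpGFR} gives an open dense locus, does this difficulty disappear.
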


\begin{remark}
Note that, given \autoref{c-logCYmodpGFS}, we conclude the following weakening of \autoref{c-ourconj1}.
Let $f\colon X\to Y$ be a fibration of complex normal projective varieties, and $(X,B)$ be a pair. 
Let $L\coloneqq -K_X-B$, let $y\in Y$ be a general point, and assume $Y$ is $\bQ$-Gorenstein. Let $\widetilde{f}\colon (\cX,\cB)\to \cY\to\Spec(A)$ be any model of $f$. If $\cI(X_y,B_y;\vvert{L}_y)$ is trivial and $\bB(L)$ does not dominate $Y$, then the locus
    \[
    \lbrace \frp\in\Spec(A) \textup{ such that } S^0_{{y}_{\wb{\frp}}}(X_{\wb{\frp}},B_{\wb{\frp}})=k(\wb{\frp}) \rbrace
    \]
    is dense in $\Spec(A)$.

Indeed, the condition $\cI(X_y,B_y;\vvert{L}_y)=\cO_{X_y}$ implies the existence of a complement $\Lambda_y$ for $(X_y,B_y)$, which lifts to $\Lambda\in |-K_X-B|_{\bQ}$. By \autoref{c-logCYmodpGFS}, we have that $\Lambda_{\frpbar}$ is an $F$-complement for $(X_{\frpbar}/Y_{\frpbar},B_{\frpbar})$ for infinitely many primes $\frp$.
We conclude by \autoref{c-conditions_F_complements}.
\end{remark}

In order to tie \autoref{c-ourconj1} together with \autoref{t-logFanomodpGFR} and \autoref{c-logCYmodpGFS} we introduce the following class of $F$-singularities.

\begin{definition}\label{d-KGFR}
    Let $(X,B)$ be a projective $\bZ_{(p)}$-pair over an algebraically closed field of characteristic $p>0$. We say $(X,B)$ is \textit{$K$-globally $F$-regular} (KGFR) if 
    \begin{itemize}
        \item $-K_X-B$ is semiample, with Iitaka fibration $f\colon X\to Y$ such that $K_X+B\sim_{\bZ_{(p)},Y}0$;
        \item  a general fibre $(X_y,B_y)$ is globally $F$-split, and the pair $(Y,B^Y)$ induced by \autoref{dp-CBF_fibration} is globally $F$-regular.
    \end{itemize}
\end{definition}

This notion interpolates between being globally $F$-regular and globally $F$-split.

\begin{proposition} \label{p-interpolation}
    Let $(X,B)$ be a proper $\bZ_{(p)}$-pair over an algebraically closed field $k$ of characteristic $p>0$ and assume that $-K_X-B$ is semiample, with fibration $f\colon X\to Y$ such that $-K_X-B\sim_{\bZ_{(p)},Y}0$. Then $(X,B)$ is $K$-globally $F$-regular if and only if $Q^0(X,B)=k$. Furthermore
    \begin{itemize}
        \item[(a)] if $-K_X-B$ is ample, then $Q^0(X,B)=P^0(X,B)$, and
        \item[(b)] if $K_X+B \sim_{\bZ_{(p)}} 0$, then $Q^0(X,B) = S^0(X,B)$.
    \end{itemize}
\end{proposition}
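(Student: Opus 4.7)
The plan is to prove parts (a) and (b) from the definitions and elementary properties of Frobenius trace maps, and then deduce the main equivalence by reducing it, via the canonical bundle formula \autoref{dp-CBF_fibration}, to the ample case on $Y$, which is covered by (a).

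\textbf{Parts (a), (b), and the forward direction.} The inclusion $P^0(X,B) \subseteq Q^0(X,B)$ is immediate from the definitions. For the reverse in (a), ampleness of $-K_X-B$ lets one dominate any effective $D$ by some $D' \in |-N(K_X+B)|$ for $N$ large, and monotonicity of the trace image under enlargement of the boundary ($\cL^{(e)}_{B_1} \subseteq \cL^{(e)}_{B_2}$ for $B_1 \geq B_2$) then yields $Q^0 \subseteq P^0$. For (b), the hypothesis $K_X+B \sim_{\bZ_{(p)}} 0$ makes every non-empty $|-m(K_X+B)|$ equal $\{0\}$; the composition identity $T^{e+e'}_B = T^e_B \circ \Fr^e_*T^{e'}_B$ then makes the chain $\{\Image H^0(T^{e}_B)\}_{e \in d\bN}$ decreasing, so the inner sums collapse and $Q^0$ recovers $S^0(X,B)$. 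For the forward direction of the main equivalence, use \autoref{r-Iitaka_fibration} to write $-K_X-B \sim_{\bZ_{(p)}} f^*H$ with $H$ ample, so that $-K_Y-B^Y$ is also ample. Given KGFR, $(X_y,B_y)$ is GFS, so \autoref{dp-CBF_fibration} produces $B^Y$; part (iv) of that result, combined with the projection-formula correspondence $|-m(K_X+B)| \leftrightarrow |-m(K_Y+B^Y)|$ via $D = f^*D^Y$, gives $f_*T^e_{B+f^*D^Y/(p^e-1)} = T^e_{B^Y+D^Y/(p^e-1)}$. Taking $H^0$ identifies these trace images as subspaces of $H^0(X,\cO_X) = k = H^0(Y,\cO_Y)$, so after passing to inner sums and outer intersections one obtains $Q^0(X,B) = Q^0(Y,B^Y) = P^0(Y,B^Y) = k$, where the second equality is part (a) and the third is \autoref{l-S^0/P^0charactofGFS/GFR} applied to the GFR pair $(Y,B^Y)$.

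\textbf{Converse direction and main obstacle.} Suppose $Q^0(X,B) = k$. The forward argument reverses provided $(X_y,B_y)$ is GFS, which one must first extract. Fix $N \geq 1$ divisible enough that $NH$ is integral and very ample, pick a general $D^Y \in |NH|$, and set $D = f^*D^Y \in |-N(K_X+B)|$. The hypothesis $Q^0(X,B) = k$ forces $\sum_{e \geq e_0} \Image H^0(T^e_{B+D/(p^e-1)}) = k$ for every $e_0$, so some $T^e_{B+D/(p^e-1)}$ is split, i.e., $(X, B+D/(p^e-1))$ is GFS. Restricting the splitting to a general fibre $X_y$ with $y \notin \supp D^Y$ in the smooth locus of $Y$ --- where $f$ is flat, $(f^*D^Y)|_{X_y} = 0$, and Frobenius pushforward commutes with base change via \cite[Lemma 2.18]{PSZ} applied after factoring $\Fr^e_X$ through $\Fr^e_{X/Y}$ --- yields a splitting of $T^e_{X_y,B_y}$, so $(X_y,B_y)$ is GFS. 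The forward argument then gives $(Y,B^Y)$ GFR, hence $(X,B)$ KGFR. The principal technical obstacle is precisely this restriction-of-splitting step, which tethers the absolute Frobenius trace on the total space to the absolute Frobenius trace on a general fibre.
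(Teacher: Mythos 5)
Your proposal is correct and follows essentially the same route as the paper: reduce the main equivalence to the base via \autoref{dp-CBF_fibration}(iii)--(iv) together with the projection formula, settle the ample case there, and obtain global $F$-splitting of $(X_y,B_y)$ in the converse by restricting a splitting to a general fibre through the relative Frobenius and base change of the trace. The only real divergence is local and harmless: you prove (a) directly by dominating an arbitrary effective divisor by a member of $|-N(K_X+B)|$ and using monotonicity of trace images, and then invoke (a) on $(Y,B^Y)$ together with the $P^0$-characterisation of global $F$-regularity, where the paper instead deduces (a) from the $Q^0=k$ and $P^0=k$ characterisations and handles the ample base case via \autoref{l-Karl<3}; the two devices play the same role.
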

\begin{proof}
    Note that if $Q^0(X,B)=k$, then $(X,B)$ is GFS, and so is $(X_y,B_y)$ for a general point $y\in Y$. By \autoref{dp-CBF_fibration} we have an induced pair $\bZ_{(p)}$-pair $(Y,B^Y)$. We need to show it is GFR if and only if $Q^0(X,B)=k$.
    Indeed, by definition and by the projection formula, $Q^0(X,B)=k$ if and only if, for every effective $\bZ_{(p)}$-divisor $D \in |-K_Y-B^Y|_{\bZ_{(p)}}$, there exists $0<\epsilon \ll 1$ such that the pair $(X, B+ \epsilon f^*D)$ is GFS. By \autoref{dp-CBF_fibration}(iii) and (iv), this is equivalent to asking that $(Y, B^Y+ \epsilon D)$ is GFS.
    Since $-K_Y-B^Y$ is ample, by \autoref{l-Karl<3}, this is in turn equivalent to $(Y, B^Y)$ being GFR.
    Point (b) follows directly from the definition.
    As for point (a), note that, by \autoref{l-S^0/P^0charactofGFS/GFR}, $P^0(X,B)=k$ if and only if $(X,B)$ is GFR.
Therefore, to conclude the first assertion, it is enough to prove that $Q^0(X,B)=k$ if and only if $(X,B)$ is GFR.
If $(X,B)$ is GFR, then it is clear from the definitions that $Q^0(X,B)=k$.
Conversely, assume that $Q^0(X,B)=k$. Since $-K_X-B$ is ample, we conclude by \autoref{l-Karl<3}.
\end{proof}

\begin{example}
    Let $X$ be a smooth projective curve. Then $X$ is KGFR if and only if $X=\bP^1$ or an ordinary elliptic curve.
\end{example}

\begin{example}
    Let $X$ be a $\bZ_{(p)}$-Gorenstein projective surface with $-K_X$ semiample.
    \begin{itemize}
        \item When $\kappa(X,-K_X)=0$ then $X$ is KGFR if and only if it is GFS.
        \item When $\kappa(X,-K_X)=1$ then $X$ is KGFR if and only if the Iitaka fibration $f\colon X\to Y$ is an elliptic fibration with ordinary general fibre, $Y=\bP^1$, and the pair $(\bP^1,0^{\bP^1})$ is GFR. Note that the fibres $f^{-1}(y)$ for $y\in\supp(0^{\bP^1})$ are either non-smooth, or supersingular elliptic curves. We refer the reader to the proof of \autoref{p-legendre} for an explicit computation of the $F$-discriminant.
        \item When $\kappa(X,-K_X)=2$ then $X$ is KGFR if and only if it is a crepant blow-up of some GFR del Pezzo surface. Note that every del Pezzo surface with canonical singularities is GFR when $p>5$ (\cite{KawTan}). In particular, its minimal resolution is KGFR.
    \end{itemize}
\end{example}

In light of \autoref{t-logFanomodpGFR} and \autoref{c-logCYmodpGFS}, it is natural to formulate the following.

\begin{conjecture}\label{c-ourconj2}
    Let $(X,B)$ be a normal projective complex pair with klt singularities such that $-K_X-B$ is semiample. Then for every model $(\cX,\cB) \to \Spec(A)$ of $(X,B)$, the locus
    \[
    \lbrace \frp\in\Spec(A) \textup{ such that } (X_{\wb{\mathfrak{p}}},B_{\wb{\mathfrak{p}}}) \textup{ is \textit{K}-globally \textit{F}-regular} \rbrace
    \]
    is dense in $\Spec(A)$.
\end{conjecture}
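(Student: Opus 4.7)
The plan is to reduce \autoref{c-ourconj2} to the combination of \autoref{t-logFanomodpGFR} for the Iitaka base and \autoref{c-logCYmodpGFS} for a general fibre, via the two-step characterisation of $K$-global $F$-regularity in \autoref{d-KGFR}. Since $-K_X-B$ is semiample, its Iitaka fibration $f\colon X\to Y$ is a morphism with $K_X+B\sim_{\bQ,Y}0$. Ambro's canonical bundle formula provides a decomposition $K_X+B\sim_{\bQ}f^*(K_Y+B^Y+M^Y)$, where $B^Y$ is the discriminant $\bQ$-divisor, $M^Y$ is a nef moduli part, and $-(K_Y+B^Y+M^Y)$ is ample by the defining property of the Iitaka fibration. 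The general fibre $(X_y,B_y)$ is then klt log Calabi--Yau. Assuming $M^Y$ can be represented by an effective $\bQ$-Cartier divisor $\Theta\sim_{\bQ}M^Y$ so that $(Y,B^Y+\Theta)$ remains klt and $-(K_Y+B^Y+\Theta)$ stays ample, the pair $(Y,B^Y+\Theta)$ is log Fano.

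Fix a model $(\cX,\cB)\to\cY\to\Spec(A)$ of $f$ spreading out $B^Y$ and $\Theta$. For $\frp$ in a dense subset of $\Spec(A)$ I would then combine: first, \autoref{t-logFanomodpGFR} yields global $F$-regularity of $(Y_{\frpbar},B^Y_{\frpbar}+\Theta_{\frpbar})$, and hence of $(Y_{\frpbar},B^Y_{\frpbar})$ by \autoref{l-GFR_perturbation}; second, \autoref{c-logCYmodpGFS} applied fibrewise gives global $F$-splitness of $(X_{y,\frpbar},B_{y,\frpbar})$ for $y$ general, and by \autoref{l-defo_CYGFS} this property propagates to a dense open subset of $Y_{\frpbar}$. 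In particular, the characteristic $p$ canonical bundle formula (\autoref{dp-CBF_fibration}) applies and produces an $F$-discriminant $B^{Y_{\frpbar}}$ on $Y_{\frpbar}$. The remaining bridge is to verify the comparison $B^{Y_{\frpbar}}\leq B^Y_{\frpbar}$ for $\frp$ ranging in a dense set; this should follow from the characterisations of the two discriminants via log canonical, respectively $F$-sub-splitting, thresholds (\autoref{r-boundarypartfpt}) together with the standard principle that test ideals reduce to multiplier ideals for infinitely many primes. Combined with \autoref{l-GFR_perturbation}, this yields global $F$-regularity of $(Y_{\frpbar},B^{Y_{\frpbar}})$, and thus $K$-global $F$-regularity of $(X_{\frpbar},B_{\frpbar})$ via \autoref{d-KGFR}.

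The chief obstacle will be that \autoref{c-logCYmodpGFS} is itself open, so the above strategy is unconditionally viable only for fibres where global $F$-splitness modulo $p$ is independently accessible — for instance when $(X_y,B_y)$ is a log Calabi--Yau toric pair, an ordinary abelian variety, or more generally an ordinary Calabi--Yau. A secondary technical difficulty is the moduli part: because $M^Y$ is only nef and not known to be semiample nor $\bQ$-effective in general, one needs further structural input (the $\bQ$-effectivity conjecture for moduli b-divisors) before one can invoke \autoref{t-logFanomodpGFR} on the base. Finally, the uniform discriminant comparison over a dense set of primes requires controlling, in families, the reduction of lc thresholds to $F$-pure thresholds at all codimension one points of $Y$ where $B^Y$ is supported, which is delicate in low characteristic and at non-rational centres.
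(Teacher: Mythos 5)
First, note that the statement you are trying to prove is stated in the paper as a \emph{conjecture}: the paper offers no proof of \autoref{c-ourconj2}, only the implication from \autoref{c-ourconj1} (via \autoref{p-Q^0_y_vs_S^0_y + KGFR_fibres}) and evidence in special cases, namely \autoref{p-isotrivial} (trivial moduli part, assuming \autoref{c-logCYmodpGFS} for the general fibre) and the Legendre family (\autoref{p-legendre}). Your proposal is likewise not a proof: it is explicitly conditional on the open \autoref{c-logCYmodpGFS} and on effectivity/semiampleness of the moduli b-divisor, so at best it could be compared with the paper's conditional evidence, and indeed your strategy is essentially that of \autoref{p-isotrivial}.

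Beyond the conditional nature, there is a genuine gap in your ``remaining bridge'': the comparison $B^{Y_{\frpbar}}\leq B^Y_{\frpbar}$ between the $F$-discriminant of \autoref{dp-CBF_fibration} and the reduction of the characteristic-zero discriminant goes in the \emph{wrong} direction and is false in general. What one knows (\cite[Proposition 5.7]{DS}, used in the proof of \autoref{p-isotrivial}) is the opposite inequality $B^{Y_{\frpbar}}\geq B_{Y,\frpbar}$, and strictness genuinely occurs: for the Legendre family the characteristic-zero discriminant is $\tfrac{1}{2}(\infty)$, while the $F$-discriminant acquires the extra term $\tfrac{1}{p-1}\sum_{\lambda\in\Lambda_p}(\lambda)$ supported at the supersingular values, which no reduction of multiplier ideals to test ideals will remove, since it reflects the non-ordinarity of special fibres rather than any singularity of the pair. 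The paper sidesteps this in two ways you would need to emulate: in \autoref{p-isotrivial} the hypothesis $M_Y=0$ forces $B^{Y_{\frpbar}}\sim_{\bQ}B_{Y,\frpbar}$, so the one-sided inequality upgrades to equality; in \autoref{p-legendre} the excess part has small coefficients ($\tfrac{1}{p-1}$) and one checks directly, via \cite[Lemma 3.1]{CGS} and \cite[Proposition 5.3]{SS} together with \autoref{l-GFR_perturbation}, that global $F$-regularity of $(Y_{\frpbar},B^{Y_{\frpbar}})$ survives the perturbation. In general, controlling the $F$-discriminant requires quantifying how often and how badly the fibres fail to be ordinary in the family, which is precisely the content that neither \autoref{t-logFanomodpGFR} nor a discriminant comparison of the type you propose can supply; this is why the statement remains a conjecture.
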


Let us now compare \autoref{c-ourconj1} with \autoref{c-ourconj2}.

\begin{proposition}\label{p-Q^0_y_vs_S^0_y + KGFR_fibres}
    Let $f\colon X\to Y$ be a fibration of normal projective varieties over an algebraically closed field $k$ of characteristic $p>0$. Let $y\in Y$ be a general point, let $(X,B)$ be a $\bZ_{(p)}$-sub-pair, and assume there exists $m\geq 1$ not divisible by $p$ such that
    \begin{itemize}
        \item $\Bs(-m(K_X+B))$ and $\supp(B^-)$ do not dominate $Y$,
        \item $|-m(K_X+B)|$ induces the Iitaka fibration.
    \end{itemize}
    If $Q^0_y(X,B)=k(y)$, then $S^0_y(X,B)=k(y)$ and $(X_y,B_y)$ is $K$-globally $F$-regular.
\end{proposition}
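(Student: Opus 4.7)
The plan is to exploit the Iitaka fibration hypothesis to show that $K_{X_y}+B_y\sim_{\bZ_{(p)}}0$ for a general $y\in Y$, and then feed this triviality into the hypothesis $Q^0_y(X,B)=k(y)$ to produce an $F$-complement for $(X/Y,B)$ whose restriction to $X_y$ is identically zero. From there, both conclusions will follow directly from \autoref{c-conditions_F_complements} and \autoref{p-interpolation}.

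To establish the triviality, I would first apply \autoref{l-Iitaka dimensions of fibres} together with hypothesis (2) to conclude that $\kappa(X_y,-(K_{X_y}+B_y))=0$. The assumption that $\textup{Bs}(-m(K_X+B))$ does not dominate $Y$ implies that the restricted sublinear system $|{-m(K_X+B)}|_y\subseteq|{-m(K_{X_y}+B_y)}|$ is base-point-free on $X_y$. A base-point-free sublinear system of a Cartier divisor of Iitaka dimension zero on a connected projective variety can only induce a constant morphism, so it must be one-dimensional and spanned by a single nowhere-vanishing section; this trivialises $-m(K_{X_y}+B_y)$, and since $p\nmid m$ we obtain $K_{X_y}+B_y\sim_{\bZ_{(p)}}0$. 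As an immediate consequence, any $\Gamma\in|{-K_X-B}|_{\bZ_{(p)}}$ restricts to an effective $\bZ_{(p)}$-divisor $\Gamma_y\sim_{\bZ_{(p)}}-(K_{X_y}+B_y)\sim_{\bZ_{(p)}}0$ on the connected normal projective variety $X_y$, which forces $\Gamma_y=0$.

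Next I would pick some $\Gamma\in|{-K_X-B}|_{\bZ_{(p)}}$ (non-empty by hypothesis (2)) and apply \autoref{c-conditions_F_complements_KGFR} with $D=E=0$. This produces $\epsilon\in\bZ_{(p),>0}$ and $\Lambda'\in|{-K_X-B-\epsilon\Gamma}|_{\bZ_{(p)}}$ such that $(X_y,B_y+\epsilon\Gamma_y+\Lambda'_y)$ is globally $F$-split. Since both $\epsilon\Gamma$ and $\Lambda\coloneqq\epsilon\Gamma+\Lambda'$ lie in $|{-K_X-B}|_{\bZ_{(p)}}$, the vanishing observation above gives $\epsilon\Gamma_y=\Lambda'_y=0$; hence $(X_y,B_y)$ itself is globally $F$-split and $\Lambda$ is an $F$-complement for $(X/Y,B)$.

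The first conclusion $S^0_y(X,B)=k(y)$ then follows from \autoref{c-conditions_F_complements} applied to $\Lambda$. For the second, the triviality $K_{X_y}+B_y\sim_{\bZ_{(p)}}0$ places $(X_y,B_y)$ in the setting of \autoref{p-interpolation} with Iitaka fibration $X_y\to\Spec(k(y))$, so combining \autoref{p-interpolation}(b) with \autoref{l-S^0/P^0charactofGFS/GFR} identifies $K$-global $F$-regularity of $(X_y,B_y)$ with its global $F$-splitting, which has just been established. I expect the real content of the proof to be concentrated in the trivialisation of $-m(K_{X_y}+B_y)$; everything after that is an essentially formal unwinding of the definitions of $Q^0_y$, $F$-complement, and $K$-global $F$-regularity.
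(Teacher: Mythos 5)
There is a genuine gap, and it sits exactly where you say the ``real content'' is: the claimed trivialisation $K_{X_y}+B_y\sim_{\bZ_{(p)}}0$ is false in general. Hypothesis (2) says that $|-m(K_X+B)|$ induces the Iitaka fibration of $-K_X-B$ \emph{on $X$}; it does not say that $f$ itself is that Iitaka fibration, and the fibres $X_y$ of $f$ are not fibres of the Iitaka fibration $\phi$. Neither \autoref{t-IF} nor \autoref{l-Iitaka dimensions of fibres} gives $\kappa(X_y,-(K_{X_y}+B_y))=0$: the first statement concerns very general fibres of $\phi$, and the second only compares $\kappa(X_y,L_y)$ with $\kappa(X_\eta,L_\eta)$ along $f$. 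A concrete counterexample to your step is $X=\bP^1\times Y\to Y$ with $B=0$, where $-K_{X_y}$ is ample on every fibre; more generally the main theorem is applied to fibrations whose fibres have big anticanonical class. Consequently the base-point-free sublinear series $|{-m(K_X+B)}|_y$ need not be one-dimensional, no nowhere-vanishing section exists, and the subsequent conclusion that every $\Gamma\in|-K_X-B|_{\bZ_{(p)}}$ restricts to $0$ on $X_y$ fails. With that step gone, your reduction of $K$-global $F$-regularity of $(X_y,B_y)$ to its global $F$-splitting (via \autoref{p-interpolation}(b)) collapses, and this is precisely the nontrivial part of the statement.

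What the paper actually does at this point is keep the positive-dimensional Iitaka fibration of the fibre: it takes the Stein factorisation $X_y\xrightarrow{\wb{\phi}_y}V_y\xrightarrow{h_y}Z_y$ of $\phi|_{X_y}$, notes via \autoref{r-Iitaka_fibration} that $\wb{\phi}_y$ is the Iitaka fibration of $-K_{X_y}-B_y$, uses $Q^0_y(X,B)=k(y)$ to get that $(X_y,B_y)$ and the general fibres of $\wb{\phi}_y$ are globally $F$-split, and then must prove that the induced pair $(V_y,B^{V_y})$ from \autoref{dp-CBF_fibration} is globally $F$-regular. This is done with \autoref{l-Karl<3}: any $M_y\in|-K_{V_y}-B^{V_y}|_{\bZ_{(p)}}$ is dominated by $N_y=h_y^*G_y$ for a suitable hypersurface section $G_y$ on $Z_y$, and $\phi_y^*G_y$ lifts to an element of $|-K_X-B|_{\bZ_{(p)}}$ (this is where the hypotheses that $\Bs(-m(K_X+B))$ does not dominate $Y$ and that $|-m(K_X+B)|$ induces the Iitaka fibration enter), so the $Q^0_y$ hypothesis produces an $F$-complement after adding $\delta N_y$, and \autoref{l-GFR_perturbation} handles the passage from $N_y$ to $M_y$. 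Your argument contains no substitute for this lifting-and-perturbation step, so it does not prove the proposition. (Your first conclusion $S^0_y(X,B)=k(y)$ is fine, though the paper gets it more directly from the inclusion $Q^0_y(X,B)\subseteq S^0_y(X,B)$; note also that your invocation of \autoref{c-conditions_F_complements_KGFR} quietly assumes $f$ equidimensional, which is not among the proposition's hypotheses.)
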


\begin{proof}
We have an inclusion $Q^0_y(X,B)\subseteq S^0_y(X,B)$, hence it remains to show $(X_y,B_y)$ is KGFR. We have a commutative diagram
\begin{center}
    \begin{tikzcd}
        X\arrow[rr,dashed,"\phi"]   &          & Z\\
        X_y\arrow[u,hook]\arrow[r,"\wb{\phi}_y"]\arrow[rr,"\phi_y",swap,bend right =30] & V_y\arrow[r,"h_y"] & Z_y,\arrow[u,hook]
    \end{tikzcd}
\end{center}
where $\phi$ is the Iitaka fibration induced by $|-m(K_X+B)|$ and $\phi_y=h_y\circ \wb{\phi}_y$ is its Stein factorisation. In particular, $\wb{\phi}_y$ is the Iitaka fibration of $-K_{X_y}-B_y$ by \autoref{r-Iitaka_fibration}. As $Q_y^0(X,B)=k(y)$, we have that $(X_y,B_y)$ is GFS, and so is a general fibre of $\wb{\phi}_y$. Hence we have an induced pair $(V_y,B^{V_y})$. By \autoref{l-Karl<3}, in order to show its global $F$-regularity, it is enough to show that, for all $M_y\in |-K_{V_y}-B^{V_y}|_{\bZ_{(p)}}$ there exists $\epsilon\in\bZ_{(p),>0}$ such that $(V_y,B_y^{V_y}+\epsilon M_y)$ is GFS. Let $G_y\in |\cO_{Z_y}(1/m)|_{\bZ_{(p)}}$ be such that $N_y\coloneqq h_y^*G_y$ satisfies $\supp(N_y)\supseteq\supp(M_y)$. 
Note that, by construction, $\phi_y^*G_y$ lifts to an element in $|-K_X-B|_{\bZ_{(p)}}$, therefore there exists $\delta\in\bZ_{(p),>0}$ such that the pair $(V_y,B_y^{V_y}+\delta N_y)$ is GFS by \autoref{dp-CBF_fibration}.
Taking $0<\epsilon\ll\delta$ so that $\epsilon M_y\leq \delta N_y$ we have $(V_y,B_y^{V_y}+\epsilon M_y)$ is GFS as well by \autoref{l-GFR_perturbation}.
\end{proof}

\begin{corollary}
    \autoref{c-ourconj1} implies \autoref{c-ourconj2}.
\end{corollary}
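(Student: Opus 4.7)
The plan is to apply \autoref{c-ourconj1} to the trivial fibration $f \colon X \to \Spec(\bC)$, and then to convert its output into $K$-global $F$-regularity via \autoref{p-interpolation}. The key observation is that when the base is a point, the ``general point'' $y$ is the unique closed point, $X_y = X$, $B_y = B$, the trace $T^e_{B,y}$ reduces to the absolute Frobenius trace, and consequently
\[
Q^0_{y_{\wb{\frp}}}(X_{\wb{\frp}},B_{\wb{\frp}}) = Q^0(X_{\wb{\frp}},B_{\wb{\frp}}).
\]

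First I would verify the hypotheses of \autoref{c-ourconj1}. Semiampleness of $L \coloneqq -K_X - B$ forces $\bB(L) = \emptyset$, so it vacuously fails to dominate a point, and $\Spec(\bC)$ is trivially $\bQ$-Gorenstein. To check triviality of $\cI(X,B;\vvert{L})$ I would fix $m \gg 0$ divisible enough so that $|mL|$ is base point free, and use Bertini to choose a general $D_m \in |mL|$ with $(X, B + D_m/m)$ still klt; this forces $\cI(X,B;D_m/m) = \cO_X$, and by \autoref{p-comparision_asy_linsys_genmem} the latter coincides with $\cI(X,B;\vvert{L})$. Granting \autoref{c-ourconj1} then yields density of the locus where $Q^0(X_{\wb{\frp}},B_{\wb{\frp}}) = k(\wb{\frp})$.

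The second step is to invoke \autoref{p-interpolation} to promote this identity to $K$-global $F$-regularity. Let $\phi \colon X \to Z$ be the Iitaka fibration of $L$, so $L \sim_{\bQ} \phi^*H$ with $H$ ample on $Z$, and fix $m \geq 1$ such that $mH$ and $mL$ are integral with $mL \sim m\phi^*H$. After shrinking the model base to clear the denominators of $B$ and $H$ and to exclude the primes dividing $m$, for every remaining $\frp$ the morphism $\phi_{\wb{\frp}} \colon X_{\wb{\frp}} \to Z_{\wb{\frp}}$ is the Iitaka fibration of $L_{\wb{\frp}}$ and satisfies $L_{\wb{\frp}} \sim_{\bZ_{(p)}, Z_{\wb{\frp}}} 0$. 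Thus \autoref{p-interpolation} applies and identifies $Q^0(X_{\wb{\frp}},B_{\wb{\frp}}) = k(\wb{\frp})$ with $K$-global $F$-regularity of $(X_{\wb{\frp}}, B_{\wb{\frp}})$, establishing \autoref{c-ourconj2}.

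I do not expect any real obstacle. The argument reduces to the trivial-fibration observation in the first paragraph, together with the standard spreading-out bookkeeping needed to ensure the Iitaka fibration structure, ampleness of $H$, and the required $\bZ_{(p)}$-Cartier conditions all descend to positive characteristic modulo generic primes.
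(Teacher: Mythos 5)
Your proposal is correct, but it takes a genuinely different route from the paper. You specialise \autoref{c-ourconj1} to the structure morphism $X\to\Spec(\bC)$, so that (by the paper's own convention for the structural morphism) $Q^0_{y_{\wb{\frp}}}(X_{\wb{\frp}},B_{\wb{\frp}})$ is literally the absolute $Q^0(X_{\wb{\frp}},B_{\wb{\frp}})$, and then convert $Q^0=k(\wb{\frp})$ into $K$-global $F$-regularity via \autoref{p-interpolation}; your verification of the hypotheses (triviality of the asymptotic multiplier ideal by Bertini since $(X,B)$ is klt and $|mL|$ is free, $\bB(L)=\emptyset$ by semiampleness) is fine, with the one small addition that the spreading-out should also include normality of the general fibre of the semiample contraction, which \autoref{p-interpolation} needs through \autoref{dp-CBF_fibration}. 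The paper instead applies \autoref{c-ourconj1} to the Iitaka fibration $f\colon X\to Y$ of $-K_X-B$ itself, obtaining the relative statement $Q^0_{y_{\wb{\frp}}}(X_{\wb{\frp}},B_{\wb{\frp}})=k(\wb{\frp})$ for $y\in Y_{\wb{\frp}}$ general, and then concludes by \autoref{p-Q^0_y_vs_S^0_y + KGFR_fibres}. What your route buys: \autoref{p-interpolation} is an honest equivalence between $Q^0(X,B)=k$ and KGFR, so the deduction is immediate and you never have to pass from relative data over the Iitaka base to a statement about the total space (a passage the paper itself flags as delicate in \autoref{r-equiv_conj1_conj2}, since in general $Q^0(X,B)\otimes k(y)\subseteq Q^0_y(X,B)$ can be strict). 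What it costs: you invoke the conjecture in the degenerate instance where the base is a point; this is formally covered by the statement of \autoref{c-ourconj1} (a point is a normal projective variety, the structure map is a fibration, and in that case the hypotheses reduce exactly to klt plus semiample), but it makes the corollary nearly a restatement, and a reader who reads \autoref{c-ourconj1} as being about positive-dimensional bases would have to fall back on the paper's argument, which only ever uses the conjecture for the Iitaka base, the setting of the main theorem. Both proofs defer comparable spreading-out bookkeeping.
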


\begin{proof}
    Let $\widetilde{f}\colon (\cX,\cB)\to\cY\to\Spec(A)$ be a model of the Iitaka fibration of $-K_X-B$ as in \autoref{c-ourconj2}. For all $\frp\in\Spec(A)$ such that $Q^0_{{y}_{\frpbar}}(X_{\frpbar},B_{\frpbar})=k(\frpbar)$ for $y \in Y_{\wb{\frp}}$ general, we have that $(X_{\frpbar},B_{\frpbar})$ is KGFR by \autoref{p-Q^0_y_vs_S^0_y + KGFR_fibres}.
\end{proof}

\begin{remark}\label{r-equiv_conj1_conj2}
    One can ask whether \autoref{c-ourconj1} is actually equivalent to \autoref{c-ourconj2}. It is easy to see that \autoref{c-ourconj2} implies that the set
    \[
    \lbrace \frp\in\Spec(A) \textup{ s.t. } (X_{{y}_{\wb{\mathfrak{p}}}},B_{{y}_{\wb{\mathfrak{p}}}}) \textup{ is KGFR and } S^0_{{y}_{\wb{\frp}}}(X_{\wb{\frp}},B_{\wb{\frp}})=k(\wb{\frp}) \rbrace
    \]
    is dense in $\Spec(A)$, whenever $f\colon (X,B)\to Y$ is a fibration as in \autoref{c-ourconj1}. However it is not clear how to reverse the implication in \autoref{p-Q^0_y_vs_S^0_y + KGFR_fibres}. Keeping the same notation, one needs to show that for all $D\in |-K_X-B|_{\bZ_{(p)}}$ there exists $\epsilon\in\bZ_{(p),>0}$ such that $(X_y,B_y+\epsilon D_y)$ admits an $F$-complement $\Lambda_y$ \textit{which extends to $X$}, i.e.\ $\Lambda_y=\Lambda|_{X_y}$ for some $\Lambda\in |-K_X-B|_{\bZ_{(p)}}$. By \autoref{dp-CBF_fibration} and an easy diagram chase, this is equivalent to showing that for all $\Delta\in |-K_Y-B^Y|_{\bZ_{(p)}}$ there exists $\epsilon\in\bZ_{(p),>0}$ such that $(V_y,B_y^{V_y}+\epsilon \Delta_y)$ admits an $F$-complement $\Gamma_y$ \textit{which descends to $Z_y$}, i.e.\ $\Gamma_y=h_y^*\Xi_y$ for some $\Xi_y\in |\cO_{Z_y}(1/m)|_{\bZ_{(p)}}$.
\end{remark}

By \autoref{r-equiv_conj1_conj2}, the equivalence between \autoref{c-ourconj1} and \autoref{c-ourconj2} is reduced to answering affirmatively the following.

\begin{question} \label{q-finite_part}
    Let $h\colon Z\to V$ be a finite morphism of projective varieties over $k$. Let $(Z,B)$ be a $\bZ_{(p)}$-pair which is Fano and globally $F$-regular. Assume $-K_Z-B\sim_{\bZ_{(p)}}h^*H$, where $H$ is an ample $\bZ_{(p)}$-divisor on $V$. Is there $\Lambda\in |H|_{\bZ_{(p)}}$ such that $h^*\Lambda$ is an $F$-complement for $(Z,B)$?
\end{question}

Using \cite{ST}, we give a positive answer when $B$ is ``compatible'' with $h$.

\begin{proposition}\label{dp-CBF_split_finite}
    Let $h\colon Z\to V$ be a surjective finite morphism of projective varieties over an algebraically closed field of characteristic $p>0$, with $Z$ normal.
    Let $Y$ be the normalisation of $V$ and $h'\colon Z \to Y$ be the induced morphism.
    Let $B \geq 0$ be a $\bZ_{(p)}$-divisor on $Z$ such that
    \begin{itemize}
        \item[(a)] $h'$ is separable with degree not divisible by $p$;
        \item[(b)] $B = h'^*(B^{Y}) - \mathrm{Ram}(h') \geq 0$ for some $\bZ_{(p)}$-divisor $B^{Y} \geq 0$ on $Y$; in particular, $K_Z+B= h'^*(K_Y+B^Y)$;
        \item[(c)] $(Z,B)$ is globally $F$-regular;
        \item[(d)] there exists $e>0$ such that $(1-p^e)(K_Z+B)\sim h^*H$, where $H$ is an ample Cartier divisor on $V$. 
    \end{itemize}
    Then $(Y,B^Y)$ is globally $F$-regular and there is $\Lambda\in |H|_{\bZ_{(p)}}$ such that $h^*\Lambda$ is an $F$-complement for $(Z,B)$.
\end{proposition}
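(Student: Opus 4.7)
The argument has two main steps, both invoking the trace compatibility provided by \cite{ST} for the separable finite morphism $h'\colon Z \to Y$. Hypothesis (b) gives $\cL^{(e)}_{Z,B} = h'^*\cL^{(e)}_{Y,B^Y}$, so by the projection formula $h'_*\cL^{(e)}_{Z,B} \cong h'_*\cO_Z \otimes \cL^{(e)}_{Y,B^Y}$, and Grothendieck duality for $h'$ further identifies $\Tr_{Z/Y} \circ h'_*T^e_{Z,B}$ with $T^e_{Y,B^Y} \otimes \Tr_{Z/Y}$ after this isomorphism. In particular, by the surjectivity of $\Tr_{Z/Y}\colon h'_*\cO_Z \to \cO_Y$ in hypothesis (a), any splitting of $T^e_{Z,B}$ descends to a splitting of $T^e_{Y,B^Y}$ upon pushing forward and composing with the trace.

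To prove $(Y,B^Y)$ is globally $F$-regular, fix an ample divisor $A$ on $Y$. By \autoref{l-Karl<3} it is enough to find, for each $E \in |A|_{\bZ_{(p)}}$, some $e' > 0$ making $(Y, B^Y + E/(p^{e'}-1))$ GFS. Since $h'^*E \geq 0$ and
\[
B + \tfrac{1}{p^{e'}-1}h'^*E = h'^*\!\left(B^Y + \tfrac{E}{p^{e'}-1}\right) - \mathrm{Ram}(h'),
\]
GFR of $(Z,B)$ yields $e' \gg 0$ for which $(Z, B + h'^*E/(p^{e'}-1))$ is GFS; the trace compatibility above descends this splitting to $(Y, B^Y + E/(p^{e'}-1))$.

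For the $F$-complement, hypothesis (d) combined with injectivity of finite pullback on $\bZ_{(p)}$-Cartier classes yields $(1-p^e)(K_Y + B^Y) \sim \nu^*H$ on $Y$. Setting $e' = ke$ and $N = (p^{e'}-1)/(p^e-1)$, we get $\cL^{(e')}_{Y,B^Y} \cong \nu^*\cO_V(NH)$. By the same trace compatibility and \autoref{t-ss4.3ii}, producing $\Lambda \in |H|_{\bZ_{(p)}}$ such that $h^*\Lambda$ is an $F$-complement for $(Z,B)$ reduces to finding $t \in H^0(V, NH)$ whose pullback $\nu^*t \in H^0(Y, \cL^{(e')}_{Y, B^Y})$ splits $T^{e'}_{Y,B^Y}$; one then takes $\Lambda := \mathrm{div}(t)/N$. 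Pushing $T^{e'}_{Y,B^Y}$ forward through $\nu$ and restricting the source via the inclusion $\cO_V \hookrightarrow \nu_*\cO_Y$ (tensored with $\cO_V(NH)$), we obtain an $\cO_V$-linear map $F^{e'}_{V,*}\cO_V(NH) \to \nu_*\cO_Y$; taking $H^0$ yields exactly the composition
\[
H^0(V, NH) \xrightarrow{\nu^*} H^0(Y, \cL^{(e')}_{Y, B^Y}) \xrightarrow{H^0(T^{e'}_{Y,B^Y})} H^0(Y, \cO_Y) = k,
\]
and any $t$ mapping to a nonzero element produces the desired section.

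The main obstacle is showing the above composition is nonzero for some sufficiently large $e'$. Since $V$ is non-normal, $\nu^*\colon H^0(V, NH) \to H^0(Y, \nu^*(NH))$ is typically a proper inclusion, so non-vanishing of the evaluation after restricting is not formal. I would handle this by a second application of \cite{ST} to the finite birational morphism $\nu$, whose trace is the identity on the smooth locus of $V$, combining it with the GFR of $(Y,B^Y)$ from Step~1 and Serre vanishing for $NH$ on $V$ (available for $N$ large by ampleness of $H$) to extract a splitting that descends from $V$ after iterating Frobenius enough times.
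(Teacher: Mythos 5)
Your first half is fine and is essentially the paper's own argument: hypothesis (b) makes the perturbed pairs on $Z$ pull back from $Y$, the compatibility of Frobenius traces with $\Tr_{Z/Y}$ from \cite{ST} pushes a splitting of $T^{e'}_{Z,B+h'^*E/(p^{e'}-1)}$ down to a splitting of $T^{e'}_{Y,B^Y+E/(p^{e'}-1)}$ using the surjectivity in (a), and global $F$-regularity of $(Y,B^Y)$ follows (your detour through \autoref{l-Karl<3} is harmless; the paper descends an arbitrary perturbation $D\geq 0$ directly).

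The second half has a genuine gap, and you in fact point at it yourself: everything reduces to producing a splitting section of $T^{e'}_{Y,B^Y}$ that lies in the image of $\nu^*\colon H^0(V,NH)\to H^0(Y,\nu^*(NH))$, and your proposed fix (a ``trace'' for $\nu$, Serre vanishing for $NH$, iterating Frobenius) does not achieve this. Serre vanishing controls higher cohomology, not the cokernel of $\nu^*$ on global sections: that cokernel injects into $H^0$ of twists of $\nu_*\cO_Y/\cO_V$, a sheaf supported on the non-normal locus whose twisted sections grow with $N$, so the inclusion $H^0(V,NH)\subseteq H^0(Y,\nu^*(NH))$ stays strict no matter how large you take $N$ or $e'$. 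Nor is there an $\cO_V$-linear retraction of $\cO_V\hookrightarrow\nu_*\cO_Y$ to serve as a trace for the birational map $\nu$ when $V$ is non-normal. The paper's mechanism is different: the only sections that descend are those vanishing along the conductor, via the isomorphism $\cO_V(lH)\otimes\cC\xrightarrow{\ \sim\ }\nu_*(\cO_Y(lM)\otimes\cC^\nu)$, where $\cC$ is the conductor ideal and $\cC^\nu=\cO_Y\cdot\cC$. One therefore chooses an effective Cartier divisor $R$ on $Y$ with $\cO_Y(-R)\subseteq\cC^\nu$, uses stability of global $F$-regularity under small perturbation (\autoref{l-GFR_perturbation}) together with \autoref{t-ss4.3ii} to obtain an $F$-complement $\Gamma_Y$ for $(Y,B^Y+R/l)$, and observes that $\Lambda_Y=\Gamma_Y+R/l$ is then an $F$-complement for $(Y,B^Y)$ whose defining section lies in $H^0(Y,\cO_Y(lnM)\otimes\cC^\nu)$, hence descends to a section on $V$ defining the desired $\Lambda$. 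Without some device of this kind forcing vanishing along the conductor, your construction of $\Lambda$ on $V$ does not go through.
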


\begin{proof}
First of all, we claim that $(Y, B^Y)$ is globally $F$-split.
Set $\cL\coloneqq\cO_Y((1-p^e)(K_Y+B^Y))$.
Note that, since $p$ does not divide the degree of $h'$, the trace $\Tr_{Z/Y} \colon h'_{*}\cO_Z \to \cO_Y$ is surjective.
    By \cite[Theorem 5.7]{ST} we have a commutative diagram
     \begin{equation}\label{e-st_finitemor}
        \begin{tikzcd}
            h'_*\Fr^e_*h'^*\cL \arrow[rrr,"h'_*T^e_B"] \arrow[dd,"\Fr^e_*\left(\Tr_{Z/Y}\otimes\cL\right)"] & & & h'_*\cO_Z \arrow[dd,"\Tr_{Z/Y}"] \\
            &&&\\
            \Fr^e_*\cL \arrow[rrr,"\psi_Y"]  & & & \cO_Y,\\ 
        \end{tikzcd}
    \end{equation} 
where $\psi_Y$ is the map induced by the $\bZ_{(p)}$-divisor $B^Y$ via the correspondence in \autoref{p-correspondence}.
Since $h'$ is a finite map, its push-forward preserves surjectivity, so $h'_*T^e_B$ is surjective. By assumption (a) $\Tr_{Z/Y}$ is surjective, whence we conclude that $\psi_Y$ is surjective too, proving the claim.

Similarly, we prove that $(Y, B^Y)$ is globally $F$-regular.
Let $D\geq 0$ be a divisor on $Y$. It is enough to show that $(Y,B^Y+D/(p^r-1))$ is GFS whenever $r$ is large enough. As $(Z,B)$ is GFR, we have that $(Z,B+h'^*\left(D/(p^r-1)\right))$ is GFS provided $r\gg 0$. We push-forward the splitting to $Y$ via the trace map of $h'$:
    \begin{equation*}
        \begin{tikzcd}
            h'_*\cO_Z\arrow[r]\arrow[dd,"\Tr_{Z/Y}"] & h'_*\Fr^{e}_*\cL^{(e)}_{B+h'^*\left(D/(p^r-1)\right)} \arrow[rrr,"h'_*T^e_{B+h'^*\left(D/(p^r-1)\right)}"] \arrow[dd,"\Fr^e_*\left(\Tr_{Z/Y}\otimes\cL^{(e)}_{B+h'^*\left(D/(p^r-1)\right)}\right)"] & & & h'_*\cO_Z \arrow[dd,"\Tr_{Z/Y}"]\\
             & & & \\
            \cO_Y\arrow[r] & \Fr^{e}_*\cL^{(e)}_{B^Y+D/(p^r-1)} \arrow[rrr,"T^e_{B^Y+D/(p^r-1)}"]  & & & \cO_Y,
        \end{tikzcd}
    \end{equation*}
hence $(Y,B^Y+D/(p^r-1))$ is GFS.

Set $\nu \colon Y \to V$ and let $M \coloneqq \nu^*H$.
Let $\cC\coloneqq\textup{Ann}_{\cO_V}(\nu_*\cO_{Y}/\cO_V)\subseteq \cO_V$ be the conductor ideal, and $\cC^\nu\coloneqq \cO_{Y}\cdot\cC\subseteq \cO_{Y}$.
Let now $R$ be an effective Cartier divisor on $Y$ such that $\cO_{Y}(-R)\subseteq \cC^\nu$. For $a\geq 1$ sufficiently divisible, we have that $(Y,B^Y+R/(p^a-1))$ is still GFR by \autoref{l-GFR_perturbation}, thus we have an $F$-complement $\Gamma_Y$ for $(Y,B^Y+R/(p^a-1))$ by \autoref{t-ss4.3ii}. In particular, $\Lambda_Y \coloneqq\Gamma_Y+R/(p^a-1)$ is an $F$-complement for $(Y,B^Y)$.
Note that for all $l\geq 0$, we have isomorphisms induced by pull-back
    \begin{equation*}
        \cO_V(lH)\otimes \nu^\sharp\colon \cO_V(lH)\otimes\cC\to \nu_*(\cO_{Y}(lM)\otimes\cC^\nu).
    \end{equation*}
By the projection formula, we have that, if $d$ is the degree of $h'$, $d(1-p^e)(K_Y+B^Y)\sim d(1-p^e)M$.
Moreover, for some $n \geq 1$ not divisible by $p$ we have that $ln(p^e-1)\Lambda_Y$ is the divisor of a section $\lambda_Y\in H^0(Y,\cO_{Y}(ln(1-p^e)(K_Y+B^Y))\otimes\cC^\nu)$. Up to choosing a bigger $n$, we can suppose it is divisible by $d$. Therefore $\lambda_Y=\nu^*\lambda$ for some $\lambda\in H^0(V,\cO_V(lnH)\otimes \cC)$. We conclude by setting $\Lambda\coloneqq (\lambda=0)/ln(p^e-1)$.
\end{proof}

\subsection{Examples}

We collect some instances in which \autoref{c-ourconj2} holds.

\begin{proposition} \label{p-isotrivial}
Let $(X,B)$ be a projective klt pair over $\bC$ such that $-K_X-B$ is semiample, and let $f\colon X \to Y$ be the induced fibration. Write $K_X+B \sim_{\bQ} f^*(K_Y+B_Y+M_Y)$ where $B_Y$ is the discriminant part and $M_Y$ the moduli part in the canonical bundle formula as in \cite{Amb_Shokurov}. Suppose that \autoref{c-logCYmodpGFS} holds for a general fibre $X_y$, and that $M_Y=0$. Then \autoref{c-ourconj2} holds.
\end{proposition}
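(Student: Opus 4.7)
The plan is to combine the hypothesis with \autoref{t-logFanomodpGFR} on the base and with the assumed version of \autoref{c-logCYmodpGFS} on the fibres, and then to identify the positive characteristic $F$-discriminant from \autoref{dp-CBF_fibration} with the reduction of the characteristic zero discriminant $B_Y$.

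First, I would observe that with $M_Y=0$, the canonical bundle formula reads $K_X+B\sim_{\bQ}f^*(K_Y+B_Y)$, and since $-K_X-B$ is semiample with Iitaka fibration $f$ this forces $-K_Y-B_Y$ to be ample. Combined with the klt hypothesis on $(X,B)$ and triviality of the moduli part, this makes $(Y,B_Y)$ log Fano, so by \autoref{t-logFanomodpGFR} there is a dense set $U_1\subseteq\Spec(A)$ over which $(Y_{\frpbar},B_{Y,\frpbar})$ is globally $F$-regular.

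Next I would apply the assumed version of \autoref{c-logCYmodpGFS} to $(X_y,B_y)$, which is klt log Calabi--Yau since $K_{X_y}+B_y\sim_{\bQ}0$, and spread out through a suitable model of $f$ using \autoref{l-defo_CYGFS} (openness of global $F$-splitness in Calabi--Yau families) to produce a dense set $U_2\subseteq\Spec(A)$ over which a general fibre of $f_{\frpbar}$ is globally $F$-split. For $\frp\in U_1\cap U_2$ the positive characteristic canonical bundle formula \autoref{dp-CBF_fibration} then applies to $f_{\frpbar}$, yielding an effective $F$-discriminant $B^Y_{\frpbar}$ with
\[
K_{X_{\frpbar}}+B_{\frpbar}\sim_{\bZ_{(p)}}f_{\frpbar}^*(K_{Y_{\frpbar}}+B^Y_{\frpbar}).
\]
Combined with the reduction of the characteristic zero formula, this gives $B^Y_{\frpbar}\sim_{\bQ}B_{Y,\frpbar}$.

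To conclude I would show that $(Y_{\frpbar},B^Y_{\frpbar})$ is itself globally $F$-regular, which together with fibrewise $F$-splitting makes $(X_{\frpbar},B_{\frpbar})$ $K$-globally $F$-regular by \autoref{d-KGFR}. Via the codimension-one description of \autoref{r-boundarypartfpt}, the coefficient of $B^Y_{\frpbar}$ at a prime divisor $Q$ is a sub-$F$-split threshold, which at a codim-one point agrees with the sub-lc threshold; the latter is preserved under reduction modulo $p$ for all but finitely many primes. Thus in a dense subset of $\Spec(A)$ we actually have $B^Y_{\frpbar}=B_{Y,\frpbar}$, and global $F$-regularity follows from the first step. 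The hard part will be verifying the identification $B^Y_{\frpbar}=B_{Y,\frpbar}$: the $\bQ$-linear equivalence is automatic, but upgrading to an honest equality of divisors requires both the codim-one correspondence between sub-$F$-split and sub-lc thresholds, and the invariance of lc thresholds under reduction modulo $p$ for a dense set of primes. A conceptually cleaner route, if available, would be to show that global $F$-regularity depends only on the $\bQ$-linear equivalence class of the boundary, which would bypass the identification of $B^Y_{\frpbar}$ and $B_{Y,\frpbar}$ as honest divisors altogether.
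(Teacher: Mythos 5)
Your first two steps coincide with the paper's argument: $(Y,B_Y)$ is klt log Fano (the paper cites Ambro's Theorem 3.1 for kltness), \autoref{t-logFanomodpGFR} makes $(Y_{\frpbar},B_{Y,\frpbar})$ globally $F$-regular for $p\gg 0$, and the assumed \autoref{c-logCYmodpGFS} for $X_y$, spread out over a model, gives a dense set of primes where the general fibre of $f_{\frpbar}$ is globally $F$-split, so that \autoref{dp-CBF_fibration} produces $B^{Y_{\frpbar}}_{\frpbar}$ with $B^{Y_{\frpbar}}_{\frpbar}\sim_{\bQ}B_{Y,\frpbar}$. The gap is in your final step, which you yourself flag as the hard part. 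The claimed ``codim-one correspondence between sub-$F$-split and sub-lc thresholds'' is false: one only has the inequality $d_Q^F\leq d_Q^{\mathrm{lc}}$ (globally sub-$F$-split implies sub-lc in codimension one), i.e.\ $B^{Y_{\frpbar}}_{\frpbar}\geq B_{Y,\frpbar}$, and the inequality can be strict for every $p$ --- the paper's own computation for the Legendre family (\autoref{p-legendre}) exhibits an $F$-discriminant with extra components at supersingular fibres that are invisible in the characteristic-zero discriminant. Moreover, the auxiliary input you invoke, that (even honest, non-fibrational) lc thresholds equal $F$-thresholds after reduction for a dense set of primes, is precisely the open problem of Hara--Watanabe type alluded to in \autoref{s-comparison}, not a usable lemma. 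Your fallback suggestion --- that global $F$-regularity depends only on the $\bQ$-linear equivalence class of the boundary --- is also false (compare $(\bP^1,\tfrac12(0)+\tfrac12(\infty))$ with $(\bP^1,(0))$), so it does not rescue the step.

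What the paper does instead is elementary and is exactly what the hypothesis $M_Y=0$ is for: by \cite[Proposition 5.7]{DS} one has the one-sided inequality $B^{Y_{\frpbar}}_{\frpbar}\geq B_{Y,\frpbar}$, and since the two divisors are $\bQ$-linearly equivalent (because $K_X+B\sim_{\bQ}f^*(K_Y+B_Y)$ with no moduli part), their difference is an effective $\bQ$-divisor that is $\bQ$-linearly trivial on the projective variety $Y_{\frpbar}$, hence zero. Thus $B^{Y_{\frpbar}}_{\frpbar}=B_{Y,\frpbar}$ on the nose, $(Y_{\frpbar},B^{Y_{\frpbar}}_{\frpbar})$ is globally $F$-regular by the first step, and $(X_{\frpbar},B_{\frpbar})$ is $K$-globally $F$-regular as in \autoref{d-KGFR}. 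So the missing idea is not a threshold comparison but the combination ``effective $+$ $\bQ$-trivial $\Rightarrow$ zero'' applied to $B^{Y_{\frpbar}}_{\frpbar}-B_{Y,\frpbar}$; without it (or some substitute), your argument does not close.
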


\begin{proof}
By \cite[Theorem 3.1]{Amb_Shokurov} $(Y,B_Y)$ is klt and log Fano. Let $\widetilde{f}\colon (\cX,\cB) \to \cY\to \Spec(A)$ be a model of $f$. For all primes $\frp\in\Spec(A)$ with sufficiently large residue characteristic we have that $(Y_{\frpbar},B_{Y,\frpbar})$ is GFR by \autoref{t-logFanomodpGFR}. As \autoref{c-logCYmodpGFS} holds for $X_y$, for infinitely many $\frp$ we have an induced $\bZ_{(p)}$-pair $(Y_{\frpbar},B_{\frpbar}^{Y_{\frpbar}})$. By \cite[Proposition 5.7]{DS} we have $B_{\wb{\frp}}^{Y_{\wb{\frp}}} \geq B_{Y,{\wb{\frp}}}$. As $B_{\wb{\frp}}^{Y_{\wb{\frp}}} \sim_{\bQ} B_{Y,{\wb{\frp}}}$, we conclude $B_{\wb{\frp}}^{Y_{\wb{\frp}}} = B_{Y,{\wb{\frp}}}$.
\end{proof}

Recall that the \textit{Legendre family} is the surface $S$ given by the closure of $V(y^2-x(x-1)(x-\lambda))\subseteq \bP^2_{xyz}\times \bP^1_{\lambda\mu}$. Note that $K_S=\cO_S(0,-1)$, so that the projection $f\colon S\to \bP^1$ is induced by the anticanonical divisor.

\begin{proposition}\label{p-legendre}
    \autoref{c-ourconj2} holds for the Legendre family.
\end{proposition}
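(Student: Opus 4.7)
Fix a model of the Legendre family over $A \coloneqq \bZ[1/2]$, so that for every prime $p > 2$ the reduction $f_p\colon S_p \to \mathbb{P}^1_p$ is an elliptic fibration: $S_p$ is a canonical surface with three $A_1$-singularities at the nodes of the $I_1$-fibres over $\lambda_0 \in \{0,1,\infty\}$, and $-K_{S_p} \sim f_p^*\cO_{\mathbb{P}^1}(1)$ already realises the Iitaka fibration. We shall show that $S_p$ is $K$-globally $F$-regular for a dense set of primes, which by \autoref{p-interpolation} amounts to verifying that $(\mathbb{P}^1_p, B_p^{\mathbb{P}^1})$ is globally $F$-regular, where $B_p^{\mathbb{P}^1}$ is the $F$-discriminant from \autoref{dp-CBF_fibration}.

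For every $p > 2$ the set of supersingular $\lambda_0 \in \mathbb{P}^1_p$ is finite, so the generic fibre of $f_p$ is an ordinary elliptic curve and globally $F$-split; hence $B_p^{\mathbb{P}^1}$ is well defined and effective, and a comparison of classes gives $\deg B_p^{\mathbb{P}^1} = 1$. By \autoref{r-boundarypartfpt}, $B_p^{\mathbb{P}^1} = \sum_{\lambda_0}(1-d_{\lambda_0})\lambda_0$, where $d_{\lambda_0}$ is the global sub-$F$-split threshold along the generic point of $\lambda_0$. At each $\lambda_0 \in \{0,1,\infty\}$ the nodal fibre passes through an $A_1$-singularity of $S_p$; in the étale model $\{uv = w^2\}$ (available thanks to $p > 2$) the crepant resolution turns $f_p^*\lambda_0$ into an SNC divisor with all multiplicities equal to one, and the $F$-regularity of the $A_1$ singularity upgrades sub-log-canonicity to global sub-$F$-splitting, yielding $d_{\lambda_0} = 1$ and no contribution. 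At a supersingular $\lambda_0$, applying \autoref{p-correspondence} to the pushforward of the Frobenius trace of $f_p$ identifies $1 - d_{\lambda_0}$ with $2\,\mathrm{mult}_{\lambda_0}(H_p)/(p-1)$, where $H_p(\lambda) \coloneqq \sum_{k=0}^{(p-1)/2}\binom{(p-1)/2}{k}^2\lambda^k$ is the Hasse polynomial; the identity $\deg H_p = (p-1)/2$ is then the Hasse--Deuring relation consistent with $\deg B_p^{\mathbb{P}^1} = 1$.

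For primes $p$ at which the roots of $H_p$ are all simple (true for all $p$ by a theorem of Igusa), every coefficient of $B_p^{\mathbb{P}^1}$ equals $2/(p-1) < 1$, and $(\mathbb{P}^1_p, B_p^{\mathbb{P}^1})$ is klt log Fano of anticanonical degree one. By \autoref{l-Karl<3} it is then enough to produce, for every effective Cartier divisor $D$ and sufficiently small $\epsilon \in \bZ_{(p), > 0}$, an $F$-complement for $(\mathbb{P}^1_p, B_p^{\mathbb{P}^1} + \epsilon D)$; such a complement is furnished by a suitable homogeneous polynomial combining a power of the Hasse polynomial (absorbing the contribution of $B_p^{\mathbb{P}^1}$) with a generic section avoiding $\mathrm{Supp}(D)$. \textbf{Main obstacle.} The crucial step is the local calculation at a supersingular fibre: identifying $1 - d_{\lambda_0}$ with $2\,\mathrm{mult}_{\lambda_0}(H_p)/(p-1)$ via \autoref{p-correspondence} requires an explicit description of the Cartier operator of the Legendre family in local coordinates, and this is where most of the technical work lies. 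The computation at the nodal points $\{0,1,\infty\}$ is more standard but still requires care in handling the passage through the $A_1$ singularity and in promoting sub-log-canonicity to actual sub-$F$-splitting.
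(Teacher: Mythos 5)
Your reduction to showing that $(\bP^1_p,B_p^{\bP^1})$ is globally $F$-regular is the right frame (it is also the paper's), but the computation of the $F$-discriminant contains a genuine error at $\lambda=\infty$. In the Legendre model $\overline{V(y^2-x(x-1)(x-\lambda))}\subseteq\bP^2\times\bP^1$ the fibre over $[1:0]$ is \emph{not} an $I_1$ fibre through an $A_1$ point: it is the union of the three concurrent lines $\{xz(x-z)=0\}$ through $[0:1:0]$ (the relatively minimal model has an $I_2^*$ fibre there), and the log canonical threshold of $f^*(\infty)$ is $1/2$, not $1$. This is forced by Kodaira's canonical bundle formula: the moduli part has degree $\deg\bigl(\tfrac{1}{12}j^*\cO(1)\bigr)=1/2$ since $j$ has degree $6$, so the characteristic-zero discriminant has degree $1/2$, and (the fibres over $0,1$ being log canonical) it is exactly $\tfrac12(\infty)$. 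Since globally sub-$F$-split implies sub-log canonical in codimension one (see \autoref{r-boundarypartfpt} and \cite[Proposition 5.7]{DS}), the $F$-discriminant coefficient at $\infty$ is at least $1/2$; your claim that ``the $A_1$ singularity upgrades sub-log-canonicity'' to give $d_\infty=1$ and no contribution cannot be repaired, because the pair is not even sub-lc there with coefficient $1$. Correspondingly, your coefficient $2\,\mathrm{mult}_{\lambda_0}(H_p)/(p-1)$ at supersingular points is twice the correct value: the computation in \cite[Example 3.2]{DS} (used in the paper) gives $\tfrac{1}{p-1}$ at each of the $(p-1)/2$ supersingular $\lambda$'s, so the correct $F$-discriminant is $\tfrac12(\infty)+\tfrac{1}{p-1}\sum_{\lambda\in\Lambda_p}(\lambda)$, of total degree $1$. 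Your degree check only ``balances'' because the two errors cancel: you dropped the $\tfrac12$ at $\infty$ and doubled the supersingular contributions.

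Two further remarks. First, the step you flag as the main obstacle (the Cartier-operator/Hasse-polynomial computation at supersingular fibres) is exactly what the paper avoids by quoting \cite[Example 3.2]{DS}, while the log canonicity of the fibres over $0,1$ is handled there not by the local $A_1$ analysis you sketch but by a Macaulay2 computation of the $F$-pure threshold at $p=5$ combined with \cite[Theorem 7.9]{MaSchwede_Perfectoid}. Second, your final global $F$-regularity step for $(\bP^1_p,B_p^{\bP^1})$ is only sketched (``a suitable homogeneous polynomial\dots''); with the corrected boundary the conclusion does hold, but one still needs an actual criterion — the paper invokes \cite[Lemma 3.1]{CGS} and \cite[Proposition 5.3]{SS} — since klt log Fano on $\bP^1$ does not by itself produce the required $F$-complements without an argument.
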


\begin{proof}
    We will denote by $f_0\colon S_0 \to T_0$ the fibration over $\Spec(\bC)$ and by $f_p \colon S_p \to T_p$ the fibration over $\Spec(\wb{\bF}_p)$.
Note that $S_0$ is normal as it is singular only at the nodal points of the fibres over $\lambda= 0,1$ and at the intersection point of the three lines composing the fibre over $\infty\coloneqq\left[1:0 \right]$. Note also that, since $f_p$ is non-isotrivial for all $p$, its general fibre is always globally $F$-split (\cite[Remark IV.4.23.4]{Har_AG}).

By Kodaira's canonical bundle formula (\cite[Theorem 8.2.1, Chapter 8]{KollarCBF}), the moduli part of $f_0$ is $\frac{1}{12}j^*\cO_{T_0}(1)$. Since the $j$-map has degree $6$ on the Legendre family, we conclude that the discriminant part of $f_0$ has degree $1/2$.
Using \cite{Macaulay2}, we see that the surface $S_5$ is strongly $F$-regular and that the $F$-pure threshold of the fibres over $0$ and $1$ at $p=5$ is exactly $1$. Therefore, by \cite[Theorem 7.9]{MaSchwede_Perfectoid} we have that $S_0$ and $(S_0, (1-\varepsilon)(f_0^*(0)+f_0^*(1)))$ are klt for all $0<\varepsilon \ll 1$, whence $(S_0, f_0^*(0)+f_0^*(1))$ is log canonical.
Thus the discriminant part of $f_0$ is $\frac{1}{2}(\infty)$.

By the computations in \cite[Example 3.2]{DS}, the $F$-discriminant on $\bA^1_p$ is given by $B_p^{T_p}|_{\bA^1_p} = \frac{1}{p-1} \sum_{\lambda \in \Lambda_p} (\lambda)$, where $\Lambda_p$ is the set of those $\lambda$'s corresponding to supersingular elliptic curves.
The cardinality of $\Lambda_p$ is $\frac{p-1}{2}$ by \cite[Theorem V.4.1(b)]{Silverman}, thus we have that, for $p\geq 3$, the $F$-discriminant of $f_p$ is
\[
B_p^{T_p} = \frac{1}{2}(\infty) + \frac{1}{p-1} \sum_{\lambda \in \Lambda_p} (\lambda).
\]
We conclude by \cite[Lemma 3.1]{CGS} and \cite[Proposition 5.3]{SS}.
\end{proof}

\bibliographystyle{alpha}
\bibliography{main.bib}

\end{document}